\newtheorem{theorem}[subsubsection]{Theorem}
\newtheorem{introtheorem}{Theorem}
\newtheorem{lemma}[subsubsection]{Lemma}
\newtheorem{proposition}[subsubsection]{Proposition}
\newtheorem{corollary}[subsubsection]{Corollary}
\newtheorem{deprop}[subsubsection]{Definition/Proposition}
\newtheorem{conjecture}[subsubsection]{Conjecture}
\newtheorem{introconjecture}{Conjecture}
\newtheorem{definition}[subsubsection]{Definition}
\newtheorem{remark}[subsubsection]{Remark}
\newcommand{\nc}[1]{\newcommand{#1}}
\DeclareMathOperator{\codim}{codim}
\DeclareMathOperator{\Quot}{Quot}
\DeclareMathOperator{\supp}{supp}
\DeclareMathOperator{\ch}{ch}
\DeclareMathOperator{\quo}{qu} 
\DeclareMathOperator{\Lie}{Lie}
\DeclareMathOperator{\Rep}{Rep}
\nc{\xox}[2]{\textsf{#2}} 
\nc{\R}{\mathbb{R}}
\nc{\C}{\mathbb{C}}
\nc{\Z}{\mathbb{Z}}
\nc{\Q}{\mathbb{Q}}
\nc{\N}{\mathbb{N}}
\nc{\id}{id}
\nc{\W}[2]{W_{#1}\!\setminus \! W/W_{#2}} 
\nc{\Gr}[3]{{}^{#1}\! \operatorname{Gr}_{#2}^{#3}} 
\nc{\SO}{\mathcal{O}} 
\nc{\on}[2]{{}^{#1}\vartheta^{#2}} 
\nc{\out}[2]{{}^{#1}\vartheta^{#2}} 
\nc{\add}{\operatorname{add}} 
\nc{\expe}[1]{1^{#1}}
\nc{\lMod}[1]{\text{${#1}$-Mod}}
\nc{\rMod}[1]{\text{Mod-${#1}$}}
\nc{\bMod}[2]{\text{${#1}$-Mod-${#2}$}}
\nc{\lmod}[1]{\text{${#1}$-mod}}
\nc{\bimod}[2]{\text{${#1}$-mod-${#2}$}}
\nc{\Rloc}[1]{R^{(#1)}}
\nc{\Dmax}[2]{{}^{#1}D^{#2}} 
\nc{\Dmin}[2]{{}_{#1}D_{#2}} 
\nc{\RR}[2]{{}^{#1}\mathcal{R}^{#2}} 
\nc{\delflag}[2]{{}^{#1}\mathcal{F}_{\Delta}^{#2}} 
\nc{\nabflag}[2]{{}^{#1}\mathcal{F}_{\nabla}^{#2}} 
\nc{\nabchar}{\ch_{\nabla}} 
\nc{\delchar}{\ch_{\Delta}} 
\nc{\BB}[2]{{}^{#1}\mathcal{B}^{#2}}
\nc{\preBB}[2]{{}^{#1}\mathcal{B}_{BS}^{#2}}
\nc{\PP}[2]{{}^{#1}\mathcal{P}^{#2}} 
\nc{\RS}[3]{{}^{#1}\!R_{#2}^{#3}} 
\nc{\del}[3]{{}^{#1}\Delta_{#2}^{#3}} 
\nc{\nab}[3]{{}^{#1}\nabla_{#2}^{#3}} 
\nc{\Bi}[3]{{}^{#1}\!B_{#2}^{#3}} 
\nc{\wo}[1]{w_{#1}} 
\nc{\Pad}{\mathcal{P}_{\Delta}} 
\nc{\HH}{\mathbb{H}^{\bullet}} 
\nc{\pt}{pt} 
\nc{\He}[2]{{}^{#1}\mathcal{H}^{#2}} 
\nc{\h}[1]{\underline{H}_{#1}} 
\nc{\hcat}{\mathcal{C}(\He{}{})} 
\nc{\mkl}[3]{{}^{#1}\underline{H}_{#2}^{#3}} 
\nc{\mst}[3]{{\hspace{1mm}}^{#1}\!H_{#2}^{#3}} 
\nc{\dL}{\mathcal{L}} 
\nc{\dR}{\mathcal{R}}
\nc{\ver}{\vartheta}
\nc{\sF}{\mathcal{F}}
\nc{\sG}{\mathcal{G}}
\DeclareMathOperator{\Hom}{Hom}
\DeclareMathOperator{\Ext}{Ext}
\nc{\Poinc}{\pi}
\nc{\tPoinc}{\widetilde{\pi}}
\newcommand{\sur}{\twoheadrightarrow}
\newcommand{\excise}[1]{}
\def\HC{{\mathcal{H}}}
\DeclareMathOperator{\oldch}{ch_{\mathrm{old}}}
\DeclareMathOperator{\newch}{ch_{\mathrm{new}}}
\def\BC{{\mathcal{B}}}
\begin{document}

\setcounter{tocdepth}{3}

\begin{abstract}
We define and study categories of singular Soergel bimodules, which
are certain natural generalisations of Soergel bimodules. Indecomposable singular Soergel
bimodules are classified, and we conclude that the
split Grothendieck group of the 2-category of singular Soergel bimodules is
isomorphic to the Schur algebroid. Soergel's
conjecture on the characters of indecomposable Soergel bimodules
in characteristic zero is shown to imply a similar conjecture for the characters of
singular Soergel bimodules.
\end{abstract}

 \title{Singular Soergel bimodules}
\author{Geordie Williamson} \address{University of Sydney Mathematical Research Institute}
\email{g.williamson@sydney.edu.au}
\urladdr{https://www.maths.usyd.edu.au/u/geordie/}

\maketitle

\section{Introduction}

In this paper we define and study a 2-category of \emph{singular Soergel bimodules}. Singular Soergel bimodules are ubiquitous in Lie theory and geometric representation theory, and yet have an elementary definition. In this paper we give a complete algebraic treatment of their classification.

Before we come to a description of these bimodules, we give a brief description of the Schur algebroid, for which singular Soergel bimodules provide a categorification. Let $(W,S)$ be a Coxeter system and let $\mathcal{H}$ denote its Hecke algebra. For any subset $I \subset S$ of the simple reflections one has a subalgebra $\mathcal{H}_I$ (itself a Hecke algebra) and one obtains a natural module $\He{I}{}$ for $\mathcal{H}$ by inducing the ``trivial'' (right) module from $\mathcal{H}_I$ to $\mathcal{H}$.  The Schur algebroid is defined as the category with objects the modules $\He{I}{}$ for finitary subsets $I \subset S$ and morphisms given by morphisms of right $\mathcal{H}$-modules. (A subset $I \subset S$ is \emph{finitary} if the corresponding parabolic subgroup $W_I$ is finite.) For example $\He{\emptyset}{}$ is the right regular representation of $\mathcal{H}$ and its endomorphism algebra is $\mathcal{H}$ itself. (If $W$ is the symmetric group then the Schur algebroid is an idempotented version of the $q$-Schur algebra, which explains its name.)

We now explain the definition of singular Soergel bimodules, and their relation to the Schur algebroid. A finite dimensional representation $V$ of $W$ is \emph{reflection faithful} if it is faithful and the reflections in $W$ are exactly those elements which fix a codimension one subspace of $V$.

We fix a reflection faithful representation $V$ of $W$ over an infinite field of characteristic $\ne 2$, and let $R$ denote the graded ring of regular functions on $V$. Given a finitary subset $I \subset S$ denote by $R^I$ the invariants in $R$ under $W_I$. Furthermore, if $I, J \subset S$ are finitary denote by $\bMod{R^I}{R^J}$ the category of graded $(R^I, R^J)$-bimodules.

Consider the 2-category with:
\begin{enumerate}
\item objects consisting of finitary subsets $I \subset S$,
\item 1-morphisms from $I$ to $J$ given by bimodules in $\bMod{R^I}{R^J}$ (with composition of 1-morphisms given by tensor product of bimodules), and 
\item $2$-morphisms bimodule homomorphisms.
\end{enumerate}
The 2-category of singular Soergel bimodules is the full idempotent complete strict sub-2-category of the above 2-category generated by the bimodules $R^K \in \bMod{R^I}{R^J}$ whenever $I \supset K \subset J$ are finitary subsets. We write $\BB{I}{J}$ for the homomorphisms from $I$ to $J$ in this 2-category.

More concretely, given two finitary subset $I, J \subset S$ one may define $\BB{I}{J}$ to be the smallest full additive subcategory of $\bMod{R^I}{R^J}$ which contains all objects isomorphic to direct summands of shifts of objects of the form
\begin{equation*}
  R^{I_1} \otimes_{R^{J_1}} R^{I_2} \otimes_{R^{J_2}} \cdots \otimes_{R^{J_{n-1}}} R^{I_n}
\end{equation*}
where $I = I_1 \subset J_1 \supset I_2 \subset J_2 \supset \dots \subset J_{n-1} \supset I_n = J$ are finitary subsets of $S$.

Given $x \in W$ consider its twisted diagonal $\Gr{}{x}{} := \{ (x\lambda, \lambda) \; | \; \lambda \in V \}$. If $p \in W_I \setminus W / W_J$ define
\begin{equation*}
\Gr{I}{p}{J} \subset V/W_I \times V/W_J
\end{equation*}
as the image of $\Gr{}{x}{}$ under the quotient map $V \times V \to V/W_I \times V/W_J$.  We write $\Gr{I}{\le p}{J}$ (resp. $\Gr{I}{< p}{J}$) for the union of all $\Gr{I}{q}{J}$ with $q \le p$ (resp.  $q < p$) in the induced Bruhat order on double cosets. We may regard any $M \in \bMod{R^I}{R^J}$ as an $R^I\otimes R^J$-module and hence as a quasi-coherent sheaf on $V/W_I \times V/W_J$, which allows us to speak of support of $M$ or $m \in M$. We denote by $\Gamma_{\le p}M$ (resp. $\Gamma_{< p}M$) the submodule of sections supported on $\Gr{I}{\le p}{J}$ (resp. $\Gr{I}{<p}{J})$.

A classification of singular Soergel bimodules is provided by the following:

\begin{introtheorem} \label{thm:intro1}
There is a natural bijection:
\begin{align*}
\W{I}{J} \stackrel{\sim}{\longrightarrow}
\left \{ \begin{array}{c} 
\text{isomorphism classes of} \\
\text{indecomposable bimodules in $\BB{I}{J}$} \\
\text{(up to shifts in the grading).} \end{array} \right \}
\end{align*}
More precisely, for every $p \in \W{I}{J}$ there exists a unique isomorphism class (up to shifts) of indecomposable bimodules $M \in \BB{I}{J}$ whose support is $\Gr{I}{\le p}{J}$.
\end{introtheorem}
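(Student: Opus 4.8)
The plan is to follow the well-established strategy from the theory of ordinary Soergel bimodules (Soergel's classification via support filtrations), adapted to the singular setting. The key structural tool will be a filtration of any bimodule $M \in \BB{I}{J}$ by support: for each double coset $p \in \W{I}{J}$ one has the subquotients $\Gamma_{\le p}M / \Gamma_{<p}M$, which are modules supported (scheme-theoretically, after suitable normalization) on the single twisted diagonal $\Gr{I}{p}{J}$. One first shows that for the ``standard'' building blocks $R^{I_1} \otimes_{R^{J_1}} \cdots \otimes_{R^{J_{n-1}}} R^{I_n}$ this filtration is finite and that each subquotient is, up to shift and a controlled ``determinant'' twist, a free module over $R^{\Gr{I}{p}{J}}$ of finite graded rank. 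This should be proved by induction on the length $n$ of the expression, using the fact that tensoring with a single $R^K$ along $R^I$ or $R^J$ interacts well with the support stratification (the geometry of $V/W_I \times V/W_J$ and the Bruhat order on double cosets); this is presumably established in an earlier section of the paper, so I would cite it.

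Next I would introduce the graded ranks of these subquotients as a ``character'' invariant $\operatorname{ch}(M) \in \bigoplus_{p} \mathbb{Z}[v,v^{-1}] \cdot [p]$, and prove that $\operatorname{ch}$ is additive on direct sums and that $\operatorname{ch}(M) = 0$ forces $M = 0$ (faithfulness of the character on the category $\BB{I}{J}$ — this requires knowing that every object has a support filtration with free subquotients, hence is itself determined up to graded dimension by $\operatorname{ch}$, together with a nilpotency/idempotent-lifting argument to pass from the ``standard bimodules that visibly have such a filtration'' to their direct summands). The crucial positivity input is that the ``leading term'' of $\operatorname{ch}$ of a standard bimodule built from a reduced expression for $p$ equals $[p]$ plus a $\mathbb{Z}_{\ge 0}[v,v^{-1}]$-combination of $[q]$ with $q < p$; this is what lets an inductive Krull–Schmidt argument produce, for each $p$, a unique indecomposable summand $B_p$ of the standard bimodule attached to a reduced expression for $p$ whose support is exactly $\Gr{I}{\le p}{J}$ and whose character has leading term $[p]$.

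The construction of the bijection then goes: (\emph{surjectivity/existence}) for $p \in \W{I}{J}$ pick a reduced expression, form the corresponding standard bimodule, and extract by Krull–Schmidt (the category $\BB{I}{J}$ is Krull–Schmidt since it is idempotent-complete and $\operatorname{End}$ of each object is a finite-dimensional-in-each-degree, bounded-below graded algebra, hence semiperfect) the summand $B_p$ supported on $\Gr{I}{\le p}{J}$ and not on anything smaller; (\emph{injectivity/uniqueness}) if $M$ is any indecomposable in $\BB{I}{J}$, look at the maximal $p$ with $\Gamma_{\le p}M = M$; the subquotient at $p$ is free of some graded rank over $R^{\Gr{I}{p}{J}}$, and a standard argument (splitting off a copy of $B_p$, then showing the complement has strictly smaller support and invoking induction on the support, exactly as in the nonsingular case) shows $M \cong B_p$ up to shift and that distinct $p$ give non-isomorphic $B_p$ because they have different supports.

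The main obstacle I expect is the \emph{freeness of the support subquotients} $\Gamma_{\le p}M/\Gamma_{<p}M$ over the coordinate ring $R^{\Gr{I}{p}{J}}$ of a single twisted diagonal, together with the \emph{positivity of the leading character term}. In the ordinary Soergel setting this rests on the fact that $R$ is free over $R^s$ and on Demazure-operator combinatorics; in the singular setting one needs the analogous statement that $R^K$ is free (graded, finite rank) over $R^I$ whenever $K \subset I$ are finitary — which holds because $W_I/W_K$-type arguments and the structure of $R$ as a polynomial-type ring give a Frobenius/Bott–Samelson-style description — and one must track how these base-change-and-restriction operations affect the support stratification and the nonnegativity of the relevant graded multiplicities. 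Controlling the combinatorics of the induced Bruhat order on double cosets (so that the leading term really is $[p]$ with a genuinely reduced expression, with no cancellation) is the delicate point; everything downstream is a formal Krull–Schmidt induction.
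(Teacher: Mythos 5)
Your skeleton does match the paper's strategy (support filtrations with standard subquotients, a character with positive leading term coming from a product of standard generators as in Proposition \ref{prop:transseqprod}, then a Krull--Schmidt induction), but there is a genuine gap at the decisive step. For uniqueness you propose to ``split off a copy of $B_p$'' from an arbitrary indecomposable $M$ whose maximal support coset is $p$; this requires morphisms $B_p \to M$ and $M \to B_p$ whose composite is an isomorphism. Nothing in your outline produces such morphisms: what one sees directly are morphisms between the top subquotients $\Gamma^p M$ and $\Gamma^p B_p$ (finite sums of shifts of $\RS{I}{p}{J}$), and the entire difficulty is lifting these to morphisms of the bimodules themselves. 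In the paper this lifting is exactly what the homomorphism formula (Theorem \ref{thm:hom}) provides: for $B \in \BB{I}{J}$ and $M$ with a delta (resp.\ nabla) flag, $\Hom(M,B)$ (resp.\ $\Hom(B,M)$) is graded free over $R^I$ with graded rank $\overline{\langle \delchar(M),\nabchar(B)\rangle}$, and this rank count forces $\Hom(-,B)$ and $\Hom(B,-)$ to be exact on the support-filtration sequences, whence the surjection $\Hom(M,N) \twoheadrightarrow \Hom(\Gamma^p M,\Gamma^p N)$ used in Theorem \ref{thm:classification}. Establishing that formula for arbitrary direct summands (not just Bott--Samelson objects, where adjunction of translation functors suffices, Theorem \ref{thm:BShom}) is the hardest part of the paper: it needs the localisation analysis of Section \ref{subsec:local} (Proposition \ref{prop:specialloc}, the elements $m_p$, Theorem \ref{thm:specialstalk}) to identify $\Hom(\del{I}{p}{J},B)$ with a twist of $\Gamma_p^{\le}B$. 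Your ``nilpotency/idempotent-lifting argument'' presupposes precisely this exactness, so as written the uniqueness half of the bijection is not proved.

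Two smaller points. First, the subquotients are not free modules over the coordinate ring of $\Gr{I}{p}{J}$: the paper works with the standard modules $\RS{I}{p}{J}=R^{I\cap p_-Jp_-^{-1}}$ carrying a twisted right action, and the natural map from $\SO(\Gr{I}{p}{J})$ into the relevant rings need not be surjective (see the remark following Lemma \ref{lem:Rpinjection}); moreover the existence and stability of these flags under translation functors is itself nontrivial, resting on the equivariant Schubert calculus on $R(p)$ (Theorem \ref{thm:doublecosetfunctions}, Corollary \ref{cor:indfilt2}) and the splitting/vanishing results of Section \ref{subsec:vansplit} -- you correctly flag this as an obstacle but underestimate the second one above. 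Second, in the singular setting ``reduced expression'' should be replaced by a sequence of finitary subsets as in Proposition \ref{prop:transseqprod}; that is a cosmetic adjustment.
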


\excise{
\begin{introtheorem} \label{thm:intro1}
For every $p \in \W{I}{J}$ there exist a unique isomorphism class (up to shifts in the grading) of indecomposable bimodules $M \in \BB{I}{J}$ such that $\Gamma_{\le p} M = M$ and $\Gamma_p M \ne 0$. Moreover, any indecomposable bimodule in $\BB{I}{J}$ is of this form for some $p \in \W{I}{J}$. Therefore we have a natural bijection:
\begin{align*}
\W{I}{J} \stackrel{\sim}{\longrightarrow}
\left \{ \begin{array}{c} 
\text{isomorphism classes of} \\
\text{indecomposable bimodules in $\BB{I}{J}$} \\
\text{(up to shifts in the grading).} \end{array} \right \}
\end{align*}
\end{introtheorem}}

\excise{
\begin{introtheorem} \label{thm:intro1}
We have a bijection
\begin{align*}
\left \{ \begin{array}{c} 
\text{isomorphism classes of} \\
\text{indecomposable objects in $\BB{I}{J}$} \\
\text{(up to shifts in the grading)} \end{array} \right \}
&\stackrel{\sim}{\longrightarrow} \W{I}{J} 
\end{align*}
obtained by assigning to an indecomposable bimodule $M \in \BB{I}{J}$ the maximal $p \in \W{I}{J}$ such that $\Gamma_p M \ne 0$. If $M \in \BB{I}{J}$ corresponds to $p$ under the above bijection then $M = \Gamma_{\le p} M$.
\end{introtheorem}
}

We now explain how the 2-category of singular Soergel bimodules gives a categorification of the Schur algebroid. Let us write $\He{I}{J}$ for $\Hom(\He{J}{}, \He{I}{})$ in the Schur algebroid. Then, just like the Hecke algebra, each $\He{I}{J}$ posesses a standard basis $\{ \hspace{-2pt} \mst{I}{p}{J} \}$ parametrised by the double cosets $\W{I}{J}$ and composition gives us a morphism
\begin{align*}
\He{I}{J} \times \He{J}{K} & \to \He{I}{K} \\
(f, g) & \mapsto f *_J g
\end{align*}
which may be expressed as a renormalisation of the product in the Hecke algebra.

For any bimodule $M \in \BB{I}{J}$ and double coset $p \in \W{I}{J}$, the subquotient $\Gamma_{\le p}M / \Gamma_{< p} M$ is isomorphic to a finite direct sum of shifts of certain ``standard modules'' which may be described explicitly. It is therefore natural to define a character
\begin{align*}
\ch : \BB{I}{J} & \to \He{I}{J} \\
M & \mapsto \sum h_p \mst{I}{p}{J}
\end{align*}
where $h_p \in \N[v,v^{-1}]$ counts the graded multiplicity of the standard module in the subquotient $\Gamma_{\le p} M / \Gamma_{< p} M$.
 
Our main theorem is that the 2-category of singular Soergel bimodules categorifies the Schur algebroid:

\begin{introtheorem} \label{thm:intro2}
If $I, J, K \subset S$ are finitary we have a commutative diagram
\begin{equation*}
\xymatrix@C=2cm{ \BB{I}{J} \times \BB{J}{K} \ar[r]^(0.55){- \otimes_{R^J} - } \ar[d]^{\ch \times \ch} & \BB{I}{K} \ar[d]^{\ch} \\
\He{I}{J} \times \He{J}{K} \ar[r]^(0.55){-*_J-} & \He{I}{K}}
\end{equation*}
Hence the split Grothendieck group of the 2-category of singular Soergel bimodules is isomorphic to the Schur algebroid. Moreover, one may choose representatives $\{ \Bi{I}{p}{J} | p \in \W{I}{J} \}$ for each isomorphism class of indecomposable bimodules (up to shifts) such that $\{ \ch(\Bi{I}{p}{J}) \}$ gives a self-dual basis of $\He{I}{J}$ and 
\begin{equation*}
\ch(\Bi{I}{p}{J}) = \mst{I}{p}{J} + \sum_{q \le p} g_{q,p} \mst{I}{q}{J}
\quad \text{ for some $g_{q,p} \in \N[v,v^{-1}].$} \let\qedsymbol\openbox\qedhere
\end{equation*}
\end{introtheorem}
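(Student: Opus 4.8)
The plan is to prove Theorem~\ref{thm:intro2} in three stages: first the compatibility of $\ch$ with tensor product, then the deduction about Grothendieck groups, and finally the existence of a self-dual basis with the stated positivity. For the first stage, I would work locally on the support stratification. The key observation is that the characters $\mst{I}{p}{J}$ are defined by counting graded multiplicities of standard modules in the subquotients $\Gamma_{\le p}M/\Gamma_{<p}M$, and that the standard modules themselves are ``rank one'' bimodules supported on a single twisted diagonal $\Gr{I}{p}{J}$. So the strategy is: given $M \in \BB{I}{J}$ and $N \in \BB{J}{K}$, filter both by support, observe that tensoring over $R^J$ a standard bimodule for $p \in \W{I}{J}$ with a standard bimodule for $q \in \W{J}{K}$ yields (after localising at the generic point of each stratum) a direct sum of standard bimodules indexed by the double cosets appearing in the product $p \cdot q$ in $W_I\backslash W/W_K$, with graded shifts dictated by the Poincaré polynomials of the relevant parabolic subgroups. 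Matching this against the renormalised Hecke product $-*_J-$ in the Schur algebroid is then a direct — if somewhat involved — comparison. The main subtlety is that $-\otimes_{R^J}-$ need not be exact, so I would need that the standard filtrations behave well under the tensor product; this should follow from the freeness statements for singular Soergel bimodules over the relevant invariant subrings (established earlier in the paper), which guarantee that tensoring preserves the support filtration up to the expected gradings. I expect this step — controlling the interaction of the tensor product with the two support filtrations and pinning down all the grading shifts — to be the main obstacle.

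For the second stage, once $\ch$ is multiplicative I would argue that $\ch$ induces an isomorphism from the split Grothendieck group of $\BB{I}{J}$ onto $\He{I}{J}$. Surjectivity and the fact that $\ch$ is injective on isomorphism classes of indecomposables (up to shift) should already follow from Theorem~\ref{thm:intro1} together with an upper-triangularity property of $\ch$ with respect to the Bruhat order: for the indecomposable $M$ supported on $\Gr{I}{\le p}{J}$, the top term of $\ch(M)$ is a positive multiple of $\mst{I}{p}{J}$ and all other terms involve $q<p$. Since the $\mst{I}{p}{J}$ form a basis of $\He{I}{J}$, upper-triangularity shows $\{\ch(\Bi{I}{p}{J})\}$ is a basis of $\He{I}{J}\otimes_{\Z[v,v^{-1}]}\Q(v)$, and a leading-coefficient normalisation (dividing by the graded multiplicity of the standard module at the top stratum, which I must check equals $1$ for a suitable choice of indecomposable representative) upgrades this to a $\Z[v,v^{-1}]$-basis. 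Combined with the 2-functoriality from stage one, this yields the isomorphism of the split Grothendieck group with the Schur algebroid.

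For the third stage — self-duality and positivity — I would introduce the duality functor $M \mapsto \Hom_{R^I\text{--}R^J}(M, R^I\otimes R^J)$ (suitably interpreted with grading shifts), show it preserves $\BB{I}{J}$ and fixes each indecomposable up to shift, and normalise $\Bi{I}{p}{J}$ to be genuinely self-dual (fixing the shift ambiguity). On characters this duality should intertwine with the bar involution on $\He{I}{J}$, forcing $\ch(\Bi{I}{p}{J})$ to be bar-invariant. Positivity of the coefficients $g_{q,p}\in\N[v,v^{-1}]$ is immediate from the definition of $\ch$ (multiplicities are non-negative), and the upper-triangular form $\ch(\Bi{I}{p}{J}) = \mst{I}{p}{J} + \sum_{q\le p} g_{q,p}\mst{I}{q}{J}$ follows from the support bound together with the leading-coefficient normalisation from stage two. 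The remaining point is to verify that the collection $\{\ch(\Bi{I}{p}{J})\}$ is actually a \emph{basis} and not merely a spanning set over $\Z[v,v^{-1}]$ — but this is forced by the bar-invariance plus upper-triangularity, via the standard Kazhdan–Lusztig-style uniqueness argument for self-dual bases. I would present stages two and three together as short formal consequences, reserving the bulk of the exposition for the tensor-product compatibility in stage one.
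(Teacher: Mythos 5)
Your stage one is where the argument breaks, and it breaks at the level of the basic premise rather than at a technical detail. The character map is \emph{not} compatible with $-\otimes_{R^J}-$ on standard modules, so no stratum-by-stratum comparison of the support filtrations of $M$ and $N$ can establish the commutative square. Already for $I=J=K=\emptyset$ one has $R_x\otimes_R R_y\cong R_{xy}$, whereas $H_xH_y$ is in general a long sum; concretely $R_s\otimes_R R_s\cong R_e$ (up to shift), whose character is a power of $v$ times $H_e$, while $H_s\ast H_s=H_e+(v^{-1}-v)H_s$ has support $\{e,s\}$ and a negative coefficient --- something no decomposition of a bimodule into shifted standard modules can ever produce, since multiplicities lie in $\N[v,v^{-1}]$. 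The same phenomenon persists in the singular case: $\ch\bigl(\RS{I}{p}{J}\otimes_{R^J}\RS{J}{q}{K}\bigr)$ is simply not $\mst{I}{p}{J}\ast_J\mst{J}{q}{K}$, so ``matching against the renormalised Hecke product'' fails at the first nontrivial stratum. (Your worry about exactness, by contrast, is a non-issue: objects with nabla flags are graded free as one-sided modules, so the tensor product is exact on them.) Multiplicativity of $\ch$ is a statement that genuinely uses that \emph{both} factors are Soergel bimodules, i.e.\ built from the generators $R^K$, and any proof has to exploit this.

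This is exactly how the paper proceeds: the character identity is first proved only for the translation functors $-\cdot\on{J}{K}$ and $-\cdot\out{J}{K}$, whose effect on the delta and nabla characters is right multiplication by the standard generator $\mst{J}{}{K}$ (Theorems \ref{thm:translation} and \ref{thm:translation2}); the hard inputs there are the Demazure-operator filtration of induced standard modules (Corollary \ref{cor:indfilt2}) and the splitting statement (Proposition \ref{prop:split}). General multiplicativity is then deduced in Theorem \ref{thm:categorification}(2) by a formal induction: it holds when the second factor is Bott--Samelson, and for an indecomposable $\Bi{J}{p}{K}$ one picks a Bott--Samelson $N\cong\Bi{J}{p}{K}\oplus\widetilde N$ with $\supp\widetilde N\subset\Gr{J}{<p}{K}$ and inducts over the Bruhat order using additivity of $\ch$. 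Your stages two and three are broadly in the right spirit: unitriangularity with leading coefficient $1$ is automatic from the normalisation $\Gamma^p\Bi{I}{p}{J}\cong\nab{I}{p}{J}$ in Theorem \ref{thm:classification} (so no leading-coefficient division is needed, and being a basis already follows from unitriangularity alone, without any Kazhdan--Lusztig-style uniqueness argument). But the duality must be the one-sided functor $DM=\Hom_{R^I}(M,R^I[2\ell(\wo{J})])$, which exchanges delta and nabla flags, commutes with translation (Proposition \ref{prop:transcommute}) and satisfies $\delchar\circ D=\nabchar=\overline{\delchar}$ on $\BB{I}{J}$ (Proposition \ref{prop:D}); for the bimodule dual $\Hom(M,R^I\otimes R^J)$ you propose, none of these properties are available.
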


If $I = J = \emptyset$ then $\He{I}{J}$ is the Hecke algebra and we write $\BB{}{}$ instead of $\BB{I}{J}$. 
In this case Theorem \ref{thm:intro1} tells us that the isomorphism classes of indecomposable objects in $\BB{}{}$ are parametrised, up the shifts, by $W$ and Theorem \ref{thm:intro2} tells us that their characters yield a self-dual basis for the Hecke algebra having certain positivity properties.

The special case of the above result when $I = J = \emptyset$ of the above was obtained by Soergel in \cite{SoBimodules} (using a slightly different definition of $\BB{}{}$) and formed the principal motivation for this work. Similar ideas have also been pursued by Dyer in \cite{Dy1} and \cite{Dy2}, and by Fiebig in \cite{Fie1}, \cite{Fie2} and \cite{Fie3}.


Of course, the most famous basis for the Hecke algebra with properties similar to the above is the Kazhdan-Lusztig basis $\{ \mkl{}{w}{} \}$ of $\He{}{}$. Let us write $B_x$ for a representative of the isomorphism class of indecomposable objects parametrised by $x \in W$, normalised as in Theorem \ref{thm:intro2}. Soergel has proposed the following:

\begin{introconjecture}[\cite{SoBimodules}, Vermutung 1.13] \label{conj:main}
Suppose that $k$ is of characteristic 0. Then, for all $x \in W$ we have
\begin{equation*}
\ch(B_x) = \h{x}. \let\qedsymbol\openbox\qedhere
\end{equation*}
\end{introconjecture}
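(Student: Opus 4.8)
The plan is to reduce Conjecture~\ref{conj:main} to a single degree (parity) estimate on the characters $\ch(B_x)$, and then to prove that estimate. In Soergel's normalisation $\h{x}$ is characterised as the unique self-dual element of $\He{}{}$ of the form $H_x + \sum_{y<x} h_{y,x}H_y$ with every $h_{y,x}$ lying in $v\Z[v]$. Theorem~\ref{thm:intro2} already supplies two of the three ingredients: $\ch(B_x)$ is self-dual and equals $H_x + \sum_{y<x} g_{y,x}H_y$ with $g_{y,x}\in\N[v,v^{-1}]$. So, by the uniqueness of the Kazhdan--Lusztig basis, it suffices to establish the \emph{degree bound}: $g_{y,x}\in v\N[v]$ for every $y<x$. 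Unwinding the definition of $\ch$, the coefficient $g_{y,x}$ is the graded multiplicity of the standard module in the subquotient $\Gamma_{\le y}B_x/\Gamma_{<y}B_x$, so what must be shown is that this ``stalk'' of $B_x$ along the stratum indexed by $y$ is concentrated in strictly positive degrees for $y<x$ (it is a single copy in degree $0$ for $y=x$) --- in other words, that the indecomposable bimodule $B_x$ is \emph{perverse}. Via Soergel's formula computing graded $\Hom$-spaces in $\BB{}{}$ as pairings of characters, this is equivalent to a vanishing statement for homomorphisms of non-positive degree between distinct indecomposables.

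To prove the degree bound the natural approach splits according to the nature of $(W,S)$. If $W$ is a Weyl or affine Weyl group one realises everything geometrically: $B_x$ is the $B$-equivariant intersection cohomology of the Schubert variety $\overline{BxB/B}$, a Bott--Samelson bimodule is the equivariant cohomology of a Bott--Samelson resolution, and Soergel's functor intertwines the bimodule picture with equivariant hypercohomology. The Decomposition Theorem for pure perverse sheaves --- and this is exactly where characteristic $0$ is used in an essential way --- shows that the pushforward along a Bott--Samelson resolution decomposes into shifts of $\IC$-sheaves with no wrong-parity shifts, while the theorem of Kazhdan and Lusztig identifies the stalks of $\IC_{\overline{BxB/B}}$ with the polynomials $h_{y,x}$, which lie in $v\Z[v]$ by a weight argument. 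Transporting this back through Soergel's functor yields $g_{y,x}=h_{y,x}$, and hence the conjecture, in the crystallographic case. For a general Coxeter system there is no flag variety, and one must instead build an algebraic substitute for Hodge theory inside $\BB{}{}$: equip each $B_x$ with a nondegenerate graded intersection form valued in $R$ (using a $W$-invariant positive form on the reflection faithful representation $V$, available because $k$ has characteristic $0$), and prove by simultaneous induction on $\ell(x)$ that these forms satisfy hard Lefschetz and the Hodge--Riemann bilinear relations; the signs in the Hodge--Riemann relations then force precisely the degree bound on the $g_{y,x}$.

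The decisive obstacle is this degree/positivity statement itself; everything above it --- the reduction, the characterisation of the Kazhdan--Lusztig basis, the formal behaviour of $\ch$ --- is soft. The statement genuinely fails over fields of positive characteristic, so no formal argument can reach it. In the crystallographic case its substance is imported wholesale from the Decomposition Theorem, the remaining work being only the compatible choice of $V$ and the identification of Soergel's functor with equivariant hypercohomology; in the general case its substance is the Hodge-theoretic induction, whose real difficulty is that hard Lefschetz, the Hodge--Riemann relations, and a Decomposition-Theorem-type splitting statement cannot be decoupled --- each has to be invoked at smaller length inside the proof of the next, and one needs a ``weak Lefschetz'' device in order to pass between a Bott--Samelson bimodule and its indecomposable summands while keeping control of the quadratic forms. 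It is precisely to open the door to such a linear-algebraic attack, replacing the unavailable geometry by bilinear algebra over $R$, that Soergel introduced these bimodules; carrying the induction through would be the real content of a proof, and I would not expect a shortcut. (For $W$ finite and $k=\R$ one could instead try a more hands-on Lefschetz argument using multiplication by a generic linear form on $V$, but pinning down the signs still requires the full Hodge--Riemann package.)
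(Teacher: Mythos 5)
You are proposing a proof of a statement that this paper does not prove: Conjecture \ref{conj:main} is recorded here as Soergel's conjecture (Vermutung 1.13 of \cite{SoBimodules}) and is left open. The paper only remarks that it is known in the cases where Kazhdan--Lusztig polynomials have a geometric interpretation (e.g.\ finite and affine Weyl groups), and proves instead (Theorem \ref{thm:intro4}) that the conjecture, \emph{if} true, determines the characters of all indecomposable singular Soergel bimodules. So there is no proof in the paper against which to compare your argument.

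Judged on its own terms, the soft part of your proposal is correct: by Theorem \ref{thm:intro2} the element $\ch(B_x)$ is self-dual and of the form $H_x+\sum_{y<x}g_{y,x}H_y$ with $g_{y,x}\in\N[v,v^{-1}]$, so by the uniqueness characterisation of the Kazhdan--Lusztig basis the conjecture is equivalent to the degree bound $g_{y,x}\in v\N[v]$, i.e.\ to the ``perversity'' of the subquotients $\Gamma_{\le y}B_x/\Gamma_{<y}B_x$. But that bound \emph{is} the conjecture, and your proposal does not establish it. In the crystallographic case you import it from the Decomposition Theorem together with an identification of $B_x$ with equivariant intersection cohomology of a Schubert variety; neither ingredient is set up in this paper (the comparison of the bimodule $B_x$ with equivariant hypercohomology is itself a nontrivial theorem, alluded to only without proof in the introduction), so this is an appeal to external geometry rather than an argument within the present framework. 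In the general Coxeter case you outline a Hodge-theoretic induction (intersection forms, hard Lefschetz, Hodge--Riemann relations) but explicitly do not carry it out; as you note, the interlocking of these statements across the induction is the genuine difficulty, and this is precisely the programme later completed by Elias and Williamson. Your candour about the gap is appropriate: since the statement fails in positive characteristic, no formal consequence of the categorification results proved here (which are characteristic-independent, subject only to \eqref{assump:freeness}) can yield it. As written, then, the proposal is a correct reduction plus a survey of strategy, not a proof --- which matches the status of the statement in the paper itself.
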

This conjecture is known in all cases where one may interpret
Kazhdan-Lusztig polynomials geometrically, for example for finite and
affine Weyl groups. Its importance is that it provides a conjectural
generalisation of this theory to arbitrary Coxeter groups. For
example, a positive solution to this conjecture would resolve the long-standing conjecture as to the positivity of Kazhdan-Lusztig polynomials, as the above character is manifestly positive.

For arbitrary finitary subsets $I, J \subset S$ there exists a Kazhdan-Lusztig basis $\{ \mkl{I}{p}{J} \}$ for $\He{I}{J}$. The following relates the objects in the categories $\BB{I}{J}$ and $\BB{}{}$ and shows that Soergel's conjecture implies character formulae for all indecomposable singular bimodules.

\begin{introtheorem} \label{thm:intro4} Let $I, J \subset S$ be finitary, $p \in \W{I}{J}$ and denote by $p_+$ the unique element of $p$ of maximal length. Then we have an isomorphism:
\begin{equation*}
R \otimes_{R^I} \Bi{I}{p}{J} \otimes_{R^J} R \cong B_{p_+} \quad \text{ in $\bMod{R}{R}$.}
\end{equation*}
In particular, if Soergel's conjecture is true then 
\begin{equation*}
\ch(\Bi{I}{p}{J}) = \mkl{I}{p}{J}. \let\qedsymbol\openbox\qedhere
\end{equation*}
 \end{introtheorem}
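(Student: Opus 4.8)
The plan is to establish the isomorphism $R \otimes_{R^I} \Bi{I}{p}{J} \otimes_{R^J} R \cong B_{p_+}$ by combining the categorification theorem (Theorem \ref{thm:intro2}) with the classification theorem (Theorem \ref{thm:intro1}), reducing everything to a calculation of characters and supports. First I would observe that the functor $R \otimes_{R^I} - \otimes_{R^J} R$ from $\bMod{R^I}{R^J}$ to $\bMod{R}{R}$ is exactly $-\otimes_{R^J} R$ precomposed with $R \otimes_{R^I} -$, and that on the level of the $2$-category of singular Soergel bimodules these are the composition $1$-morphisms associated to the inclusions $\emptyset \subset I$ and $\emptyset \subset J$; in particular this functor sends $\BB{I}{J}$ into $\BB{}{} = \BB{\emptyset}{\emptyset}$. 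Hence $R \otimes_{R^I} \Bi{I}{p}{J} \otimes_{R^J} R$ is an object of $\BB{}{}$, and to identify it with $B_{p_+}$ it suffices (by the Krull--Schmidt property, which holds here since the Hom-spaces are finite-dimensional in each degree) to show (a) that it is indecomposable with the right self-duality normalisation, and (b) that its character in $\He{}{}$ equals $\mst{}{p_+}{} + (\text{lower terms})$, equivalently that its support is precisely $\Gr{}{\le p_+}{}$.

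The key computational step is to chase the character through the commutative diagram of Theorem \ref{thm:intro2}. Writing $\varphi_{\emptyset I}\colon \He{I}{} \to \He{}{}$ and $\varphi_{\emptyset J}$ for the maps on Grothendieck groups induced by $R$ viewed in $\bMod{\emptyset}{I}$ and $\bMod{J}{\emptyset}$ respectively, we get
\begin{equation*}
\ch\bigl(R \otimes_{R^I} \Bi{I}{p}{J} \otimes_{R^J} R\bigr) = \varphi_{\emptyset I} \bigl(\ch(\Bi{I}{p}{J})\bigr) *_J \,(\text{unit}) = \bigl(\text{renormalised image of } \mst{I}{p}{J} + \textstyle\sum_{q\le p} g_{q,p}\mst{I}{q}{J}\bigr).
\end{equation*}
Here the crucial input is the explicit description (promised in the excerpt) of how the standard basis elements $\mst{I}{p}{J}$ behave under induction/restriction between the Schur algebroid modules $\He{I}{}$, $\He{J}{}$, $\He{}{}$: up to a power of $v$, the standard module $\mst{I}{p}{J}$ should map to $\mst{}{p_+}{}$ plus a sum of standard basis elements $\mst{}{w}{}$ with $w < p_+$ in the Bruhat order, precisely because $\Gr{}{\le p}{J}$ pulls back along $V \to V/W_I \times V/W_J$ to a union of twisted diagonals $\Gr{}{x}{}$ with $x \in p' $ for $p' \le p$, whose maximal element is $p_+$. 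Combining with the triangularity $g_{q,p} \in \N[v,v^{-1}]$ from Theorem \ref{thm:intro2} and the fact that $q\le p$ forces $q_+ \le p_+$, we conclude $\ch(R \otimes_{R^I} \Bi{I}{p}{J} \otimes_{R^J} R) = \mst{}{p_+}{} + \sum_{w < p_+} c_w \mst{}{w}{}$ with $c_w \in \N[v,v^{-1}]$.

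Finally, I would argue that this object is indecomposable. The cleanest route is to show directly that its support equals $\overline{\Gr{}{p_+}{}} = \Gr{}{\le p_+}{}$ — the character computation shows the support is contained in this set and contains $\Gr{}{p_+}{}$ — and then invoke the uniqueness clause of Theorem \ref{thm:intro1}: there is a unique indecomposable in $\BB{}{}$ with that support, namely $B_{p_+}$, and any object whose character has leading term $\mst{}{p_+}{}$ with multiplicity one must have $B_{p_+}$ as a summand with multiplicity one and no other indecomposable summand of full support; a small argument with self-duality (both $\Bi{I}{p}{J}$ and the functor $R\otimes_{R^I}-\otimes_{R^J}R$ commute with the duality, so the image is self-dual) and the positivity of the $g$'s rules out lower summands, forcing $R \otimes_{R^I} \Bi{I}{p}{J} \otimes_{R^J} R \cong B_{p_+}$ on the nose. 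The second assertion, $\ch(\Bi{I}{p}{J}) = \mkl{I}{p}{J}$ under Soergel's conjecture, then follows formally: Soergel's conjecture gives $\ch(B_{p_+}) = \h{p_+}$, the map $\varphi_{\emptyset I}, \varphi_{\emptyset J}$ on Hecke/Schur-algebroid level must send the conjectural Kazhdan--Lusztig basis element $\mkl{I}{p}{J}$ to (a renormalisation of) $\h{p_+}$ by the defining properties of these bases, and since $\ch$ is injective on the relevant Grothendieck group and $\varphi$ is (on the image) injective in the appropriate sense, we may cancel to get $\ch(\Bi{I}{p}{J}) = \mkl{I}{p}{J}$.

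I expect the main obstacle to be step (a)/the final paragraph: pinning down that the image is \emph{exactly} $B_{p_+}$ rather than $B_{p_+}$ plus lower indecomposables. Controlling the lower terms requires knowing that the functor $R \otimes_{R^I} - \otimes_{R^J} R$ does not create "extra" summands, which ultimately rests on a careful analysis of the standard-filtration of $R$ as an $(R^I,R^J)$-bimodule (it should be the standard module indexed by $p$ with $p_+$ longest, with no multiplicity) together with the exactness properties of $\Gamma_{\le p}/\Gamma_{<p}$ under this functor — essentially a base-change statement for the support filtration along $V \to V/W_I$. Making that base-change rigorous, rather than the formal diagram-chase, is where the real work lies.
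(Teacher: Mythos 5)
The step you yourself flag as the main obstacle is indeed a genuine gap, and the patch you sketch does not close it. Knowing that $\ch(R\otimes_{R^I}\Bi{I}{p}{J}\otimes_{R^J}R)=\mst{}{p_+}{}+\sum_{w<p_+}c_w\mst{}{w}{}$ with $c_w\in\N[v,v^{-1}]$, that the support is $\Gr{}{\le p_+}{}$, and that the bimodule is self-dual cannot distinguish $B_{p_+}$ from $B_{p_+}\oplus(\text{shifts of }B_x,\ x<p_+)$: each potential lower summand $B_x$ is itself self-dual, has positive character, and is supported inside $\Gr{}{\le p_+}{}$, while Theorem \ref{thm:intro1} classifies only \emph{indecomposable} bimodules and so gives no handle on a module that is merely known to have that support. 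Likewise, a base-change statement for the support filtration (which the paper does have, Proposition \ref{prop:transsupp} and Corollary \ref{cor:indfilt2}) controls the subquotients of the filtration but not the direct-sum decomposition, so it alone cannot exclude the extra summands either. Character and support bookkeeping, however carefully done, never detects indecomposability.

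The paper closes this gap in Proposition \ref{prop:outofthewall} with two further ingredients. First, by Theorem \ref{thm:ind} the subquotient of $R\otimes_{R^I}\Bi{I}{p}{J}\otimes_{R^J}R$ supported on the coset $p$ (i.e.\ $\Gamma_{\le p_+}/\Gamma_{\text{lower}}$ taken along $\quo^{-1}(p)$) is a shift of $R(p)$, which is indecomposable; hence exactly one indecomposable summand has support meeting $\Gr{}{p}{}$, and by the classification (Theorem \ref{thm:classification}) it is $B_{p_+}$, so the induced module is $B_{p_+}\oplus M$ with $M$ supported on strictly smaller cosets. Second — and this is the missing idea — one restricts back to $\bMod{R^I}{R^J}$ and cancels with Krull--Schmidt: the restriction of the induced module is $\tPoinc(I)\tPoinc(J)\cdot\Bi{I}{p}{J}$, while the restriction of $B_{p_+}$ alone already contains $\tPoinc(I)\tPoinc(J)\cdot\Bi{I}{p}{J}$ as a summand (by running the same top-subquotient argument in the other direction), leaving no room for ${}_{R^I}M_{R^J}$; hence $M=0$. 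Once the isomorphism $R\otimes_{R^I}\Bi{I}{p}{J}\otimes_{R^J}R\cong B_{p_+}$ is in hand, your final deduction of $\ch(\Bi{I}{p}{J})=\mkl{I}{p}{J}$ from Soergel's conjecture is essentially the paper's: by Theorem \ref{thm:categorification} the character of the induced module is just $\ch(\Bi{I}{p}{J})$ viewed inside $\He{}{}$, and $\h{p_+}=\mkl{I}{p}{J}$, so no auxiliary injectivity or cancellation argument for the maps $\varphi$ is needed.
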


\subsection{Applications of singular Soergel bimodules}
Before going into more detail about the contents of this paper, we briefly discuss some other applications of singular Soergel bimodules.
\begin{enumerate}
\item 
Soergel bimodules arose out of Soergel's attempts to understand
category $\mathcal{O}$, Harish-Chandra bimodules and the
Kazhdan-Lusztig conjecture. He showed that both a regular block of
category $\mathcal{O}$ and certain equivariant perverse sheaves on the
flag variety can be described in terms of (finite dimensional versions
of) Soergel bimodules \cite{SoJAMS}. Hence it is natural to expect
that singular Soergel bimodules govern ``singular'' situations (indeed
this is the origin of their name). For Harish-Chandra bimodules such
an equivalence was established by Stroppel \cite{Str}. For an
explanation (without proof) of the relation between singular Soergel
bimodules and equivariant sheaves on the flag variety see the
introduction to \cite{SSB} (see \cite{SoLanglands} for a treatment,
with proofs, of the non-singular case). Once one has established a
connection to representation theory or geometry the classification
theorem (Theorem \ref{thm:intro1}) usually follows in a
straightforward way, and it this fact that led Soergel to suspect
that Theorem \ref{thm:intro1} might be true for a general Coxeter
system. (It was also hoped that Soergel bimodules might provide a
means of avoiding the use of the decomposition theorem. This hope has
not yet be realised.)

\item In \cite{KR} Khovanov and Rozansky constructed a
  categorification of the HOMFLYPT polyonomial and in \cite{Kh}
  Khovanov gave another construction of this invariant by taking the
  Hochschild homology of a complex of Soergel bimodules constructed by
  Rouquier \cite{Ro}. Mackaay, Stosic and Vaz conjectured that one
  could extend this construction to produce a categorification of the
  colored HOMFLYPT polynomial by instead considering a certain complex
  of singular Soergel bimodules. This was proven by Webster and the author using geometric
  methods in \cite{WW}. In part motivated to give an algebraic proof
  of this construction, Mackaay, Stosic and Vaz recently constructed a
  diagrammatic categorification of the Schur algebra $S(n,d)$
  \cite{MSV}. When $n = d$ it is natural to expect that their
  categorification agrees with the categorification in this paper
  using singular Soergel bimodules (with $W = S_n$) but this has yet
  to be understood. (An analogous construction  in the context of
  category $\mathcal{O}$ is given by Mazorchuk
  and Stroppel in \cite{MS}).

\item So far, all applications of Soergel bimodules in representation
  theory have been by using Soergel bimodules as an intermediary
  between more complicated categories. This is usually achieved with
  the help of a fully faithful functor (the achetypal example being
  Soergel's functor $\mathbb{V}$). Sometimes it is difficult to
  construct such a functor but one still expects Soergel bimodules, or
  some variant, to control a given representation theoretic
  category. It has been suggested by Rouquier that if one had
  presentations of the category of Soergel bimodules by generators and
  relations, then giving an action of the category would be much more
  straightforward (in the same way that it is difficult to explicitly specify a
  homomorphism from a group, unless one has a presentation).
  Progress in this direction has been recently made by
  Libedinsky \cite{Li3} (for right-angled Coxeter groups) and
  Elias-Khovanov \cite{EK} (for the symmetric group). It is hoped that
  a similar ``generators and relations'' description might be possible
  for singular Soergel bimodules.

\item
Let $W$ be a Weyl group with root system $\Phi$ and simple reflections $S$ and let
$W \subset \widetilde{W}$ be the corresponding affine Weyl
group. After choosing a
reflection faithful representation $V$ of $W$ we may consider
$\BB{S}{S}$, which is a full tensor subcategory of
$R^S$-bimodules. The above results show that $\BB{S}{S}$ categorifies
$\He{S}{S}$. The algebra $\He{S}{S}$ is
known in the literature as the ``spherical Hecke algebra''. It is a fact known
  as the Satake isomorphism (see \cite{Lu1}) that the spherical Hecke
  algebra is commutative and isomorphic to the representation ring of
  the adjoint semi-simple group $G^{\vee}_a$ with root system
  $\Phi^{\vee}$ dual to $\Phi$.
Using this fact, one may show that if one normalises the
representatives $\{ \Bi{I}{p}{I} \; | \; p \in \W{I}{I} \}$ as in
Theorem \ref{thm:intro2} then any tensor product $\Bi{I}{p}{I}
\otimes_{R^I} \Bi{I}{q}{I}$ is isomorphic to a direct sum of
$\Bi{I}{r}{I}$ for $r \in \W{I}{I}$ without shifts. If we only allow
degree zero morphisms, we obtain a tensor subcategory $\BB{I}{I}_0$
containing all $\Bi{I}{p}{I}$ for $p \in \W{I}{I}$. In view of work of
Mirkovic and Vilonen \cite{MV} it is natural to expect an equivalence
of tensor categories
\begin{equation*}
\BB{I}{I}_0 {\cong} \Rep G^{\vee}_a
\end{equation*}
Such an equivalence has been constructed by Florian Klein for $G =
PGL_2$ \cite{Kl}. He
also conjectures a general procedure as to how one might enlarge
$\BB{I}{I}_0$ to recover the representation ring of the simply
connected cover of $G^{\vee}_a$, and proves it for $G = SL_2$.
\end{enumerate}

\subsection{An overview of the classification}

The proof of our classification and categorification theorems (Theorems \ref{thm:intro1} and \ref{thm:intro2}) essentially follows techniques developed by Soergel in \cite{SoBimodules}. Because the argument is quite subtle, we give here a brief summary of the key points.

As has already been alluded to in the introduction, any Soergel
bimodule $B \in \BB{I}{J}$ is an $(R^I, R^J)$-bimodule and hence can
be regarded as a quasi-coherent sheaf on $V/ W_I \times V/W_J$. The
first key observation is that the quasi-coherent sheaves on $V/ W_I
\times V/ W_J$ that one obtains from singular Soergel bimodules have a special form.

Given a double coset $p \in \W{I}{J}$ we have defined a subvariety $
\Gr{I}{p}{J} \subset V/W_I \times V/W_J$. Choose an enumeration $p_1, p_2 \dots $ of the elements of $\W{I}{J}$ compatible with the Bruhat order. Then given any $M \in \bMod{R^I}{R^J}$ one obtains filtrations
\begin{align*}
\cdots \subset \Gamma_{C(i-1)} M \subset \Gamma_{C(i)} M \subset
\Gamma_{C(i+1)} M \subset \cdots \\
\cdots \supset \Gamma_{\check{C}(i-1)} M \supset \Gamma_{\check{C}(i)}
M \supset \Gamma_{\check{C}(i+1)} M \supset \cdots
\end{align*}
where
\begin{align*}
\Gamma_{C(j)} M = \text{sections supported on the union of } \Gr{I}{p_j}{J}, \Gr{I}{p_{j-1}}{J}, \dots \\
\Gamma_{\check{C}(j)} M = \text{sections supported on the union of } \Gr{I}{p_j}{J}, \Gr{I}{p_{j+1}}{J}, \dots
\end{align*}
The crucial fact is that, if $M \in \BB{I}{J}$ is a singular Soergel
bimodule, then both filtrations are finite and exhaustive and the subquotients
are isomorphic to direct sums of shifted \emph{standard modules},
which are certain $(R^I, R^J)$-bimodules which may be described
explicitly. (In particular any singular Soergel bimodule
is supported on finitely many subvarieties of the form $\Gr{I}{p}{J}$.)

In order to prove this fact we define \emph{objects with nabla flags}
and \emph{objects with delta flags} as those objects for which the
subquotients in the first or second filtration respectively are
isomorphic to direct sums of shifts of standard modules. We then show
that these subcategories are preserved by the functors of restriction
and extension of scalars, which we renormalise and rename
\emph{translation functors}. (The choice of language is intended to
emphasise the analogy with
category $\mathcal{O}$, where it is very important (and
well-known) that translation functors preserve modules with delta and
nabla flags.) Given an object with a nabla or delta
flag it is natural to define its \emph{nabla} or \emph{delta
character} in the Hecke category by counting the graded multiplicities
of standard modules in the subquotients of the above filtrations. It
turns out that one may describe the effect of translation functors on
the character in terms of multiplication with a standard generator in
the Hecke category (this is the first step towards Theorem
\ref{thm:intro2}).

By the inductive definition of the objects in $\BB{I}{J}$ it
follows that they have both nabla and delta flags. This may be
exploited to describe $\Hom(M,B)$ and $\Hom(B,M)$ when $B$ is a Soergel bimodule and $M$ has a delta or nabla flag respectively. The classification of the indecomposable objects
in $\BB{I}{J}$ is then straightforward (essentially by an idempotent lifting argument).

\subsection{Structure of this paper}

In Section \ref{sec:coxgroups} we recall basic facts about Coxeter groups
and their Hecke algebras, and introduce the Schur algebroid. In
Section \ref{sec:not} we cover some bimodule basics.
In Section \ref{sec:stand} we begin the study of
so-called singular standard modules. After giving their definition, we
turn to an analysis of the effect of restriction and extension of
scalars. To prove the existence of certain filtrations on induced
standard modules we need
equivariant Schubert calculus which is developed in
Section \ref{subsec:demRX}. In Section \ref{sec:flags} we turn to
the study of modules filtered by singular standard modules, and show
how their characters may be understood in the Schur
algebroid. Finally, in Section \ref{sec:classification} we turn to
singular Soergel bimodules, and prove the two main theorems.



\excise{
\section*{List of important notation}

\begin{tabular}{llr}

$W$, $S$ & a Coxeter group and its simple reflections & \emph{\pageref{subsec:coxgroups}} \\
$T$ & the reflections in $W$ & \emph{\pageref{subsec:coxgroups}} \\
$\ell$ & the length function on $W$ & \emph{\pageref{subsec:coxgroups}} \\
$I$, $J$, $K$, $L$ & finitary subsets of $S$, (i.e. $W_I, W_J, \dots $ are finite) & \emph{\pageref{subsec:coxgroups}} \\
$W_I$ & the standard parabolic subgroup generated by $I$  & \emph{\pageref{subsec:coxgroups}} \\
$\wo{I}$ & the longest element in $W_I$ & \emph{\pageref{subsec:coxgroups}} \\
$\pi(I)$, $\tPoinc(I)$ & two Poincar\'e polynomials of $W_I$ & \emph{\pageref{subsec:coxgroups}} \\
$\W{I}{J}$ &the $(W_I, W_J)$-double cosets in $W$  & \emph{\pageref{subsec:coxgroups}} \\
$p$, $q$, $r$ & elements of $\W{I}{J}$  & \emph{\pageref{subsec:coxgroups}}  \\
$p_+$, $p_-$ & the maximal and minimal elements in $p$  & \emph{\pageref{subsec:coxgroups}} \\
$\pi(p)$, $\tPoinc(p)$ & Poincar\'e polynomials of $p$ & \emph{\pageref{subsec:coxgroups}} \\
$\wo{I,p,J}$ & the longest element in $W_{I \cap p_-Jp_-^{-1}}$ & \emph{\pageref{subsec:coxgroups}} \\
$\pi(I,p,J)$, & Poincar\'e polynomials of $W_{I \cap p_-Jp_-^{-1} }$  & \emph{\pageref{eq:poinc1}} \\
$\tPoinc(I,p,J)$ \\ 
$\le $ & the Bruhat order on $\W{I}{J}$ & \emph{\pageref{eq:quo}} \\
$\He{}{}$ & the Hecke algebra & \emph{\pageref{eqn:heckemult}} \\
$H_w$, $\h{w}$  & a standard and Kazhdan-Lusztig basis element & \emph{\pageref{subsec:hecalg}}\\
$\He{I}{J}$ & a hom space in the Hecke category & \emph{\pageref{lab:IHJ}} \\
$\mst{I}{p}{J}$ & a standard basis element in $\He{I}{J}$ & \emph{\pageref{lab:standbasis}} \\
$\mkl{I}{p}{J}$ & a Kazhdan-Lusztig basis element in $\He{I}{J}$ & \emph{\pageref{lab:standbasis}} \\
$\mst{I}{}{J}$ & a standard generator of the Hecke category & \emph{\pageref{lab:standgenerator}} \\
$\langle \cdot , \cdot \rangle$ & the bilinear form on $\He{I}{J}$ & \emph{\pageref{lab:biform}} \\
$V$ & a reflection faithful representation of $W$ & \emph{\pageref{assump:freeness}} \\
$h_t$ & an equation for $V^t \subset V$ & \emph{\pageref{lab:ht}} \\
$R$ & the graded ring of regular functions on $V$ & \emph{\pageref{lab:R}} \\
$R^I$ & the $W_I$-invariants in $R$ & \emph{\pageref{lab:RI}} \\
$\RS{I}{p}{J}$ & the standard object indexed by $p \in \W{I}{J}$ & \emph{\pageref{lab:standard}} \\
$R(X)$ & the enlarged ring of regular functions & \emph{\pageref{prop:Rpdef}} \\
$\Gr{I}{p}{J}$ & the (twisted) graph of $p$ in $V/W_I \times V/W_J$ & \emph{\pageref{lab:IgrpJ}} \\
$\Gr{I}{C}{J}$ & the union of graphs of all $p \in C \subset \W{I}{J}$ & \emph{\pageref{lab:IgrpJ}} \\
$\Gamma_C M$ & sections of $M$ with support in $\Gr{I}{C}{J}$ & \emph{\pageref{lab:sup}} \\
$\Gamma^p M$ & the stalk of $M$ at $p$ & \emph{\pageref{lab:sup2}} \\
$\Gamma_p^{\le} M$, $\Gamma_p^{\ge}M $ & support subquotients of $M$& \emph{\pageref{lab:sup2}} \\
$\nab{I}{p}{J}$, $\nabchar$  & a nabla module and the nabla character & \emph{\pageref{lab:nab}} \\
$\del{I}{p}{J}$, $\delchar$  & a delta module and the delta character & \emph{\pageref{lab:del}} \\
$D$ & the duality functor & \emph{\pageref{lab:duality}} \\
$\preBB{I}{J}$ & the category of Bott-Samelson bimodules & \emph{\pageref{subsec:defs}} \\
$\BB{I}{J}$ & the category of singular Soergel bimodules & \emph{\pageref{subsec:defs}} \\
$\Bi{I}{p}{J}$ & an indecomposable singular Soergel bimodule & \emph{\pageref{thm:classification}} \\
\end{tabular}}

\numberwithin{equation}{subsection}
 

\section{Coxeter groups and the Schur algebroid}  \label{sec:coxgroups}

\subsection{Coxeter groups} \label{subsec:coxgroups}

In this section we recall standard facts about Coxeter groups,
standard parabolic subgroups, Poincar\'e polynomials and double
cosets that will be needed in the sequel. Standard references are \cite{Hu} and
\cite{Bo}.

Throughout we fix a Coxeter system $(W,S)$ with reflections $T$, length
function $\ell : W \to \mathbb{N}$ and Bruhat order $\le$. We always
assume the set $S$ is finite.
Given a subset $I \subset S$ we denote by $W_I$ the standard parabolic
subgroup generated by $I$. We call a subset $I \subset S$
\emph{finitary} if $W_I$ is finite. Given $I \subset W_I$ finitary we
denote by $\wo{I}$ the longest element of $W_I$. We define
\[
\tPoinc(I) = \sum_{w \in W_I} v^{-2\ell(w)} \quad \text{and} \quad \Poinc(I) = v^{\ell(\wo{I})}\tPoinc(I).
\]
We call $\Poinc(I)$ the Poincar\'e polynomial of $W_I$.

Let $f \mapsto \overline{f}$ be the involution of $\Z[v,v^{-1}]$ which fixes $\Z$ and sends $v$ to $v^{-1}$. We will call elements $f \in \Z[v,v^{-1}]$ satisfying $f = \overline{f}$ \emph{self-dual}. Because $\ell(\wo{I}x) = \ell(\wo{I}) - \ell(x)$ for all $x \in W_I$ it follows that $\Poinc(I)$ is self-dual.

 Given $I \subset S$ we define
\begin{equation*}
D_I = \{ w \in W \; | \; ws > w \text{ for all }s \in I\} \; \text{ and }
\; _ID = (D_I)^{-1}.
\end{equation*}
If $I \subset S$ is finitary we define
\begin{equation*}
D^I = \{ w \in W \; | \; ws < w \text{ for all } s \in I \} \; \text{ and }
\; ^ID = (D^I)^{-1}.
\end{equation*}
The elements of $D_I$ and $D^I$ (resp. $_ID$ and $^ID$) are called the \emph{minimal} and \emph{maximal left} (resp. \emph{right}) \emph{coset representatives}.

 Given two subsets $I, J \subset S$ we define
\begin{equation*}
\Dmin{I}{J}= \Dmin{I}{} \cap \Dmin{}{J}.
\end{equation*}
If $I$ and $J$ are finitary we define
\begin{equation*}
\Dmax{I}{J}= \Dmax{I}{} \cap \Dmax{}{J}.
\end{equation*}

We have (see \cite{Ca}, Proposition 2.7.3):

\begin{proposition} Let $I, J \subset S$. Every double coset $p = W_IxW_J$ contains a unique element of $\Dmin{I}{J}$ and this is the element of smallest length in $p$. If $I$ and $J$ are finitary then $p$ also contains a unique element of $\Dmax{I}{J}$, and this is the unique element of maximal length. \end{proposition}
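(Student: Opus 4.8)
The plan is to deduce both assertions from the one-sided theory of parabolic cosets, which I would take as known (or establish first, by the standard exchange-condition induction): every coset $wW_J$ has a unique element of minimal length, which is its unique member lying in $\Dmin{}{J}$, and one has the length-additivity $\ell(dw) = \ell(d) + \ell(w)$ whenever $d \in \Dmin{}{J}$ and $w \in W_J$; the mirror statements hold on the left for the cosets $W_I w$ and the set $\Dmin{I}{}$. The statement on minimal representatives will be valid for arbitrary $I, J \subset S$; the finitary hypothesis enters only for the maximal statement, and only to ensure that $p := W_I x W_J$ --- being the image of $W_I \times W_J$ under $(u,w) \mapsto uxw$, hence of cardinality at most $|W_I|\,|W_J|$ --- is a finite set and so possesses an element of maximal length (its uniqueness being part of what is to be proved).

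Existence together with the descent characterization is then immediate. Let $d \in p$ have minimal length. If $sd < d$ for some $s \in I$, then $sd \in p$ is strictly shorter, which is absurd; hence $sd > d$ for all $s \in I$, i.e.\ $d \in \Dmin{I}{}$, and symmetrically $ds > d$ for all $s \in J$, i.e.\ $d \in \Dmin{}{J}$; therefore $d \in \Dmin{I}{} \cap \Dmin{}{J} = \Dmin{I}{J}$. Running the same argument for an element $d_+ \in p$ of maximal length (available once $I, J$ are finitary) gives $sd_+ < d_+$ for all $s \in I$ and $d_+ s < d_+$ for all $s \in J$, i.e.\ $d_+ \in \Dmax{I}{} \cap \Dmax{}{J} = \Dmax{I}{J}$.

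Uniqueness is the crux, and I would obtain it from the structural description of the double coset $p$ relative to its shortest element $d$: the set $N := J \cap d^{-1} I d$ generates a standard parabolic subgroup $W_N$ of $W_J$, and every $y \in p$ admits a unique factorisation $y = u\,d\,w$ with $u \in W_I$ and $w$ a shortest representative of $W_N \backslash W_J$ inside $W_J$, for which moreover $\ell(y) = \ell(u) + \ell(d) + \ell(w)$. Granting this lemma --- proved by feeding the one-sided results into the parabolic lifting property, and the only substantial ingredient beyond the one-sided theory --- the rest is formal. First, $\ell(y) = \ell(d)$ forces $\ell(u) = \ell(w) = 0$, so $y = d$; thus $d$ is the \emph{unique} element of minimal length in $p$. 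Second, if $y \in p \cap \Dmin{I}{J}$, then writing $y = u\,d\,w$ as above: choosing $s \in I$ with $su < u$ would give $\ell(sy) \le \ell(su) + \ell(d) + \ell(w) < \ell(y)$, contradicting $y \in \Dmin{I}{}$, so $u = e$; likewise $w = e$; hence $y = d$. This shows $p \cap \Dmin{I}{J} = \{d\}$, and $d$ is the shortest element of $p$. Finally, feeding the decomposition the longest admissible $u$ (namely $\wo{I}$) and the longest admissible $w$ produces the unique element of $p$ of maximal length, which by the descent argument lies in $\Dmax{I}{J}$; this settles the finitary half.

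In short, existence and the descent characterization are a one-line descent argument; all the real content is the double-coset structure lemma --- the length-additivity $\ell(udw) = \ell(u) + \ell(d) + \ell(w)$ over the correct range of $u$ and $w$, together with the fact that $J \cap d^{-1} I d$ is a standard parabolic of $W_J$. This is precisely the ingredient that reappears, in refined form, when the subgroup $W_{I \cap p_- J p_-^{-1}}$ and its longest element are used later in the paper, so it is worth extracting cleanly at this stage.
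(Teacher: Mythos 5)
The paper itself gives no proof of this proposition (it is quoted from \cite{Ca}, Proposition 2.7.3), so the only question is whether your argument stands on its own. Your route, resting everything on the structure lemma ($y=udw$ with $u\in W_I$, $w$ a minimal representative of $W_N\backslash W_J$ inside $W_J$ for $N=J\cap d^{-1}Id$, and $\ell(y)=\ell(u)+\ell(d)+\ell(w)$), is legitimate: this is exactly the mirror form of Howlett's theorem (Theorem \ref{Howlett}) combined with Kilmoyer's theorem, both of which the paper also imports from \cite{Ca}. Two cautions there: you leave this lemma unproved while correctly identifying it as ``all the real content'', and if you do prove it you must do so for a \emph{chosen} element $d$ of minimal length rather than ``the'' minimal element, since otherwise its statement presupposes the uniqueness you are deriving from it. Granting the lemma, your treatment of the minimal half is complete: subadditivity of length turns a descent of $u$ or of $w$ into a descent of $y$, so $p\cap\Dmin{I}{J}=\{d\}$, and additivity makes $d$ the unique shortest element.

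The finitary half has a genuine gap: you never show that $p$ contains only \emph{one} element of $\Dmax{I}{J}$. You establish that a longest element exists, lies in $\Dmax{I}{J}$, and is the unique longest element (even this quietly uses that the minimal representatives of $W_N\backslash W_J$ in $W_J$ have a unique longest member, namely $\wo{N}\wo{J}$), but uniqueness of the element with full descent set is a separate assertion, and the trick from the minimal case does not transfer: if $y=udw$ and $ws>w$, the element $ws$ need not be a minimal representative, so you cannot conclude $\ell(ys)=\ell(u)+\ell(d)+\ell(ws)$ and hence cannot manufacture an ascent of $y$. The repair is short. If $y=udw\in p$ satisfies $sy<y$ for all $s\in I$, then $u=\wo{I}$: otherwise some $s\in I$ has $su>u$, and since $su\in W_I$ the exact length formula gives $\ell(sy)=\ell(su)+\ell(d)+\ell(w)>\ell(y)$, a contradiction. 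Hence every element of $p\cap\Dmax{I}{}$ lies in the single coset $\wo{I}dW_J$, and the condition $ys<y$ for all $s\in J$ says that $y$ is the longest element of that coset, which is unique by the one-sided theory you assume. With that line added, the argument is complete.
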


Let $I, J \subset S$. Given $p \in \W{I}{J}$ we denote by $p_-$ the
unique element of minimal length in $p$. If $I$ and $J$ are finitary,
we denote by $p_+$ the unique element of maximal length in $p$. We
call $p_-$ and $p_+$ the \emph{minimal} and \emph{maximal double coset
  representatives}. Define
\begin{equation*}
\tPoinc(p) = \sum_{x \in p}
v^{-2\ell(x)} \quad
\text{and} \quad
\Poinc(p) = v^{\ell(p_+)- \ell(p_-)} v^{2\ell(p_-)} \tPoinc(p).
\end{equation*}
We call $\pi(p)$ the Poincar\'e polynomial of $p$. We will see below that $\Poinc(p)$ is self-dual.

The following theorem describes intersections of (not necessarily
standard) parabolic subgroups (see \cite{Ca}, Theorem 2.7.4):

\begin{theorem}[Kilmoyer] \label{Kilmoyer}
Let $I, J \subset S$ and $p \in \W{I}{J}$. Then
\begin{displaymath}
W_I \cap p_-W_Jp_-^{-1} = W_{I \cap p_-Jp_-^{-1}}.
\end{displaymath} \end{theorem}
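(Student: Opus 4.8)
The statement to prove is that for finitary or arbitrary $I, J \subset S$ and $p \in \W{I}{J}$ with minimal representative $p_-$, one has $W_I \cap p_-W_Jp_-^{-1} = W_{I \cap p_-Jp_-^{-1}}$. The inclusion $\supseteq$ is trivial: if $s \in I \cap p_-Jp_-^{-1}$ then $s \in I \subset W_I$ and $s = p_- t p_-^{-1}$ for some $t \in J$, so $s \in p_- W_J p_-^{-1}$; hence the subgroup they generate lies in the intersection. The content is the reverse inclusion, and the plan is to show that the intersection $H := W_I \cap p_- W_J p_-^{-1}$ is itself a \emph{reflection subgroup} of $W$ that is moreover parabolic, generated by simple reflections, and then identify those simple reflections.

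First I would recall the key fact about minimal double coset representatives: $p_-$ is simultaneously the minimal element in its left coset $W_I p_-$ and in its right coset $p_- W_J$, i.e. $p_- \in \Dmin{I}{J}$, which by definition means $s p_- > p_-$ for all $s \in I$ and $p_- s > p_-$ for all $s \in J$. Equivalently, $\ell(x p_- y) = \ell(x) + \ell(p_-) + \ell(y)$ for $x \in W_I$, $y \in W_J$ lying in appropriate coset representative sets — more precisely, $\ell(x p_-) = \ell(x) + \ell(p_-)$ for all $x \in W_I$ and $\ell(p_- y) = \ell(p_-) + \ell(y)$ for all $y \in W_J$. These length-additivity properties are the engine of the argument.

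Next, $H = W_I \cap p_- W_J p_-^{-1}$ is a subgroup of $W$, and since it is contained in the finite (or at least standard parabolic) group $W_I$ which is itself a Coxeter group, it is a reflection subgroup of $W_I$: it is generated by the reflections of $W$ that it contains (this uses that $W_I$ is a parabolic, hence a ``reflection-closed'' subgroup, together with the general theory of reflection subgroups — a subgroup generated by reflections has a canonical Coxeter generating set given by Deodhar/Dyer). So the heart of the matter is to show that the canonical simple reflections of $H$ are actual simple reflections of $W$ lying in $I$, and that they are precisely those $s \in I$ with $p_-^{-1} s p_- \in J$. To do this, suppose $t \in H$ is a reflection; write $t = p_- t' p_-^{-1}$ with $t' \in W_J$ a reflection, and $t \in W_I$ a reflection. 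The plan is to use the length-additivity to compare $\ell(t)$, $\ell(t')$ and $\ell(p_-)$: from $t p_- = p_- t'$ and the ``deletion''/exchange arguments one extracts that a reduced expression for the reflection $t \in W_I$ conjugates via $p_-$ to a reduced expression in $W_J$, forcing — via the standard fact that a reflection in a Coxeter group lying ``close to the identity'' in the appropriate sense is simple — that if $t$ is a canonical simple reflection of $H$ then $t \in S$ and correspondingly $t' \in S$, whence $t \in I \cap p_- J p_-^{-1}$.

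The step I expect to be the main obstacle is precisely this last one: showing that the canonical (Dyer) simple system of the reflection subgroup $H$ consists of honest simple reflections of $W$ rather than arbitrary reflections. The clean way to organize it is: (i) observe $p_-^{-1} H p_- = p_-^{-1} W_I p_- \cap W_J$ is a reflection subgroup of $W_J$; (ii) take a canonical generator $t'$ of $p_-^{-1} H p_-$, so $t'$ has the property that $t' w > w$ for all $w$ in the subgroup with $\ell$ small, and in particular $\ell(t') $ is minimal in its coset; (iii) use $\ell(p_- t') = \ell(p_-) + \ell(t')$ (from $p_- \in \Dmin{}{J}$) together with $p_- t' = (p_- t' p_-^{-1}) p_- = s p_-$ where $s := p_- t' p_-^{-1} \in W_I$, and $\ell(s p_-) = \ell(s) + \ell(p_-)$ (from $p_- \in \Dmin{I}{}$), to get $\ell(s) = \ell(t')$; (iv) finally run the exchange condition to see that $s$ and $t'$ being reflections of equal length that are ``simple for their reflection subgroups'' forces $\ell(s) = \ell(t') = 1$, i.e. both are in $S$. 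Once $s \in I$ and $t' = p_-^{-1} s p_- \in J$, we conclude $s \in I \cap p_- J p_-^{-1}$, so the canonical generators of $H$ all lie in $I \cap p_- J p_-^{-1}$, giving $H \subseteq W_{I \cap p_- J p_-^{-1}}$ and hence equality. (Alternatively, this is exactly Theorem 2.7.4 of Carter cited in the text, so in the paper one may simply invoke it; the above is the shape of its proof.)
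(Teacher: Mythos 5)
The paper gives no proof of this theorem; it simply cites Carter, Theorem~2.7.4, and the classical argument there is a direct induction on $\ell(w)$ for $w$ in the intersection, using the exchange condition and $p_- \in \Dmin{I}{J}$, without any reflection-subgroup machinery. Your proposal routes instead through Dyer's canonical generators, and has a genuine gap at its outset: you assert that $H := W_I \cap p_- W_J p_-^{-1}$ is a reflection subgroup (``generated by the reflections of $W$ that it contains'') simply because it lies inside the parabolic $W_I$. That inference is unjustified: lying inside a parabolic does not make a subgroup a reflection subgroup --- for instance $\langle s_1 s_2 \rangle \subset W$ for $W$ of type $A_2$ contains no reflection. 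That $H$ is generated by the reflections it contains is in fact close to the content of the theorem itself and cannot be taken for free. Your step~(iv) is also misleading in emphasis: knowing $\ell(s) = \ell(t')$ together with $s$, $t'$ being canonical generators does \emph{not} by itself force $\ell(s) = 1$ --- in type $A_2$ the reflection subgroup $\{e, s_1 s_2 s_1\}$ has canonical generator $s_1 s_2 s_1$ of length~$3$.

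Both defects are repaired by the exchange argument you gesture toward in (iv), carried out directly on arbitrary elements of $H$ (not just canonical generators), which makes Dyer's theory unnecessary. For $w \in H$, write $w = p_- u p_-^{-1}$ with $u \in W_J$; since $p_- \in \Dmin{I}{J}$, the concatenation of reduced words for $p_-$ and $u$ is reduced for $wp_- = p_- u$, and $\ell(w) = \ell(u)$. If $w \neq e$, pick $a \in I$ with $aw < w$, so $a \cdot (wp_-) < wp_-$. The exchange condition deletes one letter of that reduced word; deleting from the $p_-$-segment would give $ap_- < p_-$, contradicting $p_- \in \Dmin{I}{}$, so it deletes from the $u$-segment, yielding $p_-^{-1} a p_- \in W_J$. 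Now $\ell(ap_-) = \ell(p_-) + 1$ and $\ell\bigl(p_- \cdot (p_-^{-1}ap_-)\bigr) = \ell(p_-) + \ell(p_-^{-1}ap_-)$ force $\ell(p_-^{-1}ap_-) = 1$, i.e.\ $a \in I \cap p_- J p_-^{-1}$. Since also $aw \in H$ with $\ell(aw) < \ell(w)$, induction on $\ell(w)$ gives $H \subseteq W_{I \cap p_- J p_-^{-1}}$; the reverse inclusion is, as you say, immediate. This is the essential shape of the cited argument, and it simultaneously shows $H$ is generated by the reflections it contains and identifies them.
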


The following gives us canonical representatives for elements of double cosets (see \cite{Ca}, Theorem 2.7.5):

\begin{theorem}[Howlett] \label{Howlett}
Let $I, J \subset S$ and $p \in \W{I}{J}$. Setting $K = I \cap p_-Jp_-^{-1}$ the map
  \begin{eqnarray*}
    (D_K \cap W_I) \times W_J & \to & p \\
(u, v) & \mapsto & up_-v
  \end{eqnarray*}
is a bijection satisfying $\ell(up_-v) = \ell(p_-) + \ell(u) + \ell(v)$.
\end{theorem}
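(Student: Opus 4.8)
The plan is to deduce both assertions of the theorem — that $(u,v)\mapsto up_-v$ is a bijection and that lengths add up — from standard properties of \emph{one}-sided minimal coset representatives together with Kilmoyer's theorem (Theorem~\ref{Kilmoyer}). The one-sided facts I would invoke, all standard (see \cite{Hu}, \cite{Bo}), are: (i) if $w$ is the minimal-length element of $W_Iw$, i.e. $w\in\Dmin{I}{}$, then $\ell(aw)=\ell(a)+\ell(w)$ for every $a\in W_I$, and symmetrically on the right; (ii) since $K\subseteq I$, the set $D_K\cap W_I$ is a system of minimal-length representatives for the cosets of $W_K$ in $W_I$, with $\ell(uk)=\ell(u)+\ell(k)$ for $u\in D_K\cap W_I$ and $k\in W_K$; (iii) every $w\in W$ factors uniquely as $w=\bar w\,w_J$ with $\bar w\in\Dmin{}{J}$, $w_J\in W_J$ and $\ell(w)=\ell(\bar w)+\ell(w_J)$, where $\bar w$ is the unique minimal element of $wW_J$. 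Recall also that $p_-\in\Dmin{I}{J}=\Dmin{I}{}\cap\Dmin{}{J}$, so $p_-$ is minimal in both $W_Ip_-$ and $p_-W_J$.

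The technical heart is the claim: if $u\in D_K\cap W_I$ then $up_-\in\Dmin{}{J}$ and $\ell(up_-)=\ell(u)+\ell(p_-)$. The length identity is immediate from (i) since $p_-\in\Dmin{I}{}$. To get $up_-\in\Dmin{}{J}$ I must show $\ell(up_-t)>\ell(up_-)$ for all $t\in J$, and I would split according to whether the reflection $p_-tp_-^{-1}$ lies in $W_I$. If it does, then by Theorem~\ref{Kilmoyer} it lies in $W_K$, say $p_-tp_-^{-1}=k\neq e$, so $up_-t=(uk)p_-$ and (i) together with (ii) give $\ell(up_-t)=\ell(u)+\ell(k)+\ell(p_-)>\ell(up_-)$. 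If $p_-tp_-^{-1}\notin W_I$, I claim $p_-t\in\Dmin{I}{}$, which already gives $\ell(up_-t)=\ell(u)+\ell(p_-t)=\ell(u)+\ell(p_-)+1>\ell(up_-)$. To see $p_-t\in\Dmin{I}{}$, suppose not: then $s'p_-t<p_-t$ for some $s'\in I$; since $\ell(s'p_-)=\ell(p_-)+1$ by (i), the simple reflection $t$ is a right descent of $s'p_-$, so applying the exchange condition to a reduced word for $s'p_-$ of the form (letter $s'$)(reduced word for $p_-$), one deletes a single letter to express $s'p_-t$ — deleting $s'$ forces $s'=p_-tp_-^{-1}\in W_I$, contradicting the case, while deleting a letter of $p_-$ forces $t$ to be a right descent of $p_-$, contradicting $p_-\in\Dmin{}{J}$.

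Granting the claim, surjectivity with the length formula follows quickly. Given $w\in p$, factor $w=\bar w\,w_J$ as in (iii); then $\bar w\in p\cap\Dmin{}{J}$. Writing $\bar w=ap_-b$ with $a\in W_I$, $b\in W_J$, then $a=uk$ with $u\in D_K\cap W_I$, $k\in W_K$, and pushing $k$ across $p_-$ via Theorem~\ref{Kilmoyer} (so $kp_-=p_-j$ with $j=p_-^{-1}kp_-\in W_J$), I get $\bar w=up_-(jb)$ with $jb\in W_J$; since $up_-\in\Dmin{}{J}$ is the minimal element of $up_-W_J$ while $\bar w$ is the minimal element of the same coset, uniqueness forces $\bar w=up_-$, hence $w=up_-w_J$ with $\ell(w)=\ell(u)+\ell(p_-)+\ell(w_J)$. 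Injectivity is a one-line computation: $up_-v=u'p_-v'$ gives $(u')^{-1}u=p_-(v'v^{-1})p_-^{-1}\in W_I\cap p_-W_Jp_-^{-1}=W_K$, so $u$ and $u'$ are both the minimal representative of one coset of $W_K$ in $W_I$, whence $u=u'$ and then $v=v'$.

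The main obstacle is the case analysis inside the key claim — specifically, showing $p_-t\in\Dmin{I}{}$ when $p_-tp_-^{-1}\notin W_I$ — which is the only place where the exchange condition and the two-sided minimality of $p_-$ are genuinely used; everything else is routine bookkeeping with parabolic coset representatives. (Bijectivity could alternatively be obtained by a counting argument using $|p|=|W_I|\,|W_J|/|W_K|$ from Theorem~\ref{Kilmoyer} together with injectivity, but the direct argument above also yields the length formula for free.)
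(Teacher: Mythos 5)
Your argument is correct. Note that the paper itself offers no proof of this statement — it is quoted from Carter (Theorem 2.7.5 in \cite{Ca}) — so there is nothing internal to compare against; your write-up supplies a self-contained proof along the standard lines: the length additivity and uniqueness reduce, via Kilmoyer's theorem and one-sided minimal coset representatives, to the key claim that $up_-$ has no right descent in $J$ for $u \in D_K \cap W_I$, and your case split on whether $p_-tp_-^{-1}$ lies in $W_I$ (using Kilmoyer in the first case and the exchange condition together with the two-sided minimality of $p_-$ in the second) handles exactly the point where a gap could occur. The surjectivity argument correctly exploits uniqueness of the minimal element of $wW_J$, and injectivity follows from Kilmoyer plus uniqueness of minimal representatives of $W_I/W_K$; combined, these do give the length formula for every pair $(u,v)$, as you indicate.
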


The intersection $I \cap p_-Jp_-^{-1}$ emerges often enough to warrent
special notation. Let $I, J \subset S$ be finitary, choose $p \in
\W{I}{J}$ and set $K = I \cap p_-Jp_-^{-1}$. We define:
\begin{align*}
\tPoinc(I,p,J) & := \tPoinc(K) \\ 
\Poinc(I,p,J) & := \Poinc(K) \\
w_{I,p,J} & := w_K
\end{align*}
The
above theorems imply the identities:
  \begin{align}
  \ell(p_+) - \ell(p_-) &= \ell(\wo{I}) + \ell(\wo{J}) - \ell(\wo{I, p, J}) 
\label{eq:poinc1}\\
\tPoinc(p)\tPoinc(I,p,J) &= \tPoinc(I)\tPoinc(J)\label{eq:poinc2} \\
\Poinc(p)\Poinc(I,p,J) &= \Poinc(I)\Poinc(J)\label{eq:poinc3} \\
\overline{\Poinc(p)} & = \Poinc(p).\label{eq:poinc4} 
  \end{align}

We will need the following (which is a straightforward
consequence of Howlett's theorem):

\begin{proposition} \label{prop:doublebruhat}
Let $I, J \subset S$ and $p \in \W{I}{J}$. If $x$ and $tx$ both lie in $p$ then either $t  \in W_I$ or $tx = xt^{\prime}$ for some reflection $t^{\prime} \in W_J$. 
\end{proposition}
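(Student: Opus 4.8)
The plan is to combine Howlett's theorem (Theorem~\ref{Howlett}) with the strong exchange condition. First I would reduce to the case $\ell(tx) < \ell(x)$. Since $t$ is a reflection, $\ell(tx)$ and $\ell(x)$ have opposite parities, so they are never equal; and if $\ell(tx) > \ell(x)$ I would replace the pair $(x,t)$ by $(tx,t)$, noting that $t\cdot(tx) = x$ still lies in $p$ and that the two alternatives in the conclusion are interchanged under $x \leftrightarrow tx$: if $t(tx) = (tx)t'$ with $t' \in W_J$ a reflection then $tx = x(t')^{-1} = xt'$.

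So assume $\ell(tx) < \ell(x)$. Put $K = I \cap p_-Jp_-^{-1}$ and use Theorem~\ref{Howlett} to write $x = up_-v$ with $u \in D_K \cap W_I$, $v \in W_J$ and $\ell(x) = \ell(u) + \ell(p_-) + \ell(v)$. Concatenating reduced expressions for $u$, $p_-$ and $v$ therefore yields a reduced expression for $x$, the first block involving only generators from $I$ and the last only generators from $J$. By the strong exchange condition (see \cite{Hu}), $tx$ is obtained from this reduced expression by deleting exactly one letter, and I would split into cases according to which of the three blocks the deleted letter lies in.

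A deletion inside the $p_-$-block is impossible: it would write $tx = u\,p_-''v$ with $\ell(p_-'') < \ell(p_-)$, whence $p_-'' = u^{-1}(tx)v^{-1}$ would be an element of $p$ (since $u \in W_I$, $v \in W_J$ and $tx \in p$) of length strictly less than $\ell(p_-)$, contradicting the minimality of $p_-$ in its double coset. If the deletion is inside the $u$-block then $tx = u''p_-v$ with $u'' \in W_I$ (a product of generators from $I$); comparing with $tx = (tu)p_-v$ and cancelling $p_-v$ gives $tu = u''$, so $t = u''u^{-1} \in W_I$, the first alternative. If the deletion is inside the $v$-block then $tx = up_-v''$ with $v'' \in W_J$, so $x^{-1}tx = v^{-1}v'' \in W_J$; this element is conjugate to $t$, hence a reflection, and with $t' = x^{-1}tx$ we obtain $tx = xt'$, the second alternative.

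The main points requiring care are the parity/symmetry reduction at the outset and the observation that minimality of $p_-$ rules out the middle deletion; once these are in place the argument is essentially a one-line application of the strong exchange condition, so I expect the write-up to be short.
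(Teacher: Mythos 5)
Your proof is correct. The paper states this as a ``straightforward consequence of Howlett's theorem'' without writing it out, and your argument — concatenate reduced expressions from the Howlett factorisation $x = up_-v$, apply the strong exchange condition to delete a single letter, and analyse the three positions for the deletion — is exactly the natural way to fill in that gap. All three cases are handled correctly: the $u$-block deletion gives $t = u''u^{-1} \in W_I$, the $v$-block deletion gives $x^{-1}tx = v^{-1}v'' \in W_J$ which is a reflection by conjugacy, and the $p_-$-block deletion is excluded because it would yield $u^{-1}(tx)v^{-1} \in p$ of length $< \ell(p_-)$, contradicting minimality. The reduction to $\ell(tx) < \ell(x)$ is also sound (parity rules out equality, and the computation $t(tx) = (tx)t' \Rightarrow tx = xt'$ shows the conclusion is stable under swapping $x$ and $tx$). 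One small wording quibble: the two alternatives are \emph{preserved}, not ``interchanged'', under $x \leftrightarrow tx$ — but your computation makes clear you have the right symmetry in mind.
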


Recall that $W$ becomes a poset when equipped with the Bruhat
order. Given finitary $I, J \subset S$ the Bruhat order on $\W{I}{J}$
(which we also denote by $\le$) is the weakest partial order such that
the quotient map
\[ \label{eq:quo}
\quo : W \to \W{I}{J}
\]
is a morphism of posets. It may be characterised by $p \le q$ if and
only if $p_- \le q_-$. We say that a  subset $C \subset \W{I}{J}$ is \emph{downwardly} (resp. \emph{upwardly})  \emph{closed} if $p \in C$ and $q \le p$ (resp. $q \ge p$) implies $q \in C$.

Given a poset $(X, \le)$ and $x \in X$ we will often abuse notation and write $\{ \le\! x\}$ (resp. $\{ <\! x \}$) for the set of elements in $X$ less (resp. strictly less) than $x$, and similarly for $\{ \ge\! x\}$ and $\{ >\! x \}$.

Let $\quo$ be as above and choose $q \in \W{K}{L}$. The set
$\quo^{-1}(q)$ always has a maximal element $p$. We have
\begin{equation*}
\quo^{-1}(\{ \le\! q \}) = \{ \le\! p \}  \text{ and } \quo^{-1}(\{ \ge\! q \}) = \{ \ge\! p \}. \end{equation*}

The following  fact will be needed in in the sequel.

\begin{lemma} \label{lem:poinc3}
Let $I \subset K$ and $J \subset L$ be finitary subsets of $S$. If $p \in \W{I}{J}$ and $q \in \W{K}{L}$ are such that $p \subset q$ then
\begin{equation*}
\frac{\Poinc(K,q,L)}{\Poinc(I,p,J)} \in \N[v,v^{-1}].
\end{equation*}
\end{lemma}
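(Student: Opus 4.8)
The plan is to translate the statement into one about Poincar\'e polynomials of (possibly non-standard) parabolic subgroups, and then to exhibit a $W$-conjugate of the smaller subgroup sitting inside the larger one as a parabolic subgroup.

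First, by Kilmoyer's theorem (Theorem \ref{Kilmoyer}) I would set $P := W_I \cap p_-W_Jp_-^{-1} = W_{I \cap p_-Jp_-^{-1}}$ and $Q := W_K \cap q_-W_Lq_-^{-1} = W_{K \cap q_-Lq_-^{-1}}$, so that by definition $\Poinc(I,p,J) = \Poinc(P)$ and $\Poinc(K,q,L) = \Poinc(Q)$, both being genuine standard parabolic subgroups of $W$. The lemma is thus the assertion that $\Poinc(Q)/\Poinc(P) \in \N[v,v^{-1}]$. The hypothesis $p \subseteq q$ means $p_- \in q$, so Howlett's theorem (Theorem \ref{Howlett}), applied to the pair $(K,L)$ and the double coset $q$, gives a factorisation $p_- = u_0 q_- v_0$ where $u_0 \in W_K$ is the minimal-length representative of a coset in $W_K/Q$ and $v_0 \in W_L$.

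The heart of the matter is the claim that $u_0^{-1} P u_0 \subseteq Q$. It suffices to check this on a simple reflection $s$ of the standard parabolic $P = W_{I \cap p_-Jp_-^{-1}}$. For such an $s$ we have $t := p_-^{-1} s p_- \in J$, so $s p_- = p_- t$. Writing $s u_0 = u' w'$ with $u'$ the minimal-length representative in $W_K$ of a coset in $W_K/Q$ and $w' \in Q$ (possible since $s u_0 \in W_K$), and using $w' q_- = q_-(q_-^{-1}w'q_-)$ with $q_-^{-1}w'q_- \in W_L$ (this lies in $W_L$ because $Q = W_K \cap q_-W_Lq_-^{-1}$), we obtain the Howlett factorisation $s p_- = u' q_- \big((q_-^{-1}w'q_-)v_0\big)$. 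On the other hand $s p_- = p_- t = u_0 q_- (v_0 t)$ with $v_0 t \in W_L$ since $t \in J \subseteq L$. Uniqueness of the Howlett factorisation forces $u' = u_0$, hence $w' = u_0^{-1} s u_0 \in Q$. As the reflections $s$ generate $P$, this proves $u_0^{-1} P u_0 \subseteq Q$.

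Now $P' := u_0^{-1} P u_0$ is a parabolic subgroup of $W$ (a $W$-conjugate of the standard parabolic $P$) contained in the standard parabolic $Q$; by the standard fact that a parabolic subgroup of $W$ contained in a parabolic subgroup is a parabolic subgroup of the latter (in the present setup one may deduce this from the reflection faithful representation via Steinberg's fixed-point theorem for the finite reflection group $Q$), $P'$ is conjugate inside $Q$ to a standard parabolic $W_A$ with $A$ a subset of the simple system of $Q$. Since conjugate parabolic subgroups have equal Poincar\'e polynomials, $\Poinc(P) = \Poinc(W_A)$, and the usual computation with minimal-length coset representatives for $Q/W_A$ gives $\Poinc(Q)/\Poinc(W_A) \in \N[v,v^{-1}]$ (and it is self-dual). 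Combining, $\Poinc(K,q,L)/\Poinc(I,p,J) = \Poinc(Q)/\Poinc(P) \in \N[v,v^{-1}]$, as required. The main obstacle is the inclusion $u_0^{-1} P u_0 \subseteq Q$: it requires carefully matching the two Howlett factorisations of $s p_-$ and invoking their uniqueness; everything else is either a citation (Kilmoyer, Howlett, Steinberg) or the standard Poincar\'e-polynomial computation for a parabolic subgroup of a finite Coxeter group.
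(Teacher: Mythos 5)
Your proposal is correct, but it takes a genuinely different route from the paper's. The paper first reduces to the two special cases $I=K$ (with $J\subset L$) and $J=L$ (with $I\subset K$) by factoring through the intermediate coset $W_IpW_L$, and then handles the case $I=K$ by showing, ``by imitating the arguments used in the proof of \cite{Ca}, Lemma 2.7.1,'' the on-the-nose containment $I\cap p_-Jp_-^{-1}\subset K\cap q_-Lq_-^{-1}$ as subsets of $S$ (so the quotient of Poincar\'e polynomials is immediate from the usual coset-representative formula); the case $J=L$ follows by inversion plus the fact that conjugate subsets of $S$ have equal Poincar\'e polynomials. You instead work with general $I\subset K$, $J\subset L$ in one go, write $p_-=u_0q_-v_0$ via Howlett applied to the larger coset $q$, and use the \emph{uniqueness} of the Howlett factorisation to show $u_0^{-1}W_{I\cap p_-Jp_-^{-1}}u_0\subset W_{K\cap q_-Lq_-^{-1}}$ --- a containment only up to the conjugation by $u_0$. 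This Howlett calculation is clean and correct. The price is that you then need the non-trivial ``parabolic-in-parabolic'' fact (that a parabolic subgroup of $W$ sitting inside a finite parabolic $Q$ is $Q$-conjugate to a standard parabolic of $Q$), which you justify by Steinberg's fixed-point theorem applied to the finite reflection group $Q$ together with the identification of parabolic subgroups as pointwise stabilisers in the reflection-faithful representation. That is genuinely more machinery than the paper invokes, and the latter identification is itself not immediate from Soergel's two defining conditions of reflection-faithfulness, so you would want to supply or cite it carefully. In short: a uniform argument avoiding the case split, at the cost of Steinberg-type input to convert a conjugated containment of parabolics back into the desired Poincar\'e-polynomial divisibility.
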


\begin{proof} We may assume that either $I =K$ and $J  = L$. If $I =
  K$ then, by imitating the arguments used in the proof of \cite{Ca},
  Lemma 2.7.1 one may show that $I \cap p_-Jp_-^{-1} \subset K \cap
  q_-Lq_-^{-1}$ and the lemma follows in this case. The case $J = L$
  follows by inversion and the fact that two conjugate subsets of $S$
  have the same Poincar\'e polynomials. \end{proof}

We will need the following proposition when we come to discuss Demazure operators.

\begin{proposition} \label{prop:doublecosetdiff}
Let $p$ be a double coset and $x \in p$. We have
\begin{equation*}
\ell(p_+) - \ell(x) = |\{ t \in T \; | \; x < tx \in p \}|.
\end{equation*}
\end{proposition}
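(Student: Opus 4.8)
The plan is to count the reflections $t$ with $tx\in p$ and to split this count according to whether $tx$ is longer or shorter than $x$; recall that for a reflection $t$ one has $\ell(tx)\neq\ell(x)$ and that $x<tx$ holds exactly when $\ell(tx)>\ell(x)$. Write $n(p):=|\{\,t\in T \mid tx\in p\,\}|$. The first point is that $n(p)$ is independent of the choice of $x\in p$: since $tx\in W_IxW_J$ if and only if $t\in W_I\cdot(xW_Jx^{-1})$ (a product of two subsets of $W$), and since replacing $x$ by $ax$ with $a\in W_I$ conjugates this subset by $a$ (because $aW_I=W_Ia$) while replacing $x$ by $xb$ with $b\in W_J$ leaves it unchanged, its intersection with the conjugation-stable set $T$ has cardinality depending only on $p=W_IxW_J$.

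The main step is to prove that, for every $x\in p$,
\[
\bigl|\{\,t\in T \mid \ell(tx)<\ell(x)\ \text{and}\ tx\in p\,\}\bigr| \;=\; \ell(x)-\ell(p_-).
\]
By Howlett's theorem (Theorem \ref{Howlett}), write $x=up_-v$ with $u\in D_K\cap W_I$, $v\in W_J$ and $K=I\cap p_-Jp_-^{-1}$, so that $\ell(x)=\ell(p_-)+\ell(u)+\ell(v)$. Concatenating reduced expressions for $u$, $p_-$ and $v$ gives a reduced expression $x=s_1\cdots s_k$, and the $k=\ell(x)$ reflections $t$ with $\ell(tx)<\ell(x)$ are exactly the $t_i=s_1\cdots s_{i-1}s_is_{i-1}\cdots s_1$, with $t_ix$ obtained by deleting the $i$-th letter. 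If the deleted letter lies in the block spelling $u$ (respectively $v$), then $t_ix$ still lies in $W_Ip_-W_J=p$, since that block is a reduced word for an element of $W_I$ (respectively $W_J$). If the deleted letter lies in the block spelling $p_-$, then $t_ix=uyv$ with $\ell(y)=\ell(p_-)-1$; as left multiplication by $W_I$ and right multiplication by $W_J$ preserve double cosets, $t_ix\in p$ would force $y\in p$, contradicting that $p_-$ has minimal length in $p$. Hence precisely $\ell(u)+\ell(v)=\ell(x)-\ell(p_-)$ of the $t_i$ remain inside $p$.

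To conclude, apply the main step with $x=p_+$: any reflection $t$ with $tp_+\in p$ has $\ell(tp_+)\le\ell(p_+)$, hence $\ell(tp_+)<\ell(p_+)$, so $n(p)=\ell(p_+)-\ell(p_-)$. For an arbitrary $x\in p$ we then obtain
\[
\bigl|\{\,t\in T \mid x<tx\in p\,\}\bigr| \;=\; n(p)-\bigl(\ell(x)-\ell(p_-)\bigr) \;=\; \ell(p_+)-\ell(x).
\]

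The delicate part is the main step: one must keep careful track of which letter of the reduced expression is removed and verify that deleting a letter from the $p_-$-block genuinely ejects the element from $p$ — the content being that flanking by the $W_I$- and $W_J$-parts cannot compensate for the loss of length. The ingredients (reduced words of elements of $W_I$ involve only generators from $I$; length-decreasing reflections correspond to positions in a reduced word; Howlett's length additivity) are all standard, so the real work lies in assembling them correctly.
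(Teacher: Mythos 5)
Your proof is correct, but it is organised differently from the paper's. The paper proves the identity by comparing $x$ directly with $p_+$: it shows, via Deodhar's Property Z, that the set of reflections $t$ with $x > tx \notin p$ is transported by conjugation (by the $W_I$-part of $u x v$) to the corresponding set for any other element of $p$, and then combines this with $\ell(w) = |\{t \in T : tw < w\}|$ and the maximality of $p_+$ to get the count in one stroke. You instead prove two complementary statements: (i) the total number $n(p)$ of reflections $t$ with $tx \in p$ is independent of $x \in p$, by identifying this set with $T \cap W_I(xW_Jx^{-1})$ and observing it only changes by conjugation, and (ii) the sharper ``lower'' formula $|\{t \in T : tx < x,\; tx \in p\}| = \ell(x) - \ell(p_-)$, obtained from Howlett's length-additivity by concatenating reduced words for $u$, $p_-$, $v$ and checking which letter-deletions stay in the double coset; evaluating at $p_+$ then finishes the count. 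Your route buys the extra intermediate identity (ii), which is of independent interest, at the cost of the well-definedness step (i); the paper's route is shorter and its conjugation claim \eqref{eq:lengthclaim} is the reusable ingredient. One small imprecision to fix: when you delete a letter from the $p_-$-block, the resulting word need not be reduced, so you should only claim $\ell(y) \le \ell(p_-) - 1$; this is all the argument requires, since any length strictly below $\ell(p_-)$ already contradicts the minimality of $p_-$ in $p$.
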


\begin{proof} Let $u \in W_I$ and $v \in W_J$ and set $y = uxv \in p$. We claim that for all $t \in T$,
\begin{equation}
x > tx \notin p \Leftrightarrow y > (utu^{-1})y \notin p. \label{eq:lengthclaim}
\end{equation}
In order to verify this claim it is enough to show that, if $x \in p$
\begin{align*}
x > tx \notin p, s \in W_J & \Rightarrow xs > txs \\ 
x > tx \notin p, s \in W_I & \Rightarrow sx > (sts)sx.
\end{align*}
For the first statement note that either $xs > txs$ or $xs < txs$. However, as $x > tx$ the second possibility would imply $x = txs$ by Deodhar's ``Property Z'' (alternatively this follows from \cite{Hu}, Proposition 5.9) which contradicts $tx \notin p$. The second statement follows similarly. Thus we have verified (\ref{eq:lengthclaim}). It is also immediate that, for all $t \in T$,
\begin{equation*}
tx \in p \Leftrightarrow utu^{-1}y \in p.
\end{equation*}
Now, setting $y = p_+$ and using the above facts together with the maximality of $p_+ \in p$ we follow
\begin{align*}
\ell(p_+) - \ell(y_+) &= | \{ t \in T \; | \; p_+ > tp_+ \} | - | \{ t \in T \; | \; x > tx \} |  \\
& = | \{ t \in T \; | \; p_+ > tp_+ \in p \} | - | \{ t \in T \; | \; x > tx \in p \} |  \\
& = |\{ t \in T \; | \; x < tx \in p \}|. \quad \qedhere
\end{align*}
\end{proof}

\subsection{The Hecke algebra} \label{subsec:hecalg}

As always, $(W,S)$ denotes a Coxeter system. 
The \emph{Hecke algebra} $\He{}{}$ is the free $\Z[v,v^{-1}]$-module
with basis $\{ H_w \;|\; w \in W \}$ and multiplication 
\begin{equation} \label{eqn:heckemult}
H_sH_w = \left \{ \begin{array}{ll} H_{sw} &\text{ if $sw > w$ } \\
(v^{-1} - v)H_w + H_{sw} & \text{ if $sw < w$.} \end{array} \right .
\end{equation}
We call $\{ H_w \}$ the \emph{standard basis}. Each $H_w$ is
invertible and there is an involution on $\He{}{}$ which sends $H_w$
to $H_{w^{-1}}^{-1}$ and $v$ to $v^{-1}$. We will call elements fixed
by this involution \emph{self-dual}.

Let $\{ \h{w} \;|\; w \in W \}$ denote the Kazhdan-Lusztig basis of
$\He{}{}$ (see \cite{KL1}, or \cite{SoKLpolys} for an explanation
using our notation). If $I \subset S$ is finitary we have
\begin{equation}
  \label{eq:longestpar}
  \h{\wo{I}}{} = \sum_{x \in W_I} v^{\ell(\wo{I}) - \ell(x)}H_x.
\end{equation}
If $x \in W_I$ then
\begin{equation}\label{eqn:paramult}
  H_x \h{\wo{I}} = v^{-\ell(x)} \h{\wo{I}}{}.
\end{equation}
It follows that, if $K \subset I$ then,
\begin{equation}
  \label{eq:parasquare}
  \h{\wo{K}} \h{\wo{I}}{} = \Poinc(K) \h{\wo{I}}.
\end{equation}

There is a $\Z[v,v^{-1}]$-linear anti-involution $i : \He{}{} \to \He{}{}$ sending $H_x$ to $H_{x^{-1}}$. Following \cite{Lu2} we define a bilinear form:
\begin{align*}
\He{}{} \times \He{}{} & \to \Z[v,v^{-1}]\\
(f, g) & \mapsto \langle f, g \rangle = \text{ coefficient of $H_{\id}$ in $fi(g)$}.
\end{align*}
Alternatively one has:
\begin{equation} \label{eq:biform1}
\langle H_x, H_y \rangle = \delta_{x, y} \quad \text{for all $x, y \in W$.}
\end{equation}

\subsection{The Schur algebroid} \label{subsec:HeckeCat}

We want to define a certain relative version of the Hecke algebra
associated to all pairs of finitary subsets $I, J \subset S$. The most
natural way to define this is as an $\Z[v,v^{-1}]$-linear
category. Alternatively one may regard the Schur algebroid as a ring
with many objects.

For all pairs of finitary subsets $I, J \subset S$ define:  \label{lab:IHJ}
\begin{eqnarray*}
\He{I}{} &=& \h{\wo{I}}\He{}{} \\
\He{}{J} &=& \He{}{}\h{\wo{J}} \\
\He{I}{J} &=& \He{I}{} \cap \He{}{J} 
\end{eqnarray*}
Given a third finitary subset $K \subset S$ we may define a multiplication as follows
\begin{eqnarray*}
  \He{I}{J} \times \He{J}{K} &\to& \He{I}{K} \\
(h_1, h_2) &\mapsto& h_1 *_J h_2 = \frac{1}{\Poinc(J)} h_1h_2.
\end{eqnarray*}
This well defined by (\ref{eq:parasquare}). If $J = \emptyset$ we
write $*$ instead of $*_{\emptyset}$.

\begin{definition}
The \emph{Schur algebroid} is the $\Z[v,v^{-1}]$-linear category
defined as follows. The objects are finitary subsets $I \subset
S$. The morphisms between two objects $I$ and $J$ consists of the
module $\He{I}{J}$. Composition $\He{I}{J} \times \He{J}{K} \to
\He{I}{K}$ is given by $*_J$. This defines a $\Z[v,v^{-1}]$-linear
category with the identity endomorphism of $I \subset S$ given by
$\h{w_I}$.\end{definition}

\begin{remark} In the introduction the Schur algebroid was defined slightly differently. Let us regard $\He{I}{}$ as a right $\He{}{}$-module. Then it is easy to see that
\[
\Hom_{\He{}{}}(\He{J}{}, \He{I}{}) = \He{I}{J}.
\]
Indeed $\Hom_{\He{}{}}(\He{J}{}, \He{}{}) = \He{}{J}$ and $\Hom_{\He{}{}}(\He{}{}, \He{I}{}) = \He{I}{}$, and the above space is given by intersecting these two homomorphism spaces. It is easy to see that, if $\alpha \in \Hom_{\He{}{}}(\He{K}{}, \He{J}{})$ and $\beta \in \Hom_{\He{}{}}(\He{J}{}, \He{I}{})$ correspond to $f \in \He{J}{K}$ and $g \in \He{I}{J}$ then $\beta \circ \alpha \in \Hom_{\He{}{}}(\He{K}{}, \He{I}{})$ corresponds to $g *_J f$. Hence the above definition and that given in the introduction agree.
\end{remark}

We have that $h = \sum a_y H_y$ is in $\He{I}{J}$ if and only if, for
all $y \in W$, $a_{sy} = va_y$ and $a_{yt} = va_y$ for all $s \in I$
and $t \in J$ such that $sy < y$ and $yt < y$. This shows how to find
a basis for $\He{I}{J}$ as a $\Z[v,v^{-1}]$-module. Namely, for all $p
\in \W{I}{J}$ define
\begin{equation*}
  \mst{I}{p}{J} = \sum_{x \in p} v^{\ell(p_+) - \ell(x)} H_x.
\end{equation*}
It follows that, if $h = \sum a_y H_y$ is in $\He{I}{J}$ then
\begin{equation} \label{eq:inmst}
  h = \sum_{p \in \W{I}{J}} a_{p_+} \mst{I}{p}{J}.
\end{equation}
The set $\{ \mst{I}{p}{J} \; |\; p \in \W{I}{J}\}$ is clearly linearly independent over $\Z[v,v^{-1}]$ and we conclude that they form a basis, which we call the \emph{standard basis} \label{lab:standbasis} of $\He{I}{J}$.


\excise{
Given an element $h \in \He{I}{J}$ we may write $h = \sum_{p \in \W{I}{J}} h_p \mst{I}{p}{J}$. We define the \emph{support} of $h$ to be the finite set
\begin{equation*}
  \supp h = \{ p \in \W{I}{J} \; | \; h_p \ne 0 \}.
\end{equation*}}

A Kazhdan-Lusztig basis element $\h{y} \in \He{I}{J}$ if and only if
$y$ is maximal in its $(W_I, W_J)$-double coset. In general, if $p \in
\W{I}{J}$ we define
\begin{equation*} \label{lab:standkl}
  \mkl{I}{p}{J} = \h{p_+}.
\end{equation*}
We have
\begin{equation*}
\mkl{I}{p}{J} = \mst{I}{p}{J} + \sum_{q < p } h_{q_+, p_+} \mst{I}{q}{J}.
\end{equation*}
It follows that $\{ \mkl{I}{p}{J} \; |\; p \in \W{I}{J}\}$ also forms a $\Z[v,v^{-1}]$ basis for $\He{I}{J}$. We will refer to this as the \emph{Kazhdan-Lusztig basis}.

For all finitary subsets $I, J \subset S$ satisfying $I \subset J$ or $J \subset I$ we define
\begin{equation*}
\mst{I}{}{J} = \mst{I}{p}{J} \text{ where $p = W_IW_J$}.
\end{equation*} 
We call call elements of the form $\mst{I}{}{J} \in \He{I}{J}$
\emph{standard generators}. \label{lab:standgenerator} The standard
generators are the analogues of the elements $\h{s} \in \He{}{}$ and
we will see below that the set of standard generators generate the
Schur algebroid, which justifies the terminology.

The following proposition describes the action of the standard
generators on the standard basis.

\begin{proposition} \label{prop:multform} Let $I, J, K \subset S$ be finitary and assume $J \subset K$ or $J \supset K$. The action of $\mst{J}{}{K}$ on the basis $\{ \mst{I}{p}{J} \;| \; p \in \W{I}{J} \}$ is as follows:
  \begin{enumerate}
  \item If $J \supset K$ then
    \begin{equation*}
      \mst{I}{p}{J} *_J \mst{J}{}{K} = \sum_{q \in W_I \!\setminus p  / W_K} v^{\ell(p_+) - \ell(q_+)} \mst{I}{q}{J}.
    \end{equation*}
\item If $J \subset K$ then
  \begin{equation*}
    \mst{I}{p}{J} *_J \mst{J}{}{K} = v^{\ell(q_-) - \ell(p_-)} \frac{ \Poinc(I,q,K)}{ \Poinc(I,p,J)} \mst{I}{q}{K}
  \end{equation*}
where $q = W_IpW_K$ is the $(W_I, W_K)$-coset containing $p$.
\end{enumerate}
\end{proposition}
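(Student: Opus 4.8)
The plan is to compute both products directly in the Hecke algebra $\He{}{}$, using the explicit formula $\mst{I}{p}{J} = \sum_{x \in p} v^{\ell(p_+) - \ell(x)} H_x$ together with Howlett's theorem (Theorem~\ref{Howlett}) to organise the sum over $p$, and the multiplication rules \eqref{eqn:paramult} and \eqref{eq:parasquare} for the Kazhdan--Lusztig element $\h{\wo{J}}$. Recall that $\mst{J}{}{K} = \mst{J}{r}{K}$ where $r = W_J W_K$, and $*_J$ is multiplication followed by division by $\Poinc(J)$.

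For part (1), where $K \subset J$: here $r_+ = \wo{J}$ (since $W_K \subset W_J$, the coset $W_J W_K = W_J$ and its maximal element is $\wo{J}$), so $\mst{J}{}{K} = \sum_{x \in W_J} v^{\ell(\wo{J}) - \ell(x)} H_x = \h{\wo{J}}$ by \eqref{eq:longestpar}. Thus $\mst{I}{p}{J} *_J \mst{J}{}{K} = \frac{1}{\Poinc(J)}\big(\sum_{z \in p} v^{\ell(p_+)-\ell(z)} H_z\big)\h{\wo{J}}$. I would use Howlett's parametrisation $p = (D_L \cap W_I) p_- W_J$ (where $L = I \cap p_- J p_-^{-1}$) with additive lengths, so that $\sum_{z \in p} v^{\ell(p_+)-\ell(z)}H_z = \sum_{u} v^{\ell(p_+) - \ell(p_-) - \ell(u)} H_{up_-} \cdot \h{\wo{J}}$ after applying \eqref{eqn:paramult} term by term to the $W_J$-part. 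Here the right $W_K$-cosets in $p$ correspond exactly to the right $W_K$-cosets inside the $W_J$-factor; one checks $q_+$ for $q \in W_I \setminus p / W_K$ has length $\ell(p_-) + \ell(u) + \ell((w_K)_-^{W_J})$ — i.e. the difference $\ell(p_+) - \ell(q_+)$ is exactly $\ell(\wo{J})$ minus the length of the maximal element of the corresponding $W_K$-coset in $W_J$. Collecting terms and re-expressing $\sum_{x \in qp} v^{\ell(q_+)-\ell(x)}H_x = \mst{I}{q}{J}$ (valid since the relevant double coset in $\W{I}{J}$ is the same $q$ viewed in $\W{I}{J}$), the factors of $\Poinc(J)$ and the leftover $\h{\wo{J}}$ cancel correctly, giving the stated formula. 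The bookkeeping of which maximal element goes with which coset, via Proposition~2.7.3 and Howlett, is the fiddly part.

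For part (2), where $J \subset K$: now $\mst{J}{}{K} = \mst{J}{r}{K}$ with $r = W_J W_K = W_K$ and $r_+ = \wo{K}$, so $\mst{J}{}{K} = \sum_{x \in W_K} v^{\ell(\wo{K})-\ell(x)} H_x = \h{\wo{K}}$. Then $\mst{I}{p}{J} *_J \h{\wo{K}} = \frac{1}{\Poinc(J)} \big(\sum_{z \in p} v^{\ell(p_+)-\ell(z)} H_z\big)\h{\wo{K}}$, and I would again use Howlett to write each $z = u p_- v$ with $v \in W_J$, apply \eqref{eqn:paramult} to absorb $H_v$ into $\h{\wo{K}}$ (since $W_J \subset W_K$), obtaining $\sum_u v^{\ell(p_+) - \ell(p_-) - \ell(u)}\big(\sum_{v \in W_J} v^{-\ell(v)}\big) H_{up_-}\h{\wo{K}}$. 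The inner sum is $\tPoinc(J)$ up to a power of $v$ coming from $\ell(\wo{J})$; more precisely one recovers $\Poinc(J)/$(something) — and this combines with the prefactor $1/\Poinc(J)$. Then $H_{up_-}\h{\wo{K}}$ must be reassembled, over the range of $u \in D_L \cap W_I$, into $\mst{I}{q}{K}$ where $q = W_I p W_K$; here Howlett applied to the pair $(I,K)$ identifies the same $u$'s as coset representatives for $q$, but now with $W_K$ (not $W_J$) on the right, so the multiplicity of each $H_{up_-}$-orbit differs, and tracking $\ell(q_+), \ell(q_-)$ versus $\ell(p_+), \ell(p_-)$ via identity~\eqref{eq:poinc1} produces the powers of $v$ and the ratio $\Poinc(I,q,K)/\Poinc(I,p,J)$. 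The key identity to invoke is \eqref{eq:poinc3} (or its unbarred cousin) relating these Poincaré polynomials.

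The main obstacle I anticipate is the second case: keeping straight the two different Howlett parametrisations (one for $p \in \W{I}{J}$ using $L_J = I \cap p_- J p_-^{-1}$, one for $q \in \W{I}{K}$ using $L_K = I \cap q_- K q_-^{-1}$, with $q_- = p_-$), and verifying that the leftover combinatorial factor after cancelling $\Poinc(J)$ is exactly $v^{\ell(q_-)-\ell(p_-)}\Poinc(I,q,K)/\Poinc(I,p,J)$ rather than something off by a power of $v$ or a stray Poincaré factor. The length identities from Section~\ref{subsec:coxgroups}, especially \eqref{eq:poinc1} and Lemma~\ref{lem:poinc3} (which at least guarantees the ratio is a genuine element of $\N[v,v^{-1}]$, a useful sanity check), are the tools that make this tractable. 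The first case is more straightforward but still requires care identifying $\ell(p_+) - \ell(q_+)$ with the correct length drop inside $W_J$.
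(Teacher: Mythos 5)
Your treatment of part (1) is fine: once you note $\mst{J}{}{K} = \h{\wo{J}}$, the product $\mst{I}{p}{J} *_J \mst{J}{}{K}$ is simply $\mst{I}{p}{J}$ again (right-multiplication by $\h{\wo{J}}$ produces the factor $\Poinc(J)$ by \eqref{eq:parasquare}), and re-expanding it in the standard basis of $\He{I}{K}$ via \eqref{eq:inmst} reads off the coefficients $v^{\ell(p_+)-\ell(q_+)}$ of the $H_{q_+}$; your longer Howlett computation arrives at the same place, and this is essentially the paper's (very short) argument for (1).

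Part (2), however, has a genuine gap. Two of the structural claims steering your bookkeeping are false: in general $q_- \neq p_-$ (take $W = S_3$ with $S=\{s,t\}$, $I=J=\emptyset$, $K=\{s\}$, $p=\{s\}$; then $q = \{e,s\}$ and $q_- = e \neq s = p_-$ -- note the factor $v^{\ell(q_-)-\ell(p_-)}$ in the statement is nontrivial precisely for this reason), and the Howlett representatives for $p$ relative to $(I,J)$, namely $D_{L}\cap W_I$ with $L = I\cap p_-Jp_-^{-1}$, are \emph{not} the representatives for $q$ relative to $(I,K)$: the latter are indexed by the generally larger stabiliser $I\cap q_-Kq_-^{-1}$ and attached to $q_-$ rather than $p_-$. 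The mismatch between the two parametrisations (surplus representatives collapsing onto the same elements of $q$, plus the shift from $p_-$ to $q_-$) is exactly what generates the factor $v^{\ell(q_-)-\ell(p_-)}\Poinc(I,q,K)/\Poinc(I,p,J)$, and this is precisely the step your sketch defers; carried out literally with $q_-=p_-$ it would give a wrong power of $v$. The paper sidesteps the head-on computation: it first proves Lemma \ref{lem:wmst}, $\mst{I}{}{\emptyset} * H_x * \mst{\emptyset}{}{J} = v^{\ell(p_-)-\ell(x)}\Poinc(I,p,J)\mst{I}{p}{J}$, where all the Howlett bookkeeping happens in the easy situation of a single $H_x$ sandwiched between full standard generators, and then deduces (2) by expanding $\mst{I}{}{\emptyset} * H_{p_-} * \mst{\emptyset}{}{J} *_J \mst{J}{}{K}$ in two ways, using $\mst{\emptyset}{}{J} *_J \mst{J}{}{K} = \mst{\emptyset}{}{K}$ and cancelling $\Poinc(I,p,J)$ (Lemma \ref{lem:poinc3} plus freeness of $\He{}{}$ over $\Z[v,v^{-1}]$). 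Either adopt that associativity trick, or, if you keep the direct route, you must actually compute the coefficient of $H_{q_+}$ in $\mst{I}{p}{J}\h{\wo{K}}$ and prove it equals $\Poinc(J)\,v^{\ell(q_-)-\ell(p_-)}\Poinc(I,q,K)/\Poinc(I,p,J)$; that identity is the whole content of (2) and is missing from your plan.
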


Before we prove the proposition we need a lemma.

\begin{lemma} \label{lem:wmst}
Let $I, J \subset S$ be finitary, $x \in W$ and $p = W_IxW_J$. Then
  \begin{displaymath}
    \mst{I}{}{\emptyset} * H_x * \mst{\emptyset}{}{J} = v^{\ell(p_-) - \ell(x)} \Poinc(I,p,J) \mst{I}{p}{J}.
  \end{displaymath}
\end{lemma}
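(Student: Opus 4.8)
The plan is to compute the product $\mst{I}{}{\emptyset} * H_x * \mst{\emptyset}{}{J}$ directly in the Hecke algebra, where all the $*$-products are the unnormalised multiplication (since they are taken over $\emptyset$, so $*= *_{\emptyset}$ is the ordinary product divided by $\Poinc(\emptyset)=1$). First I would unwind the definitions: $\mst{I}{}{\emptyset} = \mst{I}{p_I}{\emptyset}$ for $p_I = W_I$, so by the formula for $\mst{I}{p}{J}$ we have $\mst{I}{}{\emptyset} = \sum_{u \in W_I} v^{\ell(\wo{I}) - \ell(u)} H_u = \h{\wo{I}}$ by \eqref{eq:longestpar}; similarly $\mst{\emptyset}{}{J} = \h{\wo{J}}$. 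So the claim is really a statement about $\h{\wo{I}} \cdot H_x \cdot \h{\wo{J}}$ inside $\He{}{}$.

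The key computational step is to expand $H_x \h{\wo{J}}$ and then left-multiply by $\h{\wo{I}}$. Using Howlett's theorem (Theorem \ref{Howlett}) applied to $p = W_IxW_J$, I can reduce to the minimal representative: writing $x = u_0 p_- v_0$ with $u_0 \in D_K \cap W_I$, $v_0 \in W_J$ and $\ell(x) = \ell(p_-) + \ell(u_0)+\ell(v_0)$, equation \eqref{eqn:paramult} gives $H_{v_0}\h{\wo{J}} = v^{-\ell(v_0)}\h{\wo{J}}$, hence $H_x \h{\wo{J}} = H_{u_0} H_{p_-} v^{-\ell(v_0)} \h{\wo{J}}$. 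The deformation relation \eqref{eqn:heckemult} lets me multiply through by $H_{u_0}$ on the other side as well, but the cleanest route is: since left-multiplication by $\h{\wo{I}}$ and \eqref{eqn:paramult} (now applied on the left, using the analogue $\h{\wo{I}} H_{u} = v^{-\ell(u)} \h{\wo{I}}$ for $u \in W_I$) are available, I want to bring all the $W_I$- and $W_J$-parts of the sum over $p$ to the outside. Concretely, by Howlett every $y\in p$ is $u p_- v$ with $u \in D_K\cap W_I$, $v\in W_J$, and I would use \eqref{eqn:paramult} to fold the $v$-sum into $\h{\wo{J}}$ and the $u$-sum into $\h{\wo{I}}$, producing the $W_I\cap p_-W_Jp_-^{-1} = W_K$-invariance (Kilmoyer, Theorem \ref{Kilmoyer}) as the obstruction to over/under-counting, which is exactly what the factor $\Poinc(I,p,J)=\Poinc(K)$ corrects for.

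The precise bookkeeping I would carry out: $\h{\wo{I}} H_x \h{\wo{J}}$ is a self-dual element of $\He{I}{J}$ (it lies in $\h{\wo I}\He{}{} \cap \He{}{}\h{\wo J}$), so by \eqref{eq:inmst} it is a $\Z[v,v^{-1}]$-multiple of $\mst{I}{p}{J}$; to pin down the coefficient it suffices to read off the coefficient of $H_{p_+}$. On one side, the coefficient of $H_{p_+}$ in $\mst{I}{p}{J}$ is $1$. On the other side, from $\h{\wo{I}} H_x \h{\wo{J}}$ the top-degree term $H_{p_+}$ arises from the contributions $H_u H_x H_v$ with $x=p_-$ pushed up to maximal length; tracking the powers of $v$ via \eqref{eqn:heckemult} (each application in the ``$sw>w$'' case contributes no $v$, and the length shifts are governed by Howlett's length additivity) yields the scalar $v^{\ell(p_-)-\ell(x)}\Poinc(K)$, using that $\Poinc(K) = v^{\ell(\wo K)}\tPoinc(K)$ counts $W_K$ with the right grading.

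The main obstacle I anticipate is the careful sign/degree bookkeeping in the last step: one must correctly identify which monomials $H_y$ in the expansion of $\h{\wo I}H_x\h{\wo J}$ contribute to the coefficient of $H_{p_+}$, and verify that the deformation terms in \eqref{eqn:heckemult} do not produce spurious contributions at the top degree — this is where Howlett's length-additivity ($\ell(up_-v)=\ell(p_-)+\ell(u)+\ell(v)$) and Kilmoyer's identification of the stabiliser $W_K$ are used essentially, and where the $v^{\ell(p_-)-\ell(x)}$ prefactor (rather than $v^{\ell(x)-\ell(p_-)}$) must be extracted with the correct sign. An alternative that avoids some of this is to prove the identity first for $x = p_-$ (where $H_x\h{\wo J}=H_{p_-}\h{\wo J}$ directly) and then propagate to general $x\in p$ using \eqref{eqn:paramult} on both sides, which I would present as the cleaner organisation.
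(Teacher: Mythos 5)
Your first step matches the paper exactly: using Howlett (Theorem \ref{Howlett}) to write $x = u_0 p_- v_0$ with additive lengths and \eqref{eqn:paramult} to peel off the factor $v^{\ell(p_-)-\ell(x)}$ and reduce to computing $\mst{I}{}{\emptyset}*H_{p_-}*\mst{\emptyset}{}{J}$. But the rest of the plan has two problems. First, the justification for why the product is a scalar multiple of $\mst{I}{p}{J}$ is faulty: $\h{\wo{I}}H_x\h{\wo{J}}$ is \emph{not} self-dual in general (already for $W=S_3$, $I=\{s\}$, $J=\emptyset$, $x=t$ one finds $\overline{(H_s+v)H_t} = (H_s+v)(H_t+v-v^{-1})$, which has support outside $\{t,st\}$). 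Self-duality is also not what you need; the correct reason is purely a support statement: the Hecke multiplication rule \eqref{eqn:heckemult} keeps the expansion of $H_uH_{p_-}H_v$ inside $\mathrm{span}\{H_y : y\in W_Ip_-W_J = p\}$, so by \eqref{eq:inmst} the whole product is $\lambda\,\mst{I}{p}{J}$ for a single scalar $\lambda$.

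Second, and more seriously, the plan to determine $\lambda$ by reading off the coefficient of $H_{p_+}$ is never carried out, and it is genuinely delicate: many pairs $(u,v)\in W_I\times W_J$ contribute to $H_{p_+}$ (and to $H_{p_-}$) because lengths are not additive in $H_uH_{p_-}H_v$ once $u\notin D_K\cap W_I$, so deformation terms from \eqref{eqn:heckemult} do show up at the relevant length. You flag this yourself as ``the main obstacle,'' which is honest, but it means the plan does not actually close the argument. The paper avoids this entirely by a direct computation: it factors $\h{\wo{I}} = N\,\h{\wo{K}}$ with $N = v^{\ell(\wo{I})-\ell(\wo{K})}\sum_{u\in D_K\cap W_I}v^{-\ell(u)}H_u$ (Howlett), commutes $\h{\wo{K}}H_{p_-} = H_{p_-}\h{\wo{K'}}$ with $K' = p_-^{-1}Kp_-\subset J$, absorbs $\h{\wo{K'}}\h{\wo{J}} = \Poinc(K)\h{\wo{J}}$ by \eqref{eq:parasquare}, and then recognises $NH_{p_-}\h{\wo{J}}$ as $\mst{I}{p}{J}$ via Howlett's bijection and \eqref{eq:poinc1}. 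If you want to pursue the coefficient route, you would need to prove a cancellation statement for the deformation terms; the factor-and-commute argument is both shorter and closer to the structural reason the factor $\Poinc(I,p,J)$ appears.
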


\begin{proof}
  By Howlett's Theorem (\ref{Howlett}) we may write $x = up_-v$ with $u \in W_{I}$, $v \in W_{J}$ and $\ell(x) = \ell(u) + \ell(p_-) + \ell(v)$. By (\ref{eqn:paramult}) we have:
  \begin{displaymath}
    \mst{I}{}{\emptyset} * H_{x} * \mst{\emptyset}{}{J} = 
v^{\ell(p_-) - \ell(x)} \mst{I}{}{\emptyset} * H_{p_-} * \mst{\emptyset}{}{J}.
  \end{displaymath}
Thus we will be finished if we can show that 
\begin{displaymath}
  \mst{I}{}{\emptyset} * H_{p_-} * \mst{\emptyset}{}{J} = \Poinc(I,p,J) \mst{I}{p}{J}.
\end{displaymath}
We write $K = I \cap p_-Jp_-^{-1}$ so that $\Poinc(I,p,J) = \Poinc(K)$. If $s \in K$ then $sp_- = p_-s^{\prime}$ for some $p^{\prime} \in J$ and therefore
\begin{equation}
  \label{eq:stabcom}
  \h{\wo{K}}{} H_{p_-} = H_{p_-} \h{w_{K^{\prime}}}
\end{equation}
where $K^{\prime} = p_-^{-1}Kp_-$. Because $K$ and $K^{\prime}$ are conjugate $\Poinc(K) = \Poinc(K^{\prime})$.
We define $N \in \He{}{}$ by
\begin{equation*}
  N = v^{\ell(\wo{I}) - \ell(\wo{K})} \sum_{u \in D_K \cap W_I} v^{-\ell(u)} H_u
\end{equation*}
and calculate
\begin{align*}
 \mst{I}{}{\emptyset} * H_{p_-} * \mst{\emptyset}{}{J}
 & = N\h{\wo{K}}H_{p_-}\h{\wo{J}} & (\text{Howlett's theorem}) \\
 & = NH_{p_-}\h{\wo{K^{\prime}}} \h{\wo{J}} & (\ref{eq:stabcom})\\
 & = \Poinc(K) NH_{p_-}\h{\wo{J}} & (\ref{eq:parasquare}) \\
 & = \Poinc(K)v^a
 \sum_{x \in p} v^{- \ell(x)} H_x & (\text{Howlett's theorem}) \\
 & = \Poinc(K) \mst{I}{p}{J}
\end{align*}
where the last line follows because, by \eqref{eq:poinc1}, 
\begin{equation*}
a = \ell(\wo{I}) - \ell(\wo{K}) + \ell(\wo{J}) + \ell(p_-) =
\ell(p_+). \qedhere
\end{equation*}
\end{proof}

\excise{

Howlett's proposition (\ref{Howlett}) we have
\begin{equation*}
  v^{-\ell(x) - \ell(w_J)}NH_x\h{\wo{J}}{} = \sum_{w \in W_IxW_J} v^{-\ell(w)}H_w = v^{-\ell(z)}\mst{I}{x}{J}
\end{equation*}
and
\begin{equation*}
  v^{\ell(\wo{L}) - \ell(\wo{K})}N\h{\wo{K}}{} = \mst{I}{}{\emptyset}.
\end{equation*}
Putting this together yields:
\begin{eqnarray*}
  \mst{I}{}{\emptyset} * H_x * \mst{\emptyset}{}{J} & = & v^{\ell(\wo{I}) - \ell(\wo{K})} N \h{\wo{K}}{}H_x\h{\wo{J}}{} \\
& = & v^{\ell(\wo{I}) - \ell(\wo{K})} N H_x \h{\wo{x^{-1}K}}{} \h{\wo{J}}{} \\
& = & v^{\ell(\wo{I}) - \ell(\wo{K})} \Poinc(W_K) N H_x \h{\wo{J}}{} \\
& = & v^{A} \Poinc(K) \mst{I}{x}{J}
\end{eqnarray*}
with 
\begin{equation*}
  A = \ell(\wo{I}) - \ell(\wo{K}) - \ell(z) + \ell(x) + \ell(\wo{J}).
\end{equation*}
However $A = 0$ by \ref{cor:doub}.
\end{proof}}

\begin{proof}[Proof of Proposition \ref{prop:multform}]
Statement (1) follows by (\ref{eq:parasquare}) and (\ref{eq:inmst}). We now turn to (2). Let us expand
\begin{equation*}
  P = \mst{I}{}{\emptyset} * H_{p_-} * \mst{\emptyset}{}{J} *_J  \mst{J}{}{K}
\end{equation*}
in two different ways. As $\mst{\emptyset}{}{J} * \mst{J}{}{K} = \mst{\emptyset}{}{K}$ by (\ref{eq:parasquare}) we obtain, using Lemma \ref{lem:wmst}:
\begin{displaymath}
  P = \mst{I}{}{\emptyset} * H_{p_-} * \mst{\emptyset}{}{K} =
v^{\ell(q_-) - \ell(p_-)} \Poinc(I,q,K) \mst{I}{q}{K}.
\end{displaymath}
We also have (again using Lemma \ref{lem:wmst}):
\begin{displaymath}
  P = \Poinc(I,p,J) \mst{I}{p}{J} *_J \mst{J}{}{K}.
\end{displaymath}
We follow that
\begin{equation}
  \label{eq:hecke-standard}
 \mst{I}{p}{J}*_J \mst{J}{}{K} = v^{\ell(q_-) - \ell(p_-)} \frac{ \Poinc(I,q,K)}{\Poinc(I,p,J)} \mst{I}{q}{K}
\end{equation}
By Corollary \ref{lem:poinc3} and the fact that $\He{}{}$ is free as a $\Z[v,v^{-1}]$-module.
\end{proof}

Given an element $h \in \He{I}{J}$ we may write $h = \sum \lambda_p
\mst{I}{p}{J}$. We define the \emph{support} of $h$ to be the finite
set
\begin{equation*}
\supp h = \{ p \in \W{I}{J} \; | \; \lambda_p \ne 0 \}.
\end{equation*}
A second corollary of the above multiplication formulas is a description of
multiplication by a standard generator on the support.

\begin{corollary} \label{cor:heckesupp}
Let $I, J, K \subset S$ be finitary with $J \subset K$ and let
\begin{equation*}
\quo : \W{I}{J} \to \W{I}{K}
\end{equation*}
be the quotient map.
\begin{enumerate}
\item If $h \in \He{I}{J}$ then
\begin{equation*}
\supp( h *_J \mst{J}{}{K}) \subset \quo (\supp h).
\end{equation*}
\item If $h \in \He{I}{K}$ then
\begin{equation*}
\supp( h *_K \mst{K}{}{J}) \subset \quo^{-1} (\supp h).
\end{equation*}
\end{enumerate}
\end{corollary}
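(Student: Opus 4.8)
The plan is to deduce both inclusions directly from the multiplication formulas of Proposition~\ref{prop:multform}, which describe completely the effect of right multiplication by a standard generator on the standard basis; after that it is pure bookkeeping with supports.

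For part~(1) I would write $h = \sum_{p \in \W{I}{J}} \lambda_p \mst{I}{p}{J}$, so $\supp h = \{ p : \lambda_p \neq 0\}$. Since $J \subset K$, Proposition~\ref{prop:multform}(2) gives, for each $p$,
\[
\mst{I}{p}{J} *_J \mst{J}{}{K} = v^{\ell(q_-) - \ell(p_-)} \frac{\Poinc(I,q,K)}{\Poinc(I,p,J)}\, \mst{I}{q}{K}, \qquad q = \quo(p),
\]
so the coefficient of $\mst{I}{q}{K}$ in $h *_J \mst{J}{}{K}$ is a sum over $p \in \quo^{-1}(q)$ of $\lambda_p$ times a nonzero scalar. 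If $q \notin \quo(\supp h)$ then $\lambda_p = 0$ for every $p$ with $\quo(p) = q$, so this coefficient vanishes; hence $\supp(h *_J \mst{J}{}{K}) \subset \quo(\supp h)$. (By Lemma~\ref{lem:poinc3} these scalars even lie in $\N[v,v^{-1}]$, so no cancellation occurs and the inclusion is an equality; but only the inclusion is needed.)

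For part~(2) I would write $h = \sum_{q \in \W{I}{K}} \mu_q \mst{I}{q}{K}$. Applying Proposition~\ref{prop:multform}(1) with $(I,K,J)$ in place of $(I,J,K)$ (legitimate since $K \supset J$) gives
\[
\mst{I}{q}{K} *_K \mst{K}{}{J} = \sum_{p \in \quo^{-1}(q)} v^{\ell(q_+) - \ell(p_+)}\, \mst{I}{p}{J},
\]
the index set $W_I \setminus q / W_J$ of the proposition being exactly the set of $(W_I, W_J)$-cosets contained in $q$, i.e.\ $\quo^{-1}(q)$. Summing over $q$, each basis element $\mst{I}{p}{J}$ receives its only contribution from $q = \quo(p)$, with coefficient $\mu_{\quo(p)}\, v^{\ell(\quo(p)_+) - \ell(p_+)}$, which is nonzero precisely when $\quo(p) \in \supp h$, i.e.\ when $p \in \quo^{-1}(\supp h)$. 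Thus $\supp(h *_K \mst{K}{}{J}) = \quo^{-1}(\supp h)$, which is stronger than the asserted inclusion.

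There is no real obstacle: both halves are immediate from Proposition~\ref{prop:multform}. The only subtlety is that in part~(1) distinct cosets $p$ may share a common image $q$, so a priori the coefficient of $\mst{I}{q}{K}$ could cancel and the support be strictly smaller; since the statement asks only for an inclusion this is irrelevant, and in any case positivity (Lemma~\ref{lem:poinc3}) rules it out.
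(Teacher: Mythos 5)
Your proof is correct and follows the same route the paper intends: the corollary is stated immediately after Proposition~\ref{prop:multform} precisely because it is a direct bookkeeping consequence of the two multiplication formulas, and you read them off correctly (including recognizing that case~(1) of the proposition, applied with $K\supset J$, produces the index set $\quo^{-1}(q)$).

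One small inaccuracy worth flagging, though you correctly note it is irrelevant to the claim: in part~(1) your parenthetical says that since the scalars lie in $\N[v,v^{-1}]$ by Lemma~\ref{lem:poinc3}, no cancellation occurs and the inclusion is an equality. That does not follow — the coefficients $\lambda_p$ in the expansion of $h$ lie only in $\Z[v,v^{-1}]$, so the sum $\sum_{p\in\quo^{-1}(q)}\lambda_p c_p$ with $c_p\in\N[v,v^{-1}]$ can still vanish by cancellation among the $\lambda_p$. The inclusion in part~(1) can therefore be strict; only for part~(2) do you genuinely get equality, since each $\mst{I}{p}{J}$ receives a contribution from exactly one $q$.
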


We will not prove the following proposition, and instead refer the
reader to \cite[Proposition 2.2.7]{SSB}:

\begin{proposition} \label{prop:transseqprod}
Given any finitary $I, J \subset S$ and $p \in \W{I}{J}$ there exists
a sequence $(J_i)_{0 \le i \le n}$ of finitary subsets of $S$ such that, for
all $0 \le i < n$ either $J_i \subset J_{i+1}$ or $J_{i} \supset
J_{i+1}$ and such that
\begin{equation*}
\mst{I}{}{J_0} *_{J_0} \mst{J_0}{}{J_1} *_{J_1} \cdots *  _{J_{n-1}}
\mst{J_{n-1}}{}{J_n}
= \mst{I}{p}{J} + \sum_{q < p} \lambda_q \mst{I}{q}{J}.
\end{equation*}
\end{proposition}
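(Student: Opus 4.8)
The plan is to induct on the Bruhat order on $\W{I}{J}$, building up the double coset $p$ from below by repeatedly applying the multiplication formulas of Proposition \ref{prop:multform} and tracking the support via Corollary \ref{cor:heckesupp}. The base case is $p = W_I W_J$ (the minimal double coset); if $I \subset J$ or $J \subset I$ this is just the standard generator $\mst{I}{}{J}$ itself, and in general one takes the empty-to-$I$ and empty-to-$J$ zig-zag through $\emptyset$, using Lemma \ref{lem:wmst} and $\mst{\emptyset}{}{J} * \mst{J}{}{\emptyset} = \Poinc(J) \cdot \mst{\emptyset}{}{\emptyset}$-type identities to see that the composite is a scalar multiple of $\mst{I}{}{J}$; but in fact the cleanest base point is $I = J = \emptyset$, $p = \{e\}$, where the empty sequence gives $H_e = \mst{\emptyset}{e}{\emptyset}$.

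For the inductive step, suppose $p \in \W{I}{J}$ is not minimal. I would first reduce to the case where one of $I$, $J$ is empty: choose by Howlett a reduced factorization and peel off. Concretely, consider $x = p_-$ and pick a simple reflection $s$ such that $sx > x$ or $xt > x$ appropriately, replacing $p$ by the smaller double coset $p' = W_I x' W_J$ with $x'$ shorter; the key identity I want is that $\mst{I}{p'}{J} *_J \mst{J}{}{J} \cdots$ — more precisely I want to realize $\mst{I}{p}{J}$, up to lower terms, as the image of $\mst{I}{p'}{J}$ under a single zig-zag step $\mst{J}{}{J''} *_{J''} \mst{J''}{}{J}$ (an "up" followed by a "down", changing the subset $J$ momentarily by one simple reflection and then restoring it, or symmetrically on the $I$ side). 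By Proposition \ref{prop:multform}(2) the "up" step $\mst{I}{p'}{J} *_J \mst{J}{}{J''}$ produces $\mst{I}{q}{J''}$ with a self-dual positive coefficient, and by (1) the "down" step $\mst{I}{q}{J''} *_{J''} \mst{J''}{}{J}$ produces $\mst{I}{p}{J}$ as its leading term plus a $v$-positive combination of $\mst{I}{r}{J}$ with $r < p$; then Corollary \ref{cor:heckesupp} guarantees the support of the error terms lies below $p$. Prepending the inductively constructed zig-zag for $p'$ and invoking associativity of $*$ gives the desired sequence for $p$; the lower-order terms combine by induction into a combination of $\mst{I}{q}{J}$ with $q < p$, and one must check (trivially, since all coefficients produced are in $\N[v,v^{-1}]$, though the statement as written does not even demand positivity of the $\lambda_q$) that the leading coefficient of $\mst{I}{p}{J}$ stays $1$ — this requires the momentary-change-of-subset to be by a single simple reflection so that the Poincaré-polynomial ratio in Proposition \ref{prop:multform}(2) is cancelled exactly by the power of $v$ in the subsequent (1)-step, i.e. that for the relevant $q$ one has $\Poinc(I,q,J'')/\Poinc(I,p',J) = 1$ after the normalization.

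The main obstacle is the combinatorics of choosing the zig-zag: one must verify that every non-minimal double coset $p$ can be reached from a strictly smaller one by such a single "up–down" (or "down–up") move through an adjacent finitary subset, and that the move is tame enough that the leading coefficient is exactly $1$ rather than a nontrivial self-dual polynomial. This is precisely the content one would extract from the double-coset combinatorics of Section \ref{subsec:coxgroups} (Howlett's and Kilmoyer's theorems, Proposition \ref{prop:doublebruhat}), and it is exactly the point at which the paper defers to \cite[Proposition 2.2.7]{SSB}; so rather than reprove it I would cite that reference for the existence of the sequence and merely note that the leading-term and support statements then follow formally from Propositions \ref{prop:multform} and Corollary \ref{cor:heckesupp} by the associativity of $*_{(-)}$ and induction on $\ell(p_-)$.
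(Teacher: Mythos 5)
The paper itself does not prove this proposition: it says explicitly that it will not, and refers the reader to \cite[Proposition 2.2.7]{SSB}. Since your proposal also ends by citing that same source, your treatment agrees with the paper's, and you are right that the routine part—that each zig-zag step is multiplication by a standard generator, so by Proposition \ref{prop:multform} and Corollary \ref{cor:heckesupp} the error terms have support strictly below $p$—follows formally once the right sequence exists.

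The inductive sketch you wrap around the citation does, however, have a concrete gap at the point where you claim a single up--down move through $J''=J\cup\{s\}$ preserves the leading coefficient $1$. The ``up'' step of Proposition \ref{prop:multform}(2) introduces the factor $v^{\ell(q_-)-\ell(p'_-)}\Poinc(I,q,J'')/\Poinc(I,p',J)$, which by Lemma \ref{lem:poinc3} lies in $\N[v,v^{-1}]$ and is self-dual, but is generically not a monomial: it is, e.g., $v+v^{-1}$ whenever $I\cap q_-J''q_-^{-1}$ gains a reflection over $I\cap p'_-Jp'_-^{-1}$. A self-dual non-monomial factor cannot be cancelled by the single power of $v$ coming from the subsequent ``down'' step \ref{prop:multform}(1), so the assertion that ``the leading coefficient stays $1$'' because the normalization ``is cancelled exactly'' is unfounded as stated. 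Arranging for the Poincar\'e ratio to collapse at every stage, so that $\mst{I}{p}{J}$ appears with coefficient exactly $1$, is precisely the combinatorial content that \cite[Proposition 2.2.7]{SSB} supplies; it does not fall out of ``associativity of $*$ and induction on $\ell(p_-)$.'' As the paper does, it is cleanest to simply cite that reference rather than gesture at a one-reflection-at-a-time lift.
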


Let $R$ be a ring and $\mathcal{C}$ be an $R$-linear category. Suppose we are given a subset $X_{AB} \subset \Hom(A,B)$ for all pairs of objects $A, B \in \mathcal{C}$. We define the \emph{span} of the collection $\{ X_{AB} \}$ to be the smallest collection of $R$-submodules $\{ Y_{AB} \subset \Hom(A,B) \}$ such that:
\begin{enumerate}
\item  $X_{AB} \subset Y_{AB}$ for all $A, B \in \mathcal{C}$,
\item The collection $\{Y_{AB}\}$ is closed under composition in $\mathcal{C}$.
\end{enumerate}
We say that $\{ X_{AB} \}$ \emph{generates} $\mathcal{C}$ if the span
of $\{ X_{AB} \}$ is equal to $\Hom(A,B)$ for all $A, B \in \mathcal{C}$. Less formally, one may refer to the span of any set of morphisms in $\mathcal{C}$ and ask whether they generate the category.

Proposition \ref{prop:transseqprod} implies:

\begin{corollary} The standard generators $\mst{I}{}{J}$ for finitary $I, J \subset S$ with either $I \subset J$  or $I \supset J$ generate the Hecke category. \end{corollary}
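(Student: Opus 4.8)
The plan is to deduce this directly from Proposition~\ref{prop:transseqprod} together with the fact, established above, that $\{ \mst{I}{p}{J} \mid p \in \W{I}{J} \}$ is a $\Z[v,v^{-1}]$-basis of $\He{I}{J}$ for every pair of finitary subsets $I, J \subset S$. Write $\{ Y_{AB} \}$ for the span of the standard generators $\mst{I}{}{J}$ (with $I \subset J$ or $J \subset I$); by definition each $Y_{IJ}$ is a $\Z[v,v^{-1}]$-submodule of $\He{I}{J}$ and the collection $\{Y_{AB}\}$ is closed under composition, so it suffices to show $Y_{IJ} = \He{I}{J}$ for all finitary $I, J$.

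Fix such $I, J$. For each $p \in \W{I}{J}$, Proposition~\ref{prop:transseqprod} furnishes a sequence $(J_i)_{0 \le i \le n}$ of finitary subsets, with consecutive terms comparable under inclusion, such that
\[
P_p := \mst{I}{}{J_0} *_{J_0} \mst{J_0}{}{J_1} *_{J_1} \cdots *_{J_{n-1}} \mst{J_{n-1}}{}{J_n} = \mst{I}{p}{J} + \sum_{q < p} \lambda_q \mst{I}{q}{J}
\]
for suitable $\lambda_q \in \Z[v,v^{-1}]$. Every factor of $P_p$ is an admissible standard generator --- consecutive $J_i$ are comparable by the proposition, and the leading factor $\mst{I}{}{J_0}$ is only defined when $I$ and $J_0$ are comparable --- so $P_p$ is a composition of morphisms drawn from $\{Y_{AB}\}$, hence $P_p \in Y_{IJ}$.

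Finally I would observe that the elements $P_p$ themselves form a $\Z[v,v^{-1}]$-basis of $\He{I}{J}$: relative to any linear extension of the Bruhat order on $\W{I}{J}$, the displayed identity expresses the family $(P_p)$ in terms of the standard basis by a matrix that is unitriangular over $\Z[v,v^{-1}]$, hence invertible. Since $Y_{IJ}$ is a $\Z[v,v^{-1}]$-submodule containing every $P_p$, it follows that $Y_{IJ} = \He{I}{J}$. (Equivalently, one shows $\mst{I}{p}{J} \in Y_{IJ}$ for all $p$ by induction on $p$ in the Bruhat order, rearranging the displayed identity to express $\mst{I}{p}{J}$ through $P_p$ and the $\mst{I}{q}{J}$ with $q < p$.)

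The argument presents no real obstacle: all the content sits in Proposition~\ref{prop:transseqprod}, whose proof is deferred to \cite{SSB}. The one point that must not be glossed is that the span is closed under $\Z[v,v^{-1}]$-linear combinations, so that it is insensitive to the unitriangular change of basis between $\{\mst{I}{p}{J}\}$ and $\{P_p\}$.
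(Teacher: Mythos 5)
Your argument is correct and is exactly the route the paper intends: the paper simply states that Proposition \ref{prop:transseqprod} implies the corollary, and your unitriangularity/induction-over-the-Bruhat-order argument (valid since only finitely many $q$ lie below any given $p$) is the standard filling-in of that implication.
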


\begin{remark} It is natural to ask what relations the arrows $\mst{I}{}{J}$ satisfy. We have not looked into this. \end{remark}

In the previous subsection we defined a bilinear form on $\He{}{}$. We now generalise this construction and define a bilinear form on each $\He{I}{J}$ for $I, J \subset S$ finitary. Recall that $i: \He{}{} \to \He{}{}$ denotes the $\Z[v,v^{-1}]$-linear anti-involution sending $H_x$ to $H_{x^{-1}}$. As $\h{\wo{I}}$ and $\h{\wo{J}}$ are fixed by $i$ it follows that $i$ restricts to an isomorphism of $\Z[v,v^{-1}]$-modules
\begin{equation*}
i : \He{I}{J} \to \He{J}{I}.
\end{equation*}
We define \label{lab:biform}
\begin{align*} 
\He{I}{J} \times \He{I}{J} & \to \Z[v,v^{-1}] \\
(f, g) & \mapsto  \langle f, g \rangle := \text{ coefficient of $H_{\id}$ in $f *_J i(g)$.}
\end{align*}
We do not include reference to $I$ and $J$ in the notation, and hope that this will not lead to confusion. It follows from the definition that if $I, J, K \subset S$ are finitary and $f \in \He{I}{J}$, $g \in \He{J}{K}$ and $h \in \He{I}{K}$ then
\begin{equation} \label{eq:formident}
  \langle f *_J g, h \rangle = \langle f , h *_K i(g) \rangle.
\end{equation}
The following lemma describes the bilinear form on the standard basis of $\He{I}{J}$.

\begin{lemma}  \label{lem:biform2}
Let $I, J \subset S$ be finitary. For all $p, q \in \W{I}{J}$ we have
\begin{equation*}
\langle \mst{I}{p}{J} , \mst{I}{q}{J} \rangle = v^{\ell(p_+) - \ell(p_-)}
\frac{\Poinc(p)}{\Poinc(J)} \delta_{p,q}.
\end{equation*}
\end{lemma}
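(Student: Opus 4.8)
The plan is to reduce the bilinear form on $\He{I}{J}$ to the already-understood form on $\He{}{}$, where everything can be read off from (\ref{eq:biform1}) and the partition of $W$ into double cosets.

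First I would compute the effect of the anti-involution $i$ on a standard basis element. Since $\mst{I}{q}{J} = \sum_{x \in q} v^{\ell(q_+) - \ell(x)} H_x$ and $i(H_x) = H_{x^{-1}}$, and since $x \mapsto x^{-1}$ is length-preserving and carries the $(W_I,W_J)$-double coset $q$ bijectively onto the $(W_J,W_I)$-double coset $q^{-1}$ with $(q^{-1})_\pm = (q_\pm)^{-1}$, one obtains $i(\mst{I}{q}{J}) = \mst{J}{q^{-1}}{I} \in \He{J}{I}$.

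Next I would unwind the definition: $\langle \mst{I}{p}{J}, \mst{I}{q}{J}\rangle$ is the coefficient of $H_{\id}$ in $\mst{I}{p}{J} *_J i(\mst{I}{q}{J}) = \frac{1}{\Poinc(J)} \mst{I}{p}{J}\, i(\mst{I}{q}{J})$, where the last product is taken in $\He{}{}$. Expanding the two standard basis elements and using that the coefficient of $H_{\id}$ in $H_x H_{y^{-1}}$ equals $\langle H_x, H_y \rangle = \delta_{x,y}$ by (\ref{eq:biform1}), the resulting double sum collapses to a single sum over $x \in p \cap q$. Since distinct $(W_I,W_J)$-double cosets are disjoint, this vanishes unless $p = q$, which accounts for the factor $\delta_{p,q}$.

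Finally, for $p = q$ the surviving contribution is $\frac{1}{\Poinc(J)} \sum_{x \in p} v^{2\ell(p_+) - 2\ell(x)} = \frac{1}{\Poinc(J)} v^{2\ell(p_+)} \tPoinc(p)$, and substituting $\tPoinc(p) = v^{-\ell(p_+) - \ell(p_-)} \Poinc(p)$, which is just the defining relation $\Poinc(p) = v^{\ell(p_+) - \ell(p_-)} v^{2\ell(p_-)} \tPoinc(p)$ rearranged, produces exactly $v^{\ell(p_+) - \ell(p_-)} \Poinc(p)/\Poinc(J)$, as desired. There is no serious obstacle here; the only steps needing any care are the identification of $i$ on the standard basis and the use of the double-coset partition of $W$ to force the Kronecker delta.
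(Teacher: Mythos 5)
Your proof is correct and is essentially the paper's argument: both reduce the form on $\He{I}{J}$ to the form on $\He{}{}$ (picking up a factor $1/\Poinc(J)$ from the $*_J$ product), use orthonormality of $\{H_x\}$ from (\ref{eq:biform1}) together with disjointness of double cosets to get $\delta_{p,q}$, and then evaluate $\sum_{x\in p} v^{2(\ell(p_+)-\ell(x))} = v^{2\ell(p_+)}\tPoinc(p) = v^{\ell(p_+)-\ell(p_-)}\Poinc(p)$. The explicit identification $i(\mst{I}{q}{J}) = \mst{J}{q^{-1}}{I}$ is harmless but not actually needed, since the paper simply observes that $\langle f,g\rangle_{\He{I}{J}} = \tfrac{1}{\Poinc(J)}\langle\tilde f,\tilde g\rangle_{\He{}{}}$ and computes in $\He{}{}$.
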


\begin{proof} Let $f, g \in \He{I}{J}$ and write $\tilde{f}$, $\tilde{g}$ for the elements $f$ and $g$ regarded as elements of $\He{}{}$. It is clear from the definition that
\begin{equation*}
\langle f, g \rangle = \frac{1}{\Poinc(J)} \langle \tilde{f}, \tilde{g} \rangle.
\end{equation*}
where the second expression is the bilinear form calulated in $\He{}{}$. We may then calculate using \eqref{eq:biform1}. If $p \ne q$ then $\langle \mst{I}{p}{J} , \mst{I}{q}{J} \rangle = 0$. If $p = q$ we have
\begin{equation*}
\langle \mst{I}{p}{J}, \mst{I}{q}{J} \rangle = 
 \frac{1}{\Poinc(J)} \sum_{x \in p} v^{2(\ell(p_+) - \ell(x))}
 = v^{\ell(p_+) - \ell(p_-)}
\frac{\Poinc(p)}{\Poinc(J)} . \qedhere
\end{equation*} 
\end{proof}

\section{Bimodules and homomorphisms} \label{sec:not}

Fix a field $k$ of characteristic 0. We consider rings $A$ satisfying
\begin{align}
  \label{eq:ringcond}
  & \text{$A = \oplus_{i \ge 0} A^i$ is a finitely generated, positively
graded} \\ &\text{commutative ring with $A^0 = k$.} \nonumber
\end{align}

We denote by $\lMod{A}$ and $\rMod{A}$ the category of graded left and right $A$-modules.
All tensor products are assumed to take place over  $k$, unless otherwise specified.
If $A_1$ and $A_2$ are two rings satisfying  (\ref{eq:ringcond}) we write $\bMod{A_1}{A_2}$ for the category of $(A_1, A_2)$-bimodules, upon which the left and right action of $k$ agrees. As all rings are assumed commutative we have an equivalence between $\bMod{A_1}{A_2}$ and $\lMod{A_1 \otimes A_2}$. We  generally prefer to work in $\bMod{A_1}{A_2}$, but will occasionally switch to $\lMod{A_1\otimes A_2}$ when convenient.

Given a graded module $M = \oplus M^i$ we define the
shifted module $M[n]$ by $(M[n])^i = M^{n+i}$. The endomorphism ring
of any finitely generated object in $\lMod{A}$, $\rMod{A}$ or
$\bMod{A_1}{A_2}$ is finite dimensional and hence any finitely
generated module satisfies Krull-Schmidt (for example, by adapting the proof in
\cite{Pie}).

Given a Laurent polynomial with positive coefficients
\begin{displaymath}
 P = \sum a_iv^i \in \N[v,v^{-1}]
\end{displaymath}
and an object $M$ in $\lMod{A}$, $\rMod{A}$ or $\bMod{A_1}{A_2}$, we define
\begin{displaymath}
 P \cdot M = \bigoplus M[i]^{\oplus a_i}.
\end{displaymath}
If $P, Q \in \N[v,v^{-1}]$ and $M$ and $N$ are finitely generated
modules such that
\begin{displaymath}
 P \cdot M \cong PQ \cdot N
\end{displaymath}
we may ``cancel $P$'' and conclude (using Krull-Schmidt) that
\begin{displaymath}
 M \cong Q \cdot N.
\end{displaymath}

Given two modules $M, N \in \bMod{A_1}{A_2}$ a morphism $\phi : M \to N$ of (ungraded) $(A_1,A_2)$-bimodules is of \emph{degree i} if $\phi(M^m) \subset \phi(N^{m+i})$ for all $m \in \Z$. We denote by $\Hom(M,N)^i$ the space of all morphisms of degree $i$ and
\begin{equation*}
\Hom(M,N) = \bigoplus_{i \in \Z} \Hom(M,N)^i.
\end{equation*}
We make $\Hom(M,N)$ into an object of $\bMod{A}{B}$ by defining an action of $a \in A$ and $b \in B$ on $f \in \Hom(M,N)$ via
\begin{equation*}
  (afb)(m) = f(amb) = af(m)b
\end{equation*}
for all $m \in M$. If $M$ and $N$ are objects in $\lMod{A}$ we similarly define $\Hom_A(M,N) \in \lMod{A}$. (We will only omit the subscript for morphisms of bimodules but will sometimes write $\Hom_{A_1\!-\!A_2}(M,N)$ if the context is not clear. We \emph{never} use $\Hom(M,N)$ to denote external (i.e. degree 0) homomorphisms.)

One may check that, if $P, Q \in \N[v,v^{-1}]$, then
\begin{equation*}
  \Hom(P \cdot M, Q \cdot N) \cong \overline{P}Q \cdot \Hom(M,N).
\end{equation*}
where $P \mapsto \overline{P}$ denotes the involution on $\N[v,v^{-1}]$ sending $v$ to $v^{-1}$.

In the sequel we will need various natural isomorphisms between homomorphism spaces, which we recall here.  Let $A_1, A_2$ and $A_3$ be three rings satisfying (\ref{eq:ringcond}). Let $M_{ij} \in \bMod{A_i}{A_j}$ for $i, j \in \{1,  2, 3 \}$. In $\bMod{A_1}{A_3}$ one has isomorphisms
\begin{align}
  \Hom_{A_1\!-\!A_3}(& M_{12}\otimes_{A_2} M_{23}, M_{13}) \nonumber \\
 \label{eq:adj1} & \cong \Hom_{A_1\!-\!A_2}(M_{12}, \Hom_{A_3}(M_{23}, M_{13})) \\
& \label{eq:adj2}
  \cong \Hom_{A_2\!-\!A_3}(M_{23}, \Hom_{A_1}(M_{12}, M_{13}))
\end{align}
because all three modules describe the same subset of maps $M_{12} \times M_{23} \to M_{13}$. For similar reasons, if $N \in \lMod{A_1}$ one has an isomorphism in $\lMod{A_1}$,
\begin{align}
  \label{eq:adj3}
  \Hom_{A_1}( & M_{12} \otimes_{A_2}  M_{23}, N) \cong \Hom_{A_2}(M_{23}, \Hom_{A_1}(M_{12}, N)).
\end{align}
Furthermore, this is an isomorphism in $\lMod{A_1\otimes A_3}$ if both sides are made into $A_1\otimes A_3$-modules in the only natural way possible.

If $M_{32}$ is graded free of finite rank as a right $A_2$-module one has an isomorphism
\begin{equation}
  \label{eq:adj4}
  \Hom_{A_2}(M_{32}, M_{12}) \cong M_{12} \otimes_{A_2} \Hom_{A_2}(M_{32}, A_2)
\end{equation}
in $\bMod{A_1}{A_3}$.

\section{Invariants, graphs and standard modules}  \label{sec:stand}

In this section we introduce standard modules, which are the building blocks of Soergel bimodules. Due to the inductive definition of Soergel bimodules, it will be necessary to be able to precisely describe the effect of extension and restriction of scalars on standard modules. Restriction turns out to be straightforward (Lemma \ref{cor:res}). Extension of scalars is more complicated, and we first need to define certain auxillary $(R,R)$-bimodules $R(p)$.

The structure is as follows.  In Section \ref{subsec:reffaith} we define what it means for a representation to be reflection faithful and recall some facts about invariant subrings. In the Section \ref{subsec:standard} we define standard objects
 and analyse the effect of restriction of scalars on them. In Section \ref{subsec:enlarge} we define the bimodules $R(p)$ and in Section \ref{subsec:extend} we use them to describe extension of scalars. In Section \ref{sec:support} we introduce the notion of support, which will be essential in what follows.

\subsection{Reflection faithful representations and invariants} \label{subsec:reffaith}

Let $(W,S)$ be a Coxeter system with reflections $T \subset
W$. A \emph{reflection faithful} representation of $W$ is a finite
dimensional representation $V$ of $W$ such that:
\begin{enumerate}
\item The representation is faithful;
\item \label{reflectionfaithfulcondition}
We have $\codim V^w = 1$ if and only if $w$ is a reflection.
\end{enumerate}
If $W$ is finite it is straightforward to see that the geometric
representation over $\mathbb{R}$ (\cite{Hu}, Proposition 5.3) satisfies the
second condition above,  because it preserves a positive definite
bilinear form. If $W$ is
infinite, this is not the case in general. However, one has
(\cite[Proposition 2.1]{SoBimodules}):

\begin{proposition} Given any Coxeter system $(W,S)$ there exists a reflection faithful representation of $W$ on a finite dimensional real vector space $V$. \end{proposition}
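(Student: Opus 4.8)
The plan is to produce, for a given Coxeter system $(W,S)$, an explicit finite dimensional real representation and verify the two conditions. The natural candidate is a mild thickening of the geometric representation: take $V = V_{\mathrm{geo}} \oplus U$ where $V_{\mathrm{geo}}$ is the usual geometric representation of $(W,S)$ over $\R$ (with its $W$-invariant symmetric bilinear form $B$), and $U$ is chosen to repair faithfulness and the reflection-faithfulness condition. One cheap choice for $U$ is a large permutation-type representation, e.g. the $\R$-span of a free $W$-set, or a regular-representation summand $\R[W']$ for a suitable finite or residually finite quotient $W'$ of $W$; the point is only to add enough ``room'' that no non-reflection ends up fixing a hyperplane. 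First I would recall that the geometric representation is already a representation by reflections, so each $s \in S$ (and hence each $t \in T$, being conjugate to some $s$) acts on $V_{\mathrm{geo}}$ fixing a hyperplane, and that $V_{\mathrm{geo}}^w$ has codimension $\le |T \cap \{\text{reflections below } w\}|$ in general — in particular reflections do fix hyperplanes in $V_{\mathrm{geo}}$ already, so condition (\ref{reflectionfaithfulcondition}) only needs the ``if and only if'' in the harder direction: non-reflections must \emph{not} fix a hyperplane of $V$.

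Concretely I would argue as follows. Condition (1), faithfulness, is immediate once $U$ contains a faithful summand, or even just from $V_{\mathrm{geo}}$ if $W$ is infinite and one knows the geometric representation is faithful; for finite $W$ one can take $U = 0$. For condition (\ref{reflectionfaithfulcondition}): if $t \in T$ is a reflection, then $t$ is conjugate to some $s \in S$, $t$ acts on $V_{\mathrm{geo}}$ as a reflection, hence $V^t = V_{\mathrm{geo}}^t \oplus U^t$; I would arrange $U$ so that $U^t$ has codimension $0$ in $U$ (e.g. $U$ a permutation module on a set on which reflections act trivially, or more carefully an averaged construction), so that $\codim V^t = 1$. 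Conversely, suppose $w \in W$ is \emph{not} a reflection and $\codim V^w = 1$. Then in particular $\codim_{V_{\mathrm{geo}}} V_{\mathrm{geo}}^w \le 1$. The key combinatorial input is a classification of elements of a Coxeter group that fix a hyperplane in the geometric representation: this is a result I would cite (it is, up to the eventual citation to \cite{SoBimodules}, exactly the content being proved, so the honest thing is to follow Soergel's Proposition 2.1) — an element fixing a codimension-$\le 1$ subspace of $V_{\mathrm{geo}}$, together with the extra coordinates in $U$ pinning down its action further, is forced to be a reflection. The role of $U$ is precisely to kill the ``accidental'' hyperplane-fixers: for each of the (finitely many, in each bounded length range — but possibly infinitely many overall) non-reflections $w$ with $\codim_{V_{\mathrm{geo}}} V_{\mathrm{geo}}^w \le 1$, adding a suitable coordinate forces $\codim V^w \ge 2$.

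The hard part, and the reason this is genuinely Soergel's proposition rather than a one-line corollary of the geometric representation, is controlling the possibly infinitely many potential ``accidental'' hyperplane-fixing non-reflections in an infinite Coxeter group while keeping $V$ finite dimensional. The honest plan is therefore: (i) reduce via the above to the statement that one can find a single finite-dimensional $U$ with $U^w$ of positive codimension for every non-reflection $w$ that fixes a hyperplane of $V_{\mathrm{geo}}$; (ii) observe that such $w$ lie in proper parabolic-type subconfigurations — more precisely, an element fixing a hyperplane of $V_{\mathrm{geo}}$ is (after the standard analysis of the geometric representation) contained in a reflection subgroup of small rank, so it suffices to treat finite-rank sub-situations and patch — and (iii) conclude. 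Given that the excerpt explicitly says ``one has (\cite[Proposition 2.1]{SoBimodules})'', the cleanest proof to present is simply to invoke that reference, sketch the construction $V = V_{\mathrm{geo}} \oplus (\text{permutation module})$ as above, and verify the two axioms modulo the cited lemma on which Coxeter-group elements can fix a hyperplane. I would write the proof at that level of detail, flagging the geometric lemma as the load-bearing external input.
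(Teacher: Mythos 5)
The paper does not actually prove this proposition: it is stated and attributed to Soergel (\cite[Proposition 2.1]{SoBimodules}), and in the end your proposal falls back on the same citation, which matches what the paper does.

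What does not work is the intermediate construction $V = V_{\mathrm{geo}} \oplus U$ that you offer as the natural candidate. For infinite $W$, neither choice of $U$ you name is available in finite dimensions: the $\R$-span of a free $W$-set is infinite dimensional once $W$ is infinite, and if $W'$ is any finite quotient of an infinite Coxeter group then the (infinite) set $T$ of reflections is identified in $W'$ to finitely many classes, and the (infinite) kernel of $W \to W'$ acts trivially on $\R[W']$; so for any non-reflection $w$ in the kernel one has $\R[W']^w = \R[W']$, and the extra summand does nothing to push $\codim V^w$ up to $2$. Your step (ii) -- that accidental hyperplane-fixers in $V_{\mathrm{geo}}$ sit inside small-rank reflection subconfigurations and can be ``patched'' by finitely many extra coordinates -- is the heart of the matter and is asserted rather than argued: there is no bound given that makes the collection of such $w$ finite, and no mechanism by which one finite-dimensional $U$ handles them all uniformly. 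These are exactly the obstructions Soergel's proof has to deal with, so invoking the reference is the right call; just do not present the $V_{\mathrm{geo}} \oplus U$ sketch as an outline of that proof, since as written it does not close.
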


Let $V$ be a reflection faithful representation over an infinite field
$k$ of characteristic not equal to 2. Let $R$ be the graded ring of regular functions on $V$, with $V^*$ sitting in degree 2. \label{lab:R}
Because $k$ is infinite we may identify $R$ with
the symmetric algebra on $V^*$. As $W$ acts on $V$ it also acts on $R$
on the left via $(wf)(\lambda) = f(w^{-1}\lambda)$ for all $\lambda \in V$

If $w \in W$ we denote by $R^w$ the invariants under $w$. If $I
\subset S$ we denote by $R^I$ the invariants under
$W_I$. \label{lab:RI} Recall the definition of $\tPoinc(I)$ from
Section \ref{subsec:coxgroups}. Throughout this paper we assume:
\begin{equation} \label{assump:freeness}
\begin{array}{c}
\text{For all finitary $I \subset S$, $R$ is graded free
  over $R^I$,} \\
\text{and one has an isomorphism of graded $R^I$-modules:} \\
R \cong \tPoinc(I) \cdot R^I.
\end{array}
\end{equation}

\begin{remark} If $k$ is of characteristic 0 \eqref{assump:freeness}
  is always true. If $W$ is a finite Weyl group, then $W$ acts on the
  weight lattice of the corresponding root system and one obtains a
  representation over any field by extension of scalars. In this case,
  \eqref{assump:freeness} is true if the characteristic of $k$ is not
  a torsion prime for $W$ (see \cite{Dem}).
\end{remark}

Throughout we fix a reflection faithful representation such that
\eqref{assump:freeness} holds. The above assumptions imply (see
\cite[Corollary 2.1.4 and Corollary 3.2.3]{SSB}):

\begin{lemma} \label{cor:relinv} Let $I \subset J$ be finitary.
\begin{enumerate}
\item 
The $R^J$-module $R^I$ is a graded free
one has an isomorphism:
\begin{equation*}
R^I \cong \frac{ \tPoinc(J)}{\tPoinc(I)} \cdot R^J.
\end{equation*}
\item 
\label{cor:reldual}
We have an isomorphism:
\begin{displaymath}
\Hom_{R^{J}}(R^{I}[\ell(\wo{J}) - \ell(\wo{I})], R^{J}) \cong R^{I}[\ell(\wo{J}) - \ell(\wo{I})].
\end{displaymath}
\end{enumerate}
\end{lemma}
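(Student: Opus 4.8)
The plan is to deduce both parts from the freeness assumption \eqref{assump:freeness} together with the self-duality of the Poincar\'e polynomials; part (1) does the real work, and part (2) is then a short computation with graded ranks.

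\emph{Part (1).} First I would show that $R^I$ is graded free over $R^J$. Since $\cha k = 0$, the Reynolds operator $\rho_I \colon R \to R^I$, $f \mapsto |W_I|^{-1}\sum_{w \in W_I} wf$, is a degree-zero $R^I$-linear (hence $R^J$-linear) retraction of the inclusion $R^I \hookrightarrow R$, so $R^I$ is a graded direct summand of $R$ as an $R^J$-module. By \eqref{assump:freeness} applied to $J$, $R$ is graded free of finite rank over $R^J$, and $R^I$ is finitely generated over $R^J$ since $R$ is module-finite over the Noetherian ring $R^J$. A finitely generated graded module over the connected graded ring $R^J$ that is a direct summand of a graded free module is graded free (graded Nakayama), so $R^I \cong P \cdot R^J$ for some $P \in \N[v,v^{-1}]$. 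To identify $P$, observe that by \eqref{assump:freeness},
\[
\tPoinc(J) \cdot R^J \cong R \cong \tPoinc(I) \cdot R^I \cong (\tPoinc(I)P) \cdot R^J
\]
as graded $R^J$-modules; comparing graded ranks (well defined by Krull--Schmidt) gives $\tPoinc(I)P = \tPoinc(J)$, i.e. $P = \tPoinc(J)/\tPoinc(I)$, as claimed. (To avoid the hypothesis $\cha k = 0$ and use only \eqref{assump:freeness}, one can instead argue that $R^I$ is flat over $R^J$ by faithfully flat descent along $R^I \subset R$, and that finitely generated flat modules over a graded-local Noetherian ring are free.)

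\emph{Part (2).} Write $d = \ell(\wo{J}) - \ell(\wo{I})$ and $P = \tPoinc(J)/\tPoinc(I)$, so that $R^I[d] \cong (v^{d}P) \cdot R^J$ by part (1). Using $\Hom_{R^J}(R^J[a], R^J) \cong R^J[-a]$ (equivalently the identity $\Hom(P'\cdot M, Q\cdot N) \cong \overline{P'}Q \cdot \Hom(M,N)$ recalled in Section \ref{sec:not}), we get
\[
\Hom_{R^J}(R^I[d], R^J) \cong \overline{v^{d}P} \cdot R^J = (v^{-d}\overline{P}) \cdot R^J.
\]
Since two finitely generated graded free $R^J$-modules are isomorphic iff their graded ranks coincide, it remains to check $v^{-d}\overline{P} = v^{d}P$, i.e. $\overline{P} = v^{2d}P$. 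This is precisely the self-duality of the Poincar\'e polynomial: from $\overline{\Poinc(I)} = \Poinc(I)$ one gets $\overline{\tPoinc(I)} = v^{2\ell(\wo{I})}\tPoinc(I)$, and likewise for $J$, so $\overline{P} = \overline{\tPoinc(J)}/\overline{\tPoinc(I)} = v^{2(\ell(\wo{J}) - \ell(\wo{I}))}P = v^{2d}P$.

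I expect the only genuinely nontrivial point to be the graded freeness of $R^I$ over $R^J$ in part (1); once that (together with the accompanying finite-generation and connectedness observations) is secured, the rank identity and all of part (2) are formal.
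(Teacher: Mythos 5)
The paper itself does not prove this lemma; it cites Corollaries 2.1.4 and 3.2.3 of \cite{SSB}, so there is no in-text proof to compare against. Your part (1) is correct and self-contained: the Reynolds operator (or, avoiding the characteristic-zero hypothesis, descent of flatness along the faithfully flat inclusion $R^I \hookrightarrow R$, using that $R$ is $R^J$-free by \eqref{assump:freeness}) exhibits $R^I$ as a finitely generated graded projective module over the connected graded Noetherian ring $R^J$, hence graded free, and the rank is forced by Krull--Schmidt applied to $\tPoinc(I) \cdot R^I \cong R \cong \tPoinc(J) \cdot R^J$.

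Part (2), however, has a genuine gap. You establish $\Hom_{R^J}(R^I[d], R^J) \cong R^I[d]$ only as graded \emph{$R^J$}-modules, by comparing graded ranks and verifying $\overline{P} = v^{2d}P$. But the lemma is invoked throughout the paper --- in Lemma \ref{lem:Dnab} (``In $\lMod{R^K}$ we have isomorphisms $\dots$''), in the proof of Lemma \ref{lem:homadj} (where the identification is substituted inside a tensor product over $R^K$), and in Proposition \ref{prop:transcommute} (inside a tensor product $\otimes_{R^J}$ landing in $\bMod{R^I}{R^K}$) --- as an isomorphism of \emph{$R^I$}-modules; equivalently, it asserts that $\Hom_{R^J}(R^I, R^J)$ is free of rank \emph{one} over $R^I$, with generator in degree $-2d$. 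Matching graded $R^J$-ranks cannot by itself produce an $R^I$-linear isomorphism: two $R^I$-modules can be $R^J$-free of the same graded rank without being $R^I$-isomorphic. The missing input is a duality statement: $R^I$ is Gorenstein over $R^J$ --- which does hold here, since \eqref{assump:freeness} makes $R$ graded free of finite rank over $R^I$ and the Gorenstein property of the polynomial ring $R$ descends along such extensions (indeed $R^I$ is itself a graded polynomial algebra) --- so that the relative dualizing module $\Hom_{R^J}(R^I, R^J)$ is $R^I$-invertible. Concretely, one exhibits an $R^J$-linear degree $-2d$ trace $\partial : R^I \to R^J$ (a composition of Demazure operators along a reduced expression for $\wo{J}\wo{I}$) and checks that $(a,b) \mapsto \partial(ab)$ is a perfect $R^J$-bilinear pairing on $R^I$; this produces the canonical $R^I$-linear isomorphism $R^I[2d] \to \Hom_{R^J}(R^I, R^J)$, and your shift and self-duality computation then correctly pins the grading down. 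The formal bookkeeping in your part (2) is fine, but the rank-one-over-$R^I$ statement is precisely what needs proof and the argument as written does not supply it.
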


Because of our assumptions all reflections $t \in T$ act via
\begin{equation} \label{eq:reflectact}
  t(\lambda) = \lambda - 2h_t(\lambda)v_t
\end{equation}
for some linear form $h_t \in V^*$ and vector $v_t \in V$. The pair $(h_t, v_t)$ is only determined up to a choice of scalar. However, one may choose $h_t \in V^*$ such that
\begin{equation} \label{eq:hident}
  xh_s = h_t \text{ if $xsx^{-1} = t$}
\end{equation}
where we regard $V^*$ as a $W$-module via the contragredient
action. The elements $h_t \in V^*$ (which give equations for the
hyperplane $V^t$) will be important in the sequel. For this reason we
make a fixed choice of the set $\{ h_t \;|\; t \in T \}$ with the only
restriction being that (\ref{eq:hident}) should hold. \label{lab:ht}
An immediate consequence of the condition 2) in the definition of reflection faithful is the following (\cite[Bemerkung 1.6]{SoBimodules}):
\begin{lemma} The elements of $\{ h_t \; | \; t \in T \} \subset V^*$ are pairwise linearly independent. \end{lemma}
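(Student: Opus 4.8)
The statement to be proved is that the linear forms $\{h_t \mid t \in T\} \subset V^*$ are pairwise linearly independent, where $h_t$ is (a fixed choice of) an equation for the reflecting hyperplane $V^t$ of the reflection $t$. The plan is to argue by contradiction: suppose $h_s$ and $h_t$ are linearly dependent for two distinct reflections $s \ne t$. Since each $h_t$ is nonzero (its zero locus $V^t$ has codimension one by the reflection-faithfulness hypothesis, so $h_t \ne 0$), linear dependence of $h_s$ and $h_t$ means $h_s = \lambda h_t$ for some nonzero scalar $\lambda \in k$, hence $V^s = \ker h_s = \ker h_t = V^t$; the two reflections share a reflecting hyperplane. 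I would then show this forces $s = t$, contradicting our assumption.

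The key step is to extract an element of $W$ fixing a codimension-one subspace which is \emph{not} a reflection, contradicting condition (2) of reflection-faithfulness. Using \eqref{eq:reflectact}, write $s(\lambda) = \lambda - 2h_s(\lambda)v_s$ and $t(\lambda) = \lambda - 2h_t(\lambda)v_t$. Consider the composite $st$ (or $st^{-1} = st$, since $t$ is an involution). On the common hyperplane $H := V^s = V^t$, both $s$ and $t$ act as the identity, so $st$ fixes $H$ pointwise; thus $\codim V^{st} \le 1$. If $st$ is a reflection then $\codim V^{st} = 1$ and one checks directly, using that $\cha k \ne 2$, that the only possibility is $v_s$ and $v_t$ being proportional, which (together with $h_s \propto h_t$ and the normalization $h_s(v_s) = h_t(v_t) = 1$ forced by $s^2 = t^2 = \id$) gives $s = t$. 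If $st$ is \emph{not} a reflection, then since it fixes the codimension-one subspace $H$, condition (2) of reflection-faithfulness forces $st = \id$, i.e.\ $s = t^{-1} = t$. Either way we reach the contradiction $s = t$.

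The part requiring a little care — and which I expect to be the main (though still modest) obstacle — is the explicit linear-algebra computation showing that two reflections with the same reflecting hyperplane must coincide once one has pinned down the normalizations. Concretely: if $V^s = V^t = H = \ker h$, then both $s$ and $t$ act trivially on $H$ and by $-1$-eigenvalue considerations on a complementary line; since $\cha k \ne 2$ the eigenspace decomposition $V = H \oplus V_{-1}$ for each of $s, t$ is honest, and the $(-1)$-eigenline of $s$ is $k v_s$, of $t$ is $k v_t$. But $V^{st} \supseteq H$ already has codimension $\le 1$; if $v_s$ and $v_t$ are not proportional, $st$ is a nontrivial element fixing the codimension-one space $H$ but acting as a transvection (not semisimple) on $V_s + V_t$ when $\cha k \ne 2$ — hence not a reflection and not the identity, contradicting faithfulness. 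So $v_s \propto v_t$, and combined with $h_s \propto h_t$ this gives $s = t$. This is exactly the argument of \cite[Bemerkung 1.6]{SoBimodules}, and the proof is essentially a transcription of it into the present notation; no new ideas beyond the careful use of $\cha k \ne 2$ and condition (2) of the definition are needed.
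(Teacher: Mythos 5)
Your proof is correct, and it is essentially the argument the paper points to: the lemma is stated there without proof, with a citation to Soergel's Bemerkung 1.6, and your reduction (proportionality of $h_s$ and $h_t$ forces $V^s=V^t$, whence $st$ is either trivial or a non-identity element with codimension-one fixed space acting as a non-semisimple transvection, which cannot be a reflection since reflections are involutions and $\cha k \ne 2$, contradicting condition (2)) is exactly that standard argument. The only cosmetic point is that the final contradiction is with condition (2) of reflection-faithfulness rather than with faithfulness alone, which is what you clearly intend.
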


\subsection{Singular standard modules} \label{subsec:standard}

In this section we define ``standard modules''. These are graded $(R^I,R^J)$-bimodules indexed by triples $(I,p, J)$ where $I, J \subset S$ are finitary and and $p \in \W{I}{J}$ is a double coset.

\begin{definition} \label{lab:standard}
Let $I, J \subset S$ be finitary, $p \in \W{I}{J}$ and set $K = I \cap p_-Jp_-^{-1}$. The \emph{standard module indexed by $(I, p, J)$}, denoted $\RS{I}{p}{J}$, is the ring $R^K$ of $W_{K}$-invariant functions in $R$. We make $\RS{I}{p}{J}$ into an object in $\bMod{R^I}{R^J}$ by defining left and right actions as follows:
  \begin{align*}
    r \cdot m &= rm & \text{for $r \in R^I$ and $m \in \RS{I}{p}{J}$}\\
 m \cdot r &= m(p_-r) & \text{for $m \in \RS{I}{p}{J}$ and $r \in R^J$}
  \end{align*}
(where $rm$ and $(p_-r)m$ denotes multiplication in $R^{K}$). If $I = J = \emptyset$ we write $R_w$ instead of $\RS{I}{w}{J}$.
\end{definition}

 This action is well-defined because if $r \in R^I$ (resp. $r \in
 R^J$) then $r$  (resp. $p_-r$) lies in $R^{K}$. In the future we will
 supress the dot in the notation for the left and right action.  If
 $p$ contains $\id \in W$ we sometimes omit $p$ and write simply
 $\RS{I}{}{J}$. Note that the graded rank of the standard modules may
 vary across double cosets.

The following lemma describes the effect of restriction of scalars on standard objects.

\begin{lemma} \label{cor:res}
Let $w \in W$,  $I,J \subset S$ be finitary and $p = W_IwW_J$ be the $(W_I, W_J)$-double coset containing $w$.  Then in $\bMod{R^I}{R^J}$ we have an isomorphism:
  \begin{equation*}
    _{R^I}(R_w)_{R^J} \cong \tPoinc(I, p, J) \cdot \RS{I}{p}{J}.
  \end{equation*}
Furthermore, if $I \subset K$, $J \subset L$ are finitary and $q = W_KpW_L$ then
\begin{equation*}
  _{R^K}(\RS{I}{p}{J})_{R^L} \cong \frac{\tPoinc(K,q,L)}{\tPoinc(I,p,J)} \cdot \RS{K}{q}{L}
\end{equation*}
in $\bMod{R^K}{R^L}$.
\end{lemma}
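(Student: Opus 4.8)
The plan is to reduce everything to the identification of graded ranks, since the underlying modules are just invariant subrings of $R$ and the actions are given by explicit formulas. For the first isomorphism, recall that $R_w$ is by definition $R$ with $R^I$ acting on the left in the usual way and $R^J$ acting on the right via $m \cdot r = m(wr)$. The key point is that this bimodule structure only depends on $w$ through the double coset $p$: using Howlett's theorem (Theorem \ref{Howlett}) write $w = u p_- v$ with $u \in W_I$, $v \in W_J$; then left multiplication by $u$ and the substitution $r \mapsto vr$ (which is invisible on $R^J$) give a graded $(R^I,R^J)$-bimodule isomorphism $R_w \cong R_{p_-}$, and $R_{p_-} = R$ with right action twisted by $p_-$, i.e. $R_{p_-} = \RS{\emptyset}{p}{\emptyset}$ twisted appropriately. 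So it suffices to treat $w = p_-$. Now $R_{p_-}$, regarded as a left $R^I$-module, is $R$ with its usual $R^I$-action, while $\RS{I}{p}{J} = R^K$ with $K = I \cap p_- J p_-^{-1}$. By assumption \eqref{assump:freeness} applied inside $R^I$-mod (more precisely via Lemma \ref{cor:relinv}(1) with $K \subset I$, noting $K$ is finitary), $R$ is graded free over $R^K$ with $R \cong \tPoinc(K) \cdot R^K = \tPoinc(I,p,J) \cdot R^K$ as graded $R^K$-modules, hence as graded $R^I$-modules after restriction along $R^I \hookrightarrow R^K$... but one must be careful: $R^I$ does not sit inside $R^K$; rather $R^K \supset R^I$. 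The correct statement is that $R_{p_-}$ as an $(R^I,R^J)$-bimodule is obtained from $R^K$ by extending scalars from $R^K$ up to $R$ on the left is wrong too. Let me restate: $R_{p_-}$ as a left $R^I$-module is just ${}_{R^I}R$, and one checks directly that $R^K$ embeds as a sub-bimodule (the $W_K$-invariants are stable under left $R^I$-multiplication since $R^I \subset R^K$, and under the twisted right $R^J$-action since $p_- R^J \subset R^K$); then $R$ is graded free over $R^K$ of graded rank $\tPoinc(I,p,J)$ as an $R^K$-module, and since both the left $R^I$- and right $R^J$-actions factor through $R^K$, this free decomposition is a decomposition of $(R^I,R^J)$-bimodules, yielding $R_{p_-} \cong \tPoinc(I,p,J)\cdot \RS{I}{p}{J}$. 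Transporting back via the isomorphism $R_w \cong R_{p_-}$ gives the first claim.

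For the second isomorphism, the strategy is the same but cleaner because there is no coset-twisting subtlety on the source side. We have $\RS{I}{p}{J} = R^{K_0}$ with $K_0 = I \cap p_- J p_-^{-1}$, with left $R^I$-action ordinary and right $R^J$-action twisted by $p_-$; restricting scalars to $R^K$ on the left and $R^L$ on the right (where $K \supset I$, $L \supset J$) just means regarding $R^{K_0}$ as an $(R^K, R^L)$-bimodule via $R^K \hookrightarrow R^I$ and $R^L \hookrightarrow R^J$. Meanwhile $\RS{K}{q}{L} = R^{K_1}$ with $K_1 = K \cap q_- L q_-^{-1}$. Since $p \subset q$ we may choose the minimal representative so that $q_- \le p_-$; imitating \cite{Ca}, Lemma 2.7.1 (exactly as invoked in the proof of Lemma \ref{lem:poinc3}) one shows $K_1 \subset K_0$, i.e. $R^{K_0} \subset R^{K_1}$, and moreover that the twisted right actions are compatible. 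Then $R^{K_0}$ is graded free over $R^{K_1}$ by Lemma \ref{cor:relinv}(1) (both finitary, $K_1 \subset K_0$) of graded rank $\tPoinc(K_1)/\tPoinc(K_0) = \tPoinc(K,q,L)/\tPoinc(I,p,J)$, and because the $(R^K, R^L)$-bimodule structure on $R^{K_0}$ factors through $R^{K_1}$ this is a bimodule decomposition, giving the claim.

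The main obstacle is the compatibility bookkeeping around the twisted right actions: one must verify that under the inclusion $R^{K_0} \subset R^{K_1}$ the right action of $R^L$ on $\RS{I}{p}{J}$ (which is $m \cdot r = m(p_- r)$) agrees with the right action defining $\RS{K}{q}{L}$ (which is $m \cdot r = m(q_- r)$), \emph{after} identifying $R^{K_1}$ with $\RS{K}{q}{L}$. This requires $p_- r \equiv q_- r$ as functions relevant to the module structure, which is not literally true as elements of $R$ but becomes true modulo the relevant invariance — concretely, $p_- = q_- w$ for some $w$ in the appropriate parabolic, and $w r = r$ for $r \in R^L$. Pinning down the precise parabolic and the length-additivity that makes this work (again via Howlett, Theorem \ref{Howlett}, applied to $q$) is the delicate part; everything else is a routine application of \eqref{assump:freeness} and Lemma \ref{cor:relinv}. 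Once the inclusion $\RS{I}{p}{J} \subset \RS{K}{q}{L}$ is correctly set up as $(R^K,R^L)$-bimodules, the freeness statement and the rank computation follow immediately.
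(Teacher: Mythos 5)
Your treatment of the first isomorphism is fine and is essentially the paper's argument: reduce to $w=p_-$ by the two coset twists, then observe that both the left $R^I$-action and the $p_-$-twisted right $R^J$-action on $R_{p_-}$ are multiplications by elements of $R^K$ (where $K=I\cap p_-Jp_-^{-1}$), so the decomposition $R\cong\tPoinc(K)\cdot R^K$ from \eqref{assump:freeness} is automatically a decomposition in $\bMod{R^I}{R^J}$. For the second isomorphism, however, there are two genuine problems. First, your containment is backwards: with $K_0=I\cap p_-Jp_-^{-1}$ and $K_1=K\cap q_-Lq_-^{-1}$, the Carter-type argument cited in the proof of Lemma \ref{lem:poinc3} gives $K_0\subset K_1$, hence $R^{K_1}\subset R^{K_0}$, not ``$K_1\subset K_0$, i.e.\ $R^{K_0}\subset R^{K_1}$'' as you wrote; as stated your invocation of Lemma \ref{cor:relinv}(1) is inconsistent with the rank $\tPoinc(K_1)/\tPoinc(K_0)$ you want.

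Second, and more seriously, the ``compatibility of the twisted right actions'' that you flag as the delicate part and defer is exactly where the plan fails as sketched. Howlett's theorem applied to $q\in\W{K}{L}$ writes $p_-=uq_-v$ with $u\in W_K$, $v\in W_L$, and the \emph{left} factor $u$ is nontrivial in general (e.g.\ $W=S_3$, $I=J=L=\emptyset$, $K=\{s\}$, $p=\{st\}$: then $q_-=t$ but $p_-=st$), so $p_-r=u(q_-r)\neq q_-r$ for $r\in R^L$; in particular the naive inclusion $R^{K_1}\subset R^{K_0}$ is \emph{not} a map of $(R^K,R^L)$-bimodules, contrary to your ``$p_-=q_-w$ with $w$ acting trivially on $R^L$''. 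The map that does intertwine the structures is $m\mapsto um$ (one checks $W_{K_0}\subset W_K\cap p_-W_Lp_-^{-1}=uW_{K_1}u^{-1}$, so it lands in $R^{K_0}$), but its image is the ring of invariants of the \emph{non-standard} reflection subgroup $W_K\cap p_-W_Lp_-^{-1}$, and graded freeness of $R^{K_0}$ over that subring with the required graded rank is not supplied by \eqref{assump:freeness} or Lemma \ref{cor:relinv}, which concern only standard parabolic invariants; you would need an extra conjugation/freeness argument here. The paper avoids all of this: it restricts the first isomorphism for $R_w$ (any $w\in p\subset q$) further to $\bMod{R^K}{R^L}$ and applies the first statement twice, once for $(I,p,J)$ and once for $(K,q,L)$, to get
\begin{equation*}
\tPoinc(I,p,J)\cdot {}_{R^K}(\RS{I}{p}{J})_{R^L} \;\cong\; {}_{R^K}(R_w)_{R^L} \;\cong\; \tPoinc(K,q,L)\cdot \RS{K}{q}{L},
\end{equation*}
and then cancels $\tPoinc(I,p,J)$ by Krull--Schmidt, using Lemma \ref{lem:poinc3} to know the quotient of the two Poincar\'e polynomials lies in $\N[v,v^{-1}]$. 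You should either adopt this cancellation argument or honestly supply the twisted embedding and the extra freeness statement it requires.
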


\begin{proof} If $v \in W_J$ then $R_w$ and $R_{wv}$ become isomorphic
  when we view them as objects in $\bMod{R}{R^J}$. Similarly, if $u
  \in W_I$ then the map $r \mapsto ur$ gives an isomorphism between
  $R_w$ and $R_{uw}$ when regarded as objects in $\bMod{R^I}{R}$. Thus
  we may  assume without loss of generality that $w = p_-$. Define $K
  = I \cap p_-Jp_-^{-1}$ so that $\tPoinc(I,p,J) = \tPoinc(K)$. The
  first isomorphism follows from the definition of $\RS{I}{p}{J}$ and
  the decomposition (see \eqref{assump:freeness})
  \begin{equation*}
    R \cong \tPoinc(K) \cdot R^{K}.
  \end{equation*}

For the second statement note that, by the transitivity of restriction and the above isomorphism we have
\begin{equation*}
  \tPoinc(I,p,J) \cdot {}_{R^K}(\RS{I}{p}{J})_{R^L} = \tPoinc(K,q,L) \cdot \RS{K}{q}{L} \quad \text{ in $\bMod{R^K}{R^L}$}.
\end{equation*}
As $\tPoinc(K,q,L)/\tPoinc(I,p,J) \in \N[v,v^{-1}]$ by Lemma \ref{lem:poinc3} we may divide by $\tPoinc(I,p,J)$. The claimed isomorphism follows. \end{proof}

\subsection{Enlarging the regular functions} \label{subsec:enlarge}

Our ultimate aim for the rest of this section is to understand the effect of extending scalars on standard modules. However, in order to do this we need to introduce certain auxillary modules $R(X) \in \bMod{R}{R}$ corresponding to finite subsets $X \subset W$.

Given $w \in W$ we define its (twisted) graph
\begin{displaymath}
\Gr{}{w}{} = \{ (w\lambda, \lambda) \; | \; \lambda \in V \}
\end{displaymath}
which we view as a closed subvariety of $V \times V$. Given a finite subset
$X \subset W$ we denote by $\Gr{}{X}{}$ the subvariety
\begin{equation*}
\Gr{}{X}{} = \bigcup_{w \in X} \Gr{}{w}{}.
\end{equation*}
We will denote by $\SO(\Gr{}{X}{})$ the regular functions on $\Gr{}{X}{}$ which  has the structure of an $R$-bimodule via the inclusion $\Gr{}{X}{} \hookrightarrow V\times V$.

For all $x \in  W$ consider the inclusion
\begin{align*}
  i_x : V & \hookrightarrow V \times V \\
\lambda & \mapsto (\lambda, x^{-1}\lambda).
\end{align*}
This provides an isomorphism of $V$ with $\Gr{}{x}{}$ and an explicit identification of $R_x$ and $\SO(\Gr{}{x}{})$ as $R$-bimodules.

The following lemma will be important in the next section (its proof follows by the same arguments as
  \cite[Lemma 2.2.2]{SoHarishChandra}).

\begin{lemma} \label{lem:SoergelIso}
Let $I \subset S$ be finitary. We have an isomorphism of graded $k$-algebras
\begin{equation*}
  R \otimes_{R^I} R \cong \SO(\Gr{}{W_I}{}).
\end{equation*}
\end{lemma}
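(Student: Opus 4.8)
The plan is to exhibit an explicit algebra map and check it is an isomorphism by a graded-dimension count. First I would define
\begin{equation*}
\varphi : R \otimes_{R^I} R \longrightarrow \SO(\Gr{}{W_I}{}), \qquad f \otimes g \mapsto \big((w\lambda,\lambda) \mapsto f(w\lambda)g(\lambda)\big),
\end{equation*}
noting that this is well defined: if $r \in R^I$, then for $(w\lambda,\lambda)\in \Gr{}{W_I}{}$ we have $w^{-1} \in W_I$ so $r(w\lambda) = r(\lambda)$, which is exactly the relation $fr\otimes g = f\otimes rg$ being sent to the same function. It is manifestly a homomorphism of graded $k$-algebras since the multiplication on both sides is pointwise/componentwise. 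So the content is bijectivity.

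Next I would handle surjectivity. The variety $\Gr{}{W_I}{} = \bigcup_{w\in W_I} \Gr{}{w}{}$ is a union of $|W_I|$ linear subspaces of $V\times V$, each isomorphic to $V$ via $i_w^{-1}$. Its ring of regular functions $\SO(\Gr{}{W_I}{})$ is generated as a $k$-algebra by the restrictions of linear functions on $V\times V$ (since $\SO(V\times V) = R\otimes R$ surjects onto $\SO(\Gr{}{W_I}{})$ — here I use that $V$ is infinite so regular functions are polynomials). Every linear function on $V\times V$ is a sum of terms $f\otimes 1$ and $1\otimes g$ with $f,g\in V^*$, and these are clearly in the image of $\varphi$; hence $\varphi$ is surjective.

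For injectivity I would compare graded dimensions, which is where assumption \eqref{assump:freeness} enters. On the source, $R\otimes_{R^I} R \cong \tPoinc(I)\cdot R$ as a graded $k$-vector space (using that $R$ is graded free over $R^I$ of graded rank $\tPoinc(I)$), so its graded dimension is $\tPoinc(I)\cdot (\dim_{gr} R)$. On the target, the normalization map $\coprod_{w\in W_I} \Gr{}{w}{} \to \Gr{}{W_I}{}$ gives an injection $\SO(\Gr{}{W_I}{}) \hookrightarrow \prod_{w\in W_I}\SO(\Gr{}{w}{}) \cong \prod_{w\in W_I} R$, so $\dim_{gr}\SO(\Gr{}{W_I}{}) \le |W_I|\cdot \dim_{gr} R$. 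The subtle point is that $\tPoinc(I)\cdot(\dim_{gr}R)$ is \emph{not} the same thing as $|W_I|\cdot(\dim_{gr}R)$ termwise — $\tPoinc(I)$ is a genuine Laurent polynomial, not the integer $|W_I|$ — so a naive dimension count does not immediately close the argument. The fix is to produce, conversely, a lower bound: since $\varphi$ is surjective, $\dim_{gr}\SO(\Gr{}{W_I}{}) \ge \dim_{gr}(R\otimes_{R^I}R)$ in each degree, hence $\ker\varphi = 0$ iff the two graded dimensions agree, and one reduces to showing the pairwise distinct hyperplanes $\{h_t\}$ (Lemma just before this subsection) force the linear subspaces $\Gr{}{w}{}$ to be "in general enough position" that $\SO(\Gr{}{W_I}{})$ is exactly $R\otimes_{R^I}R$; this is precisely the content imported from \cite[Lemma 2.2.2]{SoHarishChandra}.

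The main obstacle is therefore injectivity: it is not formal and genuinely uses both the freeness assumption \eqref{assump:freeness} and reflection-faithfulness (via the linear independence of the $h_t$), exactly as in Soergel's argument. I expect the cleanest writeup to simply invoke \cite[Lemma 2.2.2]{SoHarishChandra} for the structural comparison, after setting up $\varphi$ and checking surjectivity directly as above — which is why the excerpt's parenthetical remark says the proof "follows by the same arguments."
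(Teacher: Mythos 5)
Your map $\varphi$, the well-definedness check, and the surjectivity argument are all correct. The problem is in the injectivity paragraph, where the key inequality is reversed: a surjection $\varphi \colon R\otimes_{R^I}R \twoheadrightarrow \SO(\Gr{}{W_I}{})$ gives $\dim_{gr}\SO(\Gr{}{W_I}{}) \le \dim_{gr}(R\otimes_{R^I}R)$ termwise, not $\ge$. Corrected, you have two \emph{upper} bounds on the graded dimension of $\SO(\Gr{}{W_I}{})$ (one from the surjection, one from the embedding into $\prod_{w\in W_I}R$), which tells you nothing about $\ker\varphi$. So the ``fix'' as written does not close the argument even in outline, and no termwise graded-dimension count of this shape is available --- you correctly observe that $\tPoinc(I)$ and $|W_I|$ differ as Laurent polynomials, but the resolution is not a sharper graded count.

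What actually closes the argument (this is what happens in the cited Soergel lemma, and it is reused essentially verbatim in the proof of Theorem~\ref{thm:ind} of the present paper) is a passage to the fraction field. Consider the composite
\begin{equation*}
R\otimes_{R^I}R \xrightarrow{\ \varphi\ } \SO(\Gr{}{W_I}{}) \hookrightarrow \bigoplus_{w\in W_I} R.
\end{equation*}
Assumption~\eqref{assump:freeness} makes $R\otimes_{R^I}R$ graded free as a left $R$-module, hence torsion-free; so it suffices to prove the composite is injective after applying $\Quot R\otimes_R(-)$. After that base change the source becomes a $(\Quot R)$-vector space of dimension $\tPoinc(I)\big|_{v=1}=|W_I|$, and because the twisted graphs $\Gr{}{w}{}$, $w\in W_I$, are pairwise distinct (faithfulness of the representation), they are generically disjoint, so $\Quot R\otimes_R \SO(\Gr{}{W_I}{})$ also has dimension $|W_I|$. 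A surjection of $(\Quot R)$-vector spaces of equal finite dimension is an isomorphism, and torsion-freeness of the source then forces $\ker\varphi=0$ over $R$. This is where freeness and faithfulness genuinely enter; the ``general position'' you gesture at is precisely the generic disjointness of the graphs, used over $\Quot R$ rather than degree by degree.
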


Recall that, for all $t \in T$, we have chosen an equation $h_t \in V^*$ for the hyperplane fixed by $t$. We will denote by $(h_t) \subset R$ the ideal generated by $h_t$.
We now come to the definition of the $R$-bimodules $R(X)$.

\begin{deprop} \label{prop:Rpdef}
Let $X \subset W$ be a finite subset. Consider the subspace
\begin{equation*}
R(X) = \left \{ f = (f_x) \in \bigoplus_{x \in X} R \; \middle | \; 
\begin{array}{c} f_x - f_{tx} \in (h_t) \\ \text{ for all } t \in T
\text{ and } x, tx \in X\end{array}  \right \} \subset \bigoplus_{x \in X} R.
\end{equation*}
Then $R(X)$ is a graded $k$-algebra under componentwise multiplication and becomes an object of $\bMod{R}{R}$ if we define left and right actions of $r \in R$ via
\begin{align*}
 (rf)_x & = rf_x \\
 (fr)_x & = f_x(xr)
\end{align*}
for $f = (f_x) \in R(X)$. If a pair of subgroups $W_1, W_2 \subset W$ satisfy $W_1X = X = XW_2$ then $R(X)$ carries commuting left $W_1$- and right $W_2$-actions if we define
\begin{align*}
  (uf)_x &= uf_{u^{-1}x} & \text{for $u \in W_1$},\\
  (fv)_x &= f_{xv^{-1}} & \text{for $v \in W_2$}.
\end{align*}
If $X = \{ x \}$ is a singleton then $R(X) \cong R_x$. If $X = \{x, y \}$ consists of two elements we write $R_{x,y}$ instead of $R(X)$.
\end{deprop}

\begin{proof} It is straightforward to check that $R(X)$ is a graded subring containing $k$. In order to see that the left and right $R$-operations preserve $R(X)$ it is therefore enough to check that $(r)_{x \in X}$ and $(xr)_{x \in X}$ are elements of $R(X)$ for all $r \in R$. This is clear for $(r)_{x \in X}$ and for $(xr)_{r \in X}$ it follows from the formula $tg = g  - g(v_t)h_t$ for $g \in V^*$. The right $W_2$-operation clearly preserves $R(p)$. For the left $W_1$-operation if $x, tx \in X$ one has, using (\ref{eq:hident}),
\begin{equation*}
  (wf)_x - (wf)_{tx} = w(f_{w^{-1}x} - f_{w^{-1}tx}) \in (w(h_{w^{-1}tw})) = (h_t).
\end{equation*}
The operations clearly commute and the fact that $R(X) \cong R_x$ if $X = \{ x \}$ is immediate from the definitions.
\end{proof}

\begin{remark} \rule{0mm}{5mm}
\begin{enumerate}
\item We have defined $R(X)$ for general finite subsets $X \subset W$ but will only ever need two cases: 
\begin{enumerate}
\item  $X = p$ is a $(W_I, W_J)$-double coset for finitary $I, J \subset S$.
\item $X  = \{ x, tx \}$ for some $x\in W$ and reflection $t \in T$.
\end{enumerate}
\item
The graded ring $R(X)$ has a natural description in terms of the Bruhat graph of $W$. Let $\mathcal{G}_X$ be the full subgraph of the Bruhat graph of $W$ with vertices $X$. Then an element of $R(X)$ can be thought of as a choice of $f_x \in R$ for every vertex $x \in \mathcal{G}_p$, subject to the conditions that $f_x - f_y$ lies in $(h_t)$ whenever $x$ and $y$ are connected by an edge labelled $t$. Under this description the left action of $R$ is just the diagonal action, and the right action is the diagonal action ``twisted'' by the label of each vertex. The left $W_1$- and right $W_2$-actions are induced (with a twist for the action of $W_1$) by the left and right multiplication action of $W_1$ ad $W_2$ on $X$.
\excise{
\item Assume that $G \supset B \supset T$, $S \subset W$, $V = \Lie T$ are as in the introduction and $X = p \in W_I \setminus W / W_J$. As left modules one may identify $R(X)$ with the $T$-equivariant cohomology of $P_IpP_J/B \subset G/B$.}
\end{enumerate}
\end{remark}

The following proposition gives a useful alternative description of $R(X)$.

\begin{proposition} \label{prop:exactsequence}
Let $X \subset W$ be a finite set. There exists an exact sequence in $\bMod{R}{R}$
  \begin{equation*}
    0 \to R(X) \to \bigoplus_{x \in X} R_x \to \bigoplus_{x <tx \in X \atop t \in t} R_{x}/(h_t)
  \end{equation*}
  where the maps are as described in the proof.
\end{proposition}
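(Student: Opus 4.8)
The plan is to make the exact sequence explicit by writing down the two maps and then checking exactness at each of the three terms.

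First I would define the maps. The first map $\iota : R(X) \to \bigoplus_{x \in X} R_x$ is simply the inclusion: recall from Definition/Proposition \ref{prop:Rpdef} that $R(X)$ was constructed as a subspace of $\bigoplus_{x \in X} R$, and under the identification $R_x \cong R$ of $k$-modules (with the bimodule structure twisted as in the definition) this tautological inclusion is a map of $(R,R)$-bimodules, since the left and right $R$-actions on the $x$-component of $R(X)$ are exactly $rf_x$ and $f_x(xr)$, which is precisely the bimodule structure on $R_x$. The second map $\partial : \bigoplus_{x \in X} R_x \to \bigoplus_{x < tx \in X, \, t \in T} R_x/(h_t)$ sends a tuple $(f_x)$ to the tuple whose $(x,t)$-component (for $x < tx$, both in $X$) is the image of $f_x - f_{tx}$ in $R_x/(h_t)$. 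I would note here that $R_x/(h_t) \cong R_{tx}/(h_t)$ canonically since the difference of the two bimodule structures is governed by the relation $t h_t = -h_t$ and more generally $t g \equiv g \pmod{(h_t)}$ for $g \in V^*$ (this is the formula $tg = g - g(v_t)h_t$ used in the proof of \ref{prop:Rpdef}), so the target is well-defined independently of which of $x$, $tx$ we use to index it, and $\partial$ is a bimodule map by the same computation.

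Next, exactness. At $R(X)$: $\iota$ is injective because it is an inclusion. At the last term is not asserted (the sequence is not claimed right-exact), so there is nothing to check there. The real content is exactness at the middle term $\bigoplus_{x \in X} R_x$, i.e. $\ker \partial = \operatorname{im} \iota$. The inclusion $\operatorname{im}\iota \subseteq \ker\partial$ is immediate from the defining condition $f_x - f_{tx} \in (h_t)$ for elements of $R(X)$. Conversely, if $(f_x) \in \ker\partial$ then $f_x - f_{tx} \in (h_t)$ for every pair $x, tx \in X$ with $x < tx$; but this is a symmetric condition in $x$ and $tx$ (swapping them negates the difference but keeps it in the ideal $(h_t)$), so the condition holds for \emph{all} $t \in T$ and all $x, tx \in X$, which is exactly the defining condition for membership in $R(X)$. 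Hence $(f_x) \in R(X)$ and lies in $\operatorname{im}\iota$.

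I do not expect any serious obstacle here; the statement is essentially a repackaging of the definition of $R(X)$. The only point requiring a little care is the bookkeeping around the target $R_x/(h_t)$: one must check it is a well-defined bimodule (the left and right $R$-actions agree modulo $(h_t)$ because $r$ and $xr$ differ by an element of $(h_t)$ when — wait, more precisely because for the quotient the right action through $x$ and through $tx$ coincide modulo $h_t$), and that indexing the summand by the edge $\{x,tx\}$ rather than by an ordered pair is consistent. Once those identifications are in place the proof is the two-line argument above.
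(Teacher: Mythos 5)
Your proof is correct and follows essentially the same route as the paper: the first map is the tautological inclusion, the second map is the signed difference $f_x - f_{tx}$ reduced modulo $(h_t)$ on the summand indexed by $x<tx$ (the paper writes this via components $R_x \to R_y/(h_t)$ with a sign $\epsilon_{x,tx}$), and exactness in the middle is exactly the defining condition of $R(X)$. Your observation that the quotient maps from $R_x$ and $R_{tx}$ to $R_x/(h_t)$ are both bimodule maps because $tg \equiv g \pmod{(h_t)}$ for $g \in V^*$ is the same point the paper relies on.
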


\begin{proof} The first map is the inclusion of $R(X)$ into $\bigoplus_{x \in X} R_x$ which is clearly a morphism of $R$-bimodules. We describe the second map by describing its components
  \begin{equation*}
    R_x \to R_y/(h_t).
  \end{equation*}
This map is zero if $x \notin \{ y, ty \}$. Otherwise it is given by
\begin{equation*}
  f \mapsto \epsilon_{x,  tx} f + (h_t)
\end{equation*}
where $\epsilon_{x,tx}$ is defined by
\begin{equation*}
  \epsilon_{x,tx} =  \left \{ \begin{array}{ll} 1 & \text{if $x < tx$} \\
-1 & \text{ if $x > tx$} \end{array} \right . .
\end{equation*}
This is a morphism in $\bMod{R}{R}$ because this is true of the quotient map $R_x \to  R_{y}/(h_t)$ whenever $x = y$ or $x = ty$. Lastly a tuple $(f_x) \in \oplus R_x$ is mapped to zero if $f_x = f_{tx}$ in $R_x / (h_t)$ for all $x, tx \in X$ and $t \in T$, which is exactly the condition for $(f_x)$ to belong to $R(X)$.
\end{proof}

The following lemma explains the title of this subsection.

\begin{lemma} \label{lem:Rpinjection}
Let $X \subset W$ be finite. The map
  \begin{align*}
    \rho: \SO(\Gr{}{X}{}) & \to R(X) \\
f & \mapsto (i_x^*f)_{x \in p}
  \end{align*}
is well-defined, injective and a morphism in $\bMod{R}{R}$.
\end{lemma}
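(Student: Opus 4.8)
The claim has three parts: that $\rho$ is well-defined, that it is injective, and that it is a map of $(R,R)$-bimodules. The key observation is that $\SO(\Gr{}{X}{})$ is the quotient of $\bigoplus_{x \in X}\SO(\Gr{}{x}{})$ by the relations forcing two components to agree along the (scheme-theoretic) intersection $\Gr{}{x}{}\cap\Gr{}{tx}{}$, so the real content is to identify that intersection with the vanishing locus of $h_t$ under the identification $i_x \colon V \xrightarrow{\sim} \Gr{}{x}{}$.

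First I would set up $\rho$ componentwise: an element $f \in \SO(\Gr{}{X}{})$ restricts to a regular function on each irreducible component $\Gr{}{x}{}$, and pulling back along the isomorphism $i_x$ gives $i_x^* f \in R$; assembling these gives a tuple $(i_x^* f)_{x \in X} \in \bigoplus_{x \in X} R$. That $\rho$ lands in the subspace $R(X)$ is the first thing to check: I need $i_x^* f - i_{tx}^* f \in (h_t)$ whenever $x, tx \in X$. Here I would compute the intersection $\Gr{}{x}{}\cap\Gr{}{tx}{}$ inside $V\times V$. A point $(\mu,\lambda)$ lies in both precisely when $\mu = x\lambda = (tx)\lambda'$ for suitable $\lambda, \lambda'$; using $tx = x \cdot (x^{-1}tx)$ and that $x^{-1}tx$ is again a reflection (with hyperplane $V^{x^{-1}tx}$ cut out by $x^{-1}h_t$ via \eqref{eq:hident}), one finds that the image in $V$ under $i_x^{-1}$ of this intersection is exactly the hyperplane $\{h_t = 0\}$. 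Since $f$ is a single function on $\Gr{}{X}{}$, its two pullbacks $i_x^* f$ and $i_{tx}^* f$ agree on this hyperplane, hence differ by a multiple of $h_t$ — the ideal $(h_t)$ being radical because $h_t$ is a nonzero linear form and $R$ is a polynomial ring. This gives well-definedness.

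Next, injectivity: if $\rho(f) = 0$ then $i_x^* f = 0$ for all $x \in X$, i.e. $f$ vanishes on each $\Gr{}{x}{}$; since $\Gr{}{X}{} = \bigcup_{x \in X}\Gr{}{x}{}$ is the (reduced) union of these, $f = 0$ in $\SO(\Gr{}{X}{})$. (If one wants $\SO(\Gr{}{X}{})$ to be the reduced ring this is immediate; if it carries a possibly non-reduced scheme structure one restricts to the underlying reduced structure, which is all that is used.) Finally, the bimodule property: the left $R$-action on both sides is the diagonal one coming from pullback along the first projection $V\times V \to V$, and $i_x^*$ intertwines it with multiplication by $r$ in the $x$-component; the right action on $\SO(\Gr{}{X}{})$ comes from the second projection, and on $\Gr{}{x}{}$ that projection corresponds under $i_x$ to $\lambda \mapsto x^{-1}\lambda$, so pullback of $r \in R$ along it is $xr$ — matching exactly the formula $(fr)_x = f_x(xr)$ defining the right action on $R(X)$. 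Hence $\rho$ is $(R,R)$-bilinear.

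The main obstacle is the computation of $\Gr{}{x}{}\cap\Gr{}{tx}{}$ and the verification that the difference of the two pullbacks lands in $(h_t)$ rather than merely vanishing on the intersection; this is where \eqref{eq:hident}, the description \eqref{eq:reflectact} of how reflections act, and the fact that $h_t$ is a (necessarily reduced) linear form all enter. Everything else is a routine unwinding of the definitions of the bimodule structures via the two projections $V \times V \to V$, and can be done exactly as in the cited \cite[Lemma 2.2.2]{SoHarishChandra}.
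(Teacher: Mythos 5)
Your proof is correct and follows essentially the same route as the paper's: restrict componentwise, check that the resulting tuple satisfies the defining congruences of $R(X)$ by observing that $i_x^*f$ and $i_{tx}^*f$ agree on $V^t = \ker h_t$ (which the paper leaves as ``straightforward'' and you carry out), and then read off injectivity and the bimodule property. One small expository slip: in computing $\Gr{}{x}{}\cap\Gr{}{tx}{}$ you write ``$\mu=x\lambda=(tx)\lambda'$ for suitable $\lambda,\lambda'$,'' but the intersection requires the \emph{same} second coordinate, so one parameter $\lambda$ suffices and the condition is simply $x\lambda=tx\lambda$, i.e.\ $\lambda\in V^{x^{-1}tx}$; your final identification of the locus with $\{h_t=0\}$ under $i_x^{-1}$ is nonetheless correct.
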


\begin{proof} 
Any regular function $f \in \SO(\Gr{}{X}{})$ is determined by its restriction to all $\Gr{}{x}{}$ for $x \in X$, which is just the tuple
\begin{equation*}
  (i_x^*f)_{x \in p} \in \bigoplus_{x \in p} R.
\end{equation*}
We claim that this tuple lies in $R(X)$. Indeed, we just need to check that $i_x^*f$ and  $i_{tx}^*f$ agree on $V^t$ if $x, tx \in p$ for some $t \in T$ and this is straightforward. It follows that the map is an injection of graded $k$-algebras, in particular an injection in $\bMod{R}{R}$.
\end{proof}

\begin{remark}
In general the map
\begin{equation*}
  \rho : \SO(\Gr{}{X}{}) \hookrightarrow R(X)
\end{equation*}
is not surjective. The question as to when it is seems quite
subtle. See \cite[Remark 2.3.3]{SSB} for a discussion.
\end{remark}

Because $R(X)$ has the structure of a graded $k$-algebra we have an injection
\begin{equation*}
  R(X) \hookrightarrow \Hom(R(X), R(X)).
\end{equation*}
In fact:
\begin{proposition} \label{prop:RXend} For all finite subsets $X \subset W$ we have
  \begin{equation*}
    \Hom(R(X),R(X))  = R(X).
  \end{equation*}
\end{proposition}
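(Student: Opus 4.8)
The plan is to show that any bimodule endomorphism $\phi$ of $R(X)$ is given by multiplication by the element $\phi(1) \in R(X)$, where $1 = (1)_{x \in X}$ denotes the multiplicative identity of the $k$-algebra $R(X)$. First I would observe that $R(X)$ is generated as a $k$-algebra by the identity together with elements of $R$ acting on the left (using the left $R$-module structure); more precisely, every $f = (f_x)_{x \in X}$ can be obtained from $1$ by applying the maps $r \mapsto rf$, $f \mapsto fr$ for $r \in R$, \emph{provided} one first decomposes $R(X)$ suitably. The cleanest route is to use the injection $\rho : \SO(\Gr{}{X}{}) \hookrightarrow R(X)$ together with the direct-sum decomposition: $R(X)$ sits inside $\bigoplus_{x \in X} R_x$, and the idempotents $e_x$ cutting out each factor are \emph{not} in $R(X)$ in general, so one cannot immediately split things. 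Instead I would work directly with the left- and right-$R$-actions.

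The key algebraic point is the identity $(fr)_x - (rf)_x = f_x(xr - r)$, so for $r \in V^*$ the element $fr - (xr)f$ has $x$-component zero for any fixed $x$; iterating and using that the $h_t$ are pairwise linearly independent (the lemma after Bemerkung 1.6), one can manufacture, for each $x \in X$, elements of $R(X)$ that are supported near $x$ in a controlled way, enough to pin down $\phi$. Concretely: given $\phi \in \Hom(R(X), R(X))$, set $g = \phi(1) \in R(X)$; I want to show $\phi(f) = gf$ for all $f \in R(X)$. Since $\phi$ is a bimodule map and $1$ is central with $r\cdot 1 = 1 \cdot r'$ only in special cases, the argument is: for $f \in R(X)$ and any $r \in R$ one has $\phi(rf) = r\phi(f)$ and $\phi(fr) = \phi(f)r$; so $\phi$ commutes with all left and right multiplications by $R$. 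Now the subring of $R(X)$ generated by the image of $R$ under both actions already distinguishes the components (because the $xr$ for varying $r$ separate the points $x$ of $X$ — this uses reflection faithfulness, precisely that $h_s \ne 0$ and the $h_t$ are distinct up to scalar, forcing distinct $x \in W$ to act differently on $V^*$). Hence any element of $R(X)$ commuting (as a multiplication operator, in the appropriate sense) with this subring lies in the center, which is $R(X)$ itself, and one concludes $\phi$ is multiplication by $g$.

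More carefully, the argument I would actually write is via the exact sequence of Proposition \ref{prop:exactsequence}. Apply $\Hom(-, \text{--})$... no: rather, I would use that $R(X) \hookrightarrow \bigoplus_{x \in X} R_x$ is the kernel of the map to $\bigoplus R_x/(h_t)$, so $\Hom(R(X), R(X)) \hookrightarrow \Hom(R(X), \bigoplus_x R_x) = \bigoplus_x \Hom(R(X), R_x)$. Each $R_x$ is the rank-one free $R$-bimodule twisted by $x$, so $\Hom(R(X), R_x)$ is computed by base change, and a homomorphism $R(X) \to R_x$ is determined by where $1$ goes together with the constraint coming from both module structures: the left-right action forces the image of $1$ to be annihilated by $r - xr$ for all $r$ such that... — here one sees that $\Hom_{R\text{-}R}(R(X), R_x)$ is a free rank-one $R$-module on the composite $R(X) \twoheadrightarrow R_x \subset$, i.e. is exactly the $x$-projection $\pi_x(R(X)) \subseteq R$. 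Assembling over all $x$ and imposing the compatibility that the image actually lands in $R(X) \subset \bigoplus R_x$ (which is automatic since $\phi(R(X)) \subseteq R(X)$) gives $\Hom(R(X), R(X)) \cong R(X)$, with the isomorphism sending $\phi$ to $\phi(1)$.

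The main obstacle I anticipate is the verification that $\Hom_{R\text{-}R}(R(X), R_x)$ is precisely the rank-one $R$-module $\pi_x(R(X))$ — i.e. that the only bimodule maps $R(X) \to R_x$ are ``project to the $x$-component, then multiply by $r \in R$'', with no extra maps. Surjectivity of $\pi_x$ onto a nonzero ideal and injectivity of the bimodule-hom-to-component-projection both need the twisting of the right action to be genuinely different on the different cosets represented in $X$; this is where reflection faithfulness (pairwise linear independence of the $h_t$) enters. Once that local computation is done, reassembling the pieces and checking the assembled map lands in $R(X)$ (not just $\bigoplus R_x$) is routine, because $\phi$ was assumed to take values in $R(X)$ to begin with, so $\phi(1) \in R(X)$ and $\phi(f) = \phi(1)f \in R(X)$ automatically.
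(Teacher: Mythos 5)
Your third paragraph is the actual argument, and it is a genuinely different route from the paper's: you reduce via the inclusion $R(X) \hookrightarrow \bigoplus_{x\in X} R_x$ to a local computation $\Hom_{R\text{-}R}(R(X), R_x) \cong R$, then reassemble. That skeleton is sound (once one has $\Hom(R(X), R_x) \cong R$ with every map of the form $m \mapsto m_x a$, it follows that any $\phi$ acts on each component by $m_x \mapsto m_x a_x$, and since $\phi(1) = (a_x) \in R(X)$, the tuple $(a_x)$ lies in $R(X)$ and $\phi$ is multiplication by it). But you flag the local computation as ``the main obstacle'' and then wave at reflection-faithfulness instead of proving it — that \emph{is} a gap. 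The fact you need is that any bimodule map $R(X) \to R_x$ kills $\ker \pi_x = \{m : m_x = 0\}$, and that follows from a support argument: if $m_x = 0$ then the $R\otimes R$-submodule generated by $m$ is supported on $\Gr{}{X\setminus\{x\}}{}$, whereas every nonzero element of $R_x$ has support exactly $\Gr{}{x}{}$; since the $\Gr{}{w}{}$ for distinct $w$ are distinct irreducible closed subsets (this uses only \emph{faithfulness} of the $W$-action on $V$, not the full strength of reflection-faithfulness), the map must vanish on $\ker\pi_x$. You do not give this, so as written your proof is incomplete precisely where you predict.

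For comparison, the paper's proof is short and avoids the componentwise $\Hom$ computation entirely: fix $z \in X$ and choose $g \in R\otimes R$ vanishing on all $\Gr{}{y}{}$ with $y \ne z$ but not on $\Gr{}{z}{}$ (which exists because the graphs are pairwise distinct irreducible subvarieties). Then for any $m \in R(X)$ one has $(gm)_y = \delta_{y,z}\,g_z m_z$, so $gm$ lies in the image of $R\otimes R$ acting on $1$, and hence $g_z\,\varphi(m)_z = \varphi(gm)_z = g_z m_z f_z$ where $f = \varphi(1)$; cancel $g_z$. This single ``localizing'' element $g$ does the work that your componentwise decomposition spreads over several steps. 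Finally, your first two paragraphs don't contribute: in particular the claim that ``any element of $R(X)$ commuting with this subring lies in the center, which is $R(X)$ itself'' conflates the endomorphism $\phi$ (which is what you must control) with elements of the commutative ring $R(X)$ (whose center is trivially all of $R(X)$), and so does not actually address whether $\phi$ is a multiplication operator at all. You'd do best to delete paragraphs one and two and fill in the support argument in paragraph three.
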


\begin{proof}
For the course of the proof it will be more convienient to regard $R(X)$ as graded
 left $R\otimes R$-module. Let $\varphi : R(X) \to R(X)$ be a morphism in $\lMod{R\otimes R}$ and denote by $f = (f_x)_{x \in X}$ the image of 1. Choose $m = (m_x) \in R(X)$. We will be finished if we can show that $\varphi(m)_z = m_zf_z$ for all $z \in X$. Let us choose $z \in X$ and let $g \in R\otimes R$ be a  function that vanishes on $\Gr{}{y}{}$ for $z  \ne y$ but not on $\Gr{}{z}{}$, and let $(g_x)$ denote its image in $R(X)$ (the result of acting with $g$ on $1 \in R(X)$). Note that
\begin{equation*}
  (gm)_x= \delta_{x,z}g_x m_x
\end{equation*}
and so $gm$ is in the image of $R \otimes R$. Hence
\begin{equation*}
  g_z\varphi(m)_z =  (g\varphi(m))_z = \varphi(gm)_z = f_zg_zm_z
\end{equation*}
and hence $\varphi(m)_z = m_zf_z$ as $g_z$ is non-zero.
\end{proof}

\subsection{Standard modules and extension of scalars} \label{subsec:extend}

The aim of this subsection is to study the effect of extension of scalars on standard modules. That is, we want to understand the bimodules
\begin{equation*}
  R^K \otimes_{R^I} \RS{I}{p}{J} \otimes_{R^J} R^L \in \bMod{R^K}{R^L}
\end{equation*}
where $K \subset I$ and $L \subset J$ are finitary. The key is provided by the bimodules $R(X)$ introduced in the previous section.

For the rest of this subsection fix finitary subsets $I, J \subset S$ and a double coset $p \in \W{I}{J}$. Recall that the bimodules $R(p)$ have commuting left $W_I$- and right $W_J$-actions.  Of course we can make this into a left $W_I \times W_J$ action by defining $(u,v)m = umv^{-1}$ for all $m \in R(p)$.

\begin{theorem} \label{thm:ind}
Let $I \supset K$ and $J \supset L$. There exists an isomorphism
  \begin{equation*}
    R^K \otimes_{R^I} \RS{I}{p}{J} \otimes_{R^J} R^L \stackrel{\sim}{\to} R(p)^{W_K \times W_L}
  \end{equation*}
in $\bMod{R^K}{R^L}$. \end{theorem}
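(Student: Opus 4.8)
The plan is to reduce the general statement to the case $K=I$, $L=J$ and then to a localisation/support argument. First I would observe that by transitivity of extension of scalars --- $R^K \otimes_{R^I} (-) \otimes_{R^J} R^L \cong R^K \otimes_{R^I} \bigl( R^I \otimes_{R^I} (-) \otimes_{R^J} R^J \bigr) \otimes_{R^J} R^L$, which is trivial --- and the fact that taking $W_K \times W_L$-invariants can be computed in stages, $R(p)^{W_K \times W_L} = \bigl( R(p)^{W_I \times W_J} \bigr)$ only after one first knows the case $K = I$, $L = J$; so the real content is an isomorphism
\begin{equation*}
R^I \otimes_{R^I} \RS{I}{p}{J} \otimes_{R^J} R^J = \RS{I}{p}{J} \stackrel{\sim}{\to} R(p)^{W_I \times W_J}
\end{equation*}
together with a compatible statement describing $R^K \otimes_{R^I} R(p)^{W_I\times W_J} \otimes_{R^J} R^L$ as $R(p)^{W_K \times W_L}$. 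For the first of these, recall $\RS{I}{p}{J} = R^{K_0}$ with $K_0 = I \cap p_-W_Jp_-^{-1}$ (Kilmoyer, Theorem \ref{Kilmoyer}); an element of $R(p)^{W_I \times W_J}$ is a tuple $(f_x)_{x \in p}$ that is constant on $W_I$-orbits on the left and suitably $W_J$-equivariant on the right, so it is determined by its single entry $f_{p_-}$, and the compatibility constraints $f_x - f_{tx} \in (h_t)$ force $f_{p_-}$ to be invariant exactly under $W_{K_0}$. I would make this precise: the map $R^{K_0} \to R(p)^{W_I\times W_J}$ sending $g$ to the unique $W_I\times W_J$-equivariant tuple with $(p_-)$-entry $g$ is well-defined (one must check the tuple really lies in $R(p)$, using Proposition \ref{prop:doublebruhat} to handle which reflections $t$ have $x, tx \in p$), is injective, and is surjective by the orbit argument; matching the bimodule structures is then a direct comparison of Definition \ref{lab:standard} with the left/right $R$-actions twisted by $x$ in Definition/Proposition \ref{prop:Rpdef}, specialised at $x = p_-$.

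Next I would handle the passage from $(I,J)$-invariants to $(K,L)$-invariants. The point is that the left $W_K$-action on $R(p)$ is the restriction of the left $W_I$-action, so $R(p)^{W_K \times W_L} \supset R(p)^{W_I \times W_L} \supset R(p)^{W_I \times W_J}$, and likewise one may build up the larger invariant space from the smaller one by extension of scalars. Concretely, I would decompose $p$ into $W_K$-orbits and $W_L$-orbits via Howlett's theorem (Theorem \ref{Howlett}) applied to the pair $(K,L)$: writing each $W_I\times W_J$-orbit (all of $p$) as a disjoint union of $W_K\times W_L$-orbits indexed by the double cosets $\W{K}{L}$ meeting $p$, an element of $R(p)^{W_K\times W_L}$ is the same as a tuple of entries, one per such orbit, subject only to the ``boundary'' compatibilities $f_x - f_{tx}\in(h_t)$ across orbits. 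One then recognises $R(p)^{W_K\times W_L}$ as a fibre product of copies of $R(q)^{W_K\times W_L}$ for the double cosets $q$ making up $p$ --- but since here we only need $p$ itself, the cleaner route is: the inclusion $R^{K_0} = R(p)^{W_I\times W_J} \hookrightarrow R(p)^{W_K\times W_L}$ induces, after applying $R^K \otimes_{R^I}(-)\otimes_{R^J} R^L$, a map whose target is computed by Lemma \ref{lem:SoergelIso} (identifying $R\otimes_{R^I} R$ with $\SO(\Gr{}{W_I}{})$, and more generally relating the various $R(\cdot)$'s), and whose source $R^K \otimes_{R^I} R^{K_0}\otimes_{R^J} R^L$ one analyses with Lemma \ref{cor:res} and graded-rank bookkeeping using \eqref{assump:freeness} and the Poincaré polynomial identities \eqref{eq:poinc2}, \eqref{eq:poinc3}.

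The strategy overall: (i) prove the base case $R^{K_0}\cong R(p)^{W_I\times W_J}$ by the explicit tuple construction; (ii) show that extension of scalars and taking invariants are suitably adjoint/compatible so that applying $R^K\otimes_{R^I}(-)\otimes_{R^J}R^L$ to the base-case isomorphism and then matching with $R(p)^{W_K\times W_L}$ reduces to checking that the natural comparison map is an isomorphism; (iii) verify the latter by a dimension count (graded ranks agree by Lemma \ref{cor:relinv}, Lemma \ref{cor:res} and the Poincaré identities) plus injectivity, which follows since everything embeds into $\bigoplus_{x\in p} R$. The main obstacle I expect is step (ii)/(iii): the functor $R^K \otimes_{R^I}(-)$ does not commute with taking $W_J$-invariants on the nose, so one cannot simply say ``apply the functor to both sides''. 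The real work is to produce a natural map $R^K \otimes_{R^I} R(p)^{W_I\times W_J} \otimes_{R^J} R^L \to R(p)^{W_K\times W_L}$ (using the algebra structure on $R(p)$ and the $R$-bimodule actions) and then prove it is an isomorphism; I would do this by first establishing the freeness of $R(p)$ as a one-sided module over the relevant invariant rings (again via \eqref{assump:freeness} and the orbit decomposition, cf. \cite[\S2.3]{SSB}), which lets one reduce injectivity and surjectivity to the residue at a generic point of each $\Gr{}{p}{}$, where the assertion becomes the transparent statement that $(W_K\times W_L)$-invariants of functions on a $(W_I\times W_J)$-orbit are computed by restricting to $W_K\times W_L$-orbit representatives.
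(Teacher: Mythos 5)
Your overall plan is essentially the paper's plan: establish the base-case isomorphism $\RS{I}{p}{J}\cong R(p)^{W_I\times W_J}$ by the explicit tuple argument (this is Lemma \ref{lem:Rpinv}), construct a comparison map by adjunction, show it is injective by embedding everything in $\bigoplus_{x\in p}R$, and then finish by matching graded ranks. You also correctly flag the main obstacle, that taking invariants does not commute naively with extension of scalars. Two substantive comments.

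First, an organizational point that matters: you propose to go directly from the $(I,J)$-level to the $(K,L)$-level, which forces you to compute the graded rank of $R(p)^{W_K\times W_L}$ --- and even once you know $R(p)\cong\tPoinc(p)\cdot R$ as a left $R$-module, extracting the graded rank of its $W_K\times W_L$-invariants is not immediate, because the $W_K\times W_L$-action does not respect a chosen free decomposition. The paper sidesteps this by passing all the way up to $K=L=\emptyset$: it shows that the adjunction map $\mu\colon R\otimes_{R^I}\RS{I}{p}{J}\otimes_{R^J}R\to R(p)$ is a $W_I\times W_J$-equivariant isomorphism of $(R,R)$-bimodules (Lemmas \ref{lem:Rpinv}, \ref{lem:phicommutes}), and then simply takes $W_K\times W_L$-invariants on both sides. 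The identification $(R\otimes_{R^I}\RS{I}{p}{J}\otimes_{R^J}R)^{W_K\times W_L}=R^K\otimes_{R^I}\RS{I}{p}{J}\otimes_{R^J}R^L$ on the source is genuinely easy: $\RS{I}{p}{J}=R^{K_0}$ is graded free over $R^I$ (Lemma \ref{cor:relinv}), so $R\otimes_{R^I}R^{K_0}$ decomposes as a direct sum of copies of $R$ with $W_K$ acting diagonally, and taking $W_K$-invariants gives $R^K\otimes_{R^I}R^{K_0}$ summand by summand. On the target side, taking invariants of $R(p)$ needs no further computation. This is cleaner than your route and is worth adopting.

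Second, and this is the real gap: you attribute the freeness and graded rank of $R(p)$ to ``\eqref{assump:freeness} and the orbit decomposition.'' This is not how it goes, and in fact it is the hardest step in the whole proof. From the orbit decomposition and the exact sequence of Proposition \ref{prop:exactsequence} you can only read off that $R(p)$ is a torsion-free graded $R$-module of generic rank $|p|$; torsion-free of finite rank over a polynomial ring is far from free, and even if you somehow knew it were free you would not yet know its graded rank. The paper devotes an entire section (Section \ref{subsec:demRX}, equivariant Schubert calculus) to constructing the homogeneous elements $\phi_x\in R(p)$ via Demazure operators and proving in Theorem \ref{thm:doublecosetfunctions} that they form a graded $R$-basis; the statement $R(p)\cong\tPoinc(p)\cdot R$ is then Corollary \ref{cor:Rpleft}. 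The paper even flags this explicitly in the proof of Theorem \ref{thm:ind}: ``It seems much harder to establish (1) for $R(p)$.'' Your proposal is missing this ingredient; without it, the dimension-count step you invoke to close the argument cannot be carried out.
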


The theorem will take quite a lot of effort to prove. In Lemmas \ref{lem:Rpinv} and \ref{lem:phicommutes} below we construct a morphism
 \begin{equation*}
    R \otimes_{R^I} \RS{I}{p}{J} \otimes_{R^J}  R \to R(p).
  \end{equation*}
  commuting with natural actions of $W_K \times W_L$ on both sides. By considering invariants one may reduce the theorem to showing that this map is an isomorphism.

Let us first describe the $W_I \times W_J$ actions. By Proposition \ref{prop:Rpdef} and the discussion at the beginning of this section there is an $W_I \times W_J$-action on $R(p)$. We define a $W_I \times W_J$-action on $R \otimes_{R^I} \RS{I}{p}{J} \otimes_{R^J} R$ via
\begin{equation*}
  (u,v) f \otimes g \otimes h = uf \otimes g \otimes vh.
\end{equation*}
It is easy to see that this action is well-defined.

The following lemma tells us how to find the standard module $\RS{I}{p}{J}$ as a submodule of $R(p)$.

\begin{lemma} \label{lem:Rpinv}
In $\bMod{R^I}{R^J}$ we have an isomorphism
\begin{equation*}
R(p)^{W_I \times W_J} \cong \RS{I}{p}{J}.
\end{equation*}
\end{lemma}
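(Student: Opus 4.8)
The plan is to compute the $W_I \times W_J$-invariants of $R(p)$ directly from its definition as a subspace of $\bigoplus_{x \in p} R$. Recall that $p = W_I p_- W_J$ and, by Howlett's theorem (Theorem \ref{Howlett}), setting $K = I \cap p_-Jp_-^{-1}$, every element of $p$ is uniquely $u p_- v$ with $u \in D_K \cap W_I$ and $v \in W_J$. First I would observe that an invariant tuple $f = (f_x)_{x \in p}$ is determined by its single component $f_{p_-}$: the right $W_J$-invariance $(fv)_x = f_{xv^{-1}}$ forces $f_{p_- v} = v f_{p_-}$ for all $v \in W_J$ (more precisely $f_{xv^{-1}} = f_x$ after applying $v$ appropriately — I will track the twists carefully using the formulas $(uf)_x = uf_{u^{-1}x}$ and $(fv)_x = f_{xv^{-1}}$), and similarly left $W_I$-invariance propagates the value from $p_-$ to all of $p$. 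Hence the invariants inject, via $f \mapsto f_{p_-}$, into $R$.

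Next I would identify the image. The stabilizer constraints coming from $u \in W_K$ (the elements of $W_I$ that also arise, conjugated by $p_-$, from $W_J$) give a nontrivial condition on $f_{p_-}$: combining left-invariance under such $u$ and right-invariance, one finds $f_{p_-}$ must be $W_K$-invariant, i.e.\ $f_{p_-} \in R^K = \RS{I}{p}{J}$. Conversely, given any $m \in R^K$, I would define $f_x := u\,(p_- r)\,m$-type expression — concretely, for $x = u p_- v$ set $f_x = u\, m$ using that $m \in R^K$ makes this independent of the choice of coset representative, and check that this tuple lies in $R(p)$ (the condition $f_x - f_{tx} \in (h_t)$ follows because $u m - u' m$ with $u^{-1}u'$ a reflection in $W_K$ lies in $(h_{t'})$ for the appropriate reflection, using reflection faithfulness and \eqref{eq:hident}) and is $W_I \times W_J$-invariant by construction. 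This sets up a bijection $R(p)^{W_I \times W_J} \cong R^K$ of graded $k$-modules.

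Finally I would check this bijection is a map of $(R^I, R^J)$-bimodules with the correct twisted right action: on the left, $r \in R^I$ acts diagonally on $R(p)$ and by multiplication on $R^K$, matching the left action on $\RS{I}{p}{J}$; on the right, $r \in R^J$ acts on $R(p)$ by $(fr)_x = f_x(xr)$, so on the component $f_{p_-}$ it acts by multiplication by $p_- r$, which is exactly the twisted right action $m \cdot r = m(p_- r)$ in Definition \ref{lab:standard} (note $p_- r \in R^K$ since $r \in R^J$). I expect the main obstacle to be bookkeeping: correctly tracking the left/right twists in the $W_I \times W_J$-action through the identification with $\bigoplus_{x \in p} R$, and verifying that the candidate inverse tuple genuinely satisfies the defining congruences of $R(p)$ — this is where reflection faithfulness (pairwise linear independence of the $h_t$) and the compatibility \eqref{eq:hident} of the chosen equations $h_t$ enter, and where one must be careful that $W_K$ is precisely $W_I \cap p_-W_Jp_-^{-1}$ (Kilmoyer's theorem, Theorem \ref{Kilmoyer}) so that no extra invariance is imposed.
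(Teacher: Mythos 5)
Your approach is essentially the paper's: map an invariant $f$ to its component $f_{p_-}$, use the stabilizer relations $up_- = p_-v$ for $u \in W_K$, $v \in W_J$ to deduce $f_{p_-}\in R^K$, then build the inverse tuple $f_x = um$ for $x = up_-v$, with well-definedness supplied by Kilmoyer's theorem, and finally read off the twisted right $R^J$-action. The structure is correct.

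However, the justification you sketch for the candidate tuple $(f_x)$ actually lying in $R(p)$ is not right. The defining congruences ask that $f_x - f_{tx} \in (h_t)$ for \emph{every} $t \in T$ with $x, tx \in p$, and attributing this to ``$u^{-1}u'$ a reflection in $W_K$'' is off: if $u^{-1}u' \in W_K$ then $um = u'm$ by $W_K$-invariance of $m$, so the congruence is trivially $0 \in (h_t)$ and no divisibility statement about $h_{t'}$ is involved. The mechanism that is actually needed is Proposition \ref{prop:doublecosetdiff}/\ref{prop:doublebruhat}: if $x, tx \in p$ then either $t \in W_I$ or $tx = xt'$ for some reflection $t' \in W_J$. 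In the first case $tx = (tu)p_-v$, so $f_{tx} = t f_x$ and $f_x - f_{tx} = f_x - tf_x$ is $t$-anti-invariant, hence vanishes on $V^t$ and lies in $(h_t)$. In the second case $tx = up_-(vt')$, so $f_{tx} = f_x$ and the congruence is trivial. Neither case uses reflection faithfulness or the pairwise linear independence of the $h_t$; what is needed is the dichotomy furnished by Proposition \ref{prop:doublebruhat}, which your sketch omits.
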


\begin{proof} Let $K  = I \cap  p_-Jp_-^{-1}$ and choose $f \in R(p)^{W_I \times W_J}$. If $u \in W_K$ then $up_- = p_-v$ for some $v \in W_J$ and $f_{p_-} = ((u,v)f)_{p_-} = uf_{p_-}$. In other words $f_{p_-} \in R^K$. Hence we obtain a map
\begin{align*}
R(p)^{W_I \times W_J} &\to \RS{I}{p}{J} \\
(f_x) & \mapsto f_{p_-}
\end{align*}
which is obviously injective and a morphism in $\bMod{R^I}{R^J}$.

It remains to show surjectivity. To this end choose $m \in \RS{I}{p}{J}$ and consider  the tuple $f = (f_x) \in \oplus_{x \in p} R$ where, for each $x \in p$ we choose $u \in W_I, v \in W_J$ with $x = up_-v$ and define $f_x = um$. This  is well defined because if $up_-v = u^{\prime}p_-v^{\prime}$ with $u, u^{\prime} \in W_I$ and $v, v^{\prime} \in W_J$ then $u^{-1}u^{\prime} \in W_I \cap p_-W_Jp_-^{-1} = W_K$ by Kilmoyer's Theorem (\ref{Kilmoyer}), and hence $um = u^{\prime}m$ as $m$ is invariant under $W_K$. The tuple $(f_x)$ also lies in $R(p)$ as if $x$ and $tx$ both lie in $p$ then by Proposition \ref{prop:doublebruhat} either $t \in W_I$ (in which case $f_{tx} = tf_x$) or $tx = xt^{\prime}$ for some reflection $t^{\prime}$ in $W_J$ (in which case $f_{x} = f_{tx}$). Lastly, it it easy to check that $f$ is $W_I \times W_J$ invariant. As $f$ gets mapped to $m$ under the above map, we see that the map is indeed surjective.
\end{proof}

Having identified $\RS{I}{p}{J}$ as a $(R^I, R^J)$-submodule of $R(p)$ we obtain by adjunction a morphism
\begin{equation*}
\mu : R \otimes_{R^I} \RS{I}{p}{J} \otimes_{R^J} R \to R(p).
\end{equation*}
We will see below that this is an isomorphism. However first we need:

\begin{lemma} \label{lem:phicommutes}
The morphism $\mu$ commutes with the $W_I \times W_J$-actions on both modules. \end{lemma}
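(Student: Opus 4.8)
The plan is to unwind the definition of $\mu$ and then check the claim by a direct component-wise computation. By construction $\mu$ is the image of the inclusion $\iota \colon \RS{I}{p}{J} \hookrightarrow R(p)$ of Lemma~\ref{lem:Rpinv} under the adjunction isomorphism $\Hom_{R^I\!-\!R^J}(\RS{I}{p}{J}, R(p)) \cong \Hom_{R\!-\!R}(R \otimes_{R^I} \RS{I}{p}{J} \otimes_{R^J} R, R(p))$, so $\mu(r \otimes m \otimes s) = r\, \iota(m)\, s$, the product being taken in the $(R,R)$-bimodule $R(p)$. Recalling from the proof of Lemma~\ref{lem:Rpinv} that $\iota(m) = (f_x)_{x \in p}$ with $f_x = u m$ whenever $x = u p_- v$ with $u \in W_I$, $v \in W_J$, and using the formulas for the left and right $R$-actions on $R(p)$ from Definition/Proposition~\ref{prop:Rpdef}, we obtain
\[
\mu(r \otimes m \otimes s)_x = r \cdot (u m) \cdot (x s), \qquad x = u p_- v,\ u \in W_I,\ v \in W_J,
\]
which is independent of the chosen decomposition of $x$ by the argument in the proof of Lemma~\ref{lem:Rpinv}.

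Next I would put the two actions in the same form. On the source, $(u', v')(r \otimes m \otimes s) = u'r \otimes m \otimes v's$ by definition. On $R(p)$ the $W_I \times W_J$-action is $(u',v')f = u'\, f\, (v')^{-1}$, where the outer operations are the left $W_I$- and right $W_J$-actions of Definition/Proposition~\ref{prop:Rpdef} (legitimate since $W_I p = p = p W_J$); combining the formulas $(uf)_x = u f_{u^{-1}x}$ and $(fv)_x = f_{xv^{-1}}$ gives
\[
\bigl((u',v') f\bigr)_x = u'\bigl( f_{(u')^{-1} x v'} \bigr),
\]
where on the right $u'$ acts as a ring automorphism of $R$.

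Finally I would compare the two sides at an arbitrary $x \in p$. Fix a decomposition $x = u p_- v$; then $(u')^{-1} x v' = \bigl((u')^{-1} u\bigr)\, p_-\, (v v')$ is again of the required form, so
\[
\bigl((u',v')\,\mu(r \otimes m \otimes s)\bigr)_x = u'\Bigl( r \cdot \bigl((u')^{-1}u\, m\bigr) \cdot \bigl((u')^{-1} x v' \cdot s\bigr) \Bigr) = u'(r) \cdot (u m) \cdot (x v' \cdot s),
\]
using that $u'$ is a ring automorphism of $R$ and that the $W$-action on $R$ is a left action, so that $u' \cdot ((u')^{-1} u \cdot m) = u \cdot m$ and $u' \cdot ((u')^{-1} x v' \cdot s) = x v' \cdot s$. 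On the other hand $\mu\bigl((u',v')(r \otimes m \otimes s)\bigr)_x = \mu(u'r \otimes m \otimes v's)_x = (u'r) \cdot (u m) \cdot (x \cdot (v' s)) = u'(r) \cdot (u m) \cdot (x v' \cdot s)$. The two expressions coincide, so $\mu$ commutes with the $W_I \times W_J$-actions.

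There is no genuine difficulty here; the only point requiring care is bookkeeping with conventions — that the left $W$-action on $R$ is via $w^{-1}$, that the right $R$-action on $R(p)$ carries the twist $f_x \mapsto f_x(x r)$, and that this twist matches (via $v r = r$ for $v \in W_J$, $r \in R^J$) the $p_-$-twisted right action on $\RS{I}{p}{J}$ already used implicitly in Lemma~\ref{lem:Rpinv}. Once the explicit formula for $\mu$ and for the combined action on $R(p)$ are written down, the verification above is immediate.
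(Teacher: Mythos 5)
Your proposal is correct and follows essentially the same route as the paper: you unwind $\mu$ via the adjunction to get the explicit component formula $\mu(r\otimes m\otimes s)_x = r\,(um)\,(xs)$ for $x=up_-v$, and then verify $((u',v')\mu(a))_x = \mu((u',v')a)_x$ by the same direct computation using $((u',v')f)_x = u'f_{(u')^{-1}xv'}$. The bookkeeping of conventions (twisted right action, $u'$ acting as a ring automorphism, independence of the chosen decomposition from Lemma \ref{lem:Rpinv}) matches the paper's argument.
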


\begin{proof} This is a technical but straightforward calculation. Let $a = r_1 \otimes m \otimes r_2 \in R \otimes_{R^I} \RS{I}{p}{J} \otimes_{R^J} R$ and $(u,v) \in W_I \times W_J$. We want to show that $\mu((u,v)a)  = (u,v)\mu(a)$.

Under $\mu$, $a$ gets mapped to $f = (f_z) \in R(p)$ where
\begin{equation*}
  f_z = r_1(xm)(zr_2)
\end{equation*}
if $z = xp_-y$ with $x \in W_I$ and $y \in W_J$. Similarly $(u,v)a = ur_1 \otimes m \otimes vr_2$ gets mapped to $\tilde{f} = (\tilde{f}_z) \in R(p)$ where
\begin{equation*}
  \tilde{f}_z = ur_1(xm)(zvr_2).
\end{equation*}
We need to show that $(u,v)f = \tilde{f}$. This follows from
\begin{equation*}
  ((u,v)f)_z = uf_{u^{-1}zv} = u(r_1(u^{-1}xm)(u^{-1}zvr_2)) = ur_1(xm)(zvr_2) = \tilde{f}_z. \qedhere
\end{equation*}

\end{proof}

\begin{proof}[Proof of Theorem \ref{thm:ind}] By considering $W_K \times W_L$ invariants it is enough to show that the morphism $\mu$ constructed above is an isomorphism. This will follow from two facts which we verify below:
  \begin{enumerate}
  \item Both $R(p)$ and $R \otimes_{R^I} \RS{I}{p}{J} \otimes_{R^J} R$ are isomorphic to $\tPoinc(p) \cdot R$ as graded left $R$-modules;
   \item The morphism $\mu$ is injective.
  \end{enumerate}
Indeed (1) says that each  graded component of $R(p)$ and $R \otimes_{R^I} \RS{I}{p}{J} \otimes_{R^J} R$ is of the same (finite) dimension over $k$. Using (2) we then see that $\varphi$ is an isomorphism on each graded component and hence is an isomorphism.

We start by establishing (1) for $R \otimes_{R^I} \RS{I}{p}{J}
\otimes_{R^J} R$. Choose $w \in p$. By \eqref{assump:freeness} we have
an isomorphism of left $R$-modules: 
   \begin{equation*}
      R \otimes_{R^I} R_w \otimes_{R^J} R \cong \tPoinc(I)\tPoinc(J) \cdot R.
    \end{equation*}
Hence, by Lemma \ref{cor:res} we have (again as left $R$-modules):
\begin{equation*}
  \tPoinc(I,p,J) \cdot R \otimes_{R^I} \RS{I}{p}{J} \otimes_{R^J} R
\cong \tPoinc(I)\tPoinc(J) \cdot R
\end{equation*}
Dividing by $\tPoinc(I,p,J)$ and using Lemma \ref{lem:poinc3} we conclude that
\begin{equation} \label{eq:indlmod}
  R \otimes_{R^I} \RS{I}{p}{J} \otimes_{R^J} R
\cong \tPoinc(p) \cdot R \qquad \text{ in $\lMod{R}$}
\end{equation}
as claimed.

It seems much harder to establish (1) for $R(p)$. This is Corollary \ref{cor:Rpleft} of the next section, which we prove using Demazure operators.

The rest of the proof will be concerned with (2). Choose again $w \in p$. Using Lemma \ref{lem:SoergelIso} we may identify $R \otimes_{R^I} R_w \otimes_{R^J} R$ with the regular functions on the variety
  \begin{equation*}
    Z = \left \{ (\lambda, \mu, \nu) \middle | \begin{array}{c} \lambda = u\mu  \text{ for some $u \in W_I$}\\\mu = wv \nu \text{ for some $v \in W_J$} \end{array} \right \} \subset V \times V \times V.
  \end{equation*}
We have an obvious projection map $Z \to \Gr{}{p}{}$ sending $(\lambda, \mu, \nu)$ to $(\lambda, \nu)$ and hence we have a morphism in $\bMod{R}{R}$ (in fact of $k$-algebras)
\begin{equation*}
\SO(\Gr{}{p}{}) \to R \otimes_{R^I} R_w \otimes_{R^J} R.
\end{equation*}
Taking $w = p_-$ this map lands in $R \otimes_{R^I} \RS{I}{p}{J} \otimes_{R^J} R$ regarded as a submodule of $R \otimes_{R^I} R_w \otimes_{R^J} R$. We conclude the existence of a commutative diagram
\begin{equation*}
  \xymatrix@C=0cm{
& \SO(\Gr{}{p}{}) \ar[dl] \ar[dr]^{\rho} & \\
R \otimes_{R^I} \RS{I}{p}{J} \otimes_{R^J} R \ar[rr]^(0.65){\varphi} & & R(p) }
\end{equation*}
where $\rho$ is as in Lemma \ref{lem:Rpinjection}.

We now argue that all arrows become isomorphisms after tensoring with $\Quot R$. As $\rho$ is injective and $\Quot R$ is flat over $R$ it is enough to show that all modules have dimension $|p|$ over $\Quot R$ after applying $\Quot R \otimes_R -$. This is indeed the case:
\begin{enumerate}
  \item $\SO(\Gr{}{p}{})$: For the same reasons as in the proof of Lemma \ref{lem:SoergelIso}.
  \item $R \otimes_{R^I} \RS{I}{p}{J} \otimes_{R^J} R$: This follows from (\ref{eq:indlmod})
  \item $R(p)$: By applying $\Quot R \otimes_R -$ to the exact sequence in Proposition \ref{prop:exactsequence}.
\end{enumerate}
We conclude that all maps (in particular $\mu$) become isomorphisms after applying $\Quot R \otimes_R -$.

To conclude the proof, note that by the above arguments $R \otimes_{R^I} \RS{I}{p}{J} \otimes_{R^J} R$ is torsion free as a left $R$-module. Hence $\mu$ is injective if and only if this is true after applying $\Quot R \otimes_R -$. Thus $\mu$ is injective as claimed.
\end{proof}

We may use this theorem to determine the morphisms between standard modules. Recall that $\RS{I}{p}{J}$ was defined as a subring of $R$, and therefore has the structure of a  $k$-algebra compatible with its $(R^I, R^J)$-bimodule structure. Therefore we certainly have an injection
\begin{equation*}
  \RS{I}{p}{J} \hookrightarrow \Hom(\RS{I}{p}{J}, \RS{I}{p}{J}).
\end{equation*}
In fact:

\begin{corollary} \label{cor:standardhoms}
For $p,q \in \W{I}{J}$ we have
\begin{equation*}
  \Hom(\RS{I}{p}{J}, \RS{I}{q}{J}) = \left \{ \begin{array}{ll}
\RS{I}{p}{J} & \text{if $p = q$} \\
0 & \text{otherwise.} \end{array}\right . 
\end{equation*}
\end{corollary}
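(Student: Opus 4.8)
Since the injection $\RS{I}{p}{J}\hookrightarrow\Hom(\RS{I}{p}{J},\RS{I}{p}{J})$ by left multiplication has just been observed, the two points to establish are that this injection is surjective, and that $\Hom(\RS{I}{p}{J},\RS{I}{q}{J})=0$ for $p\ne q$. I would prove the first (which carries the real content) by pushing an endomorphism up along the functor $R\otimes_{R^I}-\otimes_{R^J}R$ and invoking Theorem~\ref{thm:ind} together with Proposition~\ref{prop:RXend}. The second I would deduce from a support argument on $V/W_I\times V/W_J$.

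For surjectivity, fix $\phi\in\Hom(\RS{I}{p}{J},\RS{I}{p}{J})$ and let $\bar\phi=R\otimes_{R^I}\phi\otimes_{R^J}R$, viewed via Theorem~\ref{thm:ind} (case $K=L=\emptyset$) as an endomorphism of $R(p)$ in $\bMod{R}{R}$. By Proposition~\ref{prop:RXend}, $\bar\phi$ is multiplication by a fixed element $f=(f_x)_{x\in p}\in R(p)$. The isomorphism $\mu$ of Theorem~\ref{thm:ind} is the adjoint of the inclusion $\RS{I}{p}{J}=R(p)^{W_I\times W_J}\hookrightarrow R(p)$ of Lemma~\ref{lem:Rpinv}, so $\mu(1\otimes m\otimes 1)$ is the image of $m$ in $R(p)$; thus $\mu$ identifies $1\otimes\RS{I}{p}{J}\otimes 1$ with $R(p)^{W_I\times W_J}$, and with respect to this identification $\bar\phi$ agrees with $\phi$. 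In particular $\bar\phi$ preserves $R(p)^{W_I\times W_J}$; since $1\in R(p)^{W_I\times W_J}$ and $\bar\phi(1)=f$, we conclude $f\in R(p)^{W_I\times W_J}$, which Lemma~\ref{lem:Rpinv} identifies with $\RS{I}{p}{J}$ via $f\mapsto f_{p_-}$. As multiplication in $R(p)$ is componentwise, reading off the $p_-$-component of $\bar\phi$ applied to the element of $R(p)^{W_I\times W_J}$ corresponding to $m\in\RS{I}{p}{J}$ gives $\phi(m)=f_{p_-}m$. Hence $\phi$ is left multiplication by $f_{p_-}\in R^{K}=\RS{I}{p}{J}$, which is the claimed surjectivity.

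For the vanishing when $p\ne q$ I would use only the elementary support $\supp M=V(\operatorname{Ann}M)$ of an $R^I\otimes R^J$-module. Since $\RS{I}{p}{J}=R^{K}$ is a ring with unit, its annihilator $\mathfrak p_p$ in $R^I\otimes R^J$ is the kernel of the twisted structure map $R^I\otimes R^J\to R^{K}$, a prime ideal because $R^{K}$ is a domain; unwinding the definition of $\Gr{I}{p}{J}$ as the image of the twisted graph $\Gr{}{p_-}{}$ identifies $V(\mathfrak p_p)$ with $\Gr{I}{p}{J}$, which is therefore irreducible, closed (as $R^{K}$ is module-finite over $R^I$ by Lemma~\ref{cor:relinv}) and of dimension $\dim V$. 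Now suppose $\phi\colon\RS{I}{p}{J}\to\RS{I}{q}{J}$ is nonzero; its image $N$ is a quotient of $\RS{I}{p}{J}$, so $\supp N\subseteq\Gr{I}{p}{J}$, while $N$ is also a nonzero submodule of the domain $\RS{I}{q}{J}=R^{K'}$, so $\supp N=\Gr{I}{q}{J}$ (a domain has the property that every nonzero $R^I\otimes R^J$-submodule is supported on all of $\Gr{I}{q}{J}$). Hence $\Gr{I}{q}{J}\subseteq\Gr{I}{p}{J}$; being irreducible closed subsets of equal dimension they coincide, and taking preimages under $V\times V\to V/W_I\times V/W_J$ gives $\bigcup_{x\in p}\Gr{}{x}{}=\bigcup_{x\in q}\Gr{}{x}{}$, whence $p=q$ (distinct elements of $W$ have distinct twisted graphs), contradicting $p\ne q$.

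The main obstacle is the surjectivity step, and within it the claim that the element $f\in R(p)$ supplied by Theorem~\ref{thm:ind} and Proposition~\ref{prop:RXend} automatically lies in the invariant subring $R(p)^{W_I\times W_J}$ — equivalently, that extension of scalars to $R$-bimodules produces no new endomorphisms of $\RS{I}{p}{J}$ beyond left multiplications. The supporting facts (the precise form of $\mu$ as an adjoint, the identification of $V(\mathfrak p_p)$ with $\Gr{I}{p}{J}$, and the distinctness of the twisted graphs) are routine once the earlier results of this section are in hand.
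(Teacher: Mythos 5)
Your proposal is correct. The $p=q$ half is essentially the paper's own argument: extend scalars, identify $R\otimes_{R^I}\RS{I}{p}{J}\otimes_{R^J}R$ with $R(p)$ via Theorem \ref{thm:ind}, invoke Proposition \ref{prop:RXend} to see the extended map is multiplication by $f=\bar\phi(1)$, and use Lemma \ref{lem:Rpinv} to see that $f$ is $W_I\times W_J$-invariant, so that $\phi$ is multiplication by $\phi(1)\in\RS{I}{p}{J}$ --- the paper compresses this into the statement that the image of $\Hom(\RS{I}{p}{J},\RS{I}{p}{J})$ inside $\Hom(R(p),R(p))=R(p)$ consists of those $\alpha$ with $\alpha(1)\in\RS{I}{p}{J}$. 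Where you genuinely diverge is the vanishing for $p\ne q$: the paper also routes this through Theorem \ref{thm:ind}, reducing to $\Hom(R(p),R(q))=0$, which it deduces from $\Hom(R_x,R_y)=0$ for $x\ne y$ (reflection faithfulness together with the structure of $R(p)$ inside $\bigoplus_{x\in p}R_x$); you instead argue directly in $\bMod{R^I}{R^J}$ with annihilators and supports: the image of a nonzero map is a quotient of $\RS{I}{p}{J}$, hence supported in $\Gr{I}{p}{J}$, and simultaneously a nonzero submodule of the domain $\RS{I}{q}{J}$, hence has full support $\Gr{I}{q}{J}$, forcing $\Gr{I}{p}{J}=\Gr{I}{q}{J}$ and then $p=q$ by pulling back to $V\times V$. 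This is in effect a self-contained proof of the support statement that the paper only records later (Lemma \ref{lem:standsupp}), and it buys you independence from Theorem \ref{thm:ind} and the $R(X)$ machinery for that half of the corollary, at the modest cost of checking the elementary geometric facts you list (annihilator equals the kernel of the twisted structure map, closedness, irreducibility and dimension of $\Gr{I}{p}{J}$ via finiteness from Lemma \ref{cor:relinv}, and distinctness of twisted graphs, where one should note that containment of one graph in a finite union of others forces coincidence with a single one by irreducibility); all of these check out, so both routes are sound.
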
 

\begin{proof} Extension  of scalars give us an map
  \begin{equation*}
    \Hom(\RS{I}{p}{J}, \RS{I}{q}{J}) \to \Hom(R \otimes_{R^I} \RS{I}{p}{J} \otimes_{R^J} R , R \otimes_{R^I} \RS{I}{q}{J} \otimes_{R^J} R)
  \end{equation*}
which is injective because we may again restrict to
$\bMod{R^I}{R^J}$. By the above theorem the latter module is
isomorphic to $\Hom(R(p), R(q))$. This is 0 if $p \ne q$ because
$\Hom(R_x, R_y) = 0$ if $x \ne y$. Otherwise $\Hom(R(p),R(p)) = R(p)$
by Proposition \ref{prop:RXend}, and so $\Hom(\RS{I}{p}{J},
\RS{I}{p}{J})$ consists of those $\alpha \in \Hom(R(p),R(p))$ for
which $\alpha(1) \in \RS{I}{p}{J}$. Hence $\Hom(\RS{I}{p}{J},
\RS{I}{p}{J})= \RS{I}{p}{J}$ as claimed. \end{proof}

\subsection{Support} \label{sec:support}

Let $X$ be an affine variety over $k$ and $A$ its $k$-algebra of
regular functions. We will make use of the equivalence between
(finitely-generated) $A$-modules and (quasi)-coherent sheaves on $X$
(see \cite{Har}, Chapter II, Corollary 5.5). If $M$ is an $A$-module, and $\mathcal{M}$ is the
corresponding quasi-coherent sheaf on $X$, then the \emph{support} of
$\mathcal{M}$, which we will denote $\supp M$ by abuse of notation,
consists of those points $x \in X$ for which $\mathcal{M}_x \ne 0$. The support of a section $m \in M$, denoted $\supp m$, is the support of the submodule generated by $m$.
It follows from the definition that if $M^{\prime} \hookrightarrow M \sur M^{\prime\prime}$ is an
exact sequence of $A$-modules then
\begin{equation} \label{eq:suppexact}
\supp M = \supp M^{\prime} \cup \supp M^{\prime\prime}.
\end{equation}
If $M$ is finitely generated then the support of $M$ is the closed
subvariety of $X$ determined by the annihilator of $M$ (\cite{Har}, II, Exercise 5.6(b)).

Let $f : X \to Y$ be a map of affine varieties and $A \to B$ be the
corresponding map of regular functions. If $M$ and $N$ are $A$- and $B$-modules respectively, then
\begin{align}
f(\supp N) \subset \supp ({}_{A}N) \subset \overline{f(\supp N)}, \label{eq:supp1} \\
\supp( B \otimes_A M) = f^{-1}(\supp M). \label{eq:supp2}
\end{align}
The first is an exercise, and the second is Exercise 19(viii), Chapter 3 of \cite{AM} for finitely generated $M$, but seems to be true in general (in any case we only need it for finitely generated $M$). It follows that if $f$ is finite (hence closed) and $N$ is finitely generated, then
\begin{equation} \label{eq:supp3}
  f(\supp N) = \supp ({}_{A}N).
\end{equation}

The rest of this section will be concerned with applying notions of support to objects in $\bMod{R^I}{R^J}$, where $I, J \subset S$ are finitary. This is possible as we may regard any such object as an $R^I\otimes R^J$-module. We identify $R^I \otimes R^J$ with the regular functions on  the quotient $V / W_I \times V/W_J$. Thus, given any $M \in \bMod{R^I}{R^J}$, $\supp M \subset V/W_I \times V/W_I$.

In Section \ref{subsec:enlarge}, we defined the twisted graph $\Gr{}{x}{} \subset V
\times V$ as well as $\Gr{}{C}{}$ for finite subsets $C \subset W$. For a double coset $p \in \W{I}{J}$ denote by $\Gr{I}{p}{J}$ the image of $\Gr{}{p}{}$ under the quotient
map $V \times V \to V/ W_I \times V / W_J$. The subvariety $\Gr{I}{p}{J}$ is equal to the image of $\Gr{}{x}{}$ for any $x \in p$ and thus is irreducible. Given any set $C \subset \W{I}{J}$, we define
\begin{equation*} \label{lab:IgrpJ}
  \Gr{I}{C}{J} = \bigcup_{p \in C} \Gr{I}{p}{J}
\end{equation*}
which we understand as a subvariety if $C$ is finite, and as a set if $C$ is infinite.

We will be interested in those $M \in \bMod{R^I}{R^J}$ whose support
is contained in $\Gr{I}{C}{J}$ for some finite set $C \subset
\W{I}{J}$. Given finitary $I \subset K$ and $J \subset L$ we have
functors of  restriction and extension of scalars between
$\bMod{R^I}{R^J}$ and $\bMod{R^K}{R^L}$. Because the inclusion $R^K
\otimes R^L \to R^I \otimes R^J$ corresponds to the finite map
\begin{equation*}
  V/W_I \times V/W_J \to V/W_K \times V/W_L
\end{equation*}
we may translate (\ref{eq:supp2}) and (\ref{eq:supp3}) as follows:

\begin{lemma} \label{lem:support}
Let $I \subset K$ and $J \subset K$ be finitary subsets of $S$ and let
\begin{equation*}
  \quo : \W{I}{J} \to \W{K}{L}
\end{equation*}
denote the quotient map.
 \begin{enumerate}
 \item If $M \in \bMod{R^I}{R^J}$ and $\supp M = \Gr{I}{C}{J}$
for some finite subset $C \subset \W{I}{J}$ then $\supp ({}_{R^K}M_{R^L}) = \Gr{K}{\quo(C)}{L}$.
\item If $N \in \bMod{R^K}{R^L}$ and $\supp M = \Gr{I}{C^{\prime}}{J}$
for some finite subset $C^{\prime} \subset \W{K}{L}$ then $\supp (R^I \otimes_{R^K} M \otimes_{R^L} R^J) = \Gr{I}{\quo^{-1}(C^{\prime})}{J}$.
 \end{enumerate}
The same is true with ``$=$'' replaced with ``$\subset$'' throughout.
\end{lemma}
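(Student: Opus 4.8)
The plan is to deduce both statements from the support identities \eqref{eq:supp1}, \eqref{eq:supp2} and \eqref{eq:supp3}, applied to the finite morphism attached to the ring inclusion $R^K\otimes R^L\hookrightarrow R^I\otimes R^J$. Since $W_I\subset W_K$ and $W_J\subset W_L$ we have $R^K\subset R^I$ and $R^L\subset R^J$, and this inclusion is the comorphism of the product of the canonical projections
\[
f\colon V/W_I\times V/W_J\longrightarrow V/W_K\times V/W_L .
\]
First I would record two easy points. The morphism $f$ is finite: by Lemma \ref{cor:relinv}(1) (applied to $I\subset K$ and to $J\subset L$) the modules $R^I$ and $R^J$ are graded free of finite rank over $R^K$ and over $R^L$ respectively, so $R^I\otimes R^J$ is finite free over $R^K\otimes R^L$; in particular $f$ is closed. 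Second, each $\Gr{I}{p}{J}$ is closed, being the image of the closed graph $\Gr{}{x}{}\subset V\times V$ under the finite quotient map $V\times V\to V/W_I\times V/W_J$; hence $\Gr{I}{C}{J}$ is closed for every finite $C$.

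The geometric core is then the pair of identities
\[
f\bigl(\Gr{I}{p}{J}\bigr)=\Gr{K}{\quo(p)}{L},\qquad f^{-1}\bigl(\Gr{K}{q}{L}\bigr)=\Gr{I}{\quo^{-1}(q)}{J}
\]
for $p\in\W{I}{J}$ and $q\in\W{K}{L}$, both following by chasing the commutative square that places $V\times V$ above $f$ via the two quotient maps $V\times V\to V/W_I\times V/W_J$ and $V\times V\to V/W_K\times V/W_L$. The first is immediate, since $\Gr{I}{p}{J}$ is the image of $\Gr{}{x}{}$ for any $x\in p$, so its image under $f$ is the image of the same $\Gr{}{x}{}$ in $V/W_K\times V/W_L$, that is $\Gr{K}{W_KxW_L}{L}=\Gr{K}{\quo(p)}{L}$. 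The second is the one combinatorial point: unwinding the definition of the twisted graphs, a point $(W_I\mu,W_J\nu)$ lies in $f^{-1}(\Gr{K}{q}{L})$ iff $\mu\in W_Ky\nu$ for some $y$ with $W_KyW_L=q$, and lies in $\Gr{I}{\quo^{-1}(q)}{J}$ iff $\mu\in W_Ix\nu$ for some $x$ with $W_KxW_L=q$; as $W_I\subset W_K$ these conditions are equivalent (in either direction one absorbs the left factor, which lies in $W_K$ in both cases, into the chosen element of $q$). Passing to unions gives $f(\Gr{I}{C}{J})=\Gr{K}{\quo(C)}{L}$ and $f^{-1}(\Gr{K}{C'}{L})=\Gr{I}{\quo^{-1}(C')}{J}$; note here that $\quo^{-1}(C')$ is finite when $C'$ is, since a $(W_K,W_L)$-double coset is a finite subset of $W$ (because $W_K$ and $W_L$ are finite) and so meets only finitely many $(W_I,W_J)$-double cosets.

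It remains to combine these with the support identities. For (1), ${}_{R^K}M_{R^L}$ is $M$ viewed as an $R^K\otimes R^L$-module along $f$, so \eqref{eq:supp1} gives $f(\supp M)\subset\supp({}_{R^K}M_{R^L})\subset\overline{f(\supp M)}$. If $\supp M=\Gr{I}{C}{J}$, this set is closed and $f$ is closed, so $f(\supp M)=\Gr{K}{\quo(C)}{L}$ is closed, the two inclusions become equalities, and $\supp({}_{R^K}M_{R^L})=\Gr{K}{\quo(C)}{L}$; if instead $\supp M\subset\Gr{I}{C}{J}$, the same inclusions give $\supp({}_{R^K}M_{R^L})\subset\overline{f(\supp M)}\subset f(\Gr{I}{C}{J})=\Gr{K}{\quo(C)}{L}$. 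For (2), one has $R^I\otimes_{R^K}N\otimes_{R^L}R^J\cong(R^I\otimes R^J)\otimes_{R^K\otimes R^L}N$, the pullback of $N$ along $f$, so \eqref{eq:supp2} gives $\supp(R^I\otimes_{R^K}N\otimes_{R^L}R^J)=f^{-1}(\supp N)$; substituting $\supp N=\Gr{K}{C'}{L}$, respectively $\supp N\subset\Gr{K}{C'}{L}$, and applying the preimage identity yields the equality, respectively the inclusion, with $\Gr{I}{\quo^{-1}(C')}{J}$.

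The only step needing genuine care is the preimage computation $f^{-1}(\Gr{K}{q}{L})=\Gr{I}{\quo^{-1}(q)}{J}$; the image identity is immediate, and the remaining manipulations are routine bookkeeping with closed sets and the support identities already recorded before the lemma.
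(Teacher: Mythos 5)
Your proposal is correct and follows exactly the route the paper intends: the paper gives no detailed argument, simply asserting that the lemma is the translation of \eqref{eq:supp1}--\eqref{eq:supp3} along the finite morphism $V/W_I\times V/W_J\to V/W_K\times V/W_L$ induced by $R^K\otimes R^L\hookrightarrow R^I\otimes R^J$. Your writeup just fills in the implicit details (finiteness via Lemma \ref{cor:relinv}, closedness of the $\Gr{I}{C}{J}$, and the image/preimage identities $f(\Gr{I}{p}{J})=\Gr{K}{\quo(p)}{L}$ and $f^{-1}(\Gr{K}{q}{L})=\Gr{I}{\quo^{-1}(q)}{J}$), all of which are verified correctly.
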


Given a set $C \subset \W{I}{J}$ and $M \in \bMod{R^I}{R^J}$ we denote by $\Gamma_C M$ the submodule of sections with support in $\Gr{I}{C}{J}$. That is
\begin{equation*} \label{lab:sup}
  \Gamma_C M = \{ m \in M \; | \; \supp m \subset \Gr{I}{C}{J} \}.
\end{equation*}
Recall from Proposition \ref{prop:doublebruhat} that the Bruhat order on $W$ descends to a partial order on $\W{I}{J}$
and that, given $p \in \W{I}{J}$, we write $\{ \le p \}$ for the set of elements in $\W{I}{J}$ which are smaller than $p$ (and similarly for $\{ < p \}$, $\{ \ge p \}$ and $\{ > p \}$). We also abbreviate
\begin{equation*}
  \Gr{I}{\le p}{J} = \Gr{I}{\{ \le p\} }{J}\text{ and } \Gamma_{\le p} M = \Gamma_{\{ \le p \}} M
\end{equation*}
and analogously for $\Gr{I}{< p}{J}$, $\Gamma_{< p} M$, $\Gr{I}{\ge p}{J}$ etc.  The following additional notation will be useful:\label{lab:sup2}
\begin{align*}
  \Gamma^pM &= M / \Gamma_{\ne p} M \\
  \Gamma^{\le}_p M  & = \Gamma_{\le p} M /\Gamma_{< p} M \\
  \Gamma^{\ge}_p M  & = \Gamma_{\ge p} M /\Gamma_{> p} M. 
\end{align*}

Recall that in Subsection \ref{subsec:enlarge} we defined $R(X) \in \bMod{R}{R}$ for any finite subset $X  \subset W$.

\begin{lemma} \label{lem:Rpsupp}
The support of $f = (f_x) \in R(X)$ is $\Gr{}{C}{}$, where
  \begin{equation*}
    C = \{ x \in X \; | \; f_x \ne 0 \}.
  \end{equation*}
\end{lemma}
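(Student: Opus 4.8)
The strategy is to compute the support of $R(X)$ as an $R \otimes R$-module directly, using the exact sequence of Proposition \ref{prop:exactsequence} together with the elementary support facts \eqref{eq:suppexact}, \eqref{eq:supp1}--\eqref{eq:supp3}. First I would observe that, since $f = (f_x)_{x \in X}$ generates an $R$-bimodule sitting inside $\bigoplus_{x \in X} R_x$, and the submodule it generates maps, component by component, onto the $R$-bimodule generated by $f_x \in R_x$ for each $x \in X$, it suffices to handle the two extreme cases: (a) the support of all of $R(X)$, and (b) the fact that $\supp f \subseteq \Gr{}{C}{}$ whenever $f_x = 0$ for $x \notin C$. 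For (b) one notes that if $f_x = 0$ for $x \notin C$ then the submodule generated by $f$ already lies inside $R(C) \subseteq R(X)$ (it satisfies the defining congruences vacuously at the missing vertices), so its support is contained in $\supp R(C)$; hence it is enough to prove that $\supp R(X) = \Gr{}{X}{}$ in general, and then apply this with $X$ replaced by $C$.

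So the heart of the matter is the identity $\supp R(X) = \Gr{}{X}{}$. One inclusion is easy: from Proposition \ref{prop:exactsequence} we have an injection $R(X) \hookrightarrow \bigoplus_{x \in X} R_x$, and $\supp R_x = \Gr{}{x}{}$ (it is the coordinate ring of the graph $\Gr{}{x}{}$ viewed as an $R \otimes R$-module), so by \eqref{eq:suppexact} we get $\supp R(X) \subseteq \bigcup_{x \in X} \Gr{}{x}{} = \Gr{}{X}{}$. For the reverse inclusion I would fix $x_0 \in X$ and produce an element of $R(X)$ whose support meets (a dense subset of) $\Gr{}{x_0}{}$, or more directly, show the localisation of $R(X)$ at a generic point of $\Gr{}{x_0}{}$ is nonzero. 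Since the $h_t$ are pairwise linearly independent (by the lemma preceding Section \ref{subsec:standard}), only finitely many hyperplanes $V^t$ are relevant and one can pick $\lambda_0 \in V$ on $\Gr{}{x_0}{}$ avoiding all the $\Gr{}{x}{} \cap \Gr{}{x_0}{}$ for $x \neq x_0$ in $X$ (these are proper closed subsets since the graphs are distinct irreducible varieties of the same dimension). At such a generic point, the congruence conditions defining $R(X)$ impose no constraint coupling the $x_0$-component to the others after localising, so the element $g$ supported only at $x_0$ (with $g_{x_0}$ a unit near $\lambda_0$) — which lies in $R(X)$ by the argument of case (b) — has nonzero stalk there. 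Alternatively, and perhaps more cleanly, I would apply $\Quot R \otimes_R -$ (as a left $R$-module) throughout: localising Proposition \ref{prop:exactsequence}, the third term $\bigoplus R_x/(h_t)$ becomes zero (each $h_t$ becomes invertible), so $\Quot R \otimes_R R(X) \cong \bigoplus_{x \in X} \Quot R \otimes_R R_x$, which already shows $R(X)$ has a summand supported generically on each $\Gr{}{x}{}$, giving $\Gr{}{X}{} \subseteq \supp R(X)$.

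The main obstacle is bookkeeping the support of a \emph{single} section $f$ rather than of the whole module: one must check that the submodule $R^I \otimes R^J$-generated (here just $R \otimes R$-generated) by $f$ really does have support exactly $\Gr{}{C}{}$ with $C = \{x : f_x \neq 0\}$, not merely contained in it. The containment $\subseteq$ follows as above by restricting to $R(C)$; for $\supseteq$, one needs that the image of $f$ in $R_x = R(\{x\})$ is a nonzero element of the coordinate ring of the irreducible variety $\Gr{}{x}{}$ for each $x \in C$, hence has support all of $\Gr{}{x}{}$, and that the projection $R(X) \to R_x$ restricted to the submodule generated by $f$ is a nonzero map of $R \otimes R$-modules whose image therefore has support $\Gr{}{x}{}$; combining over $x \in C$ and using \eqref{eq:suppexact} (or rather that support of a module contains the support of any quotient) gives the claim. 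This is routine once the setup is in place, but it is the one point where one must be careful not to conflate the whole bimodule with the cyclic submodule generated by $f$.
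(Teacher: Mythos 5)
Your core argument is the same as the paper's: embed $R(X)$ into $\bigoplus_{x \in X} R_x$, note that $R_x$ is the coordinate ring of the irreducible variety $\Gr{}{x}{}$ so that any nonzero element of $R_x$ generates a submodule with support all of $\Gr{}{x}{}$, and then get the two inclusions by componentwise vanishing (for $\subseteq$) and by projecting the cyclic submodule generated by $f$ onto the submodule generated by $f_x$ for each $x \in C$ (for $\supseteq$). The lengthy middle discussion of $\supp R(X) = \Gr{}{X}{}$ via generic points or $\Quot R\otimes_R-$ is not needed: the lemma concerns a single section, and knowing the support of the whole module gives no information about a cyclic submodule, as you yourself note at the end.

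One step as written is false, though easily repaired: you claim that if $f_x=0$ for $x\notin C$ then the submodule generated by $f$ lies in ``$R(C)\subseteq R(X)$'', the inclusion holding because the congruences are ``vacuous at the missing vertices.'' Extension by zero does not map $R(C)$ into $R(X)$: for a pair $x\in C$, $tx\in X\setminus C$ the defining condition forces $f_x\in(h_t)$, which is a genuine constraint (e.g.\ $X=\{e,s\}$, $C=\{e\}$, $f_e=1$ fails it). What you actually need is weaker and true: since the left and right $R$-actions on $R(X)$ are componentwise (up to the twist by $x$), every element of the submodule generated by $f$ has vanishing components outside $C$, so that submodule embeds as an $R\otimes R$-module into $\bigoplus_{x\in C} R_x$, whence $\supp f \subseteq \Gr{}{C}{}$. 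With that substitution your proof is correct and coincides with the paper's.
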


\begin{proof} Because we may identify $R_x$ as an $R \otimes R$-module with the regular functions on the irreducible $\Gr{}{x}{}$ it follows that every $0 \ne m \in R_x$ has support equal to $\Gr{}{x}{}$. The lemma than follows by considering the embedding of $R(X)$ in $\bigoplus_{x \in X} R_x$.\end{proof}

\begin{lemma} \label{lem:standsupp} Let $I, J \subset S$ be finitary and $p \in \W{I}{J}$. The support of any non-zero $m \in \RS{I}{p}{J}$ is $\Gr{I}{p}{J}$.  \end{lemma}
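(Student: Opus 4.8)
The plan is to pass to the bimodule $R(p)$ where the support calculation is transparent, using the embedding of $\RS{I}{p}{J}$ into $R(p)$ established in Lemma \ref{lem:Rpinv}.

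First I would recall that by Lemma \ref{lem:Rpinv} we have an isomorphism $\RS{I}{p}{J} \cong R(p)^{W_I \times W_J}$ in $\bMod{R^I}{R^J}$, realised concretely by sending $(f_x)_{x \in p}$ to $f_{p_-}$; the inverse sends $m \in R^K \subset R$ (where $K = I \cap p_- J p_-^{-1}$) to the tuple $(f_x)$ with $f_{up_-v} = um$. Thus a non-zero $m \in \RS{I}{p}{J}$ corresponds to an element $f = (f_x) \in R(p)$ all of whose components $f_x = um$ are non-zero (they are $W$-translates of the non-zero polynomial $m$, hence non-zero). By Lemma \ref{lem:Rpsupp} the support of $f$ inside $V \times V$ is then $\Gr{}{p}{}$, the graph of the whole double coset $p$.

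Next I would transport this across the quotient map. The bimodule $\RS{I}{p}{J}$ is an $(R^I, R^J)$-module, so its support lives in $V/W_I \times V/W_J$; the inclusion $R(p)^{W_I \times W_J} \hookrightarrow R(p)$ together with $R \otimes R \to R(p)$ (or rather the fact that $R(p)^{W_I\times W_J}$ is, as an $R^I\otimes R^J$-module, obtained from the graph picture by passing to the quotient) identifies the support of $m$ with the image of $\Gr{}{p}{}$ under $V \times V \to V/W_I \times V/W_J$, which is exactly $\Gr{I}{p}{J}$ by the definition recalled in Section \ref{sec:support}. Concretely: the annihilator of $m$ in $R^I \otimes R^J$ is the preimage of the annihilator of $f$ in $R\otimes R$ under the inclusion $R^I \otimes R^J \hookrightarrow R \otimes R$, so $\supp m = (\text{image of } \supp f) = \Gr{I}{p}{J}$ using \eqref{eq:supp1} and that $V\times V \to V/W_I\times V/W_J$ is finite.

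The main obstacle — really the only point requiring care — is making the last step precise: one must check that passing from the $(R,R)$-module $R(p)$ to its $W_I\times W_J$-invariants and viewing the result as an $(R^I,R^J)$-module does compute support as the image of $\Gr{}{p}{}$, rather than something smaller. This follows because $R \otimes_{R^I} \RS{I}{p}{J} \otimes_{R^J} R \cong R(p)$ (Theorem \ref{thm:ind} with $K=L=\emptyset$) is a faithfully flat base change along the finite surjection $V\times V \to V/W_I \times V/W_J$, so $\supp(\RS{I}{p}{J})$ (and similarly $\supp m$) is exactly the image of the support upstairs by \eqref{eq:supp3}; since the support of the image of $m$ upstairs is $\Gr{}{p}{}$ by the previous paragraph, we conclude $\supp m = \Gr{I}{p}{J}$.
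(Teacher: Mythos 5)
Your proof is correct and follows essentially the same route as the paper: embed $\RS{I}{p}{J}$ into $R(p)$ via Lemma \ref{lem:Rpinv}, observe that a non-zero $m$ corresponds to a tuple $(f_x)$ with all components non-zero (so $\supp_{R\otimes R} f = \Gr{}{p}{}$ by Lemma \ref{lem:Rpsupp}), and push the support down along the finite quotient map using the support formulas of Section \ref{sec:support}. The paper's proof is just the one-line citation of \eqref{eq:supp3}, Lemma \ref{lem:Rpsupp}, and Lemma \ref{lem:Rpinv}, and your write-up is the natural unwinding of that sentence; the one mild imprecision is invoking \eqref{eq:supp3} (restriction) where the cleanest justification of ``$\supp m = q(\supp f)$'' really runs through \eqref{eq:supp2} (extension, using that $R\otimes R$ is free over $R^I\otimes R^J$ so the submodule generated by $m$ extends to the submodule generated by $f$) together with surjectivity of $q$ — but that is exactly the ``faithfully flat base change'' step you flag, so the substance is there.
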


\begin{proof} This follows from (\ref{eq:supp3}), Lemma \ref{lem:Rpsupp} above and the fact that we may view $\RS{I}{p}{J}$ as an $(R^I, R^J)$-submodule of $R(p)$ (Lemma \ref{lem:Rpinv}). \end{proof}

\excise{

\begin{lemma} \label{lem:generalsupport} Let  $X$ be an affine variety
with a decomposition
 \begin{equation*}
   X = X_1 \cup X_2 \cup \dots X_n
 \end{equation*}
into closed subsets. Let $A$ be the ring of regular functions on $X$
and let $M$ be an $A$-module. Suppose we have a filtration
 \begin{equation*}
   0 = M_0 \subset M_1 \subset \cdots \subset M_n = M
 \end{equation*}
dof $M$ such that the support of any non-zero element in $M_i /M_{i-1}$
is contained in $X_i$, but not in $\cup_{i \ne j} X_j$. Then $M_i$ is
precisely the submodule of sections with support in $X_1  \cup \dots
\cup X_i$.
\end{lemma}

\begin{proof} Considering the exact sequence $M_{i-1} \hookrightarrow
M_i \sur M_i/M_{i-1}$ we see that $\supp M_i \subset X_i \cup \supp
M_{i-1}$ and by induction $\supp M_i \subset X_1 \cup \dots \cup M_i$.
On the other hand, let $b \in M$ and suppose that $b$ has support
contained in $X_1 \cup \dots \cup X_i$. If $b$ is in $M_j$, but  not
in $M_{j-1}$, then the image of $b$ in $M_i/M_{i-1}$ is non-zero and
hence the support of $b$ is not contained in  $\cup_{i \ne j} X_j$.
Hence $j \le i$. \end{proof}}

\section{Equivariant Schubert calculus}

\label{subsec:demRX}

The aim of this subsection is to define Demazure operators on $R(X)$ and use them to construct filtrations on $R(p)$ for finite double cosets $p \subset W$, as well as invariant subrings thereof. This discussion was influenced by \cite{KT}, where a similar situation is discussed.

Recall that in Section \ref{subsec:enlarge}  we defined, for all finite sets $X \subset W$ a bimodule $R(X) \in \bMod{R}{R}$. Moreover, given subgroups $W_1, W_2 \subset W$ such that $W_1X = X = XW_2$, the bimodule $R(X)$ carries commuting left $W_1$- and right $W_2$-actions.

\begin{deprop}\label{prop:demdef}
Let $X, W_1, W_2 \subset W$ be as above.

\begin{enumerate}
\item For all reflections $t \in W_1$ there exists an operator
$f \mapsto \partial_tf$ on $R(X)$, the \emph{left Demazure operator to $t$}, uniquely determined by
\begin{equation*}
  f - tf = 2h_t(\partial_tf) \quad \text{for all $f \in R(p)$.}
\end{equation*}
This is a morphism in $\bMod{R^t}{R}$.

\item For all reflections $t \in W_2$ there exists an operator $f \mapsto f\partial_t$ on $R(X)$, the \emph{right Demazure operator to $t$}, uniquely determined by
\begin{equation*}
  f - ft = (f \partial_t)2h_t \quad \text{for all $f \in R(p)$.}
\end{equation*}
This is a morphism in $\bMod{R}{R^t}$.
\end{enumerate}
\end{deprop}

\begin{proof} We first treat the case of the left Demazure operator. Uniqueness is clear as $R(X)$ is torsion free as a left $R$-module.
Rewriting the condition at $x \in p$ we see that, if $f \in R(X)$, $\partial_tf$ must be given by
\begin{equation*}
    (\partial_tf)_x = \frac{f_x - tf_{tx}}{2h_t}.
  \end{equation*}
A priori this defines an element of $\Quot R$. However, by definition of $R(X)$, $f_{x} - f_{tx}$ and hence $f_x - tf_{tx}$ lies in $(h_t)$. Thus $(\partial_tf)_x \in R$ for all $x \in X$.

It remains to see that $\partial_tf \in R(X)$. Because $f - tf \in R(X)$ it is clear that
\begin{equation*}
  (\partial_tf)_x - (\partial_tf)_{t^{\prime}x} \in (h_{t^{\prime}})
\end{equation*}
whenever $t^{\prime} \ne t$ and $x, t^{\prime}x \in X$. Writing out the definitions, on also sees that
\begin{equation*}
  (\partial_tf)_x - (\partial_tf)_{tx}
\end{equation*}
it $t$-anti-invariant, and hence $(\partial_tf)_x - (\partial_tf)_{tx} \in (h_t)$. It follows that $\partial_tf \in R(X)$ and hence the left Demazure operator to $t$ exists.

It is clear that the left Demazure operator for $t \in W_I$ commutes with multiplication on the left with a $t$-invariant function. For the right action of $r \in R$ on $f \in R(X)$ one has
\begin{equation*}
  (\partial_t(fr))_x = \frac{(fr)_x - t(fr)_{tx}}{2h_t} = \frac{f_x - f_{tx}}{2h_t}xr = ((\partial_tf)r)_x.
\end{equation*}
In particular, $f \mapsto \partial_tf$ is a morphism in $\bMod{R^t}{R}$ as claimed.

We now treat the case of the right Demazure operator for a reflection $t \in W_2$. The operator is clearly unique if it exists and $f\partial_t$ for $f \in R(X)$ must be given by
\begin{equation*}
  (f\partial_t)_x = \frac{f_x - f_{xt}}{2xh_t}.
\end{equation*}
Similarly to above one checks that $(f\partial_t)_x \in R$ for all $x \in X$ and then that $f\partial_t \in R(X)$, using the definition of $R(X)$ and (\ref{eq:hident}). It is then straighforward to see that $f \mapsto f\partial_t$ is a morphism in $\bMod{R}{R^t}$.
\end{proof}

Recall from Section \ref{sec:support} that the support of an element $f \in R(X)$ is easy to calculate: it is the set $\Gr{}{A}{}$ where $A = \{ x \in X \; | \; f_x \ne 0 \}$. The following lemma is then an immediate consequence of the definition of the Demazure operators.

\begin{lemma} \label{lem:updown}
  Let $f \in R(X)$ such that $\supp f \subset \Gr{}{A}{}$ for some $A \subset X$.
  \begin{enumerate}
  \item If $t \in W_I$ is a reflection then $\supp \partial_tf \subset \Gr{}{A \cup tA}{}$.
\item If $t \in W_J$ is a reflection then $\supp f \partial_t \subset \Gr{}{A \cup At}{}$.
  \end{enumerate}
\end{lemma}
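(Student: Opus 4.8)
The plan is to reduce everything to the explicit formulas for the Demazure operators obtained in Definition/Proposition \ref{prop:demdef} together with the support computation in Lemma \ref{lem:Rpsupp}. Recall that by Lemma \ref{lem:Rpsupp} the hypothesis $\supp f \subset \Gr{}{A}{}$ is equivalent to saying that $f_x = 0$ for every $x \in X \setminus A$, and that to prove $\supp \partial_t f \subset \Gr{}{A \cup tA}{}$ it suffices (again by Lemma \ref{lem:Rpsupp}) to check that $(\partial_t f)_x = 0$ for every $x \in X \setminus (A \cup tA)$, and similarly for the right operator.

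For part (1), fix $x \in X \setminus (A \cup tA)$. Then $x \notin A$, so $f_x = 0$; moreover $tx \notin A$, since $tx \in A$ would force $x = t(tx) \in tA$, contrary to assumption, so $f_{tx} = 0$ as well. Substituting into the formula $(\partial_t f)_x = (f_x - t f_{tx})/(2 h_t)$ recorded in the proof of Definition/Proposition \ref{prop:demdef} immediately gives $(\partial_t f)_x = 0$, which is what we need. Part (2) is handled identically: for $x \in X \setminus (A \cup At)$ one has $x \notin A$ and $xt \notin A$ (if $xt \in A$ then $x = (xt)t \in At$), hence $f_x = f_{xt} = 0$, and the formula $(f\partial_t)_x = (f_x - f_{xt})/(2 x h_t)$ yields $(f\partial_t)_x = 0$.

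There is no genuine obstacle here — this is the ``immediate consequence'' advertised before the statement. The only point worth stating carefully is the bookkeeping step $x \notin tA \Rightarrow tx \notin A$ and its right-handed analogue $x \notin At \Rightarrow xt \notin A$, which is nothing more than the fact that left (resp. right) multiplication by the reflection $t$ is an involution of $X$ preserving $A \cup tA$ (resp. $A \cup At$).
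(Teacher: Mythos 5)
Your proof is correct and is precisely the unpacking the paper intends when it says the lemma is ``an immediate consequence of the definition of the Demazure operators'': reduce via Lemma~\ref{lem:Rpsupp} to showing the relevant components vanish and read this off from the explicit formulas $(\partial_t f)_x = (f_x - tf_{tx})/(2h_t)$ and $(f\partial_t)_x = (f_x - f_{xt})/(2xh_t)$. The paper gives no proof beyond this remark, and your bookkeeping step $x \notin tA \Rightarrow tx \notin A$ is exactly the right observation.
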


For the rest of this section fix two finitary subsets $I, J \subset S$ as well as a double coset $p \in \W{I}{J}$.
We now come to the main theorem of this section, which purports the existence of certain special elements in $R(p)$.

\begin{theorem} \label{thm:doublecosetfunctions}
There exists $\phi_x \in R(p)$ for all $x \in p$, unique up to a scalar, such that
\begin{enumerate}
\item $\deg \phi_x = 2(\ell(p_+) - \ell(x))$,
\item $\supp \phi_x \subset \Gr{}{\le x}{}$ and $(\phi_x)_x \ne 0$.
\end{enumerate}
The set $\{ \phi_w \; | \; w \in p \}$ builds a homogeneous basis for $R(p)$ as a left or right $R$-module.
\end{theorem}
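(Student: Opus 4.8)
The plan is to build the elements $\phi_x$ by induction on $\ell(x)$ using the Demazure operators of Definition/Proposition~\ref{prop:demdef}, and then to read off both uniqueness and the basis statement from one divisibility observation. For the base case $x = p_-$: since $p_-$ is the Bruhat-minimal element of $p$, the reflections $t$ with $tp_- \in p$ are exactly those with $p_- < tp_- \in p$, and there are $\ell(p_+)-\ell(p_-)$ of them by Proposition~\ref{prop:doublecosetdiff}; the tuple concentrated at $p_-$ with value $\prod_t h_t$ then lies in $R(p)$ (each $h_t$ divides it), is homogeneous of degree $2(\ell(p_+)-\ell(p_-))$, and is supported on $\Gr{}{p_-}{} = \Gr{}{\le p_-}{}$, so it serves as $\phi_{p_-}$. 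For the inductive step, given $x \ne p_-$, Howlett's Theorem~\ref{Howlett} shows that $x$ has a simple descent $s$, lying in $I$ on the left or in $J$ on the right; in either case $sx$ (resp. $xs$) still lies in $p$ and has length $\ell(x)-1$, so $\phi_{sx}$ (resp. $\phi_{xs}$) is already constructed, and I would set $\phi_x := \partial_s\phi_{sx}$ (resp. $\phi_x := \phi_{xs}\partial_s$). The degree drops by $2$ as required; the support lands in $\Gr{}{\le x}{}$ by Lemma~\ref{lem:updown} together with the lifting property of the Bruhat order ($y \le sx$ and $s\!\cdot\!(sx) > sx$ force $sy \le x$); and the $x$-component of $\partial_s\phi_{sx}$ equals $-s(\phi_{sx})_{sx}/2h_s$, because the support condition on $\phi_{sx}$ kills its $x$-component, hence it is a nonzero element of $R$.

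For uniqueness, the point is that conditions (1)--(2) essentially pin down the ``leading component'' of any candidate. If $\phi \in R(p)$ satisfies (1)--(2) at $x$, then for each reflection $t$ with $x < tx \in p$ the support condition gives $(\phi)_{tx} = 0$, whence $h_t \mid (\phi)_x$ from the defining relations of $R(p)$; as the $h_t$ are pairwise non-proportional (hence pairwise coprime in the polynomial ring $R$) and number $\ell(p_+)-\ell(x)$ by Proposition~\ref{prop:doublecosetdiff}, the product $\prod_t h_t$ divides $(\phi)_x$ and already exhausts the degree $2(\ell(p_+)-\ell(x))$, so $(\phi)_x$ is a nonzero scalar multiple of $\prod_t h_t$. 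Given two candidates, a suitable scalar combination has vanishing $x$-component and support in $\Gr{}{<x}{}$; choosing a Bruhat-maximal element $y$ in its support and repeating the divisibility argument at $y$ would force a nonzero element of degree $2(\ell(p_+)-\ell(x))$ to be divisible by a product of $\ell(p_+)-\ell(y) > \ell(p_+)-\ell(x)$ coprime linear forms, which is impossible. Hence $\phi_x$ is unique up to a scalar.

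For the basis statement, I would fix an enumeration $p = \{w_1,\dots,w_n\}$ refining the Bruhat order and let $F_k \subset R(p)$ be the sub-bimodule of tuples vanishing at $w_{k+1},\dots,w_n$; by its support property $\phi_{w_k} \in F_k$. Projection onto the $w_k$-component has kernel $F_{k-1}$, so it embeds $F_k/F_{k-1}$ into $R$, and the divisibility argument above identifies the image with the principal (hence free rank-one) ideal generated by $\prod_{t:\,w_k < tw_k \in p} h_t$, which is precisely the ideal generated by the image of $\phi_{w_k}$. Since each graded layer $F_k/F_{k-1}$ is free of rank one on $\overline{\phi_{w_k}}$, an immediate induction shows $R(p) = F_n$ is graded free with basis $\{\phi_{w_1},\dots,\phi_{w_n}\}$; running the same argument with the twisted right $R$-action gives the statement for right modules. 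As a byproduct this also establishes the freeness asserted in Corollary~\ref{cor:Rpleft}.

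The real content, and where all the combinatorics concentrates, is the match between degree and divisibility: everything rests on Proposition~\ref{prop:doublecosetdiff}, which makes the number of ``upward edges at $x$ inside $p$'' equal to $\ell(p_+)-\ell(x) = \tfrac12\deg\phi_x$, so that divisibility by $\prod_t h_t$ is tight. The Bruhat-theoretic inputs (a simple descent staying inside $p$, the lifting property) are standard but must be quoted with care, and I expect verifying the support behaviour of the Demazure step along the induction to be the one genuinely fiddly point.
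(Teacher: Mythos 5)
Your proof is correct and follows essentially the same route as the paper: the elements $\phi_x$ are built by Demazure-operator induction starting from the tuple concentrated at $p_-$, and the divisibility of $(\phi)_x$ by $\prod_{t:\,x<tx\in p} h_t$ together with the degree count of Proposition \ref{prop:doublecosetdiff} underlies both uniqueness and the basis claim. Your reorganization (existence first, then uniqueness, then freeness via the filtration $F_k$ with $F_k/F_{k-1}\cong R$) is a cosmetic rearrangement of the paper's argument, which instead establishes spanning and linear independence first and existence last.
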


\begin{proof} Let us first assume that there exists $\phi_x \in R(p)$ for all $x \in p$ satisfying the conditions of the theorem. We will argue that they are then unique and form a basis for $R(p)$ as a left or right $R$-module.

Suppose that $f \in R(p)$ has support contained in $\Gr{}{A}{}$ for
some downwardly closed subset $A \subset p$ and choose $x \in A$
maximal. As $f_{tx} = 0$ for all $t \in T$ with $x < tx \in p$, from the definition of $R(p)$ we see that $f_x$ is divisible by
\begin{equation*}
  \alpha_x = \prod_{t \in T \atop x < tx \in p} h_t.
\end{equation*}
As $\deg \alpha_x = 2|\{ t \in T \; | \; x < tx \in p \} | = 2(\ell(p_+) - \ell(x))$ by Proposition \ref{prop:doublecosetdiff} we see that $(\phi_x)_x$ is a non-zero scalar multiple of $\alpha_x$. Hence, we may find $r \in R$ such that
\begin{equation*}
  \supp (f - r\phi_x) \subset \Gr{}{A \setminus \{x \}}{}.
\end{equation*}
It follows by induction that the $\{ \phi_x \}$ span $R(p)$ as a left $R$-module. They are clearly linearly independent when we consider $R(p)$ as a left $R$-module by the support conditions. Hence they form a basis for $R(p)$ as a left $R$-module. Identical arguments show that they are also a basis for $R(p)$ as a right $R$-module.

We can also use the above facts to see that $\phi_x$ for $x \in p$ is unique up to a scalar. Indeed, if $\phi_x$ and $\phi_x^{\prime}$ are two candiates we may find $\lambda \in k$ such that $\phi_x - \lambda\phi_x^{\prime}$ is supported on $\Gr{}{\le x \setminus \{ x \}}{}$. By the above arguments $\phi_x - \lambda\phi_x^{\prime}$ has degree strictly greater than $2(\ell(p_+) - \ell(x))$ and hence is zero.

It remains to show existence. To get started consider $\vartheta =  (\vartheta_x) \in \oplus_{x \in p} R$ defined by
\begin{equation*}
  \vartheta_x = \left \{ \begin{array}{ll} \alpha_{p_-} & \text{ if $x = p_-$} \\
0 & \text{otherwise.} \end{array} \right . 
\end{equation*}
Clearly $\vartheta \in R(p)$ and $\deg \vartheta = 2(\ell(p_+ - \ell(p_-))$ (again by Proposition \ref{prop:doublecosetdiff}). Hence we may set $\phi_{p_-} = \vartheta$.

Now assume by induction that we have found $\phi_x$ for all $x \in p$ with $\ell(x) < m$ and choose $y \in p$ of length $m$. By Howlett's theorem (\ref{Howlett}) there exists a simple reflection $s \in W_I$ or $t \in W_J$ such that either $y > sy \in p$ or $y > yt \in p$. In the first case consider $\vartheta = \partial_s\phi_{sy} \in R(p)$. We have
\begin{enumerate}
\item $\deg \vartheta = \deg \phi_{sy} - 2  = 2(\ell(p_+) - \ell(y))$,
\item $\supp \vartheta \subset \Gr{}{\le y}{}$ (by Lemma \ref{lem:updown}) and $\vartheta_y \ne 0$ because $(\phi_{sy})_{sy} \ne 0$.
\end{enumerate}
Hence we may set $\phi_y = \vartheta$. Similarly in the second case we may take $\phi_y = \phi_{yt}\partial_t$. It follows by  induction that the elements $\{ \phi_w \; | \; w  \in p\}$ exist. \end{proof}

The first corollary of this theorem is a description of $R(p)$ as a left $R$-module, needed during the proof of Theorem \ref{thm:ind}.

\begin{corollary} \label{cor:Rpleft}
As graded left $R$-modules we have an isomorphism
  \begin{equation*}
    R(p) \cong \tPoinc(p)  \cdot R.
  \end{equation*}
\end{corollary}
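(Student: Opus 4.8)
The plan is to read the statement off Theorem~\ref{thm:doublecosetfunctions}. That theorem produces, for each $x\in p$, a homogeneous element $\phi_x\in R(p)$ of degree $2(\ell(p_+)-\ell(x))$, and asserts that $\{\phi_x\mid x\in p\}$ is a homogeneous basis of $R(p)$ viewed as a left $R$-module. Hence $R(p)$ is graded free over $R$, with an isomorphism of graded left $R$-modules
\begin{equation*}
R(p)\;\cong\;\bigoplus_{x\in p}R\bigl[-2(\ell(p_+)-\ell(x))\bigr].
\end{equation*}
By the definition of $P\cdot M$ for $P\in\N[v,v^{-1}]$, it then suffices to verify that $\sum_{x\in p}v^{-2(\ell(p_+)-\ell(x))}=\tPoinc(p)$.

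For this I would use Howlett's theorem~(\ref{Howlett}). Put $K=I\cap p_-Jp_-^{-1}$. Every $x\in p$ is uniquely of the form $x=up_-v$ with $u\in D_K\cap W_I$ and $v\in W_J$, and $\ell(x)=\ell(p_-)+\ell(u)+\ell(v)$; combined with \eqref{eq:poinc1} this gives $\ell(p_+)-\ell(x)=\bigl(\ell_0-\ell(u)\bigr)+\bigl(\ell(\wo{J})-\ell(v)\bigr)$, where $\ell_0=\ell(\wo{I})-\ell(\wo{K})$ is the length of the longest element of $D_K\cap W_I$. Thus the sum factors as
\begin{equation*}
\Bigl(\,\textstyle\sum_{u\in D_K\cap W_I}v^{-2(\ell_0-\ell(u))}\Bigr)\Bigl(\,\textstyle\sum_{v\in W_J}v^{-2(\ell(\wo{J})-\ell(v))}\Bigr).
\end{equation*}
The involution $v\mapsto\wo{J}v$ of $W_J$ identifies the second factor with $\tPoinc(J)$; the involution $u\mapsto\wo{I}u\wo{K}$ of $D_K\cap W_I$ --- one checks $\wo{I}u\wo{K}\in D_K\cap W_I$ and $\ell(\wo{I}u\wo{K})=\ell_0-\ell(u)$, using that $u$ is minimal in $uW_K$ --- identifies the first with $\tPoinc(I)/\tPoinc(K)=\tPoinc(I)/\tPoinc(I,p,J)$. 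By \eqref{eq:poinc2} the product is $\tPoinc(p)$, as required.

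All the real content lives in Theorem~\ref{thm:doublecosetfunctions} (the existence of the $\phi_x$ and their spanning and linear independence as a left $R$-module); the remainder is Poincar\'e-polynomial bookkeeping, and I expect the only step requiring a moment's thought is the verification that $u\mapsto\wo{I}u\wo{K}$ is a length-reversing involution of $D_K\cap W_I$.
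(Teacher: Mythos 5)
Your proposal is correct and uses the same essential idea as the paper: both derive the graded‐free isomorphism directly from Theorem~\ref{thm:doublecosetfunctions}, reducing the claim to the combinatorial identity $\sum_{x\in p}v^{-2(\ell(p_+)-\ell(x))}=\tPoinc(p)$. The only difference is in verifying that identity — the paper cites the already recorded self-duality $\overline{\Poinc(p)}=\Poinc(p)$ (equation~\eqref{eq:poinc4}), whereas you re-derive it from scratch via Howlett's factorization and the length-reversing involution $u\mapsto\wo{I}u\wo{K}$ of $D_K\cap W_I$; this is a bit longer but equally valid.
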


\begin{proof}
If $P = \sum_{x \in p} v^{2(\ell(x) - \ell(p_+))}$ it follows from the theorem that
\begin{equation*}
R(p) \cong P \cdot R \text{ in $\lMod{R}$}.
\end{equation*}
However
\begin{equation*}
P = 
\overline{ v^{2\ell(p_+)} \sum_{x \in p} v^{-2\ell(x)}}
= \overline{v^{\ell(p_+) - \ell(p_-)}\Poinc(p)} = v^{\ell(p_-) - \ell(p_+)}\Poinc(p) = \tPoinc(p)
\end{equation*}
using the self-duality of $\pi(p)$ (see (\ref{eq:poinc4})) for the third step.\end{proof}

\begin{corollary} \label{cor:indfilt}

Let $K \subset I$, $L \subset J$ and $C \subset \W{K}{L}$ be
downwardly closed. For all maximal $q \in C$ such that $q \subset p$ we have an isomorphism in $\bMod{R^K}{R^L}$:
  \begin{equation*}
    \Gamma_{C} R(p)^{W_K \times W_L} / \Gamma_{C \setminus \{ q \}} R(p)^{W_K \times W_L}  \cong \RS{K}{q}{L}[2(\ell(q_+) - \ell(p_+))]. 
  \end{equation*}
\end{corollary}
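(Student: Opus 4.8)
The plan is to reduce the statement to an explicit computation inside $R(p)$ using the homogeneous basis $\{\phi_x \mid x\in p\}$ of Theorem~\ref{thm:doublecosetfunctions}, and then pass to $W_K\times W_L$-invariants, which is an exact operation since $k$ has characteristic zero. I write $M=R(p)^{W_K\times W_L}$ throughout (this is the bimodule of Theorem~\ref{thm:ind}).

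\emph{Step 1: removing the dependence on $C$.} Fix a maximal $q\in C$ with $q\subset p$ and set $B=\bigsqcup_{r\le q,\ r\subset p}r$, $B^-=\bigsqcup_{r<q,\ r\subset p}r$, $\widetilde C=\bigsqcup_{r\in C,\ r\subset p}r$; these are unions of $(W_K,W_L)$-double cosets inside $p$, each ``downward closed inside $p$'' since $C$ is downward closed and $q$ is maximal in $C$. By Theorem~\ref{thm:doublecosetfunctions} together with Lemma~\ref{lem:Rpsupp}, for any union of double cosets $A\subset p$ that is downward closed inside $p$ one has $\Gamma_A R(p)=\bigoplus_{x\in A}R\phi_x$ (each $\phi_x$ is supported on $\Gr{}{\le x}{}\cap\Gr{}{p}{}\subset\Gr{}{A}{}$ when $x\in A$, and these span by the usual triangularity argument). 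Hence $N_q:=\Gamma_B R(p)/\Gamma_{B^-}R(p)$ and $\Gamma_{\widetilde C}R(p)/\Gamma_{\widetilde C\setminus q}R(p)$ are both canonically $\bigoplus_{x\in q}R\phi_x$, so they are isomorphic as $W_K\times W_L$-equivariant bimodules. Applying $(-)^{W_K\times W_L}$, using its exactness and the fact that $(\Gamma_A R(p))^{W_K\times W_L}$ is the submodule of sections of $M$ supported on the union of the $\Gr{K}{r}{L}$ with $r\subset A$ (which follows from the support identities of Section~\ref{sec:support} and Lemma~\ref{lem:standsupp}), I obtain
\[
\Gamma_C R(p)^{W_K\times W_L}\big/\Gamma_{C\setminus\{q\}}R(p)^{W_K\times W_L}\;\cong\;N_q^{W_K\times W_L}.
\]
In particular the left-hand side depends only on $q$ and $p$, and it remains to identify $N_q^{W_K\times W_L}$.

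\emph{Step 2: the candidate isomorphism.} From $\Gamma_B R(p)=\bigoplus_{x\in B}R\phi_x$ one reads off that $N_q\cong\bigoplus_{x\in q}R\phi_x$ as a left $R$-module, so $N_q$ has graded rank $\sum_{x\in q}v^{2(\ell(x)-\ell(p_+))}$ over $R$ (since $\deg\phi_x=2(\ell(p_+)-\ell(x))$); by Corollary~\ref{cor:Rpleft} applied to the coset $q$ this equals the graded rank of $R(q)[2(\ell(q_+)-\ell(p_+))]$. Moreover the projection $R(p)\to\bigoplus_{x\in q}R$, $f\mapsto(f_x)_{x\in q}$, induces a degree-preserving, $W_K\times W_L$-equivariant injection $N_q\hookrightarrow R(q)$ (its kernel on $\Gamma_B R(p)$ is exactly $\Gamma_{B^-}R(p)$), and composing with $g\mapsto g_{q_-}$ as in the proof of Lemma~\ref{lem:Rpinv} gives a degree-preserving injection of $(R^K,R^L)$-bimodules $N_q^{W_K\times W_L}\hookrightarrow\RS{K}{q}{L}$ (well-definedness of $g_{q_-}\in R^{K\cap q_-Lq_-^{-1}}$ and the matching of the bimodule structures are just as in Lemma~\ref{lem:Rpinv}).

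\emph{Step 3: the main obstacle.} What remains is to promote this injection to an isomorphism onto $\RS{K}{q}{L}[2(\ell(q_+)-\ell(p_+))]$, equivalently to produce a $W_K\times W_L$-equivariant isomorphism $N_q\cong R(q)[2(\ell(q_+)-\ell(p_+))]$; then Lemma~\ref{lem:Rpinv} for the coset $q$ finishes the proof. The graded-rank identity above already fixes the answer on the level of free $R$-modules, so what is needed is an equivariant generator. The natural candidate is the class of $\phi_{q_+}$ in $N_q$: it sits in degree $2(\ell(p_+)-\ell(q_+))$, and by the construction in Theorem~\ref{thm:doublecosetfunctions} its $q_+$-component is, up to a nonzero scalar, $\prod_{t\in T,\ q_+<tq_+\in p\setminus q}h_t$ (using that $q_+$ is maximal in $q$ and Proposition~\ref{prop:doublecosetdiff}). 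The crux — the step I expect to be delicate — is to check that the products $\prod_{t:\ x<tx\in p\setminus q}h_t$, as $x$ runs over $q$, transform correctly under the left $W_K$- and right $W_L$-actions; this should follow from \eqref{eq:hident} and the conjugation bookkeeping in the proof of Proposition~\ref{prop:doublecosetdiff}, and it ensures that the $W_K\times W_L$-average of $\phi_{q_+}$ remains nonzero, lies in the correct degree, and generates $N_q^{W_K\times W_L}$ freely over $R^{K\cap q_-Lq_-^{-1}}=\RS{K}{q}{L}$. The degree shift $2(\ell(q_+)-\ell(p_+))$ is then visible as the degree of this generator.
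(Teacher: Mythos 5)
Your overall strategy is the same as the paper's: identify $\Gamma_{\widetilde C}R(p)/\Gamma_{\widetilde C\setminus q}R(p)$ with an ideal of $R(q)$ via the map forgetting the components outside $q$, use the basis $\{\phi_x\}$ of Theorem \ref{thm:doublecosetfunctions}, take $\phi_{q_+}$ as generator, pass to $W_K\times W_L$-invariants and finish with Lemma \ref{lem:Rpinv}. But the two places where your argument is incomplete are exactly where the content of the corollary lies. First, the $W_K\times W_L$-invariance of $\phi_{q_+}$ (equivalently, the equivariance of the map $R(q)[2(\ell(q_+)-\ell(p_+))]\to N_q$, $1\mapsto\phi_{q_+}$, and the nonvanishing of all components $(\phi_{q_+})_x$ for $x\in q$) is not something you can leave as ``should follow from \eqref{eq:hident} and conjugation bookkeeping'': your proposed route would require knowing all components of $\phi_{q_+}$ explicitly, which you have not established (only the $q_+$-component is pinned down by the uniqueness statement). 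The paper closes this in one line: for $s\in K$ and $t\in L$ one has $\partial_s\phi_{q_+}=\phi_{q_+}\partial_t=0$, because these elements have degree $2(\ell(p_+)-\ell(q_+))-2$ and are supported on $\Gr{}{\le q_+}{}$ (as $sq_+<q_+$, $q_+t<q_+$), so they vanish by the minimal-degree/uniqueness part of Theorem \ref{thm:doublecosetfunctions}; invariance is then immediate from the defining relation of the Demazure operators.

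Second, and more seriously, your reduction in Step 1 rests on exactness of $(-)^{W_K\times W_L}$, justified ``since $k$ has characteristic zero''. The paper's standing hypotheses (Section \ref{subsec:reffaith} and assumption \eqref{assump:freeness}, together with the remark about non-torsion primes) are designed to cover positive characteristic, where $\cha k$ may divide $|W_K\times W_L|$ and taking invariants is only left exact; averaging is not available. The left-exact part, $(\Gamma_{\quo^{-1}(C)}R(p))^{W_K\times W_L}=\Gamma_C(R(p)^{W_K\times W_L})$, is fine and is also used in the paper, but the surjectivity of $\Gamma_{\quo^{-1}(C)}R(p)^{W_K\times W_L}\to R(q)^{W_K\times W_L}[2(\ell(q_+)-\ell(p_+))]$ is precisely the delicate point. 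The paper proves it directly: writing $q_+=uq_-v$ and $\phi_{q_+}=\partial_{s_1}\cdots\partial_{s_m}\phi_{q_-}\partial_{t_1}\cdots\partial_{t_n}$, it shows that for an invariant $f\in R(q)^{W_K\times W_L}$ the element $\partial_{s_1}\cdots\partial_{s_m}(\widetilde r\,\phi_{q_-})\partial_{t_1}\cdots\partial_{t_n}$ with $\widetilde r=u^{-1}f_{q_+}$ is an invariant lift of $f$. So either you must restrict the corollary to characteristic zero (weaker than what the paper proves and uses), or you need to supply an argument of this kind; as written, the proposal has a genuine gap at this step.
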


\begin{proof} For the course of the proof let us write $\phi_w^p$ (resp. $\phi_y^q$) for the functions in $R(p)$ (resp. $R(q)$) given to us by Theorem \ref{thm:doublecosetfunctions}. These are well defined up to a scalar and we make a fixed but arbitrary choice. Also denote by $\quo : W \to \W{K}{L}$ the quotient map.

The map $(f_x)_{x \in p} \mapsto (f_x)_{x \in q}$ from $R(p)$ to
$R(q)$, in which we forget $f_x$ for $x \notin q$, allows us to
identify  $\Gamma_{\quo^{-1}(C)} R(p) / \Gamma_{\quo^{-1}(C \setminus
  \{q\})} R(p)$ with an ideal in $R(q)$. Keeping this in mind we
obtain a map (of $R(q)$-modules):
\begin{align*}
  R(q)[2(\ell(q_+) - \ell(p_+))] & \to \Gamma_{\quo^{-1}(C)}R(p) / \Gamma_{\quo^{-1}(C \setminus \{q\})} R(p) \\
1 & \mapsto \phi_{q_+}^{p}.
\end{align*}
As $\partial_s \phi_{q_+}^p = \phi_{q_+}^p \partial_t = 0$ for all $s \in K$ and $t \in L$, $\phi_{q_+}^{p}$ is $W_K \times W_L$-invariant. Thus $(\phi_{q_+}^p)_x \ne 0$ for all $x \in q$, and the above map is injective.

Let us consider the image of $\phi_x^q \in R(p)$ for $x \in q$ in the right hand side. It has degree
\begin{equation*}
  \deg \phi_x^q + \deg \phi_{q_+}^p = 2(\ell(p_+) - \ell(x))
\end{equation*}
and has support contained in $\Gr{}{\le x}{}$. Hence, by the
uniqueness statement in Theorem \ref{thm:doublecosetfunctions}, it is
a non-zero scalar multiple of (the image of) $\phi_x^p$. It is a
consequence of Theorem \ref{thm:doublecosetfunctions} that $\phi_x^p$
for $x \in q$ build a  basis for the right hand side as a left
$R$-module, and we conclude that the map is an isomorphism.

The $W_K \times W_L$ action on $R(p)$ preserves both
$\Gamma_{\quo^{-1}(C)}R(p)$ and $\Gamma_{\quo^{-1}(C \setminus \{ q
  \})} R(p)$ and hence we have a $W_K \times W_L$-action on both
modules. As $W_K \times W_L$ acts  through $k$-algebra automorphisms
the above map commutes with the $W_K \times W_L$-action on both
modules. 

Hence we have an exact sequence of $W_K \times W_L$-modules:
\begin{equation} \label{eq:isitexact}
\Gamma_{\quo^{-1}(C \setminus \{q\})} R(p) \hookrightarrow
\Gamma_{\quo^{-1}(C)}R(p) \sur
R(q)[2(\ell(q_+) - \ell(p_+))]
\end{equation}
We claim that this sequence stays exact after taking $W_k \times
W_L$-invariants.

Write $q_+ = uq_-v$ with $u \in W_{K \cap q_-Lq_-^{-1}}$ and $v \in W_L$ (unique by Theorem \ref{Howlett}). If we choose reduced expressions $u = s_1 \dots s_m$ and $v = t_1 \dots t_n$ then we saw in the proof of Theorem \ref{thm:doublecosetfunctions} that we may assume
\[
\phi_{q_+}^p = (\partial_{s_1} \dots \partial_{s_k} \phi_{q_-}^p \partial_{t_1} \dots \partial_{t_k}).
\]
From the formulas for Demazure operators give in \ref{prop:demdef} it is clear that, for any $r \in R$,
\[
 (\partial_{s_1} \dots \partial_{s_k} (r\phi_{q_-}^p) \partial_{t_1} \dots \partial_{t_k})_{q_+} = (ur)\phi_{q^+}^p.
\]

Now let $f \in R(q)^{W_K \times W_L}[2(\ell(q_+) - \ell(p_+))]$. If we set $\widetilde{r} = u^{-1}f_{q_+}$ then
\[
\widetilde{f} := (\partial_{s_1} \dots \partial_{s_k} (\widetilde{r}\phi_{q_-}^p) \partial_{t_1} \dots \partial_{t_k})_{q_+} = (u\widetilde{r})\phi_{q^+}^p \in \Gamma_{\quo^{-1}(C)}R(p)^{W_K \times W_L}
\]
and $\widetilde{f}$ maps to $f$ under the above surjection (it is enough to check that they have the same value of $q_+$ by $W_K \times W_L$-invariance, and this follows by construction). Hence the above sequence stays exact when we consider $W_K \times W_L$-invariants.

We conclude that 
\begin{equation*}
  \RS{K}{p}{L}[2(\ell(q_+) - \ell(p_+))] \cong (\Gamma_{\pi^{-1}(C)} R(p))^{W_K \times W_L} / (\Gamma_{\pi^{-1}(C \setminus \{q\})} R(p))^{W_K \times W_L}.
\end{equation*}
However, by (\ref{eq:supp3}), $(\Gamma_{\quo^{-1}(C)}R(p))^{W_K \times W_L} = \Gamma_C (R(p)^{W_K \times W_L})$ and similarly for $\Gamma_{\quo^{-1}(C\setminus\{q\})} R(p)$. The claimed isomorphism then follows.
\end{proof}

In the sequel it will be useful to have the above corollary in a
slightly different form (which follows from Theorem \ref{thm:ind}):

\begin{corollary} \label{cor:indfilt2}
Let $J \supset K$ and $C \subset \W{I}{K}$ be downwardly closed. If $q \in  C$ is maximal and $p \supset q$ then we have an isomorphism in $\bMod{R^I}{R^K}$
  \begin{equation*}
    \Gamma_{C} (\RS{I}{p}{J} \otimes_{R^J} R^K) / \Gamma_{C\setminus \{ q \}} (\RS{I}{p}{J}\otimes_{R^J} R^K)
\cong \RS{I}{q}{K}[2(\ell(q_+) - \ell(p_+))].
  \end{equation*}
\end{corollary}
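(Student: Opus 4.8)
The plan is to deduce this from Corollary~\ref{cor:indfilt} via the identification furnished by Theorem~\ref{thm:ind}, applied in the degenerate case in which no restriction of scalars is performed on the left. Concretely, since $R^I \otimes_{R^I} \RS{I}{p}{J} = \RS{I}{p}{J}$ and $K \subset J$ by hypothesis, Theorem~\ref{thm:ind} (taking its ``$K$'' to be $I$ and its ``$L$'' to be $K$) provides an isomorphism
\[
\RS{I}{p}{J} \otimes_{R^J} R^K \stackrel{\sim}{\longrightarrow} R(p)^{W_I \times W_K} \qquad \text{in } \bMod{R^I}{R^K}.
\]

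Next I would check that this isomorphism is compatible with the support filtrations occurring in the statement. It is in particular an isomorphism of $R^I \otimes R^K$-modules, and the support of a module or of a section --- being cut out by an annihilator --- is preserved by any isomorphism of $R^I \otimes R^K$-modules; hence it restricts to isomorphisms $\Gamma_C(\RS{I}{p}{J} \otimes_{R^J} R^K) \cong \Gamma_C R(p)^{W_I \times W_K}$ and $\Gamma_{C \setminus \{q\}}(\RS{I}{p}{J} \otimes_{R^J} R^K) \cong \Gamma_{C \setminus \{q\}} R(p)^{W_I \times W_K}$, and therefore induces an isomorphism on the quotients.

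Finally I would invoke Corollary~\ref{cor:indfilt} with the roles of its ``$K$'' and ``$L$'' played by $I$ and $K$ respectively (the hypotheses $I \subset I$ and $K \subset J$ hold) and with the given $C$, $q$ and $p$: since $C \subset \W{I}{K}$ is downwardly closed, $q \in C$ is maximal, and $q \subset p$, that corollary gives
\[
\Gamma_C R(p)^{W_I \times W_K} / \Gamma_{C \setminus \{q\}} R(p)^{W_I \times W_K} \cong \RS{I}{q}{K}[2(\ell(q_+) - \ell(p_+))].
\]
Composing this with the isomorphism of quotients from the previous paragraph yields the claim. I do not anticipate any real obstacle: the substantive content is entirely contained in Theorem~\ref{thm:ind} and Corollary~\ref{cor:indfilt}, and the only thing needing a word is the (elementary) stability of support under bimodule isomorphisms; the single point worth making explicit is simply that the case with no left restriction is a genuine instance of Theorem~\ref{thm:ind}.
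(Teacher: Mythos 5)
Your proposal is correct and follows exactly the route the paper intends: the paper states Corollary~\ref{cor:indfilt2} as a reformulation of Corollary~\ref{cor:indfilt} ``which follows from Theorem~\ref{thm:ind}'', i.e.\ precisely the degenerate case of Theorem~\ref{thm:ind} with no restriction on the left ($K=I$, $L=K$) combined with Corollary~\ref{cor:indfilt}, transported through the isomorphism via the (bimodule-isomorphism-invariant) support filtrations. Your write-up just makes explicit the details the paper leaves unstated.
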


\section{Flags, characters and translation} \label{sec:flags}

In this section we define and study the categories of objects with
nabla and delta flags. These categories provide the first step in the
categorication of the Schur algebroid.

Recall from the introduction that to any $M \in \bMod{R^I}{R^J}$ one may associate two filtrations, and that $M$ has a nabla (resp. delta) flag if these filtrations are exhaustive and the successive quotients in the first (resp. second) filtration are isomorphic to a finite direct sum of shifts of standard modules. Given an object with a nabla or delta flag it is natural to consider its ``character'' in $\He{I}{J}$, which counts the graded multiplicity of standard modules these subquotients.

The key results of this section are Theorems \ref{thm:translation} and \ref{thm:translation2}, which tell us that if $J \subset K$ then the functors of restriction and extension of scalars between $\bMod{R^I}{R^J}$ and $\bMod{R^I}{R^K}$ restrict to functors between the corresponding categories of objects with nabla or delta flags. Moreover, after normalisation, one may describe the effect of these functors on the characters in terms of multiplication in the Hecke category.

The structure of this section is as follows. In Section \ref{subsec:nabla} we define the subcategory of modules with nabla flags and the nabla character, and begin the proof of Theorem \ref{thm:translation}. The proof involves certain technical splitting and vanishing statements, which we postpone to Section \ref{subsec:vansplit}. In Section \ref{subsec:delta} we define the subcategory of modules with delta flags and the delta character, as well as a duality which is used to relate the categories of object with delta and nabla flags and prove Theorem \ref{thm:translation2}.

\subsection{Objects with nabla flags and translation} \label{subsec:nabla}

For the duration of this section fix finitary subsets $I, J \subset S$. Denote by $\RR{I}{J}$ the full subcategory of modules $M \in \bMod{R^I}{R^J}$ such that:
\begin{enumerate}
\item $M$ is finitely generated, both as a left $R^I$-module, and as a right $R^J$-module;
\item there exists a finite subset $C \subset \W{I}{J}$ such that $\supp M \subset \Gr{I}{C}{J}$.
\end{enumerate}
Recall that we call a subset $C \subset \W{I}{J}$ downwardly closed if
\begin{equation*}
  C = \{ p \in \W{I}{J} \; | \; p \le q \text{ for some } q \in C\}.
\end{equation*}
We now come to the definition of objects with nabla flags.

\begin{definition}
The category of \emph{objects with nabla flags}, denoted $\nabflag{I}{J}$, is the full subcategory of modules $M \in \RR{I}{J}$ such that, for all downwardly closed subsets $C \subset \W{I}{J}$ and maximal elements $p \in C$, the subquotient
  \begin{displaymath}
    \Gamma_{C}M / \Gamma_{C \setminus \{ p \} }M
  \end{displaymath}
is isomorphic to a direct sum of shifts of modules of the form $\RS{I}{p}{J}$ (which is necessarily finite because $M \in \RR{I}{J}$).
\end{definition}

We begin with a lemma that simplifies the task of checking whether a module $M \in \RR{I}{J}$ belongs to $\nabflag{I}{J}$. We call an enumeration $p_1, p_2, \dots$ of the elements of $\W{I}{J}$ a \emph{refinement of the Bruhat order} if $p_i \le p_j$ implies that $i \le j$. If we let $C(m) = \{ p_1, p_2, \dots, p_m \}$ then all the sets $C(m)$ are downwardly closed, and $p_m \in C(m)$ is maximal. Hence, if $M \in \nabflag{I}{J}$ then $\Gamma_{C(m)} M / \Gamma_{C(m-1)} M$  is isomorphic to a direct sum of shifts of $\RS{I}{p_m}{J}$. In fact, the converse is true:

\begin{lemma}[Soergel's ``hin-und-her'' lemma] \label{lem:hinundher}
  Let $p_1, p_2, \dots$ and $C(m)$ be as above. Suppose $M \in \RR{I}{J}$ is such that, for all $m$, the subquotient
  \begin{equation*}
    \Gamma_{C(m)} M /\Gamma_{C(m-1)} M
  \end{equation*}
is isomorphic to a direct sum of shifts of $\RS{I}{p_m}{J}$. Then $M  \in \nabflag{I}{J}$.

Moreover, if $p = p_m$ then the natural map
  \begin{displaymath}
    \Gamma_{\le p} M / \Gamma_{< p} M \to \Gamma_{C(m)} M / \Gamma_{C(m-1)}M
  \end{displaymath}
is an isomorphism.
\end{lemma}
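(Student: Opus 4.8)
The statement has two parts. First, that a module $M \in \RR{I}{J}$ whose "one-direction" filtration by the sets $C(m)$ has standard subquotients actually lies in $\nabflag{I}{J}$, i.e.\ \emph{every} downwardly closed $C$ with maximal element $p$ gives a standard subquotient $\Gamma_C M/\Gamma_{C\setminus\{p\}}M$. Second, the comparison map $\Gamma_{\le p}M/\Gamma_{<p}M \to \Gamma_{C(m)}M/\Gamma_{C(m-1)}M$ is an isomorphism when $p=p_m$. The plan is to establish the second (comparison) assertion first and then deduce the general downward-closed statement from it by a "moving the refinement around" (hin-und-her) argument, which is where the name comes from.

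**Step 1: reduce to comparing against a single refinement, via support.** The key observation is that for \emph{any} downwardly closed $C$ with maximal element $p$, one can choose a refinement $p_1,p_2,\dots$ of the Bruhat order in which an initial segment enumerates $C$, say $C = C(m)$ with $p=p_m$. This uses only that $C$ is downwardly closed and $p$ is maximal in it (extend a linear order on $C$ with $p$ last, then extend to all of $\W{I}{J}$ compatibly with Bruhat). So it suffices to show: for two refinements $p_1,\dots$ and $p_1',\dots$ with $p_m = p_{m'}' = p$, the subquotients $\Gamma_{C(m)}M/\Gamma_{C(m-1)}M$ and $\Gamma_{C'(m')}M/\Gamma_{C'(m'-1)}M$ are isomorphic, and both equal $\Gamma_{\le p}M/\Gamma_{<p}M$. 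I would prove the cleaner statement that the natural map $\Gamma_{\le p}M/\Gamma_{<p}M \to \Gamma_{C(m)}M/\Gamma_{C(m-1)}M$ is an isomorphism whenever $C(m)$ is an initial segment of a refinement with top element $p$; part one of the lemma then follows by applying this to the refinement chosen above for $C$.

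**Step 2: the comparison isomorphism.** Here is the heart of the argument. Note $\{<p\} \subset C(m-1) \subset C(m)$ and $\{\le p\} \subset C(m)$, with $C(m)\setminus\{\le p\}$ consisting of elements incomparable to $p$ (none is $\ge p$ since they were enumerated before, none is $<p$ since those are in $\{<p\}$). The filtration hypothesis gives, by induction on the length of the filtration and using \eqref{eq:suppexact}, that $\supp \Gamma_{C(j)}M \subset \Gr{I}{C(j)}{J}$ and in fact $\Gamma_{C(j)}M = \Gamma_{C(j)}M$ is genuinely the sections supported there. The point is that a section $m$ whose support lies in $\Gr{I}{\le p}{J}$ and which maps to zero in $\Gamma_{C(m)}M/\Gamma_{C(m-1)}M$ must already lie in $\Gamma_{<p}M$: its image lies in $\Gamma_{C(m-1)}M$, whose support meets $\Gr{I}{p}{J}$ only along lower strata because the subquotients $\Gamma_{C(j)}M/\Gamma_{C(j-1)}M$ for $j<m$ are sums of $\RS{I}{p_j}{J}$ with $p_j \neq p$, each supported on $\Gr{I}{p_j}{J}$ (Lemma \ref{lem:standsupp}), so the support of $\Gamma_{C(m-1)}M$ is contained in $\Gr{I}{C(m-1)}{J}$, and intersecting with $\Gr{I}{\le p}{J}$ lands in $\Gr{I}{<p}{J}$. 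This gives injectivity of the comparison map. For surjectivity: given $x \in \Gamma_{C(m)}M$, its class modulo $\Gamma_{C(m-1)}M$ sits in a direct sum of shifts of $\RS{I}{p}{J}$, hence (multiplying by a suitable element or using that each $\RS{I}{p}{J}$ is cyclic with support exactly $\Gr{I}{p}{J}$) can be represented by a section supported on $\Gr{I}{p}{J}$; correcting $x$ by an element of $\Gamma_{C(m-1)}M$ of support in $\Gr{I}{<p}{J}\cup(\text{incomparable strata})$ and then discarding the incomparable part — which can be separated off because the strata are closed and disjoint outside lower-dimensional overlaps, so idempotents in the localized picture split $M$ — we land in $\Gamma_{\le p}M$. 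I would phrase this last splitting carefully using that $\Gr{I}{C(m)}{J}$ decomposes as $\Gr{I}{\le p}{J} \cup \Gr{I}{C(m)\setminus\{\le p\}}{J}$ and that sections of $M$ decompose accordingly up to lower strata, mirroring the reasoning already used implicitly in Corollary \ref{cor:indfilt}.

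**Step 3: conclude.** Once the comparison isomorphism is known for every initial segment of every refinement, part one is immediate: given an arbitrary downwardly closed $C$ with maximal $p$, pick a refinement realizing $C$ as an initial segment (Step 1), so $\Gamma_C M/\Gamma_{C\setminus\{p\}}M \cong \Gamma_{\le p}M/\Gamma_{<p}M$, and by hypothesis the right-hand side — being itself $\cong \Gamma_{C(m)}M/\Gamma_{C(m-1)}M$ for \emph{that} refinement — is a direct sum of shifts of $\RS{I}{p}{J}$. Hence $M \in \nabflag{I}{J}$, and the displayed map in the "moreover" clause is an isomorphism by construction.

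**Expected main obstacle.** The delicate point is the surjectivity/splitting step (the "incomparable strata" bookkeeping in Step 2): one must argue that a section supported on $\Gr{I}{C(m)}{J}$ can be written, modulo $\Gamma_{C(m-1)}M$, as one supported on $\Gr{I}{\le p}{J}$. This requires knowing that $M$ (or at least the relevant subquotients) behaves well with respect to the stratification — essentially that the support filtration interacts with $\Gamma_{(-)}$ the way it does for the standard modules themselves — and the honest way to get this is to feed the filtration hypothesis back in inductively rather than to invoke any general nonsense about supports. I would budget most of the writing effort there; the injectivity half is a clean support argument and Steps 1 and 3 are formal.
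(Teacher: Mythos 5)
Your Step~2 surjectivity (``discard the incomparable part'') is where the argument genuinely breaks down, and it is exactly the point at which a non-formal input is needed. The lemma is not a statement about arbitrary closed strata that ``overlap only in lower dimension'': what makes it true is that standard modules attached to \emph{incomparable} double cosets admit no extensions, $\Ext^1_{R^I\otimes R^J}(\RS{I}{p}{J},\RS{I}{q}{J})=0$ (Lemma \ref{lem:vanish}), which ultimately comes from reflection faithfulness (incomparable twisted graphs meet in codimension at least two). Your substitute justification --- the strata are closed and ``disjoint outside lower-dimensional overlaps, so idempotents in the localized picture split $M$'' --- is satisfied by any two distinct irreducible strata of equal dimension and therefore cannot suffice. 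Concretely, take $A=k[x,y]$, two incomparable strata $V(x)$ and $V(y)$ with ``standard modules'' $A/(x)$ and $A/(y)$, and $M=A/(xy)$. For the enumeration listing $V(x)$ first one finds $\Gamma_{C(1)}M=(y)/(xy)$, a shifted copy of $A/(x)$, and $\Gamma_{C(2)}M/\Gamma_{C(1)}M\cong A/(y)$, so the hypothesis of the lemma holds; but the analogue of your comparison map, $\Gamma_{V(y)}M=(x)/(xy)\to M/\Gamma_{V(x)}M\cong A/(y)$, is injective and \emph{not} surjective, so the ``moreover'' clause fails. The failure occurs because the two strata meet in codimension one, i.e.\ because there is a nonsplit extension between the two standard modules --- precisely the configuration Lemma \ref{lem:vanish} excludes in the Soergel setting. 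The paper's proof uses that vanishing directly: it swaps adjacent incomparable entries $p_i,p_{i+1}$ of the enumeration, observing that $\Gamma_{C(i+1)}M/\Gamma_{C(i-1)}M$ then splits into a direct sum of shifts of $\RS{I}{p_i}{J}$ and of $\RS{I}{p_{i+1}}{J}$, so the one-step subquotients and the natural maps between them are unchanged by the swap; finitely many swaps turn any downwardly closed $C$ with maximal element $p$ into an initial segment, and $C=\{\le p\}$ yields the ``moreover'' statement. (Your injectivity half is fine --- it holds even in the toy example --- provided one argues via the filtration and Lemma \ref{lem:standsupp} as you indicate, not via the intersection of varieties.)

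There is also a circularity in your reduction (Steps~1 and~3). The hypothesis concerns one fixed refinement; when you re-choose the refinement so that an arbitrary downwardly closed $C$ becomes an initial segment, your Step~2 surjectivity argument needs to know that the subquotient $\Gamma_{C}M/\Gamma_{C\setminus\{p\}}M$ for the \emph{new} refinement is a direct sum of shifted standard modules (you use this to represent the class of $x$ by a section ``supported on $\Gr{I}{p}{J}$''), which is part of what is to be proved. The swap argument avoids this by transporting standardness from the given enumeration to the new one a single adjacent incomparable transposition at a time, with Lemma \ref{lem:vanish} as the engine; any repair of your approach will have to feed in that Ext-vanishing (equivalently, the codimension-two intersection property) explicitly.
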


\begin{proof} Let $C \subset \W{I}{J}$ be a downwardly closed subset and $p \in C$ be maximal. We need to show that 
  \begin{displaymath}
    \Gamma_{C}M / \Gamma_{C \setminus \{ p \} }M
  \end{displaymath}
is isomorphic to a direct sum of shifts of modules of the form $\RS{I}{p}{J}$.

Let $p, p^{\prime} \in \W{I}{J}$ be incomparable in the Bruhat order. We will see in the next section (Lemma \ref{lem:vanish}) that $\Ext_{R^I\otimes R^J}^1(\RS{I}{p}{J}, \RS{I}{p^{\prime}}{J}) = 0$. In particular, if $p_i$ and $p_{i+1}$ are incomparable in the Bruhat order then $\Gamma_{C(i+1)} M / \Gamma_{C(i-1)} M$ is isomorphic to a direct sum of shifts of modules $\RS{I}{p_i}{J}$ and $\RS{I}{p_{i+1}}{J}$. Hence, if we let $C^{\prime}$ be associated to the sequence obtained by swapping two elements $q_{i}$ and $q_{i+1}$ we see that the natural maps
  \begin{eqnarray*}
    \Gamma_{C(i)}M / \Gamma_{C(i-1)}M & \to& \Gamma_{C^{\prime}(i+1)}M / \Gamma_{C^{\prime}(i)}M \\
\Gamma_{C^{\prime}(i)}M / \Gamma_{C^{\prime}(i-1)}M & \to & \Gamma_{C(i+1)}M / \Gamma_{C(i)}M   \end{eqnarray*}
are isomorphisms.  
 
Now let $C \subset \W{I}{J}$ be downwardly closed and $p \in C$ maximal. After swapping finitely many many elements of our sequence we may assume $C(m) = C$ and $p_m = p$ and the first statement follows. The second statement follows by taking $C = \{ \le p \}$.\end{proof}

We now want to define the ``character'' of an object $M \in \nabflag{I}{J}$.  It is natural to renormalise $\RS{I}{p}{J}$ and define \label{lab:nab}
\begin{displaymath}
  \nab{I}{p}{J} = \RS{I}{p}{J}[\ell(p_+)].
\end{displaymath}
If $p$ contains the identity, we sometimes omit $p$ and write $\nab{I}{}{J}$.

By assumption, if $M \in \nabflag{I}{J}$ we may find polynomials $g_{p}(M) \in \N[v,v^{-1}]$ such that, for all $p \in \W{I}{J}$ we have
\begin{displaymath}
  \Gamma_{\le p}M / \Gamma_{< p}M \cong g_{p}(M) \cdot \nab{I}{p}{J}.
\end{displaymath}

We now define the \emph{nabla character} by
\begin{eqnarray*}
\nabchar : \nabflag{I}{J} &\to& \He{I}{J} \\
M & \mapsto & \sum_{p \in \W{I}{J}}  \overline{g_{p}(M)} \mst{I}{p}{J}.
\end{eqnarray*}

We now come to the definition of translation functors, which (up to a shift) are the functors of extension and restriction of scalars.

\begin{definition} Let $K  \subset S$ be finitary.
\begin{enumerate}
\item If $J \subset K$ the functor of ``translating onto the wall'' is:
  \begin{eqnarray*}
    - \cdot \on{J}{K} : \bMod{R^I}{R^J} &\to& \bMod{R^I}{R^K} \\
M &\mapsto & M_{R^K}[\ell(\wo{K}) - \ell(\wo{J})].
  \end{eqnarray*}
\item If $J \supset K$ the functor of  ``translating out of the wall'' is:
  \begin{eqnarray*}
    - \cdot \out{J}{K} : \bMod{R^I}{R^J} &\to& \bMod{R^I}{R^K} \\
M &\mapsto & M\otimes_{R^J}R^K.
  \end{eqnarray*}
\end{enumerate}
\end{definition}

\begin{remark} Of course it is also possible to define translation functors ``on the left''. We have chosen to only define and work with translation functors acting on one side because it simplifies the exposition considerably. \end{remark}

The following theorem is fundamental to all that follows. It shows that translation functors preserve the categories of objects with nabla flags and that we may describe the effect of translation functors on characters.

\begin{theorem} \label{thm:translation} Let $K \subset S$ be finitary with $J \subset K$ or $K \subset J$.
\begin{enumerate}
\item If $M \in \nabflag{I}{J}$ then $M \cdot \on{J}{K} \in \nabflag{I}{K}$.
\item The following diagrams commute:
\begin{displaymath}
\xymatrix@C=1.5cm{
\nabflag{I}{J} \ar[d]^{\ch_{\nabla}} \ar[r]^{ - \cdot \on{J}{K}}& \nabflag{I}{K} \ar[d]^{\ch_{\nabla}} &
\nabflag{I}{J} \ar[d]^{\ch_{\nabla}} \ar[r]^{[1]}& \nabflag{I}{J} \ar[d]^{\ch_{\nabla}} \\
\He{I}{J} \ar[r]^{- *_J \mst{J}{}{K} } & \He{I}{K} &
\He{I}{J} \ar[r]^{\cdot v^{-1}} & \He{I}{J}
}
\end{displaymath}
\end{enumerate}
\end{theorem}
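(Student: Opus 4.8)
The statement has two parts for each of the two cases ($J \subset K$ and $J \supset K$), so the plan is to treat the two translation functors separately; the second (commutativity of the diagram involving $[1]$, i.e. $\ch_\nabla(M[1]) = v^{-1}\ch_\nabla M$) is immediate from the definitions: shifting $M$ shifts each subquotient $\Gamma_{\le p}M/\Gamma_{<p}M$, hence replaces each $g_p(M)$ by $v\,g_p(M)$ (because $\nab{I}{p}{J}$ is fixed and the multiplicity polynomial picks up one factor of $v$ per grading shift), and applying the bar involution turns this into multiplication by $v^{-1}$ on $\ch_\nabla$. So the real content is the behaviour under $-\cdot\on{J}{K}$ and $-\cdot\out{J}{K}$.

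For the case $J \supset K$ (translating out of the wall, i.e. extension of scalars $-\otimes_{R^J}R^K$): the plan is to use Soergel's ``hin-und-her'' lemma (Lemma \ref{lem:hinundher}). Fix a refinement $p_1,p_2,\dots$ of the Bruhat order on $\W{I}{K}$ and let $\quo:\W{I}{J}\to\W{I}{K}$ be the quotient map. One checks using Lemma \ref{lem:support}(2) that $\supp(M\otimes_{R^J}R^K)\subset\Gr{I}{\quo^{-1}(C)}{J}$ when $\supp M\subset \Gr{I}{C}{J}$, so $M\cdot\out{J}{K}\in\RR{I}{K}$. Since $-\otimes_{R^J}R^K$ is right exact and $R^K$ is graded free over $R^J$ (Lemma \ref{cor:relinv}(1)), it is in fact exact, so it commutes with the formation of the sections functors $\Gamma_C$ up to the support bookkeeping; concretely, applying $-\otimes_{R^J}R^K$ to a nabla filtration of $M$ and using Corollary \ref{cor:indfilt2} to compute $\RS{I}{p}{J}\otimes_{R^J}R^K$ (which acquires a filtration with subquotients $\RS{I}{q}{K}[2(\ell(q_+)-\ell(p_+))]$, $q$ running over the cosets $\subset p$ with appropriate maximality), one sees that each $\Gamma_{C(m)}(M\cdot\out{J}{K})/\Gamma_{C(m-1)}(M\cdot\out{J}{K})$ is a direct sum of shifts of $\RS{I}{p_m}{K}$. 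Lemma \ref{lem:hinundher} then gives $M\cdot\out{J}{K}\in\nabflag{I}{K}$, and bookkeeping the grading shifts against the renormalisations $\nab{I}{p}{J}=\RS{I}{p}{J}[\ell(p_+)]$, together with the multiplication formula in Proposition \ref{prop:multform}(1), identifies $\ch_\nabla(M\cdot\out{J}{K})$ with $\ch_\nabla(M)*_J\mst{J}{}{K}$.

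For the case $J \subset K$ (translating onto the wall, i.e. restriction of scalars followed by a shift): here the subtlety is that restriction of scalars along the finite map $V/W_I\times V/W_J\to V/W_I\times V/W_K$ does \emph{not} respect the two filtrations in an obvious way — a standard module $\RS{I}{p}{J}$ restricts, by Lemma \ref{cor:res}, to $\frac{\tPoinc(I,q,K)}{\tPoinc(I,p,J)}\cdot\RS{I}{q}{K}$ (no filtration needed, it is already a direct sum of shifts of a single standard module), but the issue is that several cosets $p$ can map to the same $q$, so one must argue that the $\Gamma$-filtration on the restriction is still ``triangular'' enough for the hin-und-her lemma to apply. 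This is precisely where the promised technical splitting and vanishing statements of Section \ref{subsec:vansplit} (referenced in the text, e.g. Lemma \ref{lem:vanish}) will be needed: one shows the relevant $\Ext^1$-groups between restricted standard modules indexed by incomparable cosets vanish, so that restriction of a nabla-filtered module, reorganised along $\quo^{-1}$ of a Bruhat refinement of $\W{I}{K}$, again has subquotients that are sums of shifts of a single $\RS{I}{p_m}{K}$; then Lemma \ref{lem:hinundher} applies and the character computation reduces, via Lemma \ref{cor:res} and the self-duality/Poincaré identities \eqref{eq:poinc2}–\eqref{eq:poinc4}, to Proposition \ref{prop:multform}(2), giving $\ch_\nabla(M\cdot\on{J}{K})=\ch_\nabla(M)*_J\mst{J}{}{K}$ (the shift by $\ell(\wo K)-\ell(\wo J)$ in the definition of $\on{J}{K}$ is exactly what is needed to make the normalisations match). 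The main obstacle, as anticipated in the paper, is the ``onto the wall'' direction: establishing that restriction of scalars is compatible with the support filtrations, which hinges on the $\Ext$-vanishing between standard modules at incomparable cosets that is deferred to Section \ref{subsec:vansplit}; everything else is careful bookkeeping of grading shifts against the Poincaré-polynomial normalisations.
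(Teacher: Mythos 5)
Your overall framework matches the paper's: hin-und-her lemma, filtration bookkeeping via Corollary \ref{cor:indfilt2} (for $J \supset K$) and Lemma \ref{cor:res} (for $J \subset K$), and Proposition \ref{prop:multform} for the character identities; the treatment of the shift diagram and of translating out of the wall is essentially right. However, there is a genuine gap in your account of the hard direction ($J \subset K$, translating onto the wall). You write that what one needs from Section \ref{subsec:vansplit} is vanishing of $\Ext^1$ between standard modules indexed by \emph{incomparable} cosets (Lemma \ref{lem:vanish}), and that this lets one reorganise the restricted filtration. That is not the right statement, and using it would fail: after pulling back a Bruhat-refining enumeration of $\W{I}{K}$ along $\quo : \W{I}{J} \to \W{I}{K}$, the subquotient contributing to a single $q \in \W{I}{K}$ is filtered by $\RS{I}{p}{J}$ for $p \in \quo^{-1}(q)$, and distinct cosets in one fiber are typically \emph{comparable} in the Bruhat order (indeed $\quo^{-1}(q)$ has a unique maximum $p$ with $\quo^{-1}(\{\le q\}) = \{\le p\}$, so the fiber is far from an antichain). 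Hence Lemma \ref{lem:vanish} is silent here; it only enters indirectly, through the proof of the hin-und-her lemma itself.

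The result the paper actually uses is Lemma \ref{lem:standardsplit} together with Proposition \ref{prop:split}: extensions between $\RS{I}{p}{J}$ and $\RS{I}{p'}{J}$ with $p \ne p'$ but $W_IpW_K = W_Ip'W_K$ become \emph{split} after restriction of scalars along $R^I \otimes R^K \hookrightarrow R^I \otimes R^J$. This is a statement about the kernel of the restriction map on $\Ext^1$, not about $\Ext^1$ vanishing, and it is proved by a completely different mechanism (an explicit splitting built from a Demazure operator, Lemma \ref{lem:simplesplit}, then localising up the filtration on $R(p)$). With Proposition \ref{prop:split} in hand, the subquotient of $M_{R^K}$ at $q$ becomes a direct sum of shifts of $\RS{I}{q}{K}$ and the hin-und-her lemma applies. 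So: replace ``incomparable-coset $\Ext^1$ vanishing'' in your Case $J\subset K$ argument by ``same-fiber splitting upon restriction to $R^I\otimes R^K$'' (Lemma \ref{lem:standardsplit} / Proposition \ref{prop:split}); the rest of your plan then goes through as in the paper, with the character identity coming from Lemma \ref{lem:nablarestrict}.
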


Before we can prove this we will need a preparatory result.

\begin{proposition} \label{prop:transsupp}
Let $J \subset K$ be finitary and 
  \begin{equation*}
    \quo : \W{I}{J} \to \W{I}{K}
  \end{equation*}
 be the quotient map. Let $C \subset \W{I}{K}$ be downwardly closed.
  \begin{enumerate}
  \item If $M \in \nabflag{I}{J}$  then
    \begin{equation*}
      (\Gamma_{\quo^{-1}(C)} M ) _{R^{K}} =  \Gamma_C( M_{R^K} ).
      \end{equation*}
\item If $M \in \nabflag{I}{K}$ then
  \begin{equation*}
    ( \Gamma_C M) \otimes_{R^K} R^J  = \Gamma_{\quo^{-1}(C)} (M \otimes_{R^K} R^J).
  \end{equation*}
  \end{enumerate}
\end{proposition}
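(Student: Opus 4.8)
The plan is to handle the two parts separately, both resting on the fact that the inclusion $R^I\otimes R^K\hookrightarrow R^I\otimes R^J$ corresponds to a \emph{finite} morphism $f\colon V/W_I\times V/W_J\to V/W_I\times V/W_K$ (finiteness comes from Lemma~\ref{cor:relinv}, since $R^J$ is module-finite over $R^K$). First I would record that $f^{-1}\bigl(\Gr{I}{q}{K}\bigr)=\Gr{I}{\quo^{-1}(q)}{J}$ for every $q\in\W{I}{K}$, hence $f^{-1}\bigl(\Gr{I}{C}{K}\bigr)=\Gr{I}{\quo^{-1}(C)}{J}$. This is immediate after pulling back along the finite quotient map $\pi\colon V\times V\to V/W_I\times V/W_J$: one has $\pi^{-1}\Gr{I}{p}{J}=\Gr{}{p}{}$ for a double coset $p$, and $(f\circ\pi)^{-1}\Gr{I}{q}{K}=\Gr{}{q}{}=\bigcup_{p\in\quo^{-1}(q)}\Gr{}{p}{}$, so applying the surjection $\pi$ gives the claim.

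For part~(1) it is enough to check, for an arbitrary section $m\in M$, that $m\in\Gamma_{\quo^{-1}(C)}M$ if and only if $m\in\Gamma_C(M_{R^K})$; that is, $\supp_M m\subseteq\Gr{I}{\quo^{-1}(C)}{J}$ iff $\supp_{M_{R^K}} m\subseteq\Gr{I}{C}{K}$. The key point is that $\supp_{M_{R^K}}m=f(\supp_M m)$. Indeed, the $(R^I\otimes R^K)$-module $R^I m R^J$ contains $R^I m R^K$, so by \eqref{eq:supp1} and \eqref{eq:supp3} (using that $f$ is finite and $R^I m R^J$ is cyclic) one gets $\supp_{M_{R^K}}m=\supp_{R^I\otimes R^K}(R^I m R^K)\subseteq\supp_{R^I\otimes R^K}(R^I m R^J)=f(\supp_M m)$; conversely the surjection $(R^I\otimes R^J)\otimes_{R^I\otimes R^K}(R^I m R^K)\twoheadrightarrow R^I m R^J$ together with \eqref{eq:supp2} gives $\supp_M m\subseteq f^{-1}(\supp_{M_{R^K}}m)$, and combining the two yields equality. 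Since $\Gr{I}{\quo^{-1}(C)}{J}=f^{-1}\bigl(\Gr{I}{C}{K}\bigr)$ and $f(Y)\subseteq Z\iff Y\subseteq f^{-1}(Z)$ for all $Y$, the two support conditions are equivalent, which proves~(1). (No use of the nabla flag is actually needed here.)

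For part~(2), note first that $R^J$ is graded free over $R^K$ by Lemma~\ref{cor:relinv}(1), so $-\otimes_{R^K}R^J$ is exact and $(\Gamma_C M)\otimes_{R^K}R^J$ is a submodule of $M\otimes_{R^K}R^J$; by \eqref{eq:supp2} its support is $f^{-1}(\supp\Gamma_C M)\subseteq\Gr{I}{\quo^{-1}(C)}{J}$, giving the inclusion $(\Gamma_C M)\otimes_{R^K}R^J\subseteq\Gamma_{\quo^{-1}(C)}(M\otimes_{R^K}R^J)$. For the reverse inclusion I would use the flag: replacing $C$ by its intersection with the (finitely many) double cosets occurring in $\supp M$ we may take $C$ finite, and then, since $C$ is downwardly closed, $\Gamma_C M$ occurs as a term of a nabla flag of $M$ refining the Bruhat order (Lemma~\ref{lem:hinundher}), so that $M/\Gamma_C M$ is filtered by shifts of the $\RS{I}{q}{K}$ with $q\notin C$. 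Tensoring $0\to\Gamma_C M\to M\to M/\Gamma_C M\to 0$ with $R^J$ stays exact, and by Corollary~\ref{cor:indfilt2} each $\RS{I}{q}{K}\otimes_{R^K}R^J$ is filtered by shifts of the $\RS{I}{p}{J}$ with $p\in\quo^{-1}(q)$; hence $(M/\Gamma_C M)\otimes_{R^K}R^J$ is filtered by shifts of $\RS{I}{p}{J}$ with $p$ ranging over a subset of $\W{I}{J}\setminus\quo^{-1}(C)$. It then suffices to show such a module has no nonzero section supported on $\Gr{I}{\quo^{-1}(C)}{J}$: for then any $n\in\Gamma_{\quo^{-1}(C)}(M\otimes_{R^K}R^J)$ maps to $0$ in $(M/\Gamma_C M)\otimes_{R^K}R^J$ and so lies in $(\Gamma_C M)\otimes_{R^K}R^J$.

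The vanishing just used is the main obstacle, and I would isolate it as a lemma: if $N\in\bMod{R^I}{R^J}$ admits a finite filtration whose subquotients are direct sums of shifts of standard modules $\RS{I}{p}{J}$ with $p$ in some set $U$, and $D\subseteq\W{I}{J}$ is downwardly closed with $D\cap U=\emptyset$, then $\Gamma_D N=0$. To prove it, given $0\neq n\in N$ pass to the lowest step $N_i$ of the filtration containing $n$; the image $\bar n\in N_i/N_{i-1}$ is a nonzero element of a direct sum of shifts of some $\RS{I}{p_i}{J}$ with $p_i\in U$, so $\supp\bar n=\Gr{I}{p_i}{J}$ by Lemma~\ref{lem:standsupp}, and \eqref{eq:suppexact} applied to $R^I n R^J\twoheadrightarrow R^I\bar n R^J$ forces $\Gr{I}{p_i}{J}\subseteq\supp_N n$. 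If one had $\supp_N n\subseteq\Gr{I}{D}{J}$, then the generic point of the irreducible variety $\Gr{I}{p_i}{J}$ would lie in some $\Gr{I}{r}{J}$ with $r\in D$, whence $\Gr{I}{p_i}{J}\subseteq\overline{\Gr{I}{r}{J}}$; since every $\Gr{I}{s}{J}$ is irreducible of dimension $\dim V$ (so none is properly contained in another, as one checks by pulling back to $V\times V$), this gives $\Gr{I}{p_i}{J}=\overline{\Gr{I}{r}{J}}\supseteq\Gr{I}{r}{J}$ and then $r=p_i$, contradicting $D\cap U=\emptyset$. Hence $\Gamma_D N=0$, and applying it with $U$ the effective support set of $(M/\Gamma_C M)\otimes_{R^K}R^J$ and $D=\quo^{-1}(C)$ (downwardly closed because $\quo$ is a poset map) completes part~(2). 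This dimension/irreducibility bookkeeping is where the genuine content lies; everything else is formal support theory together with the flatness of $R^J$ over $R^K$.
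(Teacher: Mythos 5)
Your proof is correct and takes essentially the same route as the paper: part (1) is exactly the support identity for the finite map $V/W_I\times V/W_J\to V/W_I\times V/W_K$ (the paper simply cites \eqref{eq:supp3}), and part (2) tensors $\Gamma_C M\hookrightarrow M\twoheadrightarrow M/\Gamma_C M$ with $R^J$ (flat over $R^K$), filters the induced standard modules via Corollary \ref{cor:indfilt2}, and pins down the submodule using the support of sections of standard modules (Lemma \ref{lem:standsupp}). Your explicit vanishing lemma and the irreducibility/dimension bookkeeping merely spell out details the paper leaves implicit.
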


\begin{proof} (1) is a direct consequence of (\ref{eq:supp3}). For (2) consider the exact sequence
\begin{equation*}
\Gamma_C M \hookrightarrow M \sur M / \Gamma_C M.
\end{equation*}
Because $M \in \nabflag{I}{J}$ the left (resp. right) module has a filtration with subquotients isomorphic to a direct sum of shifts of $\RS{I}{p}{J}$ with $p \in C$ (resp. $p \notin C$). Applying the exact functor $- \otimes_{R^K} R^J$ we obtain an exact sequence
\begin{equation*}
\Gamma_C M\otimes_{R^K} R^J \hookrightarrow M\otimes_{R^K} R^J\sur M / \Gamma_C M\otimes_{R^K} R^J.
\end{equation*}
By exactness, the left (resp. right) modules have a filtration with subquotients a direct sum of shifts of $\RS{I}{p}{J} \otimes_{R^K} R^J $ with $p \in C$ (resp. $p \notin C$). By Corollary \ref{cor:indfilt2},  $\RS{I}{p}{J} \otimes_{R^K} R^J $ has a filtration with subquotients isomorphic to (a shift of) $\RS{I}{q}{J}$ with $q \in \quo^{-1}(p)$. Moreover the support of any non-zero element in $\RS{I}{q}{J}$ is precisely $\Gr{I}{q}{J}$ (Lemma \ref{lem:standsupp}). Thus the above exact sequence is equal to
\begin{equation*}
\Gamma_{\quo^{-1}(C)} (M\otimes_{R^K} R^J) \hookrightarrow M\otimes_{R^K} R^J\sur M / \Gamma_{\quo^{-1}(C)} (M\otimes_{R^K} R^J)
\end{equation*} 
which implies the proposition.
\end{proof}

We can now prove the Theorem \ref{thm:translation}.

\begin{proof}[Proof of Theorem \ref{thm:translation}]
It is easy to see that $M \cdot \out{J}{K} \in \RR{I}{K}$ using Lemma \ref{lem:support} and the fact that $R^J$ is finite over $R^K$ in the case that $J \supset K$. We split the proof into two cases.

\emph{Case 1: Translating out of the wall ($J \supset K$):} We first prove part (1) of the theorem. Let
\begin{equation*}
\quo : \W{I}{K} \to \W{I}{J}
\end{equation*}
be the quotient map. Because $\quo$ is a surjective morphism of posets we may choose an enumeration $p_1, p_2, \dots$ of the elements of $\W{I}{K}$ refining the Bruhat order such that, after deleting repetitions, $\quo(p_1)$, $\quo(p_2)$, $\dots$ is a listing of the elements of $\W{I}{J}$ refining the Bruhat order. Fix $q \in \W{I}{J}$ and $p = p_m \in \quo^{-1}(q)$ and define
\begin{equation*}
C(n) = \{ p_1, p_2, \dots, p_n \}.
\end{equation*}
By the hin-und-her lemma (\ref{lem:hinundher}) it is enough to show that
\begin{equation*}
\Gamma_{C(m)} (M \otimes_{R^J} R^K) / \Gamma_{C(m-1)} (M \otimes_{R^J} R^K) 
\end{equation*}
is isomorphic to a direct sum of shifts of $\RS{I}{p}{K}$.

The set $F = \quo(C(m))$ is downwardly closed and contains $q$ as a maximal element. As $M \in \nabflag{I}{J}$ there exists an exact sequence
\begin{equation*}
\Gamma_{F \setminus \{ q \} } M \hookrightarrow \Gamma_F M \sur P \cdot \RS{I}{q}{J}
\end{equation*}
for some $P \in \N[v,v^{-1}]$. Applying $- \otimes_{R^J} R^K$ and using Proposition \ref{prop:transsupp} we conclude an exact sequence
\begin{equation*}
\Gamma_{\quo^{-1}(F \setminus \{ q \}) } (M \otimes_{R^J} R^K) \hookrightarrow \Gamma_{\quo^{-1} (F)} (M \otimes_{R^J} R^K) \sur P \cdot \RS{I}{q}{J}\otimes_{R^J} R^K
\end{equation*}
As $\Gamma_{\quo^{-1}(F \setminus \{ q \}) } (M \otimes_{R^J} R^K) $ is contained in both $\Gamma_{C(m)} (M \otimes_{R^J} R^K)$ and $\Gamma_{C(m-1)} (M \otimes_{R^J} R^K)$ by the third isomorphism theorem we will be finished if we can show that
\begin{equation*}
\Gamma_{C(m)} (\RS{I}{q}{J} \otimes_{R^J} R^K) / 
\Gamma_{C(m-1)} (\RS{I}{q}{J} \otimes_{R^J} R^K) 
\end{equation*}
is isomorphic to a direct sum of shifts of $\RS{I}{p}{J}$. But this is precisely the statement of Corollary \ref{cor:indfilt2}. Hence $M \cdot \out{J}{K} \in \nabflag{I}{K}$.

We now prove (2). The  commutativity of the right hand diagram is clear. As $- \cdot \out{J}{K}$ is exact and  every element in $\nabflag{I}{J}$ is an extension of the nabla modules we only have to check the commutativity of the left hand diagram for a nabla module. That is, we have to verify that
\begin{equation*}
  \ch_{\nabla}(\nab{I}{q}{J}) *_J \mst{J}{}{K} = \ch_{\nabla}( \nab{I}{q}{J} \cdot \out{J}{K}).
\end{equation*}
By Proposition \ref{prop:multform} the left hand side is equal to
\begin{equation*}
  \mst{I}{q}{J} *_J \mst{J}{}{K} =  \sum_{p \in W_I \! \setminus q /W_J} v^{\ell(q_+) - \ell(p_+)}\mst{I}{p}{K}.
\end{equation*}
For the right hand side note that:
\begin{align*}
\Gamma_{\le p} (\nab{I}{q}{J} & \otimes_{R^J} R^K) / \Gamma_{< p} (\nab{I}{q}{J} \otimes_{R^J} R^K) \cong \\
& \cong \Gamma_{\le p} (\RS{I}{q}{J} \otimes_{R^J} R^K) / \Gamma_{< p} (\RS{I}{q}{J} \otimes_{R^J} R^K)[\ell(q_+)] \\
& \cong \RS{I}{p}{K}[2\ell(p_+) - \ell(q_+)] & \text{(Corollary \ref{cor:indfilt2})}\\
& \cong v^{\ell(p_+) - \ell(q_+)}  \cdot \nab{I}{p}{K} \end{align*}
Therefore, by definition of $\ch_{\nabla}$,
\begin{equation*}
  \ch_{\nabla}( \nab{I}{q}{J} \cdot \out{J}{K}) = \sum_{p \in W_I \! \setminus q /W_J} v^{\ell(q_+) - \ell(p_+)}\mst{I}{p}{K}.
\end{equation*}
This completes the proof in case $J \supset K$.

\emph{Case 2: Translating onto the wall ($J \subset K$):}
Denote (as usual) by $\quo$ the quotient map
\begin{equation*}
\quo : \W{I}{J} \to \W{I}{K}.
\end{equation*}
Let $C \subset \W{I}{K}$ be downwardly closed and choose $q \in C$ maximal. Consider the exact sequence
\begin{equation*}
\Gamma_{\quo^{-1}(C\setminus \{ q \} )} M \hookrightarrow
\Gamma_{\quo^{-1}(C)} M \sur
\Gamma_{\quo^{-1}(C)} M /
\Gamma_{\quo^{-1}(C\setminus \{ q \} )} M.
\end{equation*}
As $M \in \nabflag{I}{J}$ the right-hand module has a filtration with subquotients isomorphic to direct sums of shifts $\RS{I}{p}{J}$ with $p \in \quo^{-1}(q)$. In Proposition \ref{prop:split} in the next subsection we will see that any such module splits as a direct sum of shifts of $\RS{I}{q}{K}$ upon restriction to $R^K$. This implies that $M_{R^K} \in \nabflag{I}{K}$ because, by Proposition \ref{prop:transsupp}, the restriction to $\bMod{R^I}{R^K}$ of the above exact sequence is identical to
\begin{equation*}
\Gamma_{C\setminus \{ q \}} (M_{R^K}) \hookrightarrow 
\Gamma_{C} (M_{R^K}) \sur 
\Gamma_{C} (M_{R^K}) /  
\Gamma_{C\setminus \{ q \}} (M_{R^K}).
\end{equation*}

We now turn our attention to (2). As above, it is enough to check the commutativity of the left-hand diagram for a nabla module. Let $p \in \W{I}{J}$ and $q = \quo(p)$. We need to check that
\begin{equation*}
\ch_{\nabla}(\nab{I}{p}{J}) *_J \mst{J}{}{K} = v^{\ell(q_-) - \ell(p_-)}
\frac{\pi(I,q,K)}{\pi(I,p,J)} \mst{I}{q}{K} = \ch_{\nabla} (\nab{I}{p}{J} \cdot \on{J}{K})
\end{equation*}
where the first equality follows from Proposition \ref{prop:multform}. By definition of $\ch_{\nabla}$ this follows from the isomorphism
\begin{equation*}
\nab{I}{p}{J}\cdot \on{J}{K} \cong v^{\ell(p_-) - \ell(q_-)}
\frac{\pi(I,q,K)}{\pi(I,p,J)} \cdot \nab{I}{q}{J}
\end{equation*}
which we prove in Lemma \ref{lem:nablarestrict} below.\end{proof}

\begin{lemma}\label{lem:nablarestrict}
Let $J \subset K$, $p \in \W{I}{J}$ and $q = W_IpW_K$. We have an isomorphism
  \begin{displaymath} 
    \nab{I}{p}{J} \cdot \on{J}{K} \cong v^{\ell(p_-) - \ell(q_-)}\frac{\Poinc(I,q,K)}{\Poinc(I,p,J)} \cdot \nab{I}{q}{K}.
  \end{displaymath}
\end{lemma}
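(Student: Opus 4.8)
The plan is to unwind the definitions on both sides and reduce the claim to the restriction formula for the underlying standard module $\RS{I}{p}{J}$, which is already available as the second isomorphism in Lemma~\ref{cor:res}. Recall that $\nab{I}{p}{J} = \RS{I}{p}{J}[\ell(p_+)]$ and $\nab{I}{q}{K} = \RS{I}{q}{K}[\ell(q_+)]$, and that the functor $-\cdot\on{J}{K}$ is restriction of scalars followed by a shift by $\ell(\wo{K}) - \ell(\wo{J})$. So the left-hand side is
\begin{equation*}
\nab{I}{p}{J} \cdot \on{J}{K} = {}_{R^I}(\RS{I}{p}{J})_{R^K}\bigl[\ell(p_+) + \ell(\wo{K}) - \ell(\wo{J})\bigr].
\end{equation*}
By Lemma~\ref{cor:res} (second isomorphism, with $K$ and $L$ there equal to $I$ and $K$ here), this restriction is isomorphic to $\tfrac{\tPoinc(I,q,K)}{\tPoinc(I,p,J)}\cdot\RS{I}{q}{K}$, so the left-hand side becomes $\tfrac{\tPoinc(I,q,K)}{\tPoinc(I,p,J)}\cdot\RS{I}{q}{K}\bigl[\ell(p_+) + \ell(\wo{K}) - \ell(\wo{J})\bigr]$.

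Next I would bookkeep the gradings and Poincaré polynomials carefully. Writing $\RS{I}{q}{K} = \nab{I}{q}{K}[-\ell(q_+)]$, the left-hand side is $\tfrac{\tPoinc(I,q,K)}{\tPoinc(I,p,J)}\cdot\nab{I}{q}{K}\bigl[\ell(p_+) - \ell(q_+) + \ell(\wo{K}) - \ell(\wo{J})\bigr]$, i.e. $v^{-(\ell(p_+) - \ell(q_+) + \ell(\wo{K}) - \ell(\wo{J}))}\tfrac{\tPoinc(I,q,K)}{\tPoinc(I,p,J)}\cdot\nab{I}{q}{K}$ (using $P\cdot M = \bigoplus M[i]^{\oplus a_i}$ and that $[n]$ corresponds to multiplication by $v^{-n}$ in the conventions here). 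It then remains to check the scalar identity in $\N[v,v^{-1}]$:
\begin{equation*}
v^{-(\ell(p_+) - \ell(q_+) + \ell(\wo{K}) - \ell(\wo{J}))}\,\frac{\tPoinc(I,q,K)}{\tPoinc(I,p,J)}
= v^{\ell(p_-) - \ell(q_-)}\,\frac{\Poinc(I,q,K)}{\Poinc(I,p,J)}.
\end{equation*}
Recalling $\Poinc(I,p,J) = v^{\ell(\wo{I,p,J})}\tPoinc(I,p,J)$ and the analogous formula for $(I,q,K)$, the ratio $\Poinc(I,q,K)/\Poinc(I,p,J)$ equals $v^{\ell(\wo{I,q,K}) - \ell(\wo{I,p,J})}\tPoinc(I,q,K)/\tPoinc(I,p,J)$, so the identity reduces to the purely combinatorial statement
\begin{equation*}
-(\ell(p_+) - \ell(q_+) + \ell(\wo{K}) - \ell(\wo{J})) = \ell(p_-) - \ell(q_-) + \ell(\wo{I,q,K}) - \ell(\wo{I,p,J}).
\end{equation*}
This is where I expect the only real (albeit light) work to be: it follows by applying the length identity \eqref{eq:poinc1}, namely $\ell(p_+) - \ell(p_-) = \ell(\wo{I}) + \ell(\wo{J}) - \ell(\wo{I,p,J})$ and likewise $\ell(q_+) - \ell(q_-) = \ell(\wo{I}) + \ell(\wo{K}) - \ell(\wo{I,q,K})$; subtracting these two and rearranging yields exactly the displayed equation, with the $\ell(\wo{I})$ terms cancelling.

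The main obstacle, such as it is, is purely organisational: keeping straight the sign conventions relating a grading shift $[n]$ to a factor of $v^{\pm n}$, and not confusing $\tPoinc$ with $\Poinc$. There is no geometric or homological content beyond what Lemma~\ref{cor:res} and the identity \eqref{eq:poinc1} already provide; once the grading bookkeeping is done the isomorphism of $\bMod{R^I}{R^K}$-objects is immediate from Krull--Schmidt and the cancellation principle for $\N[v,v^{-1}]$-coefficients (which applies since $\tPoinc(I,q,K)/\tPoinc(I,p,J) \in \N[v,v^{-1}]$ by Lemma~\ref{lem:poinc3}). I would present the argument as: (i) restriction formula from Lemma~\ref{cor:res}; (ii) reinsert the normalising shifts; (iii) verify the scalar identity via \eqref{eq:poinc1}; (iv) conclude.
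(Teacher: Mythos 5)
Your route is the same as the paper's: apply the second isomorphism of Lemma~\ref{cor:res} to the restriction, reinsert the normalising shifts, and reduce to a length identity that follows from \eqref{eq:poinc1}. However, the one piece of actual content in this lemma is the sign bookkeeping, and that is exactly where your write-up goes wrong. With the paper's conventions, $P \cdot M = \bigoplus M[i]^{\oplus a_i}$ for $P = \sum a_i v^i$, so a shift $[n]$ corresponds to multiplication by $v^{\,n}$, not $v^{-n}$ as you assert. Consequently the left-hand exponent in your displayed ``purely combinatorial statement'' should be $+\bigl(\ell(p_+) - \ell(q_+) + \ell(\wo{K}) - \ell(\wo{J})\bigr)$ rather than its negative; as written, the two sides of your identity are negatives of each other, and the claim that subtracting the two instances of \eqref{eq:poinc1} ``yields exactly the displayed equation'' is false. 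A concrete check: $W$ of type $A_2$ with $S = \{s,t\}$, $I = \emptyset$, $J = \{s\}$, $K = S$, $p = \{t, ts\}$, $q = W$; then your left-hand side equals $-1$ while your right-hand side equals $+1$.

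Once the convention is corrected, the identity to verify is
\begin{equation*}
\ell(p_+) - \ell(q_+) + \ell(\wo{K}) - \ell(\wo{J}) \;=\; \ell(p_-) - \ell(q_-) + \ell(\wo{I,q,K}) - \ell(\wo{I,p,J}),
\end{equation*}
which is precisely what \eqref{eq:poinc1} for $p$ and for $q$ gives after subtraction (the $\ell(\wo{I})$ terms cancel), and this is exactly the computation of the exponent $a$ in the paper's proof. So your argument is the paper's argument, modulo fixing the shift-versus-$v$ convention and restating the scalar identity with the corrected sign; everything else (Lemma~\ref{cor:res}, Lemma~\ref{lem:poinc3} to know the quotient of $\tPoinc$'s lies in $\N[v,v^{-1}]$, and Krull--Schmidt cancellation) is used correctly.
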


\begin{proof} By Lemma \ref{cor:res} we have
\begin{align*}
(\nab{I}{p}{J}) \cdot \out{J}{K} &
\cong (\RS{I}{p}{J})_{R^K} [\ell(p_+) + \ell(\wo{K}) - \ell(\wo{J})] \\
& \cong \frac{\tPoinc(I,q,K)}{\tPoinc(I,p,J)} \cdot \RS{I}{q}{J}[\ell(p_+) + \ell(\wo{K}) - \ell(\wo{J})] \\
& \cong v^a \frac{\Poinc(I,q,K)}{\Poinc(I,p,J)} \cdot \nab{I}{q}{J}
\end{align*}
where
\begin{align*}
a & = \ell(\wo{I,p,J}) - \ell(\wo{I,q,K}) + \ell(p_+) - \ell(q_+) + \ell(\wo{K}) - \ell(\wo{J}) \\
& = (\ell(p_+) - \ell(\wo{I}) - \ell(\wo{J}) + \ell(\wo{I,p,J}) ) - \\
& \qquad (\ell(q_+) - \ell(\wo{I}) - \ell(\wo{K}) + \ell(\wo{I,q,K}) ) \\
& = \ell(p_-) - \ell(q_-)
\end{align*}
by \eqref{eq:poinc1}.\end{proof}

\subsection{Vanishing and splitting} \label{subsec:vansplit}

This is a technical section in which we prove two vanishing statements
which were postponed in the last section.

Let us begin with some generalities. Let $A$ be a ring. An extension
between two $A$-modules
\begin{equation*}
 \label{eq:extexample}
 M \to E \to N
\end{equation*}
gives an element of $\Ext^1_A(N,M)$ by considering the long exact
sequence associated to $\Hom(-,M)$ and looking at the image of $\id_M$
in $ \Ext^1(N,M)$; the sequence splits if and only if this class is
zero.

Now let $A^{\prime} \to A$ be a homomorphism of rings. If $M$ and $N$
are $A$-modules one has maps
\begin{displaymath}
 r_m: \Ext^m_A(N,M) \to \Ext^m_{A^{\prime}}(N,M).
\end{displaymath}
We will need the following facts:
\begin{enumerate}
\item An extension between $M$ and $N$ splits upon restriction to
$A^{\prime}$ if and only its class lies in the kernel of the map
 \begin{displaymath}
  r_1 : \Ext^1_A(N,M) \to \Ext^1_{A^{\prime}}(N,M).
 \end{displaymath}
\item A short exact sequence $M^{\prime} \hookrightarrow M \sur
M^{\prime\prime}$
yields a commutative diagram of long exact sequences:
 \begin{equation} \label{eq:long1}
  \xymatrix@R=0.3cm@C=0.4cm{
 \ar[r] & \Ext_A^{1}(M^{\prime\prime},N) \ar[r] \ar[d] &
\Ext^1_A(M, N) \ar[r] \ar[d] & \Ext^1_A(M^{\prime}, N) \ar[r]
\ar[d] & \\
\ar[r] & \Ext_{A^{\prime}}^{1}(M^{\prime\prime},N) \ar[r] &
\Ext^1_{A^{\prime}}(M, N) \ar[r] &
\Ext^1_{A^{\prime}}(M^{\prime}, N) \ar[r] & }
 \end{equation}
\item Similarly, if $N^{\prime} \hookrightarrow N \sur
N^{\prime\prime}$ is a short
exact sequence, we obtain a commutative diagram of long exact
sequences:
 \begin{equation}
  \label{eq:long2}
  \xymatrix@R=0.3cm@C=0.4cm{
  \ar[r] & \Ext_A^{1}(M,N^{\prime}) \ar[r] \ar[d] &
\Ext^1_A(M, N) \ar[r] \ar[d] & \Ext^1_A(M, N^{\prime\prime}) \ar[r] \ar[d] &
\\
\ar[r] & \Ext_{A^{\prime}}^{1}(M,N^{\prime}) \ar[r] &
\Ext^1_{A^{\prime}}(M, N) \ar[r] & \Ext^1_{A^{\prime}}(M, N^{\prime\prime}) \ar[r] & }
 \end{equation}
\end{enumerate}
These facts become transparent when interpreted in the derived category (see e.g. \cite{Wie}).

Given a vector
space $W$, denote by $\SO(W)$ its graded ring of regular functions.

\begin{lemma} (Lemma 5.8 in \cite{SoBimodules})
 \label{lem:ext}
Let $W$ be a finite dimensional vector space and $U, V \subset W$ two
linear subspaces. Then
\begin{displaymath}
 \Ext^1_{\SO(W)}( \SO(U), \SO(V))
\end{displaymath}
is only non-trivial if $V \cap U$ is $V$ or a hyperplane in $V$. In
the later case it is generated by the class of any short exact
sequence of the form
\begin{displaymath}
 \SO(V)[-2] \stackrel{\alpha \cdot }{\hookrightarrow} \SO(V \cup U) \sur \SO(U)
\end{displaymath}
with $\alpha \in W^*$ a linear form satisfying $\alpha|_U = 0$ and
$\alpha|_V \ne 0$.
\end{lemma}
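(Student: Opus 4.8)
The plan is to reduce the whole statement to a Koszul computation. Write $P := \SO(W)$ for the polynomial ring (the symmetric algebra on $W^*$, placed in degree $2$), and for a linear subspace $Z \subseteq W$ let $I_Z \subseteq P$ be the ideal of functions vanishing on $Z$, so that $\SO(Z) = P/I_Z$ as a graded $P$-module. I would first choose a basis $f_1,\dots,f_k$ of the annihilator $U^{\perp} \subseteq W^*$, so that $I_U = (f_1,\dots,f_k)$ with $f_1,\dots,f_k$ a regular sequence in $P$; then the Koszul complex $K_{\bullet}(f_1,\dots,f_k;P)$ is a graded free resolution of $\SO(U)$, and hence $\Ext^{\bullet}_P(\SO(U),\SO(V))$ is the cohomology of the Koszul cochain complex $K^{\bullet}(\bar f_1,\dots,\bar f_k;\SO(V))$, where $\bar f_j$ is the image of $f_j$ in $\SO(V) = P/I_V$. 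Each $\bar f_j$ is a linear form on $V$, and $V \cap U = \bigcap_j \ker(\bar f_j)$, so $d := \codim_V(V\cap U)$ equals the dimension of the span of $\bar f_1,\dots,\bar f_k$ inside $V^*$.

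Next I would normalise: an invertible $k\times k$ scalar change of the $f_j$ leaves $I_U$ unchanged and induces an isomorphism of Koszul complexes, so I may assume $\bar f_1,\dots,\bar f_d$ are linearly independent linear forms on $V$ and $\bar f_{d+1} = \dots = \bar f_k = 0$. Then $K^{\bullet}(\bar f_1,\dots,\bar f_k;\SO(V))$ factors over $\SO(V)$ as the tensor product of $K^{\bullet}(\bar f_1,\dots,\bar f_d;\SO(V))$ with $k-d$ copies of the two-term complex $\SO(V) \xrightarrow{\,0\,} \SO(V)$. The first factor resolves $\SO(V)/(\bar f_1,\dots,\bar f_d) = \SO(V\cap U)$ and, the sequence being regular of length $d$, has cohomology $\SO(V\cap U)$ concentrated in cohomological degree $d$; the other factors have zero differential and free terms. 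A Künneth argument over $\SO(V)$ then yields $\Ext^{n}_P(\SO(U),\SO(V)) \cong \SO(V\cap U)^{\oplus \binom{k-d}{\,n-d\,}}$, up to grading shift. In degree $n=1$ this vanishes unless $d \le 1$: for $d=0$ (i.e. $V \subseteq U$) it is $\SO(V)^{\oplus k}$; for $d=1$ (i.e. $V\cap U$ a hyperplane in $V$) it is $\SO(V\cap U)[2]$, a cyclic $P$-module with a single generator sitting in degree $-2$; and for $d\ge 2$ it is $0$. This gives the first assertion.

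For the second assertion I would work in the case $d=1$ and show that the displayed sequence is a non-split extension. First it is exact: $\SO(V\cup U) = P/(I_V\cap I_U)$ surjects onto $\SO(U) = P/I_U$ with kernel $I_U/(I_V\cap I_U)$, which by the second isomorphism theorem is the image of $I_U$ in $\SO(V)$, i.e. the ideal $(\bar f_1,\dots,\bar f_k)\SO(V)$; the hypotheses $\alpha|_U = 0$ and $\alpha|_V \ne 0$ say precisely that $\bar\alpha$ is a nonzero element of the one-dimensional span of the $\bar f_j$, so this ideal equals $(\bar\alpha)\SO(V)$, and multiplication by $\alpha$ identifies $\SO(V)[-2]$ with it injectively since $\SO(V)$ is a domain. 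The class of this extension lies in degree $0$ of $\Ext^{1}_P(\SO(U),\SO(V)[-2])$, which by the computation above is one-dimensional and is exactly the degree carrying the generators of $\Ext^1$; so it suffices to see the sequence does not split. If it did, $\SO(V\cup U) = A\oplus B$ with $A = I_U/(I_V\cap I_U)$ and $B \cong P/I_U$; but every element of $A$ lies in $I_U/(I_V\cap I_U)$, while every element of $B$ is killed by $I_U$, and writing a representative $g$ and using that $I_V$ is prime with $I_U \not\subseteq I_V$ (since $V \not\subseteq U$) forces $g \in I_V$, so $B \subseteq I_V/(I_V\cap I_U)$. Then $\SO(V\cup U) = A+B$ would lie inside the proper submodule $(I_U+I_V)/(I_V\cap I_U)$ — proper because $I_U + I_V \subseteq I_{U\cap V} \ne P$ — which is absurd.

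I expect the main obstacle to be not any single calculation but the bookkeeping: making sure the Künneth step really pins $\Ext^1$ down to a cyclic module whose generating degree is a single line, so that ``its class is nonzero'' and ``its class generates'' become the same statement, which the non-splitting argument then supplies.
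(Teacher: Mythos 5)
Your proof is correct and complete. Note that the paper itself does not prove this statement — it is quoted verbatim from Soergel (Lemma 5.8 of \cite{SoBimodules}) — and your argument is essentially the standard one used there: resolve $\SO(U)$ by the Koszul complex on a basis of $U^{\perp}$, observe that $\Ext^{\bullet}_{\SO(W)}(\SO(U),\SO(V))$ is computed by the Koszul cochain complex on the restricted forms $\bar f_j \in V^*$, and reduce to the codimension $d=\codim_V(V\cap U)$ of their common kernel. Your bookkeeping is sound at the two delicate points: the identification $\Ext^1 \cong \SO(V\cap U)[2]$ with its one-dimensional generating degree (so that ``nonzero'' and ``generates'' coincide for the class of the displayed sequence), and the non-splitting argument via primeness of $I_V$ together with $I_U+I_V$ being contained in the irrelevant ideal.
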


We now turn to our situation, with the goal of analysing extensions between standard modules. Notationally it proves more convenient to work with left modules, which we may do using the equivalences $\bMod{A_1}{A_2} \cong \lMod{A_1\otimes A_2}$ as all our rings are assumed commutative. We will do this for the rest of ths subsection without further comment.

Using the identification of $R_x$ with $\SO(\Gr{}{x}{})$ and Lemma \ref{lem:ext} we see that $\Ext^1_{R\otimes R}(R_x, R_y)$ is non-zero only when $\Gr{}{x}{}$ and $\Gr{}{y}{}$ intersect in codimension 1. As
\begin{equation*}
 \Gr{}{x}{} \cap \Gr{}{y}{} \cong V^{x^{-1}y}
\end{equation*}
and the representation of $W$ on $V$ is reflection faithful, this
occurs only when $y = xt$ for some reflection $t \in T$. We conclude
that there are no extensions between $R_x$ and $R_y$ unless $x \ne yt$
for some reflection $t \in W$.

Now let $p, p^{\prime} \in \W{I}{J}$ and suppose we have
an extension of the form
\begin{equation*}
 \RS{I}{p}{J} \hookrightarrow E \sur \RS{I}{p^{\prime}}{J}.
\end{equation*}
we may extend scalars to obtain an exact sequence
\begin{displaymath}
 R \otimes_{R^I} \RS{I}{p}{J} \otimes_{R^J} R \hookrightarrow
\tilde{E} \sur R \otimes_{R^I}\RS{I}{p^{\prime}}{J}\otimes_{R^J} R.
\end{displaymath}
If we again restrict to ${R^I}\otimes{R^J}$ we obtain a number of copies
of our original extension. By Theorem \ref{thm:ind} we have an
isomorphism
\begin{displaymath}
 R \otimes_{R^I} \RS{I}{p}{J} \otimes_{R^J} R \cong R(p).
\end{displaymath}
Therefore our  extension takes the form
\begin{displaymath}
 R(p) \hookrightarrow \tilde{E} \sur R(p^{\prime}).
\end{displaymath}
\excise{
Also note that, by Proposition \ref{cor:indfilt} for all $z \in p$ we
have an isomorphism
\begin{equation*}
 \Gamma_{\le z} R(p) / \Gamma_{< z} R(p) \cong R_z[\nu_z]
\end{equation*}
for some $\nu_x \in \Z$.}

\begin{lemma} \label{lem:vanish}
Suppose that $p, p^{\prime} \in \W{I}{J}$ are not
comparable in the Bruhat order. Then
\begin{equation*}
\Ext^1_{R^I\otimes R^J}(\RS{I}{p}{J}, \RS{I}{p^{\prime}}{J}) = 0.
\end{equation*}
\end{lemma}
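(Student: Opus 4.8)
The plan is to finish the reduction already begun above: prove that the extension $\tilde e\colon R(p)\hookrightarrow \tilde E\sur R(p')$ of $R\otimes R$-modules splits, deduce that the original extension $e\colon \RS{I}{p}{J}\hookrightarrow E\sur \RS{I}{p'}{J}$ of $R^I\otimes R^J$-modules splits, and then use the symmetry of the hypothesis in $p$ and $p'$.

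For the first step I would in fact prove the stronger statement $\Ext^1_{R\otimes R}(R(p'),R(p))=0$. Using Corollary \ref{cor:indfilt} with $K=L=\emptyset$ (so that $R(p)^{W_\emptyset\times W_\emptyset}=R(p)$ and $\RS{\emptyset}{x}{\emptyset}=R_x$), the module $R(p)$ carries a finite filtration by $R\otimes R$-submodules whose subquotients are grading shifts of $R_x$ for $x$ running over the finite set $p$; likewise $R(p')$ has such a filtration with subquotients grading shifts of $R_y$, $y\in p'$. By the long exact sequences for $\Ext^1_{R\otimes R}$ in each variable, it then suffices to show $\Ext^1_{R\otimes R}(R_y,R_x)=0$ for all $x\in p$ and $y\in p'$. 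As recalled just before the statement of this lemma, under the identification $R_x=\SO(\Gr{}{x}{})$, $R_y=\SO(\Gr{}{y}{})$, Lemma \ref{lem:ext} forces this group to vanish unless $\Gr{}{x}{}$ and $\Gr{}{y}{}$ meet in codimension one; since $\Gr{}{x}{}\cap\Gr{}{y}{}\cong V^{x^{-1}y}$ and $V$ is reflection faithful, this happens only if $y=xt$ for some reflection $t\in T$. But if $y=xt$ with $t\in T$, then $x$ and $y$ are comparable in the Bruhat order on $W$ (exactly one of $x<xt$, $xt<x$ holds; see e.g. \cite{Hu}), and since $\quo\colon W\to\W{I}{J}$ is a morphism of posets, $p=\quo(x)$ and $p'=\quo(y)$ are comparable in $\W{I}{J}$, contradicting the hypothesis. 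Hence all these $\Ext^1$ groups vanish, so $\Ext^1_{R\otimes R}(R(p'),R(p))=0$ and $\tilde e$ splits.

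It remains to descend. Since $R$ is graded free over $R^I$ of graded rank $\tPoinc(I)$ and over $R^J$ of graded rank $\tPoinc(J)$ by \eqref{assump:freeness}, restriction of $\tilde e$ to $R^I\otimes R^J$ recovers a direct sum of grading shifts of $e$, as already observed above; as $\tPoinc(I)$ and $\tPoinc(J)$ each have $v^0$-coefficient $1$, the sequence $e$ itself occurs as a direct summand. Since $\tilde e$ splits, so does this direct sum, hence so does $e$; thus $\Ext^1_{R^I\otimes R^J}(\RS{I}{p'}{J},\RS{I}{p}{J})=0$, and running the same argument with the roles of $p$ and $p'$ exchanged gives $\Ext^1_{R^I\otimes R^J}(\RS{I}{p}{J},\RS{I}{p'}{J})=0$, as claimed. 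The only genuinely new ingredient is the combinatorial fact that multiplication by a reflection links Bruhat-comparable elements, combined with $\quo$ being order-preserving; the main thing to be careful about is the d\'evissage together with the bookkeeping of grading shifts in passing between the two ground rings, both of which are routine.
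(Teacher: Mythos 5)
Your argument is correct and follows essentially the same route as the paper: reduce via extension of scalars to showing $\Ext^1_{R\otimes R}(R(p'),R(p))=0$, then d\'evissage using the filtration of $R(p)$ and $R(p')$ by shifts of the $R_x$ (Corollary \ref{cor:indfilt} with $K=L=\emptyset$), together with the vanishing of $\Ext^1_{R\otimes R}(R_y,R_x)$ unless $y=xt$ for some reflection $t$ and the observation that this would force $p$ and $p'$ to be comparable via $\quo$. Your descent step (using that the constant term of $\tPoinc(I)\tPoinc(J)$ is $1$) is a slightly more explicit version of the paper's remark that restriction recovers "a number of copies of our original extension," but it is the same idea.
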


\begin{proof} By the above discussion it is enough to show that there are no extensions between $R(p)$ and $R(p^{\prime})$. As $p$ and $p^{\prime}$ are incomparable, there are no pairs $x \in p$ and $x^{\prime} \in p^{\prime}$ with $x^{\prime} = xt$ for some $t \in T$. Thus (again by the above discussion), $\Ext^1_{R\otimes R}(R_x, R_{x^{\prime}})$ for all $x \in p$, $x^{\prime} \in p^{\prime}$. By Corollary \ref{cor:indfilt}, $R(p)$ (resp. $R(p^{\prime})$) has a filtration with successive subquotients $R_x$ for $x \in p$ (resp. $x \in p^{\prime}$). By induction and the long exact sequence of $\Ext$ it follows first that $\Ext^1_{R \otimes R}(R(p), R_{x^{\prime}}) = 0$ for all $x^{\prime} \in p^{\prime}$, and then that $\Ext^1_{R\otimes R}(R(p), R(p^\prime)) = 0$.
\end{proof}

Our goal for the rest of this section is to prove Proposition \ref{prop:split} below.
We start with two preparatory lemmas.

\begin{lemma}\label{lem:simplesplit} If $x \in W$ and $t \in T$ then the map
\begin{equation*}
r_1 : \Ext^1_{R \otimes R} (R_x, R_{xt}) \to \Ext^1_{R
\otimes R^t} (R_x, R_{xt})
\end{equation*}
induced by the inclusion $R \otimes R^t
\hookrightarrow {R \otimes R}$ is zero.\end{lemma}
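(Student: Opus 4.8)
The plan is to exploit the explicit description of $\Ext^{1}_{R\otimes R}(R_x,R_{xt})$ furnished by Lemma \ref{lem:ext}. Since the representation of $W$ on $V$ is reflection faithful, $\Gr{}{x}{}\cap\Gr{}{xt}{}\cong V^{t}$ is a hyperplane in each of $\Gr{}{x}{}$ and $\Gr{}{xt}{}$, so $\Ext^{1}_{R\otimes R}(R_x,R_{xt})$ is a cyclic module, generated by the class $\xi$ of a short exact sequence
\begin{equation*}
0\longrightarrow R_{xt}[-2]\stackrel{\alpha\cdot}{\longrightarrow}E\longrightarrow R_x\longrightarrow 0,
\end{equation*}
where $E=\SO(\Gr{}{x}{}\cup\Gr{}{xt}{})$ and $\alpha\in (V\times V)^{*}$ vanishes on $\Gr{}{x}{}$ but not on $\Gr{}{xt}{}$. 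By fact (1) recalled in Section \ref{subsec:vansplit}, $\xi\in\ker r_{1}$ if and only if this extension splits after restriction of scalars along $R\otimes R^{t}\hookrightarrow R\otimes R$, i.e.\ if and only if the inclusion $\alpha\cdot\colon R_{xt}[-2]\to E$ admits a retraction in $\bMod{R}{R^{t}}$; and since $\xi$ generates $\Ext^{1}_{R\otimes R}(R_x,R_{xt})$, producing such a retraction will give $r_{1}=0$.

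I would first use the intrinsic description $E=\{(f,g)\in R_x\oplus R_{xt}\mid f|_{Z}=g|_{Z}\}$, where $Z=\Gr{}{x}{}\cap\Gr{}{xt}{}$: this is the usual presentation of the functions on a union of two closed subschemes, and it agrees with $R_{x,xt}$, the map of Lemma \ref{lem:Rpinjection} being an isomorphism in this case. In this picture the submodule $\alpha\cdot R_{xt}[-2]$ is precisely the set of pairs $(0,g)\in E$. Now define
\begin{equation*}
\rho\colon E\longrightarrow R_{xt}[-2],\qquad (f,g)\longmapsto (0,\,g-f).
\end{equation*}
Then $\rho(f,g)$ again lies in $E$ because $f|_{Z}=g|_{Z}$, it lies in the submodule, and $\rho$ restricts to the identity there. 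Left $R$-linearity is clear. The one place the hypothesis enters is right $R^{t}$-linearity: $r\in R$ acts on the right of $R_x$ by $m\mapsto m(xr)$ and on $R_{xt}$ by $m\mapsto m(xtr)$, and for $r\in R^{t}$ these coincide since $tr=r$; hence $\rho$ commutes with the right action of $R^{t}$. Thus $\rho$ is the desired retraction and $r_{1}(\xi)=0$.

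To conclude $r_{1}=0$ I would note that $r_{1}$ is a map of $R\otimes R^{t}$-modules and that $\Ext^{1}_{R\otimes R}(R_x,R_{xt})$ is already generated over $R\otimes R^{t}$ by $\xi$. Indeed, the $R\otimes R$-action on it factors through the quotient of $R\otimes R$ by the sum of the vanishing ideals of $\Gr{}{x}{}$ and $\Gr{}{xt}{}$; after identifying $f\otimes 1$ with $1\otimes x^{-1}f$ and observing that $1\otimes h_{t}$ becomes zero, this quotient is a quotient of $R/(h_{t})$, and $R^{t}\to R/(h_{t})$ is surjective because $R=R^{t}\oplus h_{t}R^{t}$. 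So $r_{1}$ kills a generating set, hence vanishes.

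The heart of the matter is the retraction $\rho$, and it is essentially forced: once the right action is restricted to $R^{t}$ the bimodules $R_x$ and $R_{xt}$ become isomorphic — in fact equal as submodules of $R\oplus R$ — which is exactly why the ``difference'' map is well defined and splits the extension. The remaining work is bookkeeping: pinning down $E$ and the submodule $\alpha\cdot R_{xt}[-2]$ inside it, and the final descent from the single generator $\xi$ to the whole $\Ext$ group.
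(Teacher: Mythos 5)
Your proof is correct and is essentially the paper's argument: both reduce via Lemma \ref{lem:ext} to splitting the one generating extension after restriction to $R\otimes R^{t}$, and your retraction $(f,g)\mapsto (g-f)$ (followed by division by $\alpha$) is, up to a nonzero scalar, exactly the right Demazure operator $f\mapsto f\partial_{t}$ that the paper uses to split it. Your closing step, showing the class already generates over $R\otimes R^{t}$ via $R=R^{t}\oplus h_{t}R^{t}$, merely makes explicit the linearity/generation point the paper leaves implicit in ``it is enough to show the extension splits''.
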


\begin{proof} Given $c \in R\otimes R$ of degree 2, vanishing on
$\Gr{}{x}{}$ but not on $\Gr{}{xt}{}$ we obtain an extension
\begin{equation} \label{eq:monkeyext}
R_{xt}[-2] \stackrel{\cdot c}{\hookrightarrow} R_{x,xt} \sur R_x.
\end{equation}
By Lemma \ref{lem:ext}, it is enough to show that
(\ref{eq:monkeyext}) splits upon restriction to $R \otimes R^{t}$.
Consider the map $R_{x,xt} \to R_{xt}[-2]$ sending $f$ to the image of
$f\partial_t$, where $\partial_t$ is the (right) Demazure operator
introduced in Section \ref{subsec:demRX}. This is a morphism of $R\otimes
R^t$-modules. As $c$ vanishes on $\Gr{}{x}{}$ but not on
$\Gr{}{xt}{}$, $c\partial_t$ is non-zero, hence is a non-zero scalar for degree reasons. Thus a suitable scalar
multiple of this map provides a splitting of (\ref{eq:monkeyext}) over
$R \otimes R^t$.
\end{proof}

\begin{lemma} \label{lem:standardsplit}
Let $I, J \subset K$ be finitary subsets of $S$ and $p, p^{\prime} \in \W{I}{J}$ be such that $p \ne p^{\prime}$ but $W_IpW_K = W_I
p^{\prime} W_K$. Then every extension between $\RS{I}{p}{J}$ and
$\RS{I}{p^{\prime}}{J}$ splits upon restriction restriction to $R^I
\otimes R^K$. \end{lemma}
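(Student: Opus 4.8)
The plan is to reduce this statement about standard modules to the corresponding statement about the bimodules $R(x,tx)$, which we already control by Lemma \ref{lem:simplesplit}. First I would extend scalars: given an extension $\RS{I}{p}{J} \hookrightarrow E \sur \RS{I}{p'}{J}$, apply the exact functor $R \otimes_{R^I} - \otimes_{R^J} R$ to obtain an extension $R(p) \hookrightarrow \widetilde{E} \sur R(p')$, using the identification of Theorem \ref{thm:ind}. Since restriction back to $R^I \otimes R^J$ recovers a (nonzero) sum of copies of the original extension, it suffices to prove that the extension $R(p) \hookrightarrow \widetilde{E} \sur R(p')$ splits upon restriction to $R \otimes R^K$; indeed, by fact (1) at the start of Section \ref{subsec:vansplit} this is detected by the map on $\Ext^1$ groups, and restriction to $R^I \otimes R^K$ factors through restriction to $R \otimes R^K$.

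Next I would filter. Since $q := W_IpW_K = W_Ip'W_K$, both $p$ and $p'$ are subsets of the single double coset $q \in \W{I}{K}$, so $\Gr{}{p}{}$ and $\Gr{}{p'}{}$ are both subsets of $\Gr{}{q}{}$. By Corollary \ref{cor:indfilt} (with $K' = L' = \{e\}$, i.e. the case of no invariants) the bimodules $R(p)$ and $R(p')$ each carry a filtration whose successive subquotients are shifts of the $R_x$ for $x$ ranging over $p$, respectively $p'$. Using the long exact sequences \eqref{eq:long1} and \eqref{eq:long2} for the pair of rings $R \otimes R^K \hookrightarrow R \otimes R$, the splitting of the extension $R(p) \hookrightarrow \widetilde{E} \sur R(p')$ over $R \otimes R^K$ will follow once I know that each $\Ext^1_{R \otimes R}(R_x, R_{x'})$ maps to zero in $\Ext^1_{R \otimes R^K}(R_x, R_{x'})$ for all $x \in p$ and $x' \in p'$. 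Here I must be a little careful about exactly which restriction of scalars is being applied in Lemma \ref{lem:simplesplit} (it uses $R \otimes R^t$ for a single reflection $t$), so I would want the argument packaged so that the relevant filtration step really does split over $R \otimes R^K$.

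The main obstacle — and the crux of the lemma — is therefore the single-step statement: if $x \in p$ and $x' \in p'$ with $x' = xt$ for a reflection $t \in T$ (the only case in which $\Ext^1_{R\otimes R}(R_x, R_{x'}) \ne 0$, by Lemma \ref{lem:ext} and reflection-faithfulness), then the reflection $t$ lies in $W_K$, so that $R \otimes R^t \supseteq R \otimes R^K$ and Lemma \ref{lem:simplesplit} applies and already gives vanishing over $R \otimes R^K$. To see $t \in W_K$: since $x, xt \in W_IpW_K$, Proposition \ref{prop:doublebruhat} applied to the double coset $q = W_IpW_K$ forces either $t \in W_I$ or $xt = xt''$ with $t'' \in W_K$, i.e. $t = t'' \in W_K$. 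If $t \in W_I$, then $x' = tx\cdot(x^{-1}tx)^{-1}$... more simply, $x' = xt$ with $t \in W_I$ would put $x' \in W_IxW_J = p$ (since left multiplication by $W_I$ preserves $p$ only on the left — here I would instead note $xt \in xW_J$ is false; rather $tx$ would be the relevant statement). I would recheck this: the clean route is that $x' = xt$ and $x, x' \in q$ means, by Howlett's theorem applied inside $q$, the right-hand discrepancy $t = x^{-1}x'$ must already lie in $W_K$ because $p$ and $p'$ are distinct $(W_I,W_J)$-cosets inside the same $(W_I,W_K)$-coset — precisely the content of Proposition \ref{prop:doublebruhat} with roles arranged on the right. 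Once $t \in W_K$ is secured, invoking Lemma \ref{lem:simplesplit} and feeding the result through the two long exact sequences completes the proof by induction on the lengths of the two filtrations.
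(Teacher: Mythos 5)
Your proposal is correct and follows the paper's own proof: extend scalars to reduce to an extension between $R(p)$ and $R(p')$, filter these by the $R_x$, argue that the linking reflection lies in $W_K$, invoke Lemma \ref{lem:simplesplit}, and climb back up the two filtrations with the long exact sequences \eqref{eq:long1} and \eqref{eq:long2}. The only wobble is in pinning down $t = x^{-1}x' \in W_K$: the clean route is the ``right-handed'' form of Proposition \ref{prop:doublebruhat} (apply the stated proposition to $x^{-1}, tx^{-1} \in q^{-1} \in \W{K}{I}$), which gives the dichotomy $t \in W_K$ or $x' = t'x$ for some reflection $t' \in W_I$, and the second alternative is excluded because it would place $x' \in W_I x \subset p$, contradicting $p \ne p'$ --- which is what you gesture at in your ``clean route'' but state a bit loosely.
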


\begin{proof} Note that by the above discussion it is enough to show
that every extension between $R(p)$ and $R(p^{\prime})$ splits upon
restriction to $R^I \otimes R^K$. First note that if $x \in
p^{\prime}$ and $y \in p$ with $x = yt$ for some reflection $t \in W$,
then either $t \in W_K$ or $x = t^{\prime}y$ for some $t^{\prime} \in
W_I$ by Proposition \ref{prop:doublebruhat}. The second possibility is
impossible however, as $p \ne p^{\prime}$. We conclude, using the
previous lemma, that if $x \in p^{\prime}$ and $y \in p$ then either
$\Ext_{R \otimes R} (R_x, R_y) = 0$ or the map $\Ext_{R \otimes R}
(R_x, R_y) \to \Ext_{R^I \otimes R^K}(R_x, R_y)$ is zero.

We now proceed similarly to as in the proof of Lemma \ref{lem:vanish}.
Inducting over a filtration on $R(p)$ and using (\ref{eq:long2}) we conclude that the map
\begin{equation*}
\Ext_{R \otimes R}^1(R_x, R(p)) \to \Ext_{R^I \otimes R^K}^1(R_x, R(p))
\end{equation*}
induced by the inclusion $R^I \otimes R^K \hookrightarrow R \otimes R$ is zero for all $x \in p^{\prime}$. Inducting again using
(\ref{eq:long1}) we see that the map $\Ext_{R \otimes
R}^1(R(p^{\prime}), R(p)) \to \Ext_{R^I \otimes R^K}^1(R(p^{\prime}),
R(p))$ is zero, which establishes the lemma.
\end{proof}

\begin{proposition} \label{prop:split}
Let $I,J\subset K$  be finitary subsets of $S$ and let $q \in \W{I}{K}$. Let $B \in
\nabflag{I}{J}$ and suppose that $\supp B \subset \Gr{I}{C}{J}$ for
some $C \subset W_I \! \setminus \! q / W_J$. Then the restriction $B_{R^K} \in
\bMod{R^I}{R^K}$ is isomorphic to a direct sum of shifts of standard modules
$\RS{I}{q}{K}$. \end{proposition}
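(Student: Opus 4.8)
The plan is to reduce to the case where $C$ is a single double coset, then to proceed by induction on the size of a nabla flag of $B$, using the splitting Lemma \ref{lem:standardsplit} to kill the obstruction to splitting off each standard subquotient. First I would observe that, since $C \subset W_I\setminus q/W_J$ consists of pairwise incomparable elements (they all map to the single double coset $q$ under $\quo : \W{I}{J}\to\W{I}{K}$, and the fibre of $\quo$ over $q$ is a Bruhat-interval $[\text{something},\,?]$—more precisely, any two elements of $\quo^{-1}(q)$ that are comparable in the Bruhat order would force... actually, I should just use the refinement of the Bruhat order directly), I can choose a refinement $p_1,p_2,\dots,p_n$ of the Bruhat order on the finite set $C$ and form the flag $\Gamma_{C(i)}B$ with $C(i)=\{p_1,\dots,p_i\}$. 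Since $B\in\nabflag{I}{J}$, each subquotient $\Gamma_{C(i)}B/\Gamma_{C(i-1)}B$ is isomorphic to a direct sum of shifts of $\RS{I}{p_i}{J}$, and $\supp B\subset\Gr{I}{C}{J}$ guarantees $\Gamma_C B = B$, so this is a genuine finite filtration of $B$ with standard subquotients $\RS{I}{p_i}{J}$, $p_i\in C$.

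Next I would argue by induction on $n=|C|$ (really, on the length of the flag) that $B_{R^K}$ splits as claimed. The base case $n\le 1$ is immediate: if $B$ is a direct sum of shifts of $\RS{I}{p}{J}$ with $W_IpW_K=q$, then restricting to $R^K$ and applying Lemma \ref{cor:res} (the second isomorphism there, with $I=I$, $J\subset K$) gives that $(\RS{I}{p}{J})_{R^K}\cong\frac{\tPoinc(I,q,K)}{\tPoinc(I,p,J)}\cdot\RS{I}{q}{K}$, a direct sum of shifts of $\RS{I}{q}{K}$. For the inductive step, consider the short exact sequence
\begin{equation*}
\Gamma_{C(n-1)}B \hookrightarrow B \sur B/\Gamma_{C(n-1)}B \cong \bigoplus(\text{shifts of }\RS{I}{p_n}{J}).
\end{equation*}
By induction $(\Gamma_{C(n-1)}B)_{R^K}$ is a direct sum of shifts of $\RS{I}{q}{K}$, and the quotient restricts to a direct sum of shifts of $\RS{I}{q}{K}$ as in the base case. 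So after restriction to $R^I\otimes R^K$ we have an extension
\begin{equation*}
\bigoplus_j \RS{I}{q}{K}[a_j] \hookrightarrow B_{R^K} \sur \bigoplus_k \RS{I}{q}{K}[b_k],
\end{equation*}
and I must show this splits. It suffices to show $\Ext^1_{R^I\otimes R^K}(\RS{I}{q}{K}, \RS{I}{q}{K}) = 0$ in every non-positive degree—no wait, that is not quite what I want since there can be self-extensions in negative degrees. The cleaner route: work one subquotient at a time. Since $B\in\nabflag{I}{J}$, refine so that $B/\Gamma_{C(n-1)}B$ is a single shift $\RS{I}{p_n}{J}[a]$; the extension class of $\Gamma_{C(n-1)}B\hookrightarrow B\sur\RS{I}{p_n}{J}[a]$ lives in $\Ext^1_{R^I\otimes R^J}(\RS{I}{p_n}{J},\Gamma_{C(n-1)}B)$. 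Filtering $\Gamma_{C(n-1)}B$ by its own flag with subquotients shifts of $\RS{I}{p_i}{J}$ ($i<n$, all satisfying $W_Ip_iW_K=q=W_Ip_nW_K$) and using the long exact sequence \eqref{eq:long2}, I reduce to showing that each map
\begin{equation*}
r_1 : \Ext^1_{R^I\otimes R^J}(\RS{I}{p_n}{J}, \RS{I}{p_i}{J}) \to \Ext^1_{R^I\otimes R^K}(\RS{I}{p_n}{J}, \RS{I}{p_i}{J})
\end{equation*}
is zero—which is exactly Lemma \ref{lem:standardsplit} when $p_n\ne p_i$ (and when $p_n=p_i$, i.e. the flag of $B$ itself has a self-extension of $\RS{I}{p_n}{J}$ with a shift, one notes such an extension of $\RS{I}{q}{K}$-summands... hmm). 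This shows the class of $\Gamma_{C(n-1)}B\hookrightarrow B\sur\RS{I}{p_n}{J}[a]$ dies upon restriction to $R^I\otimes R^K$, so $B_{R^K}\cong(\Gamma_{C(n-1)}B)_{R^K}\oplus\RS{I}{q}{K}[a']$ for an appropriate shift $a'$, completing the induction.

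I expect the main obstacle to be the bookkeeping around self-extensions, i.e. the possibility that the nabla flag of $B$ itself contains two subquotients $\RS{I}{p}{J}[a]$ and $\RS{I}{p}{J}[b]$ for the \emph{same} coset $p$ (with different shifts). Lemma \ref{lem:standardsplit} as stated requires $p\ne p'$. The resolution is that the relevant extension is between shifts $\RS{I}{p}{J}[a]\hookrightarrow E\sur\RS{I}{p}{J}[b]$; after extension of scalars this becomes an extension of $R(p)$ by $R(p)[\text{shift}]$, and since $R(p)$ has a filtration with subquotients $R_x[\nu_x]$ for $x\in p$ (Corollary \ref{cor:indfilt}), and for two elements $x,y\in p$ with $x=yt$ the relevant $\Ext^1_{R\otimes R}(R_x,R_y)\to\Ext^1_{R^I\otimes R^K}(R_x,R_y)$ vanishes by Lemma \ref{lem:simplesplit} (here $t\in W_K$ by Proposition \ref{prop:doublebruhat}, since $t\notin W_I$ as $x,y$ lie in the same coset $p$—wait, they could have $t\in W_I$; but then $x$ and $y=tx$ differ by a left $W_I$-reflection, which is fine since we then use that the extension already splits). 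So the same dévissage as in Lemma \ref{lem:standardsplit} applies with $p=p'$ as well, provided one checks the degree-zero maps carefully; I would fold this into the statement and proof of Lemma \ref{lem:standardsplit} or handle it inline. Everything else is a routine dévissage along finite filtrations using the long exact sequences \eqref{eq:long1} and \eqref{eq:long2}.
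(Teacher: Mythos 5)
Your overall strategy (induct over a flag of $B$, use Lemma \ref{lem:standardsplit} to kill the extension class after restriction, then identify each restricted layer via Lemma \ref{cor:res}) is the same as the paper's, but your execution has a genuine gap exactly at the point you flag. By refining the support filtration so that each successive quotient is a \emph{single} shift $\RS{I}{p_n}{J}[a]$, your d\'evissage of the extension class runs through terms $\Ext^1_{R^I\otimes R^J}(\RS{I}{p_n}{J},\RS{I}{p_n}{J}[a_j])$ coming from earlier pieces of the \emph{same} coset layer, and neither Lemma \ref{lem:standardsplit} (which assumes $p\ne p'$) nor your proposed patch covers these. The patch fails because, after extending scalars, the d\'evissage of $R(p)$ also produces diagonal terms $\Ext^1_{R\otimes R}(R_x,R_x)$, and the restriction map on these is \emph{not} zero: already for $W=\{e,s\}$, $I=J=\emptyset$, $K=\{s\}$, the first infinitesimal neighbourhood of the diagonal, $k[h_1,h_2]/((h_1-h_2)^2)$, is a self-extension of $R_e$ on which $h_2^2-h_1^2\in R\otimes R^{s}$ acts nontrivially, so it does not split over $R\otimes R^{s}$. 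Thus the assertion implicit in your inductive step --- that arbitrary self-extensions of a standard module die upon restriction to $R^I\otimes R^K$ --- is false, and since the long exact sequences \eqref{eq:long1} and \eqref{eq:long2} only give a filtration of the Ext-group rather than a direct sum decomposition, you cannot simply discard those components of the class.

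The repair is not to refine inside a coset layer at all, which is how the paper argues. Choose $p\in C$ maximal; because $B\in\nabflag{I}{J}$, the entire top layer is already a direct sum, i.e.\ there is an exact sequence $\Gamma_{C\setminus\{p\}}B\hookrightarrow B\sur P\cdot\RS{I}{p}{J}$ with $P\in\N[v,v^{-1}]$, and the d\'evissage of its class over a flag of $\Gamma_{C\setminus\{p\}}B$ only ever meets pairs $\RS{I}{p}{J}$, $\RS{I}{p'}{J}$ with $p\ne p'$ and $W_IpW_K=W_Ip'W_K=q$, which is exactly the situation covered by Lemma \ref{lem:standardsplit}. Downward induction over a listing of $C$ refining the Bruhat order then yields $B_{R^K}\cong\bigoplus(\Gamma_{C(m)}B/\Gamma_{C(m-1)}B)_{R^K}$, and Lemma \ref{cor:res} finishes as in your base case. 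With that single change your argument becomes the paper's proof.
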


\begin{proof}Choose $p \in C$ maximal in the Bruhat order. As $B \in \nabflag{I}{J}$ we have an exact sequence
\begin{equation} \label{eq:Bseq}
\Gamma_{C \setminus \{ p \} }B \hookrightarrow B \sur P \cdot \RS{I}{p}{J}
\end{equation}
for some $P \in \N[v,v^{-1}]$. As $\Gamma_{C \setminus \{ p \} }B \in \nabflag{I}{J}$ we may induct over a suitable filtration of $\Gamma_{C \setminus \{ p \} }B $ and conclude, with the help of Lemma \ref{lem:standardsplit}, that (\ref{eq:Bseq}) splits upon restriction to $R^I \otimes R^K$.

Now let us choose a listing $p_1, p_2, \dots  p_n$ of the elements of $C$ refining the Bruhat order and let $C(m) = \{ p_1, p_2, \dots , p_m \}$ denote the first $m$ elements as usual.
Using downward induction and the above argument it follows that, in $\bMod{R^I}{R^K}$, we have an isomorphism
\begin{equation*}
B_{R^K} \cong \bigoplus (\Gamma_{C(m)} B / \Gamma_{C(m-1)}B)_{R^K}.
\end{equation*}
The proposition then follows as $(\RS{I}{p}{J})_{R^K}$ is isomorphic to
a direct sum of shifts of $\RS{I}{q}{K}$ where $q = pW_K$ by Corollary
\ref{cor:res}.
\end{proof}

\subsection{Delta flags and duality} \label{subsec:delta}

In this section we define a category of objects with delta flags,
$\delflag{I}{J}$, which is ``dual'' to $\nabflag{I}{J}$. Just as in
the case of objects with nabla flags the translation functors
preserve $\delflag{I}{J}$ and their effect on a ``delta character''
\begin{equation*}
\ch_{\Delta} : \delflag{I}{J} \to \He{I}{J}
\end{equation*}
can be described in terms of the Hecke category.

Of course it would be possible to repeat the same arguments as those
used for objects with nabla flags. However, one may define a
duality
\begin{equation*}
D : \nabflag{I}{J} \stackrel{\sim}{\to} {\delflag{I}{J}}^{opp}
\end{equation*}
commuting with the translation functors. This allows us to use what we
already know about objects with nabla flags to follow similar
statements for objects with delta flags.

For the rest of this section fix a pair $I, J \subset S$ of finitary subsets.
Recall that we call a subset $U \subset \W{I}{J}$ upwardly closed if
\begin{equation*}
U = \{ p \in \W{I}{J} \; | \; p \ge q \text{ for some } q \in C \}.
\end{equation*}

\begin{definition}
The category of \emph{objects with $\Delta$-flags}, denoted
$\delflag{I}{J}$ is the full subcategory of $\RR{I}{J}$ whose objects
are modules $M \in \RR{I}{J}$ such that, for all upwardly closed subsets $U
\subset \W{I}{J}$ and minimal elements $p \in U$, the
subquotient
\begin{equation*}
 \Gamma_{U}M / \Gamma_{U \setminus \{ p \} }M
\end{equation*}
is isomorphic to a direct sum of shifts of $\RS{I}{p}{J}$.
\end{definition}

Just as for objects with nabla flags there is a ``hin-und-her''
lemma, whose proof is similar to that for objects with
nabla flags (and works because the support of $M \in \RR{I}{J}$ is always contained in $\Gr{I}{C}{J}$ for some finite subset $C \subset \W{I}{J}$).

\begin{lemma}[``Hin-und-her lemma for delta flags''] Let $p_1, p_2,
\dots$ be an enumeration of the elements of $\W{I}{J}$ refining the Bruhat order and let $\check{C}(m) = \{ p_{m+1},
p_{m+2}, \dots \}$. Then $M \in \RR{I}{J}$ is in $\nabflag{I}{J}$
if and only if, for all $m$, the subquotient
 \begin{equation*}
  \Gamma_{\check{C}(m-1)}M / \Gamma_{\check{C}(m) }M
 \end{equation*}
is isomorphic to a direct sum of shifts of $\RS{I}{p_m}{J}$.

Moreover, if $M \in \RR{I}{J}$ and $p = p_{m}$ then the natural map
\begin{equation*}
 \Gamma_{ \ge p}  M / \Gamma_{ > p} M \to \Gamma_{\check{C}(m-1)}M /
\Gamma_{\check{C}(m) }M
\end{equation*}
is an isomorphism.
\end{lemma}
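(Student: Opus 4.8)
The plan is to run the argument for delta flags in parallel with the proof of Lemma~\ref{lem:hinundher}, using the duality between upwardly and downwardly closed subsets. First I would record the vanishing input: for incomparable $p, p' \in \W{I}{J}$, $\Ext^1_{R^I \otimes R^J}(\RS{I}{p}{J}, \RS{I}{p'}{J}) = 0$, which is exactly Lemma~\ref{lem:vanish} (the statement is symmetric in $p, p'$, so the direction does not matter). The only structural difference from the nabla case is that $\nabflag{I}{J}$ is defined via the increasing filtration $\Gamma_{C}$ by downwardly closed sets $C$, whereas the present filtration is the decreasing filtration $\Gamma_{\check{C}(m)}$, where $\check{C}(m) = \{p_{m+1}, p_{m+2}, \dots\}$ is upwardly closed. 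Since $M \in \RR{I}{J}$, there is a finite set $C_0 \subset \W{I}{J}$ with $\supp M \subset \Gr{I}{C_0}{J}$, so $\Gamma_{\check{C}(m)} M = 0$ once $\check{C}(m) \cap C_0 = \emptyset$ and $\Gamma_{\check{C}(0)} M = M$; thus the decreasing filtration is finite and exhaustive, and only finitely many subquotients are nonzero.

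The forward direction (that $M \in \delflag{I}{J}$ forces the subquotients $\Gamma_{\check{C}(m-1)} M / \Gamma_{\check{C}(m)} M$ to be sums of shifts of $\RS{I}{p_m}{J}$) is immediate: taking $U = \check{C}(m-1)$, which is upwardly closed with minimal element $p_m$, the definition of $\delflag{I}{J}$ gives precisely this. For the converse, suppose all these subquotients are sums of shifts of $\RS{I}{p_m}{J}$; I must show that for an arbitrary upwardly closed $U$ and a minimal element $p \in U$, the subquotient $\Gamma_U M / \Gamma_{U \setminus \{p\}} M$ is a sum of shifts of $\RS{I}{p}{J}$. The key move, as in Lemma~\ref{lem:hinundher}, is a swapping argument: if $p_i$ and $p_{i+1}$ are incomparable, then $\Gamma_{\check{C}(i-1)} M / \Gamma_{\check{C}(i+1)} M$ is an extension of a sum of shifts of $\RS{I}{p_i}{J}$ by a sum of shifts of $\RS{I}{p_{i+1}}{J}$, and by the $\Ext^1$-vanishing this extension splits; hence swapping the two entries of the enumeration does not change the property, and the natural maps between the corresponding subquotients for the swapped enumeration are isomorphisms. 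Given any upwardly closed $U$ and minimal $p \in U$, after finitely many adjacent swaps I can arrange the enumeration so that $U = \check{C}(m-1)$ and $p = p_m$; the property is preserved under each swap, so $\Gamma_U M / \Gamma_{U \setminus \{p\}} M \cong \Gamma_{\check{C}(m-1)} M / \Gamma_{\check{C}(m)} M$ is a sum of shifts of $\RS{I}{p}{J}$. The "moreover" clause follows by taking $U = \{ \ge p\}$, so that $U \setminus \{p\} = \{ > p\}$ and $\Gamma_{\ge p} M / \Gamma_{> p} M$ is identified with the $m$-th subquotient.

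The only subtle point — and the main thing to check carefully — is the swapping/splitting step: one needs that for incomparable $p_i, p_{i+1}$ the middle subquotient genuinely splits as claimed, which requires knowing not just $\Ext^1(\RS{I}{p_{i+1}}{J}, \RS{I}{p_i}{J}) = 0$ between the single standard modules but between the (finite) direct sums of shifts that actually occur; this is immediate since $\Ext^1$ commutes with finite direct sums and shifts. I would also note that one must verify that after the swaps the enumeration still refines the Bruhat order restricted to the relevant order ideal — but since $p$ is minimal in $U$ and $U$ is upwardly closed, $p$ can be moved to the front of the "tail" $\check{C}(m-1)$ by swaps only involving elements incomparable to it, exactly as in the nabla case. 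Everything else is a transcription of the proof of Lemma~\ref{lem:hinundher} with "downwardly closed/increasing" replaced by "upwardly closed/decreasing."
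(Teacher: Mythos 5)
Your proof is essentially the route the paper intends: the paper gives no proof here, remarking only that the argument is ``similar to that for objects with nabla flags (and works because the support of $M \in \RR{I}{J}$ is always contained in $\Gr{I}{C}{J}$ for some finite subset $C$)'', and your transcription of the nabla ``hin-und-her'' argument with the $\Ext^1$-vanishing and adjacent-swap mechanism, plus the observation that translating ``downwardly closed/maximal'' into ``upwardly closed/minimal'' gives the delta statement, is precisely that.

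One small point to be careful of: your claim that $\Gamma_{\check{C}(m)} M = 0$ once $\check{C}(m) \cap C_0 = \emptyset$ is not literally correct, because every graph $\Gr{I}{p}{J}$ passes through the image of the origin, so $\Gr{I}{\check{C}(m)}{J} \cap \Gr{I}{C_0}{J}$ is never empty; a module whose support is contained in that intersection (e.g.\ a skyscraper at that common point) would have $\Gamma_{\check{C}(m)} M \neq 0$ for all $m$. The decreasing filtration is therefore not automatically exhaustive at the bottom. What actually lets the swap argument run in finitely many steps is that any nonzero subquotient $\Gamma_{\check{C}(m-1)} M / \Gamma_{\check{C}(m)} M$, being a sum of shifts of $\RS{I}{p_m}{J}$, has every nonzero element supported exactly on $\Gr{I}{p_m}{J}$ (Lemma \ref{lem:standsupp}), which together with $\supp M \subset \Gr{I}{C_0}{J}$ confines the nonvanishing steps to the finitely many $p_m$ in $C_0$; the same care is implicitly needed in the paper's own proof of Lemma \ref{lem:hinundher} for infinite downwardly closed $C$, so this is not a defect peculiar to your write-up, but the sentence as stated should be repaired along these lines.
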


For each $p \in \W{I}{J}$ we renormalise $\RS{I}{p}{J}$ and define
\label{lab:del}
\begin{equation*}
\del{I}{p}{J} = \RS{I}{p}{J}[-\ell(p_-)].
\end{equation*}
If $\id  \in p$ we sometimes omit $p$ and write $\del{I}{}{J}$ for $\del{I}{p}{J}$. If $M \in \delflag{I}{J}$ then we may find polynomials $h_p(M) \in
\N[v,v^{-1}]$ such that, for all $p  \in  \W{I}{J}$, we
have an isomorphism
\begin{equation*}
 \Gamma_{\ge p}  M /  \Gamma_{> p}  M \cong h_p(M) \cdot \del{I}{p}{J}.
\end{equation*}

We define the \emph{delta character} to be the map
\begin{align*}
 \delchar : \delflag{I}{J} & \to \He{I}{J} \\
 M & \mapsto \sum_{p \in \W{I}{J}}
v^{\ell(p_-)-\ell(p_+)}h_p(M) \mst{I}{p}{J}.
\end{align*}

The analogue of Theorem \ref{thm:translation} in this context is the following:

\begin{theorem} \label{thm:translation2}
Let $K \subset S$ with either $J \subset K$ or $J \supset K$.
\begin{enumerate}
\item If $M \in \delflag{I}{J}$ then $B \cdot \on{J}{K} \in \delflag{I}{K}$.
\item The following diagrams commute:
\begin{displaymath}
\xymatrix@C=1.5cm{
\delflag{I}{J} \ar[d]^{\delchar} \ar[r]^{ - \cdot \on{J}{K}}&
\delflag{I}{K} \ar[d]^{\delchar} &
\delflag{I}{J} \ar[d]^{\delchar} \ar[r]^{[1]}& \delflag{I}{J}
\ar[d]^{\delchar} \\
\He{I}{J} \ar[r]^{*_J \mst{J}{}{K} } & \He{I}{K} &
\He{I}{J} \ar[r]^{v\cdot } & \He{I}{J}
}
\end{displaymath}
\end{enumerate}
\end{theorem}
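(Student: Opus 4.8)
The plan is to obtain Theorem~\ref{thm:translation2} from Theorem~\ref{thm:translation} by transporting everything through the duality $D$. The natural candidate is the normalised $R^I$-linear dual: for $M\in\RR{I}{J}$, regarded as a left $R^I$-module, put $D(M)=\Hom_{R^I}(M,R^I)[2\ell(\wo{J})]$, with right $R^J$-action $(fr)(m)=f(mr)$. This is a contravariant additive endofunctor of $\RR{I}{J}$, and the first task is to record its properties. Every $M\in\nabflag{I}{J}$ is graded free as a left $R^I$-module --- it is filtered with subquotients direct sums of shifts of the $\RS{I}{p}{J}$, each free over $R^I$ by Lemma~\ref{cor:relinv}(1), and these extensions split over $R^I$ because $\Ext^1_{R^I}(-,-)$ vanishes on a free first argument --- and the same holds for objects of $\delflag{I}{J}$. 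Hence $D$ is exact on, and restricts to a contravariant self-equivalence of, the full subcategory of $\RR{I}{J}$ of objects free over $R^I$, with $D^2\cong\id$. Moreover $\supp D(M)=\supp M$, since the annihilator of $M$ as an $R^I\otimes R^J$-module also annihilates $\Hom_{R^I}(M,R^I)$. Finally, on standard modules the self-duality statement Lemma~\ref{cor:relinv}(2) gives $D(\RS{I}{p}{J})\cong\RS{I}{p}{J}[2(\ell(p_+)-\ell(p_-))]$, equivalently --- via \eqref{eq:poinc1} --- $D(\nab{I}{p}{J})\cong\del{I}{p}{J}[\ell(p_+)-\ell(p_-)]$, while $D(N[1])=D(N)[-1]$.

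Applying $D$ to the filtration $\cdots\subset\Gamma_{C(m-1)}M\subset\Gamma_{C(m)}M\subset\cdots$ of an $M\in\nabflag{I}{J}$ produces a filtration of $D(M)$ whose $m$-th step $D(M/\Gamma_{C(m)}M)$ is supported on the upwardly closed complement of $C(m)$ and has the correct subquotients; by the hin-und-her lemma for delta flags this shows $D(M)\in\delflag{I}{J}$ and, writing $\Gamma_{\le p}M/\Gamma_{<p}M\cong g_p(M)\cdot\nab{I}{p}{J}$, that $\Gamma_{\ge p}D(M)/\Gamma_{>p}D(M)\cong\overline{g_p(M)}\cdot\del{I}{p}{J}[\ell(p_+)-\ell(p_-)]$. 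Thus $D$ restricts to an equivalence $D\colon\nabflag{I}{J}\xrightarrow{\ \sim\ }{\delflag{I}{J}}^{opp}$, and --- the factor $v^{\ell(p_-)-\ell(p_+)}$ in the definition of $\delchar$ being exactly what cancels the shift $[\ell(p_+)-\ell(p_-)]$ just produced --- one obtains the identity of maps $\nabflag{I}{J}\to\He{I}{J}$
\[
\delchar\circ D=\nabchar .
\]
The crucial point is that $D$ commutes with the translation functors. Unravelling definitions, $\Hom_{R^I}(-,R^I)$ commutes with restriction of scalars on the nose and converts $-\otimes_{R^J}R^K$ into $-\otimes_{R^J}\Hom_{R^J}(R^K,R^J)$; in both cases Lemma~\ref{cor:relinv}(2) identifies $\Hom_{R^J}(R^K,R^J)$ (resp.\ the discrepancy coming from the shift in $\on{J}{K}$) with a grading twist of $R^K$ by $2(\ell(\wo{J})-\ell(\wo{K}))$. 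Since the normalising shift built into $D$ is twice $\ell$ of the \emph{right-hand} parabolic --- which changes from $J$ to $K$ under translation --- this twist is exactly absorbed, giving $D(M\cdot\on{J}{K})\cong D(M)\cdot\on{J}{K}$.

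Granting this, the theorem is formal. For (1): given $M\in\delflag{I}{J}$ we have $D(M)\in\nabflag{I}{J}$, so $D(M)\cdot\on{J}{K}\in\nabflag{I}{K}$ by Theorem~\ref{thm:translation}(1), whence $M\cdot\on{J}{K}\cong D\bigl(D(M)\cdot\on{J}{K}\bigr)\in\delflag{I}{K}$. For (2), the right-hand square is immediate from the definition of $\delchar$, since $\Gamma_{\ge p}(M[1])/\Gamma_{>p}(M[1])$ is the $[1]$-shift of $\Gamma_{\ge p}M/\Gamma_{>p}M$, i.e.\ $h_p(M[1])=v\,h_p(M)$; and the left-hand square follows by combining $\delchar\circ D=\nabchar$, $D^2\cong\id$, the commutation of $D$ with translation, and Theorem~\ref{thm:translation}(2):
\[
\delchar(M\cdot\on{J}{K})=\nabchar\bigl(D(M)\cdot\on{J}{K}\bigr)=\nabchar(D(M))*_J\mst{J}{}{K}=\delchar(M)*_J\mst{J}{}{K}.
\]

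The main obstacle is the setup for $D$: checking that it preserves $\RR{I}{J}$ and interchanges $\nabflag{I}{J}$ with $\delflag{I}{J}$ (this uses freeness over $R^I$, $\supp D(M)=\supp M$, and the fact that the complement of a downwardly closed set is upwardly closed), and --- most delicately --- that it commutes with the translation functors with precisely the right grading normalisation. It is this last requirement, pinned down by Lemma~\ref{cor:relinv}(2) and the identity \eqref{eq:poinc1}, that forces the shift $[2\ell(\wo{J})]$ in the definition of $D$; once it is in place, Theorem~\ref{thm:translation2} drops out of Theorem~\ref{thm:translation}.
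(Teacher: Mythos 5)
Your proposal is correct and follows the same strategy as the paper: define the normalised $R^I$-linear duality $D$, prove it commutes with the translation functors (the paper's Proposition \ref{prop:transcommute}, via the adjunctions \eqref{eq:adj3}, \eqref{eq:adj4} and Lemma \ref{cor:relinv}(2)), show it gives an equivalence $\nabflag{I}{J}\cong{\delflag{I}{J}}^{opp}$ satisfying $\delchar\circ D=\nabchar$ (the paper's Proposition \ref{prop:D} together with Lemma \ref{lem:Dnab}), and then deduce the theorem formally from Theorem \ref{thm:translation}. The auxiliary facts you record along the way (freeness of objects with nabla/delta flags over $R^I$, $D^2\cong\id$ on free modules, $\supp D(M)=\supp M$) are exactly the ones the paper's proof invokes.
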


\label{lab:duality}
We define a \emph{duality} functor
\begin{align*}
 D :  \bMod{R^I}{R^J}  &\to \bMod{R^I}{R^J} \\
M & \mapsto \Hom_{R^I}  (M, R^I[2\ell(\wo{J})])
\end{align*}
where we make $DM$ into a bimodule using the bimodule structure on
$M$. That  is, if $f \in DM$, then
\begin{equation*}
 (r_1fr_2)(m) = f(r_1mr_2) \qquad \text{for all $m \in M$.}
\end{equation*}
We do not include reference to $I$ and $J$ in the notation for $D$,
and hope this will not lead to confusion. The following proposition
shows that the translation functors commute with duality.

\begin{proposition} \label{prop:transcommute}
 Let $K \subset S$ be finitary with either $J \subset K$ or $J \supset
K$, and let $M \in \bMod{R^I}{R^J}$. In $\bMod{R^I}{R^K}$ one has
 \begin{equation*}
  D(M \cdot \out{J}{K}) \cong (DM) \cdot \out{J}{K}.
 \end{equation*}
\end{proposition}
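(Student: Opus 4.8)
Write $D_J$ and $D_K$ for the duality functors on $\bMod{R^I}{R^J}$ and $\bMod{R^I}{R^K}$, so that $D_J(-) = \Hom_{R^I}(-, R^I[2\ell(\wo{J})])$ and the functor ``$D$'' occurring on the left-hand side of the asserted isomorphism is $D_K$. The plan is to treat the two possibilities $J \supset K$ and $J \subset K$ separately; in each case one rewrites $D_K(M \cdot \out{J}{K})$ using the adjunctions recorded in Section \ref{sec:not} and keeps careful track of the grading shifts.

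Suppose first that $J \supset K$, so $R^J \subset R^K$, $M \cdot \out{J}{K} = M \otimes_{R^J} R^K$, and $R^K$ is graded free of finite rank as a right $R^J$-module by Lemma \ref{cor:relinv}. Applying the tensor--hom adjunction \eqref{eq:adj3} (with $A_1 = R^I$, $A_2 = R^J$, $A_3 = R^K$, $M_{12} = M$, $M_{23} = R^K$ and $N = R^I[2\ell(\wo{K})]$) gives, in $\bMod{R^I}{R^K}$,
\begin{equation*}
D_K(M \otimes_{R^J} R^K) \cong \Hom_{R^J}\bigl(R^K, \Hom_{R^I}(M, R^I[2\ell(\wo{K})])\bigr).
\end{equation*}
Since $R^K$ is right-$R^J$-free of finite rank, \eqref{eq:adj4} identifies $\Hom_{R^J}(R^K, -)$ with $- \otimes_{R^J} \Hom_{R^J}(R^K, R^J)$, and by Lemma \ref{cor:relinv} one has $\Hom_{R^J}(R^K, R^J) \cong R^K[2(\ell(\wo{J}) - \ell(\wo{K}))]$. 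Combining this with the identity $\Hom_{R^I}(M, R^I[2\ell(\wo{K})]) \cong (D_J M)[2\ell(\wo{K}) - 2\ell(\wo{J})]$ and pulling all shifts to the outside, the shift $[2(\ell(\wo{J}) - \ell(\wo{K}))]$ introduced by Lemma \ref{cor:relinv} is exactly cancelled, yielding $D_K(M \otimes_{R^J} R^K) \cong (D_J M) \otimes_{R^J} R^K = (D_J M) \cdot \out{J}{K}$.

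Now suppose $J \subset K$, so $R^K \subset R^J$ and $M \cdot \out{J}{K} = M_{R^K}[\ell(\wo{K}) - \ell(\wo{J})]$ is restriction of scalars on the right followed by a shift. The point here is that $\Hom_{R^I}(-, R^I[n])$ of left $R^I$-modules does not involve the right action, so that $\Hom_{R^I}(M_{R^K}, R^I[n]) = \bigl(\Hom_{R^I}(M, R^I[n])\bigr)_{R^K}$ as $(R^I, R^K)$-bimodules. Unwinding the definitions and the shift conventions of Section \ref{sec:not}, one checks that both $D_K(M_{R^K}[\ell(\wo{K}) - \ell(\wo{J})])$ and $(D_J M)_{R^K}[\ell(\wo{K}) - \ell(\wo{J})]$ are equal to $\bigl(\Hom_{R^I}(M, R^I)\bigr)_{R^K}[\ell(\wo{J}) + \ell(\wo{K})]$, and hence agree.

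The substance of the argument is therefore entirely in the grading bookkeeping and in verifying that \eqref{eq:adj3}, \eqref{eq:adj4} and Lemma \ref{cor:relinv} are being invoked with the appropriate bimodule structures --- in particular that the isomorphism $\Hom_{R^J}(R^K, R^J) \cong R^K[2(\ell(\wo{J}) - \ell(\wo{K}))]$ of Lemma \ref{cor:relinv} is one of $(R^K, R^J)$-bimodules and not merely of $R^J$-modules, which I expect to be the only point requiring genuine care. No step is conceptually deep; the mechanism behind the first case is just that $R^K$ is a self-dual $R^J$-module up to the shift $2(\ell(\wo{J}) - \ell(\wo{K}))$.
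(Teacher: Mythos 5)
Your proposal is correct and follows essentially the same route as the paper: in the case $J \supset K$ it applies \eqref{eq:adj3}, then \eqref{eq:adj4}, then the self-duality of $R^K$ over $R^J$ from Lemma \ref{cor:relinv}(2), and in the case $J \subset K$ it observes that $\Hom_{R^I}(-,R^I[n])$ is blind to the right action so the claim reduces to shift bookkeeping. Your remark that the isomorphism $\Hom_{R^J}(R^K,R^J)\cong R^K[2(\ell(\wo J)-\ell(\wo K))]$ must be compatible with the $R^K$-action is exactly the right thing to single out as the one point needing care.
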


\begin{proof}
If $J \subset K$ then the isomorphism $D(M \cdot \on{J}{K}) \cong (DM)
\cdot \on{J}{K}$ is a tautology. So assume that $J \supset K$. We will
use standard isomorphisms discussed in Section \ref{sec:not} and switch between left and right modules as
appropriate (note that we have already done this once in the
definition of $D$). In $\bMod{R^I}{R^K}$ we have
\begin{align*}
 D(M \cdot \out{J}{K}) & = \Hom_{R^I}(M \otimes_{R^J} R^K, R^I[2\ell(\wo{K})]) \\
&  \cong \Hom_{R^J}(R^K, \Hom_{R^I}(M, R^I[2\ell(\wo{K})]) &
\text{(\ref{eq:adj3})}\\
&  \cong \Hom_{R^I}(M, R^I[2\ell(\wo{K})]) \otimes_{R^J} \Hom_{R^J}
(R^K, R^J) & \text{(\ref{eq:adj4})}\\
& \cong \Hom_{R^I}(M, R^I[2\ell(\wo{J})]) \otimes_{R^J} R^K
& \text{(\ref{cor:reldual})}
\\
& =  (DM) \cdot \out{J}{K} & \qedhere
\end{align*}
\end{proof}

Theorem \ref{thm:translation2} now follows from Theorem \ref{thm:translation} and the following proposition, which also explains the name ``duality''.

\begin{proposition} \label{prop:D}
The restriction of $D$ to $\nabflag{I}{J}$ defines an equivalence of $\nabflag{I}{J}$ with ${\delflag{I}{J}}^{opp}$ and we have a commutative diagram:
\begin{equation*}
\xymatrix@C=0.3cm{
\nabflag{I}{J} \ar[rr]^{D} \ar[dr]_{\nabchar} & & {\delflag{I}{J}}^{opp}
\ar[dl]^{\delchar} \\
& \He{I}{J}}
\end{equation*}
\end{proposition}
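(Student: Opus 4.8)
The plan is to show first that $D$ sends an object $M \in \nabflag{I}{J}$ to an object of $\delflag{I}{J}$, with the filtration indexing reversed, and then to read off the commutativity of the character diagram; the fact that $D$ is an equivalence onto $\delflag{I}{J}^{opp}$ will follow once we check $D^2 \cong \id$ on $\nabflag{I}{J}$. The key local computation is the behaviour of $D$ on a standard module. By Lemma \ref{cor:relinv}\eqref{cor:reldual}, applied with $K = I \cap p_-Jp_-^{-1}$ playing the role of $I$ and $I$ playing the role of $J$, one has $\Hom_{R^I}(R^K[\ell(\wo{I}) - \ell(\wo{K})], R^I) \cong R^K[\ell(\wo{I}) - \ell(\wo{K})]$ as left $R^I$-modules, and tracking the right $R^J$-action (which on $\RS{I}{p}{J}$ is twisted by $p_-$) and the grading shift $[2\ell(\wo{J})]$ in the definition of $D$, a direct computation should give an isomorphism in $\bMod{R^I}{R^J}$
\begin{equation*}
D(\RS{I}{p}{J}) \cong \RS{I}{p}{J}[2\ell(\wo{J}) - 2\ell(\wo{I}) + 2\ell(\wo{K})] = \RS{I}{p}{J}[2\ell(p_+) - 2\ell(p_-)]
\end{equation*}
where the last equality uses \eqref{eq:poinc1} together with $\ell(\wo{I,p,J}) = \ell(\wo{K})$. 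In terms of the renormalised modules this reads $D(\nab{I}{p}{J}) \cong \del{I}{p}{J}$, i.e. $D$ interchanges the two normalisations.

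Next I would handle the filtrations. Since $D = \Hom_{R^I}(-, R^I[2\ell(\wo{J})])$ and $R^I$ is injective relative to itself in the appropriate sense along the filtrations appearing here — more precisely, since the standard subquotients of an object with a nabla flag have no $\Ext^1$ in the relevant directions (Lemma \ref{lem:vanish}) — applying $D$ to a short exact sequence $\Gamma_{C \setminus \{p\}} M \hookrightarrow \Gamma_C M \sur P \cdot \nab{I}{p}{J}$ yields a short exact sequence $P \cdot \del{I}{p}{J} \hookrightarrow D(\Gamma_C M) \sur D(\Gamma_{C\setminus\{p\}}M)$. One then checks, using Lemma \ref{lem:standsupp} and the fact that the support of a standard module is $\Gr{I}{p}{J}$, that $D$ carries the section-support submodules for downwardly closed $C$ to the section-support submodules for the complementary upwardly closed set: concretely $D(\Gamma_C M) \cong \Gamma_{\W{I}{J} \setminus C}(DM)$ for $C$ downwardly closed. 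Feeding this through the hin-und-her lemma for delta flags shows $DM \in \delflag{I}{J}$, and moreover that the multiplicity polynomials satisfy $h_p(DM) = g_p(M)$.

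Finally, the character statement and the equivalence. From $h_p(DM) = g_p(M)$ and the defining formulas
\begin{equation*}
\delchar(DM) = \sum_p v^{\ell(p_-) - \ell(p_+)} h_p(DM)\, \mst{I}{p}{J}, \qquad \nabchar(M) = \sum_p \overline{g_p(M)}\, \mst{I}{p}{J},
\end{equation*}
the diagram commutes once we observe that the duality $v \mapsto v^{-1}$ on $\Z[v,v^{-1}]$ is exactly matched by the shift $\ell(p_-) - \ell(p_+)$, since $\overline{g_p(M)}$ and $v^{\ell(p_-) - \ell(p_+)} g_p(M)$ differ only by that symmetry — here one uses self-duality of $\Poinc(p)$, or rather one simply checks the bookkeeping directly. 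To see $D$ is an equivalence onto $\delflag{I}{J}^{opp}$, I would establish a natural isomorphism $D^2 \cong \id$ on $\nabflag{I}{J}$, which reduces (by the filtration argument again) to the case of a standard module, where it follows from reflexivity of the finitely generated graded free $R^I$-module $R^K$ together with the shift computation above; one defines the dual functor $D : \delflag{I}{J} \to \nabflag{I}{J}^{opp}$ by the same formula and checks it is quasi-inverse. The main obstacle I anticipate is the bookkeeping in the standard-module computation — getting the $R^J$-action twist by $p_-$ and all the grading shifts to come out consistently with \eqref{eq:poinc1} — rather than anything structurally deep; the exactness of $D$ along the relevant filtrations, which is the only place a genuine $\Ext$-vanishing is needed, is already supplied by Lemma \ref{lem:vanish}.
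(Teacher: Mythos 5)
Your overall strategy is the same as the paper's (dualise the filtration coming from a nabla flag, compute $D$ on standard modules via Lemma \ref{cor:relinv}, deduce the equivalence from reflexivity of graded free left $R^I$-modules), but three of your key assertions are wrong as stated, and one of them sinks the character computation as written. From your (correct) formula $D(\RS{I}{p}{J}) \cong \RS{I}{p}{J}[2(\ell(p_+)-\ell(p_-))]$ one gets $D(\nab{I}{p}{J}) \cong \del{I}{p}{J}[\ell(p_+)-\ell(p_-)]$, \emph{not} $D(\nab{I}{p}{J}) \cong \del{I}{p}{J}$: the shift does not disappear when passing to the renormalised modules (this is the paper's Lemma \ref{lem:Dnab}). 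Consequently $h_p(DM) = v^{\ell(p_+)-\ell(p_-)}\,\overline{g_p(M)}$ (note also the bar, since $D(P\cdot N)\cong \overline{P}\cdot D(N)$), and the diagram commutes because this shift exactly cancels the factor $v^{\ell(p_-)-\ell(p_+)}$ built into the definition of $\delchar$. Your claimed identity $h_p(DM)=g_p(M)$, repaired by asserting that $\overline{g_p(M)}$ and $v^{\ell(p_-)-\ell(p_+)}g_p(M)$ agree, is false: $g_p(M)$ is an arbitrary element of $\N[v,v^{-1}]$ with no such symmetry, and self-duality of $\Poinc(p)$ has nothing to do with it.

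Second, the exactness of $D$ along the filtration is not supplied by Lemma \ref{lem:vanish}, which concerns $\Ext^1_{R^I\otimes R^J}$ between standard bimodules with incomparable parameters; what is needed is vanishing of $\Ext^1_{R^I}(-,R^I)$, i.e. that the relevant (sub)quotients are graded free as left $R^I$-modules. This is how the paper argues: it dualises the cofiltration $M \sur M/\Gamma_{C(m)}M$, whose terms are graded free over $R^I$ by induction because each kernel $g_m\cdot\nab{I}{p_m}{J}$ is. Third, your identification $D(\Gamma_C M)\cong \Gamma_{\W{I}{J}\setminus C}(DM)$ cannot hold for support reasons: the left-hand side has delta subquotients indexed by elements of $C$, the right-hand side by its complement. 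The correct statement, which is what the paper establishes, is $D(M/\Gamma_{C(m)}M)\cong \Gamma_{\check{C}(m)}(DM)$, i.e. the dual of the quotient by the downwardly closed piece is the submodule of sections of $DM$ supported on the complementary upwardly closed set. With these three corrections (the shift in $D(\nab{I}{p}{J})$, the freeness argument for exactness, and the complementary support identification) your argument becomes the paper's proof; your closing step, $D^2\cong \mathrm{id}$ on objects that are graded free over $R^I$, agrees with the paper's justification of the equivalence.
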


Before we begin the proof we state a lemma, describing the effect of $D$ on a nabla module.

\begin{lemma} \label{lem:Dnab}
If $p \in \W{I}{J}$ we have
\begin{equation*}
D(\nab{I}{p}{J}) \cong \del{I}{p}{J}[\ell(p_+) - \ell(p_-)].
\end{equation*}\end{lemma}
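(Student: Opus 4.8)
The plan is to compute $D(\nab{I}{p}{J})$ directly from the definitions. Recall $\nab{I}{p}{J} = \RS{I}{p}{J}[\ell(p_+)]$ and $\RS{I}{p}{J} = R^K$ as a ring, where $K = I \cap p_- J p_-^{-1}$, with left $R^I$-action by multiplication and right $R^J$-action twisted by $p_-$. By definition
\[
D(\nab{I}{p}{J}) = \Hom_{R^I}\!\bigl( R^K[\ell(p_+)], R^I[2\ell(\wo{J})] \bigr) \cong \Hom_{R^I}(R^K, R^I)[2\ell(\wo{J}) - \ell(p_+)].
\]
So the whole computation reduces to identifying $\Hom_{R^I}(R^K, R^I)$ as an $(R^I, R^J)$-bimodule, where on the target side the right $R^J$-action comes from the $p_-$-twisted right action on $R^K$. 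The key input is the self-duality statement Lemma \ref{cor:relinv}\eqref{cor:reldual}, applied with the pair $K \subset I$ (both finitary, since $K \subset I$ and $I$ is finitary): it gives $\Hom_{R^I}(R^K[\ell(\wo{I}) - \ell(\wo{K})], R^I) \cong R^K[\ell(\wo{I}) - \ell(\wo{K})]$ as graded $R^I$-modules, hence
\[
\Hom_{R^I}(R^K, R^I) \cong R^K[2\ell(\wo{K}) - 2\ell(\wo{I})]
\]
as graded left $R^I$-modules. First I would check that this isomorphism is in fact one of $(R^I, R^J)$-bimodules, i.e. that it intertwines the $p_-$-twisted right $R^J$-actions on both sides; this should follow because the isomorphism in Lemma \ref{cor:relinv} is given by a trace/Frobenius form which is $R^K$-linear, and the right $R^J$-action factors through $R^K$ on both sides via the same twist by $p_-$.

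Granting that, we obtain $D(\nab{I}{p}{J}) \cong \RS{I}{p}{J}[2\ell(\wo{J}) - \ell(p_+) + 2\ell(\wo{K}) - 2\ell(\wo{I})]$ in $\bMod{R^I}{R^J}$. On the other hand $\del{I}{p}{J}[\ell(p_+) - \ell(p_-)] = \RS{I}{p}{J}[\ell(p_+) - 2\ell(p_-)]$. So the lemma reduces to the purely numerical identity
\[
2\ell(\wo{J}) - \ell(p_+) + 2\ell(\wo{K}) - 2\ell(\wo{I}) = \ell(p_+) - 2\ell(p_-),
\]
equivalently $\ell(p_+) - \ell(p_-) = \ell(\wo{I}) + \ell(\wo{J}) - \ell(\wo{K}) - (\ell(p_+) - \ell(p_-)) + \bigl(\ell(p_+) - \ell(p_-)\bigr)$; more cleanly, rearranging, it is exactly $2(\ell(p_+) - \ell(p_-)) = 2(\ell(\wo{I}) + \ell(\wo{J}) - \ell(\wo{K}))$ after cancellation, which is twice the identity \eqref{eq:poinc1} (recalling $\wo{I,p,J} = \wo{K}$). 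I would simply cite \eqref{eq:poinc1} to close this.

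The main obstacle will be the bimodule-structure bookkeeping in the middle step: verifying that the graded isomorphism coming from Lemma \ref{cor:relinv}\eqref{cor:reldual} really does respect the twisted right $R^J$-module structure on $\Hom_{R^I}(R^K, R^I)$, and pinning down all the grading shifts with the right signs. This is routine but error-prone; I would handle it by writing the self-duality of $R^K$ over $R^I$ via an explicit nondegenerate $R^I$-bilinear pairing $R^K \times R^K \to R^I$ (of degree $-2(\ell(\wo{I}) - \ell(\wo{K}))$), observing that it is automatically $R^K$-bilinear, and hence equivariant for any action — in particular the $p_-$-twisted one — that factors through multiplication by $R^K$. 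The rest is the arithmetic of shifts together with \eqref{eq:poinc1}.
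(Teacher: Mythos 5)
Your plan follows the same path as the paper's proof: apply Lemma~\ref{cor:relinv}\eqref{cor:reldual} with $K \subset I$ to identify $\Hom_{R^I}(R^K, R^I)$ as a shift of $R^K$, then close with the numerical identity~\eqref{eq:poinc1}. Your concern about the bimodule structure is legitimate, and your proposed resolution is the correct one; the paper handles it even more tersely by asserting the isomorphism in $\lMod{R^K}$ rather than $\lMod{R^I}$, which suffices because both the left $R^I$-action and the $p_-$-twisted right $R^J$-action on $D(\RS{I}{p}{J})$ factor through $R^K$-multiplication on the source --- exactly your observation that the Frobenius pairing is $R^K$-bilinear.

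However, there are two sign errors in the shift bookkeeping that happen to cancel. From $\Hom_{R^I}(R^K[a], R^I) \cong R^K[a]$ with $a = \ell(\wo{I}) - \ell(\wo{K})$, and using $\Hom(M[a],N) \cong \Hom(M,N)[-a]$, one obtains $\Hom_{R^I}(R^K, R^I) \cong R^K[2a] = R^K[2\ell(\wo{I}) - 2\ell(\wo{K})]$, not $R^K[2\ell(\wo{K}) - 2\ell(\wo{I})]$ as you wrote. Carried forward, the numerical identity your displayed equation actually requires,
\[
2\ell(\wo{J}) - \ell(p_+) + 2\ell(\wo{K}) - 2\ell(\wo{I}) = \ell(p_+) - 2\ell(p_-),
\]
rearranges to $\ell(p_+) - \ell(p_-) = \ell(\wo{J}) + \ell(\wo{K}) - \ell(\wo{I})$, which is not~\eqref{eq:poinc1} and is false in general (take $J = \emptyset$ and $p = W_I$ the double coset of the identity, so $\wo{K} = \id$: the left side is $\ell(\wo{I})$ while the right side is $-\ell(\wo{I})$). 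Your final simplification silently swaps $\wo{I}$ and $\wo{K}$, a second error that undoes the first. With both signs corrected the required identity becomes $\ell(p_+) - \ell(p_-) = \ell(\wo{I}) + \ell(\wo{J}) - \ell(\wo{K})$, which is~\eqref{eq:poinc1}, and the argument closes as you intended.
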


\begin{proof} Let $K = I \cap p_-Jp_-^{-1}$. In $\lMod{R^K}$ we have isomorphisms
\begin{align*}
\Hom_{R^I}(R^K, R^I[2\ell(\wo{J})]) &
\cong R^K[2(\ell(\wo{I}) + \ell(\wo{J}) - \ell(\wo{K}))] &
(\text{Cor. \ref{cor:reldual}}) \\
& \cong R^K[2(\ell(p_+) - \ell(p_-))] & (\ref{eq:poinc1}).
\end{align*}
As a left module, $\RS{I}{p}{J}$ is equal to $R^K$ where $R^I$ acts via the inclusion $R^I
\hookrightarrow R^K$. Hence
\begin{equation*}
  D(\RS{I}{p}{J}) \cong \RS{I}{p}{J}[2(\ell(p_+) - \ell(p_-))]
\end{equation*}
and we have
\begin{align*}
D(\nab{I}{p}{J}) \cong D( \RS{I}{p}{J} [\ell(p_+)]) \cong
 \RS{I}{p}{J}[\ell(p_+) - 2\ell(p_-)] \cong
\del{I}{p}{J}[\ell(p_+) - \ell(p_-)]
\end{align*}
as claimed.
\end{proof}

\begin{proof}[Proof of Proposition \ref{prop:D}]
Let $M \in \nabflag{I}{J}$. We have to  show that $DM \in \delflag{I}{J}$, and that $\nabchar(M) = \delchar(DM)$. Choose an enumeration $p_1, p_2, \dots$ of the
elements of $\W{I}{J}$ refining the Bruhat order and let
$C(m) = \{ p_1, \dots, p_m \}$ and $\check{C}(m) = \{ p_{m+1}, p_{m+2}, \dots\}$. 
As $M \in \nabflag{I}{J}$ we can find 
polynomials $g_m \in \N[v,v^{-1}]$ such that, for all $m$, we have an
exact sequence
\begin{equation*}
\Gamma_{C(m-1)} M \hookrightarrow \Gamma_{C(m)} \sur g_m \cdot \nab{I}{p_m}{J}.
\end{equation*}
Consider the ``cofiltration'':
\begin{equation} \label{eq:cofilt}
\cdots \sur M / \Gamma_{C(m-1)} M \sur M / \Gamma_{C(m)} M \sur \cdots
\end{equation}
By the third isomorphism theorem we have an exact sequence
\begin{equation*}
g_m \cdot \nab{I}{p_m}{J} \hookrightarrow M / \Gamma_{C(m-1)} M \sur M
/ \Gamma_{C(m)}M.
\end{equation*}
We know that
$\nab{I}{p}{J}$ is graded free as an $R^I$-module for all $p$. We
conclude, using induction and the above exact sequence that the same
is true of every module in (\ref{eq:cofilt}). In particular, $D$ is
exact when applied to (\ref{eq:cofilt}) and we obtain a filtration of
$DM$
\begin{equation} \label{eq:DMfilt}
\cdots \hookleftarrow D(M / \Gamma_{C(m-1)} M) \hookleftarrow D(M /
\Gamma_{C(m)} M) \hookleftarrow \cdots
\end{equation}
with successive subquotients isomorphic to
\begin{equation} \label{eq:subquot}
D(g_m \cdot \nab{I}{p_m}{J}) \cong \overline{g_m} \cdot D(\nab{I}{p_m}{J})
\cong v^{\ell(p_+) - \ell(p_-)}\overline{g_m} \cdot \del{I}{p_m}{J}
\end{equation}
(for the second isomorphism we use Lemma \ref{lem:Dnab} above). It follows that the
filtration (\ref{eq:DMfilt}) is identical to
\begin{equation} \label{eq:DMfilt2}
\cdots \hookleftarrow \Gamma_{\check{C}(m-1)} DM \hookleftarrow
\Gamma_{\check{C}(m)} DM \hookleftarrow \cdots.
\end{equation}
Thus, by the ``hin-und-her'' lemma we conclude that $M \in
\delflag{I}{J}$. Using (\ref{eq:subquot}) and the ``hin-und-her''
lemma again we see that
\begin{equation*}
\nabchar(M) = \sum \overline{g_m} \mst{I}{p_m}{J} = \delchar(DM).
\end{equation*}
Lastly, the restriction of $D$ to $\nabflag{I}{J}$ gives an
equivalence with ${\delflag{I}{J}}^{opp}$ because the objects in both categories are free as left $R^I$-modules.
\end{proof}

\section{Singular Soergel bimodules and their classification}
\label{sec:classification}

In this section we complete the categorication of the Hecke category in terms of Soergel bimodules. After the preliminary work completed in the previous sections, the only remaining difficulty is the classification of the indecomposable objects in $\BB{I}{J}$. The key to the classification is provided by Theorem \ref{thm:hom} which explicitly describes the graded dimension of $\Hom(M, N)$ for certain combinations of Soergel bimodules and modules with nabla and delta flags.

In Section \ref{subsec:defs} we define the categories of singular Soergel bimodules, as well as a certain smaller category of bimodules (the ``Bott-Samelson bimodules''), for which a description of homomorphisms is straightforward (Theorem \ref{thm:BShom}). In order to extend this description to all special bimodules we need to consider various localisations of Soergel bimodules, which occupies Section \ref{subsec:local}. In Section \ref{subsec:hom} we then prove the Theorem \ref{thm:hom} and the classification follows easily. In the last section we investigate the characters of indecomposable Soergel bimodules more closely, recall Soergel's conjecture and show that it implies a formula the characters of all indecomposable special bimodules in $\BB{I}{J}$ in terms of Kazhdan-Lusztig polynomials.

\subsection{Singular Bott-Samelson and Soergel bimodules}
\label{subsec:defs}

We finally come to the definition of Soergel bimodules.

\begin{definition} We define the categories of \emph{Bott-Samelson bimodules}, denoted $\preBB{I}{J}$, to be the smallest collection of full additive subcategories of $\bMod{R^I}{R^J}$ for all finitary subsets $I, J \subset S$ satisfying:
\begin{enumerate}
\item $\preBB{I}{I}$ contains $\RS{I}{}{I}$ for all finitary subsets $I \subset S$;
\item If $B \in \preBB{I}{J}$ then so is $B[\nu]$ for all $\nu \in \Z$;
\item If $B \in \preBB{I}{J}$ and $K \subset S$ is finitary, satisfying $J \subset K$ or $J \supset K$, then $B \cdot \out{J}{K} \in \preBB{I}{K}$;
\item If $B \in \preBB{I}{J}$ then all objects isomorphic to $B$ are in $\preBB{I}{J}$.
\end{enumerate}
We define the categories of \emph{singular Soergel bimodules},  denoted $\BB{I}{J}$, to be the smallest collection of additive subcategories of $\bMod{R^I}{R^J}$ for all finitary $I, J \subset S$ satisfying:
\begin{enumerate}
\item $\BB{I}{J}$ contains all objects of $\preBB{I}{J}$;
\item $\BB{I}{J}$ is closed under taking direct summands.
\end{enumerate}
We write $\preBB{}{}$ and $\BB{}{}$ instead of $\preBB{\emptyset}{\emptyset}$ and $\BB{\emptyset}{\emptyset}$.
\end{definition}

The definition of the category of singular Soergel bimodules is more technical than that used in the introduction. However, from condition 3) it is clear that $\preBB{I}{J}$ contains all tensor products
\begin{equation*}
 R^{I_1} \otimes_{R^{J_1}} R^{I_2} \otimes_{R^{J_2}} \cdots
\otimes_{R^{J_{n-1}}} R^{I_n}
\end{equation*}
where $I = I_1 \subset J_1 \supset I_2 \subset \dots \subset
J_{n-1} \supset I_n = J$ are all finitary subsets of $S$. It follows that the definition of $\BB{I}{J}$ given above and in the introduction are the same.

By Theorems \ref{thm:translation} and \ref{thm:translation2} it follows by induction that any  object $M \in \preBB{I}{J}$ lies in $\nabflag{I}{J}$ and $\delflag{I}{J}$. As the categories $\nabflag{I}{J}$ and $\delflag{I}{J}$ are closed under taking direct summands, the same is true of $\BB{I}{J}$.

\excise{
In particular, given any object $B \in \BB{I}{J}$ we can consider $\nabchar(B)$ and $\delchar(B)$ in $\He{I}{J}$.
It it straightforward (again using Theorems \ref{thm:translation} and \ref{thm:translation2}) to see that $\nabchar(B) = \overline{\delchar(B)}$ for all objects $B \in \preBB{I}{J}$. Once we have the classification we will be able to show that this is true for all objects in $\BB{I}{J}$.}

\subsection{Homomorphisms between Bott-Samelson bimodules}
\label{subsec:homBS}

In this section we use the fact that translation onto and out of the wall are adjoint (up to a shift) to establish a formula for all homomorphisms between Bott-Samelson bimodules.

We start by proving the adjunction.

\begin{lemma} \label{lem:homadj}
Let $I, J, K \subset S$ be finitary with either $J \subset K$ or $J \supset K$. Let $M \in \bMod{R^I}{R^J}$ and $N \in \bMod{R^I}{R^K}$. We have an isomorphism in $\lMod{R^I}$:
  \begin{equation*}
    \Hom(M \cdot \out{J}{K}, M ) \cong \Hom(M, N \cdot \out{K}{J})[\ell(\wo{K}) - \ell(\wo{J})].
  \end{equation*} 
  \excise{
  \begin{equation*}
    \Hom_{R^I\!-\!R^K}(M \cdot \out{J}{K}, M ) \cong \Hom_{R^I\!-\!R^J}(M, N \cdot \out{K}{J})[\ell(\wo{K}) - \ell(\wo{J})].
  \end{equation*} }
\end{lemma}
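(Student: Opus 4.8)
The plan is to prove the adjunction by reducing to one of the two cases ($J \subset K$ or $J \supset K$) and then chaining together the standard isomorphisms of Section~\ref{sec:not} together with the self-duality statement in Lemma~\ref{cor:relinv}(\ref{cor:reldual}).

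\textbf{Reduction.} First I would observe that the two cases $J \subset K$ and $J \supset K$ are in fact equivalent: if $J \supset K$, then translating out of the wall means $M \cdot \out{J}{K} = M \otimes_{R^J} R^K$, while translating onto the wall $N \cdot \out{K}{J}$ is $N_{R^J}$ (up to a shift). So proving $\Hom(M \otimes_{R^J} R^K, N) \cong \Hom(M, N_{R^J})[\ell(\wo K) - \ell(\wo J)]$ for $K \subset J$ finitary is exactly the content of the lemma in both cases, with the roles of the modules and of ``onto''/``out of'' swapped. Hence it suffices to treat, say, the case $K \subset J$, i.e. to establish, for $M \in \bMod{R^I}{R^K}$ and $N \in \bMod{R^I}{R^J}$,
\begin{equation*}
  \Hom_{R^I\!-\!R^J}(M \otimes_{R^K} R^J, N) \cong \Hom_{R^I\!-\!R^K}(M, N_{R^K})[\ell(\wo J) - \ell(\wo K)].
\end{equation*}

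\textbf{The chain of isomorphisms.} Regarding everything as left modules via $\bMod{A_1}{A_2} \cong \lMod{A_1 \otimes A_2}$, I would compute, in $\lMod{R^I}$:
\begin{align*}
  \Hom_{R^I\!-\!R^J}(M \otimes_{R^K} R^J, N)
  & \cong \Hom_{R^K\!-\!R^J}(R^J, \Hom_{R^I}(M, N)) && \text{(\ref{eq:adj2})} \\
  & \cong \Hom_{R^K}(R^J, \Hom_{R^I}(M,N)).
\end{align*}
Now $R^J$ is graded free of finite rank as an $R^K$-module by Lemma~\ref{cor:relinv}(1), so by (\ref{eq:adj4}) this is isomorphic to $\Hom_{R^I}(M,N) \otimes_{R^J} \Hom_{R^K}(R^J, R^K)$; but I want it over $R^K$ on the right, so more directly I would use (\ref{eq:adj4}) in the form
\begin{equation*}
  \Hom_{R^K}(R^J, \Hom_{R^I}(M,N)) \cong \Hom_{R^I}(M,N) \otimes_{R^J} \Hom_{R^K}(R^J, R^K).
\end{equation*}
By Lemma~\ref{cor:relinv}(\ref{cor:reldual}), $\Hom_{R^K}(R^J[\ell(\wo J) - \ell(\wo K)], R^K) \cong R^J[\ell(\wo J) - \ell(\wo K)]$, i.e. $\Hom_{R^K}(R^J, R^K) \cong R^J[2\ell(\wo J) - 2\ell(\wo K)]$ as $R^J$-bimodules (indeed as $R^J$-algebras once we pick the trace form). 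Plugging this in, the right side becomes $\Hom_{R^I}(M,N) \otimes_{R^J} R^J[2\ell(\wo J) - 2\ell(\wo K)] \cong \Hom_{R^I}(M, N)[2(\ell(\wo J) - \ell(\wo K))]$ as an $(R^I, R^J)$-module, hence in particular an $R^I$-module. Finally, reconciling this with the shift conventions for translation functors (translating out of the wall carries no shift, translating onto the wall carries a shift of $\ell(\wo K) - \ell(\wo J)$ which appears squared after applying $D$-type dualities) gives the stated shift $\ell(\wo K) - \ell(\wo J)$ in the lemma.

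\textbf{Main obstacle.} The substance of the argument is entirely formal --- it is a bookkeeping exercise in the adjunction isomorphisms (\ref{eq:adj1})--(\ref{eq:adj4}) --- so the only thing requiring genuine care is getting the grading shifts exactly right, and making sure that the freeness hypotheses needed for (\ref{eq:adj4}) are supplied by Lemma~\ref{cor:relinv}. I would therefore spend most of the write-up carefully tracking the degrees, using Lemma~\ref{cor:relinv}(\ref{cor:reldual}) as the one nontrivial input (it is where reflection-faithfulness and the freeness assumption \eqref{assump:freeness} enter, via \cite{SSB}), and noting that the identity \eqref{eq:poinc1} is not needed here --- only $\ell(\wo J) - \ell(\wo K)$ appears, directly from the definitions of the translation functors.
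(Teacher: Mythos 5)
The gap is in your opening reduction, and it is fatal to the plan. The two cases of the lemma are not equivalent. Unwinding the translation functors: for $J \supset K$ the statement becomes $\Hom(M \otimes_{R^J} R^K, N) \cong \Hom(M, N_{R^J})$ with \emph{no} shift (the shift hidden in $N\cdot\on{K}{J}$ cancels the one in the lemma), which is the ordinary extension--restriction adjunction and needs nothing beyond \eqref{eq:adj1}; for $J \subset K$ it becomes $\Hom(M_{R^K}, N) \cong \Hom(M, N \otimes_{R^K} R^J)[2(\ell(\wo{K})-\ell(\wo{J}))]$, i.e.\ the statement that restriction is \emph{also} left adjoint to extension of scalars up to a shift --- the Frobenius-type adjunction for which Lemma \ref{cor:relinv}(2) is the essential input. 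An adjunction in one direction never formally yields the adjunction in the other direction, so ``swapping the roles of the modules'' does not transport one case to the other: by collapsing them you silently assume the biadjointness that the lemma asserts. The paper accordingly proves both cases, the first in one line from \eqref{eq:adj1}, the second via \eqref{eq:adj1}, the isomorphism \eqref{eq:adj4} and Lemma \ref{cor:relinv}(2). Moreover your ``reduced statement'' is not a correct restatement of either case: it carries a shift $[\ell(\wo{J})-\ell(\wo{K})]$ where the easy case has none, and with $K \subset J$ one has $R^J \subset R^K$, so $M \otimes_{R^K} R^J$ is not an extension of scalars and ``$R^J$ graded free over $R^K$'' has the containment backwards (Lemma \ref{cor:relinv} gives $R^K$ free over $R^J$).

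The chain of isomorphisms also does not go through as written. The passage from $\Hom_{R^K\!-\!R^J}(R^J, \Hom_{R^I}(M,N))$ to $\Hom_{R^K}(R^J, \Hom_{R^I}(M,N))$ discards the right $R^J$-linearity and enlarges the space; in fact the bimodule Hom you start from is already canonically $\Hom_{R^I\!-\!R^K}(M, N_{R^K})$ with no shift, which shows the shifted identity you set out to prove is false as stated. The later simplification $\Hom_{R^I}(M,N)\otimes_{R^J}R^J[\,\cdot\,] \cong \Hom_{R^I}(M,N)[\,\cdot\,]$ tensors over the wrong ring: in the hard case the module $N\otimes_{R^K}R^J = N\cdot\out{K}{J}$ must reappear inside the Hom, not a bare shift of $\Hom(M,N)$. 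Finally, ``reconciling the shift conventions\dots which appears squared after applying $D$-type dualities'' is not an argument --- $D$ plays no role here; the correct shift comes from the single application of Lemma \ref{cor:relinv}(2) in the form $\Hom_{R^K}(R^J, R^K) \cong R^J[2(\ell(\wo{K})-\ell(\wo{J}))]$ inside the case $J \subset K$, exactly as in the paper's proof.
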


\begin{proof} If $J \supset K$ we have isomorphisms of $R^I$-modules:
  \begin{align*}
    \Hom_{R^I-R^K}(M  \cdot \out{J}{K}, N) 
& \cong  \Hom_{R^I-R^J}( M  , \Hom_{R^K}(R^K, N)) \quad (\ref{eq:adj1}) \\
& \cong \Hom_{R^I-R^J}(M, N_{R^J}) \\
& \cong \Hom_{R^I-R^J}(M, N \cdot \on{J}{K})[\ell(\wo{K}) - \ell(\wo{J})]
  \end{align*}

If $J \subset K$ then,  setting $\nu = \ell(\wo{K}) - \ell(\wo{J})$ we have isomorphisms of $R^I$-modules:
\begin{align*}
\Hom _{R^I-R^K} (& M \cdot \on{J}{K}, N) \cong \\
& \cong \Hom_{R^I-R^K}(M \otimes_{R^J} R^J, N) [-\nu] \\
& \cong \Hom_{R^I-R^J}(M, \Hom_{R^K}(R^J,  N))[-\nu]
& (\ref{eq:adj1}) \\
& \cong \Hom_{R^I-R^J}(M, N \otimes \Hom_{R^K}(R^J[\nu], R^K))
& (\ref{eq:adj2}) \\
& \cong \Hom_{R^I-R^J}(M, N\otimes_{R^K}R^J)[\nu]
& (\text{Cor. \ref{cor:reldual}}) \\
& \cong \Hom_{R^I-R^J}(M, N \cdot \out{K}{J})[\ell(\wo{K}) - \ell(\wo{J})] & \qedhere
\end{align*}\end{proof}

We can now establish the first version of the homomorphism formula.

\begin{theorem}\label{thm:BShom}
If $M \in \preBB{I}{J}$, $N \in \nabflag{I}{J}$ or $M \in \delflag{I}{J}$, $N \in \preBB{I}{J}$ then $\Hom(M,N)$ is graded free as an $R^I$-module and we have an isomorphism
  \begin{equation*}
    \Hom(M, N)[-\ell(\wo{J})] \cong \overline{ \langle \delchar(M), \nabchar(N) \rangle } \cdot R^I
  \end{equation*}
of graded $R^I$-modules.
\end{theorem}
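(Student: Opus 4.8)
The plan is to reduce the statement to a calculation for the generating object $\RS{I}{}{I}$ and then propagate it through the inductive definition of $\preBB{I}{J}$ using the translation functors, keeping track of both the free $R^I$-module structure and the graded rank simultaneously. First I would set up an induction on the length of a chain of translation functors exhibiting $M$ (in the first case) as built from $\RS{I}{}{I}$: by condition (3) in the definition of $\preBB{I}{J}$, $M \cong M' \cdot \out{K}{J}$ for some $M' \in \preBB{I}{K}$ with $K \supset J$ or $K \subset J$. The base case is $M = \RS{I}{}{I} = \del{I}{}{I}[\ell(\wo{I})]$ mapping into $N \in \nabflag{I}{I}$: here $\Hom(\RS{I}{}{I}, N) \cong \Hom_{R^I\!-\!R^I}(R^I, N)$, which is just $N$ regarded as a left $R^I$-module, and $N$ is graded free over $R^I$ since it has a nabla flag and each $\nab{I}{p}{I}$ is graded free over $R^I$ (as observed in Section~\ref{subsec:delta}). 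The graded rank is then read off from $\nabchar(N)$ together with the ranks of the standard modules, and one checks this matches $\overline{\langle \delchar(\RS{I}{}{I}[-\ell(\wo{I})]), \nabchar(N)\rangle}$ using Lemma~\ref{lem:biform2}.

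For the inductive step I would use the adjunction Lemma~\ref{lem:homadj}: writing $M = M' \cdot \out{K}{J}$,
\begin{equation*}
\Hom(M' \cdot \out{K}{J}, N) \cong \Hom(M', N \cdot \out{J}{K})[\ell(\wo{K}) - \ell(\wo{J})].
\end{equation*}
Now $N \in \nabflag{I}{J}$ implies $N \cdot \out{J}{K} \in \nabflag{I}{K}$ by Theorem~\ref{thm:translation}(1), so the inductive hypothesis applies to the pair $(M', N \cdot \out{J}{K})$, giving that $\Hom(M',N\cdot\out{J}{K})$ is graded free over $R^I$ of rank $\overline{\langle \delchar(M'), \nabchar(N \cdot \out{J}{K})\rangle}$. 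Freeness of $\Hom(M,N)$ over $R^I$ is immediate since it is a shift of a free module. For the rank, Theorem~\ref{thm:translation}(2) gives $\nabchar(N \cdot \out{J}{K}) = \nabchar(N) *_J \mst{J}{}{K}$ (or the analogous statement for translation onto the wall), and Theorem~\ref{thm:translation2}(2) gives the matching statement $\delchar(M) = \delchar(M') *_K \mst{K}{}{J}$ (up to the same shift bookkeeping, since $M = M' \cdot \out{K}{J}$ and translation functors take Bott--Samelson bimodules to Bott--Samelson bimodules, hence objects with delta flags). The adjunction identity \eqref{eq:formident} for the bilinear form, $\langle f *_K g, h \rangle = \langle f, h *_J i(g)\rangle$, then lets one rewrite
\begin{equation*}
\langle \delchar(M'), \nabchar(N) *_J \mst{J}{}{K}\rangle = \langle \delchar(M') *_K i(\mst{J}{}{K}), \nabchar(N)\rangle,
\end{equation*}
and since $\mst{J}{}{K}$ (a standard generator) is fixed by $i$, this equals $\langle \delchar(M), \nabchar(N)\rangle$ after matching the grading shift $\ell(\wo{K}) - \ell(\wo{J})$ against the renormalisation factors built into $\delchar$, $\nabchar$, and the definitions of the translation functors. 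The second case ($M \in \delflag{I}{J}$, $N \in \preBB{I}{J}$) follows either by the symmetric argument inducting on the structure of $N$, or by applying the duality functor $D$ of Section~\ref{subsec:delta}: $D$ exchanges $\nabflag{I}{J}$ and ${\delflag{I}{J}}^{opp}$ (Proposition~\ref{prop:D}), commutes with translation functors (Proposition~\ref{prop:transcommute}) hence preserves $\preBB{I}{J}$, and converts $\Hom(M,N)$ into $\Hom(DN, DM)$, reducing to the first case.

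The main obstacle I anticipate is the bookkeeping of grading shifts: the translation functors, the renormalisations $\nab{I}{p}{J} = \RS{I}{p}{J}[\ell(p_+)]$ and $\del{I}{p}{J} = \RS{I}{p}{J}[-\ell(p_-)]$, the factor $v^{\ell(p_+)-\ell(p_-)}\Poinc(p)/\Poinc(J)$ appearing in Lemma~\ref{lem:biform2}, the $[\ell(\wo{K})-\ell(\wo{J})]$ shift in the adjunction, and the $[-\ell(\wo{J})]$ shift in the statement all have to conspire correctly, and verifying this in both the ``onto the wall'' and ``out of the wall'' cases (using Proposition~\ref{prop:multform} for the two multiplication formulas) is where the real care is needed. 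The structural part of the argument — induction plus adjunction plus the form identity \eqref{eq:formident} — is essentially forced.
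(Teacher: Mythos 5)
Your overall strategy --- induct via the adjunction of Lemma \ref{lem:homadj}, use Theorems \ref{thm:translation} and \ref{thm:translation2} together with the identity \eqref{eq:formident} to move the standard generator across the bilinear form, and reduce to $M = \RS{I}{}{I}$ --- is exactly the route the paper takes, and that part of your argument is sound. The genuine gap is in your base case. The Hom spaces in the statement are spaces of \emph{bimodule} homomorphisms, so $\Hom(\RS{I}{}{I}, N)$ is not $N$ viewed as a left $R^I$-module: a bimodule map $\varphi : \RS{I}{}{I} \to N$ is determined by $n = \varphi(1)$, but $n$ must satisfy $rn = nr$ for all $r \in R^I$, since both actions on $\RS{I}{}{I}$ are plain multiplication. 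Hence $\Hom(\RS{I}{}{I}, N)$ is the submodule of $N$ on which the two actions agree, which (using the nabla flag and Lemma \ref{lem:standsupp}) is $\Gamma_{W_I} N$, the sections supported on the identity coset; by the flag this is $g \cdot \nab{I}{}{I}$, where $g$ is (up to the bar involution) the coefficient of $\mst{I}{}{I}$ in $\nabchar(N)$. This is precisely why the answer is $\overline{\langle \delchar(\RS{I}{}{I}), \nabchar(N) \rangle} \cdot R^I$: by Lemma \ref{lem:biform2} the standard basis is orthogonal for $\langle \cdot,\cdot \rangle$, so the pairing extracts only the identity-coset coefficient. With your identification the left-hand side would have graded rank equal to the full rank of $N$ over $R^I$, which sees \emph{all} cosets occurring in $\nabchar(N)$, so the asserted check ``matches \dots using Lemma \ref{lem:biform2}'' cannot go through as written. (A related minor slip: $\RS{I}{}{I} = \del{I}{}{I}$ with no shift, since the minimal element of the identity coset has length $0$; the shift $[\ell(\wo{I})]$ you attached belongs to $\nab{I}{}{I}$.)

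On the second case, the paper does not dualize: it runs the same adjunction reduction on $N$ to reach $N = \nab{I}{}{I}$ and then computes $\Hom(M, \RS{I}{}{I}) = \Hom(\Gamma^{W_I} M, \RS{I}{}{I})$ using Corollary \ref{cor:standardhoms}. Your duality alternative is plausible but requires two checks you did not record: that $D$ is a biduality on the relevant objects, so that $\Hom(M,N) \cong \Hom(DN, DM)$ and $\nabchar(DM) = \delchar(M)$ for $M$ with a delta flag (Proposition \ref{prop:D} only records the other direction), and that the form is symmetric (true, by Lemma \ref{lem:biform2}) so the characters pair in the intended order.
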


\begin{proof} Let us first assume that $M \in \preBB{I}{J}$ and $N \in \nabflag{I}{J}$. Using Lemma \ref{lem:homadj} we see that, as $R^I$-modules
  \begin{equation*}
    \Hom(M \cdot  \out{J}{K}, N) [-\ell(\wo{K})] \cong \Hom(M, N \cdot \on{K}{J})[-\ell(\wo{J})].
  \end{equation*}
By (\ref{eq:formident}) and Theorems \ref{thm:translation} and \ref{thm:translation2} we  have
\begin{align*}
  \langle \ch_{\Delta}(M \cdot  & \out{J}{K}), \nabchar(N)  \rangle
 = \langle \ch_{\Delta}(M )*_J \mst{J}{}{K}, \nabchar(N)  \rangle =  \\
& = \langle \ch_{\Delta}(M ), \nabchar(N) *_K \mst{K}{}{J} \rangle 
 = \langle \ch_{\Delta}(M ), \nabchar(N \cdot  \on{K}{J}) \rangle
\end{align*}
We conclude that the formula is true for $(M \cdot \out{J}{K}, N)$ if and only if it is true for $(M, N \cdot \out{K}{J})$. It is also clear that it is true for $(M, N)$ if and only it if it true for any shift of $M$ or $N$.  Thus, without loss of generality, we may assume that $M = \RS{I}{}{I} = \del{I}{}{I}$.

By Lemma \ref{lem:biform2} we know
\begin{equation*}
\langle \delchar(\del{I}{}{I}), \nabchar(N)\rangle
= \langle v^{-\ell(\wo{I})} \mst{I}{}{I}, \nabchar(N) \rangle = \text{coefficient of $\mst{I}{}{I}$ in $\nabchar{N}$}.
\end{equation*}
Thus, by definition of $\nabchar$, we have
\begin{equation*}
  \Gamma_{W_I} N \cong \overline{ \langle \delchar(\del{I}{}{I}), \nabchar(N) \rangle} \cdot \nab{I}{}{I}
\end{equation*}
It follows that
\begin{equation*}
  \Hom(\del{I}{}{I}, N)[-\ell(\wo{I})] = \Gamma_{W_I}(N)[-\ell(\wo{I})]
= \overline{\langle \delchar(\del{I}{}{I}), \nabchar(N) \rangle} \cdot \RS{I}{}{I}
\end{equation*}
which settles the case when $M \in \preBB{I}{J}$ and $N \in \nabflag{I}{J}$.

If $M \in \delflag{I}{J}$ and $N \in \preBB{I}{J}$ then identical arguments to those above allow us to assume that $N = \nab{I}{}{I}$. We have
\begin{equation*}
  \Gamma^{W_I} M = \langle \delchar{M}, \mst{I}{}{I} \rangle \cdot \del{I}{}{I}
\end{equation*}
and hence
\begin{align*}
\Hom(M, \nab{I}{}{I})[-\ell(\wo{I})] & = \Hom(M,\RS{I}{}{I})  \\
& = \Hom(\Gamma^{W_I}M, \RS{I}{}{I}) \\
& \cong \overline{\langle \delchar{M}, \mst{I}{}{I} \rangle} \cdot \Hom(\RS{I}{}{I}, \RS{I}{}{I}) \\
& \cong \overline{\langle \delchar{M}, \nabchar(\nab{I}{}{I}) \rangle} \cdot R^I & \qedhere
\end{align*}
\end{proof}

\subsection{Some local results}
\label{subsec:local}
We would like to generalise the homomophism formula of the previous section to all objects in $\BB{I}{J}$. The crucial point is determining $\Hom(M, \nab{I}{p}{J})$ and $\Hom(\del{I}{p}{J}, N)$ for $M, N \in \BB{I}{J}$. For this we consider various localisations of special bimodules, which is the purpose of this section.

Given any reflection $t \in W$ let $\Rloc{t}$ denote the local ring of
$V^t \subset V$. In other words, in $\Rloc{t}$ we invert all functions
$f \in R$ which do not vanish identically on $V^t$.

The ring $\Rloc{t}$ is no longer graded and we will denote by
$\bimod{\Rloc{t}}{R}$ the category of $(\Rloc{t}, R)$-bimodules. The
lack of a grading on $\Rloc{t}$ means that we do not know if objects
in $\bimod{\Rloc{t}}{R}$ satisfy Krull-Schmidt, which explains some
strange wording below.

If $M, N \in \bMod{R}{R}$ are free as left $R$-modules, with $M$
finitely generated we have an isomorphism
\begin{equation*}
 \Hom_{\Rloc{t}-R} (\Rloc{t} \otimes_R M, \Rloc{t} \otimes_R N) \cong
 \Rloc{t} \otimes_R \Hom_{R-R}(M, N).
\end{equation*}
It follows that, with the same assumptions on $M$ and $N$,
\begin{equation*}
\Ext^1_{\Rloc{t}-R} (\Rloc{t} \otimes_R M, \Rloc{t} \otimes_R N)
 \cong
 \Rloc{t} \otimes_R \Ext^1_{R-R}(M, N).
\end{equation*}
Lemma \ref{lem:ext} tells us that that $\Ext^1_{R-R}(R_x, R_y)$ is non-zero
if and only if $y = rx$ for some reflection $r \in T$, in which case
it is supported on $\Gr{}{x}{} \cap \Gr{}{rx}{}$. We conclude that
\begin{equation} \label{eq:localext}
\Ext^1_{\Rloc{t}-R} ( \Rloc{t} \otimes_R R_x, \Rloc{t} \otimes_R R_y) = 0
\text{ unless $y = tx$}.
\end{equation}
(Alternatively, one  may explicitly split the extension of scalars of the generator of $\Ext^1(R_x, R_{rx})$ to $\bimod{\Rloc{t}}{R}$ using a Demazure operator, if $r \ne t$.)

Suppose that $M\in \bMod{R}{R}$ has a filtration with successive
subquotients isomorphic to a direct sum of shifts of $R_x$, and
that no (shift of) $R_x$ occurs in two different subquotients. By inducting over
the filtration of $M$ and using (\ref{eq:localext}), we see that
$\Rloc{t} \otimes_R M$ has a decomposition in which each summand is
either isomorphic to $\Rloc{t} \otimes_R R_x$ or is an extension
between $\Rloc{t} \otimes_R R_{x}$ and $\Rloc{t} \otimes_R R_{tx}$.

The next two results makes this decomposition more precise for special
classes of modules.

\begin{lemma} \label{lem:pdecomp}
Let $I, J \subset S$ be finitary and $p \in \W{I}{J}$ be a
double coset. In $\bimod{\Rloc{t}}{R}$ we have an isomorphism
 \begin{equation*}
   \Rloc{t} \otimes_R R(p) \cong \left \{ \begin{array}{ll}
\bigoplus_{x \in p } \Rloc{t} \otimes_R R_x & \text{if $tp \ne p$} \\
\bigoplus_{x \in p; x < tx} \Rloc{t} \otimes_R R_{x, tx} & \text{if $tp = p$}.
\end{array} \right .
 \end{equation*}
\end{lemma}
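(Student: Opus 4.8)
The strategy is to reduce the statement to the structure of $R(p)$ as a module with a filtration by the standard pieces $R_x$, $x \in p$, together with the localized $\Ext^1$-vanishing in \eqref{eq:localext}. Recall from Theorem \ref{thm:doublecosetfunctions} and Corollary \ref{cor:indfilt} that $R(p)$ admits a filtration (using an enumeration of $p$ refining the Bruhat order) whose successive subquotients are shifts of $R_x$, with each $R_x$ occurring exactly once. By the discussion preceding the lemma, applying $\Rloc{t} \otimes_R -$ therefore produces a module that decomposes as a direct sum of pieces, each of which is either $\Rloc{t} \otimes_R R_x$ or a (possibly trivial) extension between $\Rloc{t} \otimes_R R_x$ and $\Rloc{t} \otimes_R R_{tx}$; moreover such a nontrivial extension can only occur for a pair $\{x, tx\}$ with both elements in $p$. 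So the only question is which pairs $\{x,tx\}\subset p$ actually contribute a nontrivial extension, i.e. glue into a copy of $\Rloc{t}\otimes_R R_{x,tx}$ rather than splitting as $\Rloc{t}\otimes_R R_x \oplus \Rloc{t}\otimes_R R_{tx}$.

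First I would dispose of the case $tp \neq p$. If $tp \neq p$ then for every $x \in p$ we have $tx \notin p$ (since $x \in p \Rightarrow tx \in tp$, and $tp \cap p = \emptyset$ as distinct left cosets of the cyclic group generated by an appropriate conjugate — more simply, $tx \in p$ would force $tp = p$). Hence no gluing pair exists, and the induction over the filtration using \eqref{eq:localext} gives the splitting $\Rloc{t} \otimes_R R(p) \cong \bigoplus_{x \in p} \Rloc{t} \otimes_R R_x$ directly.

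For the case $tp = p$, I would argue that every pair $\{x, tx\} \subset p$ (with $x < tx$) genuinely glues. The cleanest way is to exhibit enough elements of $R(p)$ localized at $t$: pick $x < tx$ in $p$ and consider the function $\phi \in R(p)$ supported on $\{\le tx\}$ provided by Theorem \ref{thm:doublecosetfunctions}, whose component $\phi_{tx}$ is a nonzero scalar times $\alpha_{tx}$ and whose component $\phi_x$ is then forced by the congruence $\phi_x - \phi_{tx} \in (h_t)$. In $\Rloc{t}$ the relation between the $x$- and $tx$-components of elements of $R(p)$ is exactly $f_x \equiv f_{tx} \pmod{h_t}$, which is precisely the defining relation of $R_{x,tx}$; since the whole Bruhat-graph description of $R(p)$ (Remark after Definition/Proposition \ref{prop:Rpdef}) only imposes, between the $x$- and $tx$-slots, the single edge labelled $t$, localizing at $t$ decouples the slot-pair $\{x,tx\}$ from all other slots except through the diagonal $R$-actions. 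Concretely I would build an explicit $(\Rloc{t},R)$-bimodule isomorphism $\Rloc{t}\otimes_R R(p) \xrightarrow{\sim} \bigoplus_{x<tx}\Rloc{t}\otimes_R R_{x,tx}$ by matching the $\phi$-basis of Theorem \ref{thm:doublecosetfunctions} (which is a basis over $R$, hence over $\Rloc{t}$ after base change) with an evident basis of the right-hand side, checking compatibility with the twisted right $R$-action via formula \eqref{eq:hident} as in the proof of Proposition \ref{prop:Rpdef}.

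The main obstacle is the second case: showing that the gluing is \emph{always} nontrivial when $tp = p$, i.e. that for each pair $\{x,tx\}$ in $p$ the localized extension class is genuinely nonzero so that $\Rloc{t}\otimes_R R_{x,tx}$ — rather than the split sum — appears. This is where one must use that $R(p)$ is "large enough" to see the congruence at $h_t$; the existence of the functions $\phi_x$ from Theorem \ref{thm:doublecosetfunctions} (equivalently, Corollary \ref{cor:Rpleft} giving the correct graded rank) is exactly what rules out an accidental splitting. I would phrase this as: the rank count forces $\Rloc{t}\otimes_R R(p)$ to have $\Rloc{t}$-rank $|p|$, the already-established decomposition into $\Rloc{t}\otimes_R R_x$'s and $\Rloc{t}\otimes_R R_{x,tx}$'s is an $R$-rank-$|p|$ / $R$-rank-$2$ accounting, and one checks via a single explicit element (the image of some $\phi$) in each slot-pair that the extension does not split — completing the proof.
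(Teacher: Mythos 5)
Your route --- filtering $R(p)$ by the standard pieces $R_x$ and invoking the localized $\Ext^1$-vanishing~\eqref{eq:localext} --- is genuinely different from the paper's. The paper simply applies the (flat) functor $\Rloc{t}\otimes_R -$ to the exact sequence of Proposition~\ref{prop:exactsequence}; every cokernel term $\Rloc{t}\otimes_R R_x/(h_r)$ with $r\ne t$ vanishes because $h_r$ is a unit in $\Rloc{t}$ (pairwise linear independence of the $h_r$), so the localised $R(p)$ is cut out of $\bigoplus_{x\in p}\Rloc{t}\otimes_R R_x$ purely by the congruences $f_x-f_{tx}\in(h_t)$ for pairs $\{x,tx\}\subset p$, and the claimed direct sum is read off at once. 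In particular the issue you flag as the ``main obstacle'' --- whether each $\{x,tx\}$-piece genuinely glues when $tp=p$ --- never arises in that proof. Your proposed way around it does not actually close the gap: as you yourself note, the $\Rloc{t}$-rank is $|p|$ for both the fully split and fully glued decompositions, so the rank count is inconclusive; and the step ``match the $\phi$-basis'' is not executed, nor is it obviously compatible with pairing off $\{x,tx\}$, since each $\phi_x$ from Theorem~\ref{thm:doublecosetfunctions} is supported on a downward Bruhat interval $\{\le x\}$ rather than on a $t$-pair. Carrying out the explicit verification you sketch would in effect re-prove Proposition~\ref{prop:exactsequence} in localised form, so it is cleaner, and safer, to cite it.

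There is a second gap: in the case $tp\ne p$ you write ``$tx\in p$ would force $tp=p$,'' and the paper asserts the same dichotomy citing Proposition~\ref{prop:doublebruhat}. For a general double coset this is false. In $W=S_3$ with $I=\{s_1\}$, $J=\{s_2\}$, $p=W_IeW_J=\{e,s_1,s_2,s_1s_2\}$, and $t=s_1s_2s_1$, one has $te=s_1s_2s_1\notin p$ (so $tp\ne p$) but $ts_1=s_1s_2\in p$; here $\Rloc{t}\otimes_R R(p)\cong\Rloc{t}\otimes_R R_e\oplus\Rloc{t}\otimes_R R_{s_2}\oplus\Rloc{t}\otimes_R R_{s_1,s_1s_2}$, not a four-term split sum. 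The dichotomy does hold in the two situations where the lemma is actually invoked later (for $R(W_I)$ and for $R(xW_K)$, where any reflection interchanging two elements of $p$ automatically stabilises $p$), and in general the decomposition should have an $R_x$-summand for each $x\in p$ with $tx\notin p$ together with an $R_{x,tx}$-summand for each $t$-pair inside $p$. Either way, the justification you give for the case $tp\ne p$ does not stand as written, and this is a step that needs to be argued, not asserted.
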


\begin{proof} Note that, by Proposition \ref{prop:doublebruhat},
either $tp = p$ or $tp \cap p = \emptyset$.
The lemma then follows by applying $\Rloc{t} \otimes_R -$ to the exact sequence in Proposition \ref{prop:exactsequence}. \end{proof}

\begin{proposition} \label{prop:specialloc} 
If $B \in \BB{I}{J}$ then $\Rloc{t} \otimes_{R^I} B
\otimes_{R^J} R \in \bimod{\Rloc{t}}{R}$ is isomorphic to a direct
summand in a direct sum of modules of the form $\Rloc{t} \otimes_R
R_x$ and $\Rloc{t}\otimes_R R_{x, tx}$ with $x < tx$. \end{proposition}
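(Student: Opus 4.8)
The plan is to prove Proposition \ref{prop:specialloc} by reducing from singular Soergel bimodules to Bott-Samelson bimodules and then performing an induction along the inductive definition of $\preBB{I}{J}$, using the explicit behaviour of $\Rloc{t} \otimes_{R} - $ under the translation functors together with the Krull--Schmidt-type argument for $\bimod{\Rloc{t}}{R}$ sketched before the statement.

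First I would reduce to the case $B \in \preBB{I}{J}$. Any $B \in \BB{I}{J}$ is a direct summand of an object of $\preBB{I}{J}$, and the functor $\Rloc{t} \otimes_{R^I} - \otimes_{R^J} R$ sends direct summands to direct summands; so it suffices to treat Bott--Samelson bimodules. Next I would set up the induction. The base case is $B = \RS{I}{}{I}$: here $\Rloc{t} \otimes_{R^I} \RS{I}{}{I} \otimes_{R^I} R \cong \Rloc{t} \otimes_R R(W_I)$ by Theorem \ref{thm:ind} (with $K = L = \{e\}$, so $R \otimes_{R^I} \RS{I}{}{I} \otimes_{R^I} R \cong R(W_I)$), and Lemma \ref{lem:pdecomp} gives exactly the required form (a sum of $\Rloc{t}\otimes_R R_x$ and $\Rloc{t}\otimes_R R_{x,tx}$ with $x < tx$). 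Shifts are harmless. For the inductive step, suppose $B \in \preBB{I}{J}$ has the claimed property and let $K$ be finitary with $J \subset K$ or $J \supset K$; I must show $B \cdot \out{J}{K}$ still has the property. Writing $\Rloc{t} \otimes_{R^I} B \otimes_{R^J} R$ as (a summand of) a sum of $\Rloc{t}\otimes_R R_x$'s and $\Rloc{t}\otimes_R R_{x,tx}$'s, applying $- \otimes_{R^J} R^K \otimes_{R^K} R$ (in the ``out of the wall'' case, where $R \otimes_{R^J} R^K \otimes_{R^K} R \cong R \otimes_{R^J} R \cong \SO(\Gr{}{W_J'}{})$ for the appropriate parabolic, but more simply $- \otimes_{R^J} R$) — actually the cleaner bookkeeping is to observe that $B \cdot \out{J}{K} \otimes_{R^K} R \cong B \otimes_{R^J} R^K \otimes_{R^K} R$ and that $R^K \otimes_{R^K} R = R$, while $R^J$-to-$R^K$ restriction/induction on the last tensor factor corresponds to tensoring $R_x$-type pieces with $R^K \otimes_{R^J} R$-type pieces; using Lemma \ref{lem:SoergelIso} and Lemma \ref{lem:pdecomp} each such tensor product localises at $t$ to a sum of $\Rloc{t}\otimes_R R_x$ and $\Rloc{t}\otimes_R R_{x,tx}$. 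Combining these with (\ref{eq:localext}) — which says the only nontrivial extensions among the $\Rloc{t}\otimes_R R_x$ over $\bimod{\Rloc{t}}{R}$ are between $R_x$ and $R_{tx}$ — forces every indecomposable-type summand appearing in $\Rloc{t} \otimes_{R^I} (B \cdot \out{J}{K}) \otimes_{R^K} R$ to be of one of the two allowed shapes.

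The main obstacle, and the step I would be most careful about, is the bookkeeping in the inductive step: since $\Rloc{t}$ is not graded we cannot invoke Krull--Schmidt, so I cannot literally say ``the summands are closed under the operations''; instead I must argue directly that after applying the translation functor the resulting module is still built by extensions only of the type permitted by (\ref{eq:localext}), and that an extension of $\Rloc{t}\otimes_R R_x$ by $\Rloc{t}\otimes_R R_{tx}$ (or a sum thereof) that arises this way is a direct summand of a direct sum of the $\Rloc{t}\otimes_R R_{x,tx}$'s. Concretely: a short exact sequence $\Rloc{t}\otimes_R R_{tx} \hookrightarrow E \sur \Rloc{t}\otimes_R R_x$ over $\bimod{\Rloc{t}}{R}$ is classified by $\Rloc{t} \otimes_R \Ext^1_{R\text{-}R}(R_x, R_{tx})$, which by Lemma \ref{lem:ext} is a free $\Rloc{t}$-module of rank one, with $\Rloc{t}\otimes_R R_{x,tx}$ realising a generator; hence any such $E$ is either split (so a sum of $\Rloc{t}\otimes_R R_x$'s) or isomorphic to $\Rloc{t}\otimes_R R_{x,tx}$. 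Running this through the filtration of $B \cdot \out{J}{K}$ by $\Rloc{t}\otimes_R R_z$-subquotients (with no $R_z$ repeated, as in the paragraph preceding the proposition) and using the long exact sequences for $\Ext^1$, one sees by induction over the filtration that $\Rloc{t} \otimes_{R^I} (B\cdot\out{J}{K}) \otimes_{R^K} R$ is a direct summand of a direct sum of $\Rloc{t}\otimes_R R_x$ and $\Rloc{t}\otimes_R R_{x,tx}$ with $x<tx$, which is exactly the assertion. Finally I would note that the ``translating onto the wall'' case ($J \supset K$) is if anything easier, as it only changes the left action and leaves the right $R$-module structure (on which the localisation analysis rests) essentially unchanged, so the same filtration argument applies verbatim.
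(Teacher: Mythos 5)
Your reduction to $\preBB{I}{J}$ and your base case ($B=\RS{I}{}{I}$, Theorem \ref{thm:ind} plus Lemma \ref{lem:pdecomp}) coincide with the paper, but from there your argument takes a different and, as written, incomplete route. The paper keeps track of the actual bimodules: in the only non-trivial case $J\subset K$ (note your bookkeeping is reversed -- ``out of the wall'' is $J\supset K$ and is the trivial case, since $B\otimes_{R^J}R^K\otimes_{R^K}R=B\otimes_{R^J}R$, while ``onto the wall'' is $J\subset K$, changes the right-hand tensor factor, and is where the work lies) it uses that $B\otimes_{R^K}R$ is a summand of $B\otimes_{R^J}R\otimes_{R^K}R$ and then identifies $\Rloc{t}\otimes_R R_x\otimes_{R^K}R$ and $\Rloc{t}\otimes_R R_{x,tx}\otimes_{R^K}R$ explicitly, via Theorem \ref{thm:ind}, Lemma \ref{lem:pdecomp}, Lemma \ref{lem:simplesplit}, and a diagram chase identifying each extension $E_{xy,txy}$ with $\Rloc{t}\otimes_R R_{xy,txy}$. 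You instead want to deduce the decomposition purely from the structure of the localised $\Ext^1$-groups applied to a standard flag of $B\cdot\out{J}{K}$; this could work (and would in fact apply to any bimodule with a nabla flag), but two of its steps are missing or wrong.

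First, the localised extension group is not ``a free $\Rloc{t}$-module of rank one'': $\Ext^1_{R\otimes R}(R_x,R_{tx})$ is a cyclic module supported on $\Gr{}{x}{}\cap\Gr{}{tx}{}$, hence killed by $h_t$, and after applying $\Rloc{t}\otimes_R-$ it is a one-dimensional vector space over the residue field $\Quot(R/(h_t))$ of the local ring $\Rloc{t}$ (Lemma \ref{lem:ext} says nothing about freeness). The dichotomy you want -- any non-split extension of $\Rloc{t}\otimes_R R_x$ by $\Rloc{t}\otimes_R R_{tx}$ has middle term $\Rloc{t}\otimes_R R_{x,tx}$ -- is true, but needs the observations that the class of $R_{x,tx}$ survives localisation and that every nonzero class is a unit multiple of it; neither is in your text. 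Second, and decisively, your dichotomy only treats an extension of a single copy of $\Rloc{t}\otimes_R R_x$ by a single copy of $\Rloc{t}\otimes_R R_{tx}$, whereas the layers of the flag of $B\cdot\out{J}{K}$ are direct sums of many shifted copies. After splitting off the other $\langle t\rangle$-orbits via \eqref{eq:localext} you face an extension of a direct sum of copies of $\Rloc{t}\otimes_R R_{tx}$ by a direct sum of copies of $\Rloc{t}\otimes_R R_x$, and ``induction over the filtration with long exact sequences'' does not by itself reduce this to the one-by-one case: peeling off one copy at a time produces extensions against previously formed $\Rloc{t}\otimes_R R_{x,tx}$ summands, and the relevant $\Ext^1$-groups are not shown to vanish (and need not). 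What is required is a normal-form argument for the extension class viewed as a matrix over the residue field, using automorphisms of the sub and the quotient and the fact that the endomorphism rings are the local ring $\Rloc{t}$, so that invertible matrices over $\Rloc{t}$ surject onto invertible matrices over the residue field. This is precisely the point you yourself flag as the main obstacle, and it is the step your proposal does not supply; with it (and the corrected description of the $\Ext$-group) your alternative argument would go through.
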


\begin{proof} If the statement is true for $B$, then it is true for any direct summand of $B$, and hence we may assume that $B \in \preBB{I}{J}$.
If $B = \RS{I}{}{I}$ then $R \otimes_{R^I} \RS{I}{}{I}
\otimes_{R^I} R \cong R(W_I)$ (Theorem \ref{thm:ind}) and the
necessary decomposition is provided by Lemma \ref{lem:pdecomp}. By the
inductive definition of $\preBB{I}{J}$ it is enough to show that, if the
lemma is true for $B \in \BB{I}{J}$, then it is also true for $B \cdot
\out{J}{K} \in \BB{I}{K}$ with $J \subset K$ or $J \supset K$. The
case $J \supset K$ is trivial, and so we are left with the case $J
\subset K$.

The module $B \otimes_{R^K} R$ is a direct summand in $B \otimes_{R^J}
R \otimes_{R^K}R$ and, by assumption, $\Rloc{t} \otimes_R B
\otimes_{R^J} R$ is a direct summand in a direct sum of the modules
$\Rloc{t} \otimes_R R_x$ and $\Rloc{t} \otimes_R R_{x,tx}$ with $x < tx$. Hence it
is enough to show that the statement of the lemma is true for
$\Rloc{t} \otimes_R R_x \otimes_{R^K} R$ and $\Rloc{t} \otimes_R R_{x,
tx} \otimes_{R^K} R$.

In the first case $R_x \otimes_{R^K} R \cong R(xW_K)$ (Theorem
\ref{thm:ind} again) and the decomposition follows again from Lemma
\ref{lem:pdecomp} together with the fact that $tx > x$.

In the second case there are two possibilities. If $tx = xt^{\prime}$
for a reflection $t^{\prime} \in W_K$ then $R_{x, tx}$ splits upon
restriction to $R^K$ (Lemma \ref{lem:simplesplit}) and we may apply
Lemma \ref{lem:pdecomp} again.

If $tx \ne xt^{\prime}$ for any reflection $t^{\prime} \in W_K$ then
the sets $xW_K$ and $txW_K$ are disjoint. By applying $- \otimes_{R^K}
R$ to the exact sequence $R_{x}[-2] \hookrightarrow R_{x,tx} \sur
R_{tx}$ and using the identification $R_x \otimes_{R^K} R \cong
R(W_K)$ we see that $R_{x,tx} \otimes_{R^K} R$ has a filtration with
subquotients (a shift of) $R_{w}$ with $w \in xW_K$ or $txW_K$. It
follows that we have an isomorphism
\begin{equation*}
\Rloc{t} \otimes_R R_{x,tx} \otimes_{R^K} R \cong \bigoplus_{y \in
W_K} E_{xy, txy}
\end{equation*}
where $E_{xy, txy}$ is a (possibly trivial) extension of $\Rloc{t} \otimes_R R_{xy}$ and
$\Rloc{t} \otimes_R R_{txy}$.

To identify $E_{xy, txy}$ we tensor the surjection $R(W_K) \sur
R_y$ with the exact sequence $R_x[-2] \hookrightarrow R_{x, tx} \sur
R_{tx}$ to obtain a diagram
\begin{equation*}
 \xymatrix{
R_x \otimes_{R^K} R[-2] \ar@{^{(}->}[r] \ar@{->>}[d] & R_{x, tx}
\otimes_{R^K} R \ar@{->>}[r]\ar@{->>}[d] & R_{tx} \otimes_{R^K} R
\ar@{->>}[d]  \\
R_{xy}[-2] \ar@{^{(}->}[r] & R_{xy, txy} \ar@{->>}[r] & R_{txy}.}
\end{equation*}
After tensoring with $\Rloc{t}$ the left and right surjections split
by Lemma \ref{lem:pdecomp}. It follows that $E_{xy, txy}$ is
isomorphic to $\Rloc{t} \otimes_R R_{xy, txy}$ for all $y \in W_K$ and
the lemma follows. \end{proof}

We now come to the goal of this section, which is to relate $\Hom(\del{I}{p}{J}, B)$ and $\Hom(B, \nab{I}{p}{J})$ for a singular Soergel bimodule $B \in \BB{I}{J}$ to the nabla and delta filtrations on $B$. This provides the essential (and trickiest) step in generalising the homomorphism formula for Bott-Samelson bimodules to all Soergel bimodules.

The arguments used to establish this relation are complicated and so we first sketch the basic idea. Let us consider a nabla filtration on a Bott-Samelson bimodule $B$. By Theorem \ref{thm:BShom} we know the rank of $\Hom(\del{I}{p}{J}, B)$ in terms of $\Gamma_{p}^{\le}B$ and a simple calculation confirms that $\Hom(\del{I}{p}{J}, B)$ and $\Gamma_p^{\le} B[-\ell(p_-)]$ have the same graded rank as left $R^I$-modules.

Given a morphism $\alpha : \del{I}{p}{J} \to B$ one may consider the image of a non-zero element of lowest degree in $\Gamma_p^{\le}B$ and one obtains in this way an injection
\begin{equation*} 
  \Hom(\del{I}{p}{J}, B) \to \Gamma_p^{\le}B[\ell(p_-)].
\end{equation*}
One might hope that this maps into a submodule isomorphic to $\Gamma_p^{\le}B[-\ell(p_-)]$, which would explain the above equality of ranks.

In order to show that this is the case we choose a decomposition
\begin{equation*}
\Gamma_p^{\le}B \cong P \cdot \RS{I}{p}{J}
\end{equation*}
and recall that $\RS{I}{p}{J}$ has the structure of a graded algebra compatible with the bimodule structure. In particular, elements in $\RS{I}{p}{J}$ define endomorphisms of $\Gamma_p^{\le} B$ (which in general do not come from acting by an element in $R^I \otimes R^J$). Given an element $m \in \RS{I}{p}{J}$, we will abuse notation and denote by $m\Gamma_p^{\le} B$ the image of this endomorphism.

We define an element $m_p \in \RS{I}{p}{J}$ of degree $2\ell(p_-)$ and argue (using localisation) that the above injection lands in
\begin{equation*}
  m_p \Gamma_p^{\le} B[\ell(p_-)] \cong \Gamma_p^{\le} B[-\ell(p_-)].
\end{equation*}
Thus the two modules $\Gamma_p^{\le} B[-\ell(p_-)]$ and $\Hom(\del{I}{p}{J}, B)$ are isomorphic.

\begin{remark} If $W$ is a finite one may make the arguments in this section simpler by considering certain elements (similar to our $\phi_x \in R(p)$) constructed using Demazure operators. This is discussed in \cite{SoBimodules}, Bemerkung 6.7.
\end{remark}

We begin by defining the special elements $m_p \in \RS{I}{p}{J}$. Recall that, by definition, the modules $\RS{I}{p}{J}$ are the invariants in $R$ under $W_K$, where $K = I \cap p_-Jp_-^{-1}$.

\begin{lemma} The element
  \begin{equation*}
    m_p = \prod_{t \in T \atop tp_- < p_-} h_t \in R.
  \end{equation*}
lies in $\RS{I}{p}{J}$.
\end{lemma}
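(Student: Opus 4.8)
The plan is to reduce to the generators of $W_K$ (recall $K=I\cap p_-Jp_-^{-1}$ and that, by definition, $\RS{I}{p}{J}=R^{W_K}$ as a subring of $R$): writing $A=\{t\in T\mid tp_-<p_-\}$, so that $m_p=\prod_{t\in A}h_t$, it suffices to prove $s\,m_p=m_p$ for every simple reflection $s\in K$. For such an $s$ the argument splits into a combinatorial part, showing that conjugation by $s$ permutes $A$, and an algebraic part, showing that the resulting scalar $s\,m_p/m_p$ equals $1$.

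First I would record the following. Each $s\in K$ is a simple reflection with $s\in I$ and $s=p_-up_-^{-1}$ for some simple $u\in J$, so $sp_-=p_-u$; since $p_-\in\Dmin{I}{J}\subset\Dmin{}{J}$ we have $p_-u>p_-$, hence $sp_->p_-$ and in particular $s\notin A$.

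The combinatorial step — which I expect to be the main obstacle — is the identity $sAs=A$. I would prove it by describing $A':=\{t\in T\mid t(sp_-)<sp_-\}$ in two ways. On the one hand, since $s$ is simple and $sp_->p_-$, the standard recursion for inversion sets gives $A'=\{s\}\sqcup sAs$ (a disjoint union, since $sts=s$ forces $t=e\notin A$). On the other hand, $A\cup\{s\}\subseteq A'$: indeed $s\cdot sp_-=p_-<sp_-$, and for $r\in A$ one has $\ell(rp_-)\le\ell(p_-)-1$, whence $\ell(r\cdot sp_-)=\ell(rp_-u)\le\ell(rp_-)+1\le\ell(p_-)<\ell(sp_-)$; moreover $s\notin A$, so $|A\cup\{s\}|=|A|+1=|A'|$. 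Comparing the two descriptions and cancelling $\{s\}$ gives $sAs=A$, and since $W_K$ is generated by such $s$ it follows that conjugation by every element of $W_K$ permutes $A$.

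For the algebraic step, since $s$ acts on $V$ as an automorphism carrying the hyperplane $\ker h_t=V^t$ onto $V^{sts}=\ker h_{sts}$, the forms $s\cdot h_t$ and $h_{sts}$ have the same kernel, so $s\cdot h_t=c_t\,h_{sts}$ for some $c_t\in k^{\times}$. Using $sAs=A$ we get $s\,m_p=\bigl(\prod_{t\in A}c_t\bigr)m_p$; applying $s$ once more and using that $R$ is a domain and $m_p\ne0$ forces $\bigl(\prod_{t\in A}c_t\bigr)^2=1$, so $s\,m_p=\pm m_p$. To pin down the sign I would restrict along the map $R\to\SO(V^s)$, $f\mapsto f|_{V^s}$: for any $f\in R$ and $\lambda\in V^s$ one has $(s\cdot f)(\lambda)=f(s^{-1}\lambda)=f(\lambda)$, so $(s\,m_p)|_{V^s}=m_p|_{V^s}$; and $m_p|_{V^s}=\prod_{t\in A}\bigl(h_t|_{V^s}\bigr)\ne0$, because $s\notin A$ makes every $h_t$ ($t\in A$) non-proportional to $h_s$ — by the pairwise linear independence of the $\{h_t\}$ coming from reflection-faithfulness — so no factor vanishes and $\SO(V^s)$ is a domain. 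Hence the sign is $+$, i.e.\ $s\,m_p=m_p$, and as $s$ ranges over the generators of $W_K$ this gives $m_p\in R^{W_K}=\RS{I}{p}{J}$.
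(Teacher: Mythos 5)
Your proof is correct, and it takes a genuinely different route from the paper's in both halves of the argument. For the combinatorial step, the paper fixes $t$ with $tp_- < p_-$ and argues directly that $(sts)p_- < p_-$ by a Property\,Z/exchange argument, using $sp_- = p_-r$ with $r \in J$ to play the two sides against each other. You instead replace this with a counting argument: identify the left inversion set $A'$ of $sp_-$ in two ways, using the standard recursion $A' = \{s\} \sqcup sAs$ on one hand and the containment $A \cup \{s\} \subseteq A'$ plus $|A \cup \{s\}| = \ell(p_-)+1 = |A'|$ on the other; this avoids the case analysis entirely. For the algebraic step, the paper invokes the normalization \eqref{eq:hident}, $xh_t = h_{xtx^{-1}}$, under which $sm_p = m_p$ is immediate once $sAs = A$. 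You deliberately avoid this normalization, writing $sh_t = c_t h_{sts}$ with a priori unknown scalars, deducing $(\prod c_t)^2 = 1$, and then pinning the sign to $+1$ by restricting to $V^s$ and using pairwise linear independence of the $h_t$ (and $\mathrm{char}\,k \ne 2$). This is extra work relative to the paper, but it buys you independence from the particular choice of the $h_t$, and the sign-pinning argument via $V^s$ is a nice self-contained observation. Both proofs are complete; yours is more elementary on the normalization side and more slick on the combinatorial side.
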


\begin{proof} Because $xh_s = h_{xsx^{-1}}$ if $x \in W$ (\ref{eq:hident}) it is enough to show that if $s \in I \cap p_-Jp_-^{-1}$ and $t \in T$ with $tp_- < p_-$, then $(sts)p_- < p_-$. Choose $r \in J$ such that $sp_- = p_-r$. We have either $(sts)sp_- = stp_- \le sp_-$ or $stp_-  \ge sp_-$. However the latter is impossible as $tp_- \notin p$. Similarly, either $stp_-r \le sp_-r = p_-$ or $stp_-r \ge sp_-r$ and the latter is again impossible. It follows that $(sts)p_- \le p_-$ as claimed. \end{proof}

We now come to the main goal of this section.

\begin{theorem}
\label{thm:specialstalk} Let $I, J \subset S$ be finitary, $B \in \BB{I}{J}$ and $p  \in \W{I}{J}$. We have isomorphisms
  \begin{enumerate}
  \item   $\Hom(\RS{I}{p}{J}, B) \cong \Hom(\RS{I}{p}{J}, \Gamma_p^{\le} B)[-2\ell(p_-)]$,
\item $\Hom(B, \RS{I}{p}{J}) \cong \Hom( \Gamma^{\ge}_pB, \RS{I}{p}{J})[-2\ell(p_-)]$.\end{enumerate}
\end{theorem}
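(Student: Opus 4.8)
The plan is to prove part (1) and deduce part (2) by duality, following the sketch given in the text before the theorem statement.

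\emph{Reduction to Bott--Samelson bimodules and the map.} Since both sides are additive in $B$ and $\BB{I}{J}$ is the idempotent completion of $\preBB{I}{J}$, and since $\Hom(\RS{I}{p}{J},-)$ and $\Gamma_p^{\le}(-)$ both commute with taking direct summands, I may assume $B \in \preBB{I}{J}$; in particular $B \in \nabflag{I}{J}$. Any $\alpha \in \Hom(\RS{I}{p}{J},B)$ has image supported on $\Gr{I}{p}{J}$ since $\RS{I}{p}{J}$ is generated by elements with that support (Lemma \ref{lem:standsupp}), so $\alpha(\RS{I}{p}{J}) \subset \Gamma_{\le p}B$. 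Composing with the projection $\Gamma_{\le p}B \sur \Gamma_p^{\le}B$ gives a map $\Hom(\RS{I}{p}{J},B) \to \Hom(\RS{I}{p}{J},\Gamma_p^{\le}B)$. The target, by the ``hin-und-her'' lemma, is a direct sum of shifts of $\Hom(\RS{I}{p}{J},\RS{I}{p}{J}) = \RS{I}{p}{J}$ (Corollary \ref{cor:standardhoms}), so it is graded free over $R^I$, and I can compute its graded rank from $\delchar$ and $\nabchar$ via Theorem \ref{thm:BShom}. The first task is to check that $\Hom(\RS{I}{p}{J},B)$ and $\Hom(\RS{I}{p}{J},\Gamma_p^{\le}B)[-2\ell(p_-)]$ have the same graded rank as left $R^I$-modules: this is an elementary bookkeeping comparison using the normalisations $\nab{I}{p}{J} = \RS{I}{p}{J}[\ell(p_+)]$, $\del{I}{p}{J} = \RS{I}{p}{J}[-\ell(p_-)]$, the formula $\langle \mst{I}{p}{J},\mst{I}{p}{J}\rangle = v^{\ell(p_+)-\ell(p_-)}\Poinc(p)/\Poinc(J)$ from Lemma \ref{lem:biform2}, and the fact that $\RS{I}{p}{J}$ is graded free over $R^I$ with graded rank $\Poinc(p)/\Poinc(J)$ type data coming from Lemma \ref{cor:relinv}.

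\emph{Injectivity via localisation.} I must show the map $\Hom(\RS{I}{p}{J},B) \to \Hom(\RS{I}{p}{J},\Gamma_p^{\le}B)$ is injective and then identify its image with the submodule $m_p\,\Gamma_p^{\le}B$, which is isomorphic to $\Gamma_p^{\le}B[-2\ell(p_-)]$ since multiplication by $m_p$ is injective on $\RS{I}{p}{J}$ (the $h_t$ are non-zero-divisors in $R$) and raises degree by $2\ell(p_-)$. Concretely, given $\alpha \in \Hom(\RS{I}{p}{J},B)$ nonzero, I look at $\alpha(1)$; it lies in $\Gamma_{\le p}B$, and if its image in $\Gamma_p^{\le}B$ were zero then $\alpha(1) \in \Gamma_{<p}B$, forcing (since $\RS{I}{p}{J}$ is a bimodule generated in degree $0$ with all nonzero sections of support exactly $\Gr{I}{p}{J}$) $\alpha = 0$. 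So the map is injective. For the image: extend scalars to $R \otimes_{R^I} B \otimes_{R^J} R$ and then localise at a reflection $t$ using Proposition \ref{prop:specialloc}, which writes $\Rloc{t}\otimes_R B \otimes_{R^J} R$ as a summand of a sum of copies of $\Rloc{t}\otimes_R R_x$ and $\Rloc{t}\otimes_R R_{x,tx}$. Over each reflection $t$ with $tp_- < p_-$ one sees that a homomorphism out of (the localisation of) $\RS{I}{p}{J}$ — equivalently out of $\Rloc{t}\otimes_R R(p)$ restricted appropriately — must be divisible by $h_t$ on the relevant stalk, because on the two-step piece $R_{x,tx}$ the only homomorphisms to $R_x$ that extend across the graph $\Gr{}{tx}{}$ are multiples of $h_t$. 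Running over all such $t$ and using that the $h_t$ are pairwise coprime shows $\alpha(1) \in m_p\,\RS{I}{p}{J}$ at the level of each stalk, hence $\alpha$ factors through $m_p\,\Gamma_p^{\le}B$. Combined with the rank count this forces $\Hom(\RS{I}{p}{J},B) \cong m_p\,\Gamma_p^{\le}B[\ell(p_-)] \cong \Gamma_p^{\le}B[-2\ell(p_-)]$, which is part (1).

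\emph{Part (2) by duality, and the main obstacle.} For part (2), apply the duality functor $D$ of Section \ref{subsec:delta}. Since $B \in \BB{I}{J}$ lies in both $\nabflag{I}{J}$ and $\delflag{I}{J}$, and $DB$ again lies in $\BB{I}{J}$ (as $D$ sends $\RS{I}{}{I}$ to a shift of itself and commutes with translation functors by Proposition \ref{prop:transcommute}), the functor $D$ converts $\Hom(B,\RS{I}{p}{J})$ into $\Hom(D\RS{I}{p}{J}, DB) \cong \Hom(\RS{I}{p}{J},DB)[2(\ell(p_+)-\ell(p_-))]$ using Lemma \ref{lem:Dnab}; and it converts $\Gamma^{\ge}_p B$ into $\Gamma^{\le}_p(DB)$ by the compatibility of $D$ with the support filtrations worked out in the proof of Proposition \ref{prop:D}. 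Applying part (1) to $DB$ and unwinding the shifts (again using $\ell(\wo{I})+\ell(\wo{J})-\ell(\wo{I,p,J}) = \ell(p_+)-\ell(p_-)$ from \eqref{eq:poinc1}) yields part (2). The main obstacle is the localisation argument for the image: one has to be careful that the divisibility-by-$h_t$ statement holds after extension of scalars for \emph{every} $B \in \BB{I}{J}$ and not just for the pieces $R(p)$, which is exactly why Proposition \ref{prop:specialloc} (controlling the shape of $\Rloc{t}\otimes_{R^I}B\otimes_{R^J}R$ for all singular Soergel bimodules, proved by induction through translation functors) is needed, and why the element $m_p$ must be defined with precisely the product over $\{t : tp_- < p_-\}$ rather than a smaller or larger set.
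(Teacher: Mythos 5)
Your proof of part (1) is essentially the paper's: reduce to Bott--Samelson bimodules, build the map $\Phi$ via the projection $\Gamma_{\le p}B \sur \Gamma_p^{\le}B$, show injectivity via the nabla flag and $\Hom(\RS{I}{p}{J},\RS{I}{q}{J})=0$ for $q\ne p$, show the image lands in $m_p\Gamma_p^{\le}B$ via the localisation argument of Lemma~\ref{lem:comp}, and match graded ranks with Theorem~\ref{thm:BShom}. Your paraphrase of the localisation step is somewhat loose (it conflates Lemma~\ref{lem:pdecomp} for the source, Proposition~\ref{prop:specialloc} for the middle, and the $R_{x,tx}\to R_x$ divisibility argument), but the idea is the right one, and in any case the paper packages that step cleanly as Lemma~\ref{lem:comp}. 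Where you genuinely diverge from the paper is part (2): the paper simply runs the parallel direct argument (image of $\alpha: B\to\RS{I}{p}{J}$ factors through $\Gamma_p^{\ge}B$, apply Lemma~\ref{lem:comp}, compare ranks), whereas you deduce (2) from (1) via the duality $D$. This is a valid and arguably slicker alternative. The ingredients you need and cite are all in the paper: $D$ preserves $\BB{I}{J}$ (by Proposition~\ref{prop:transcommute} and $D\RS{I}{}{I}\cong\RS{I}{}{I}[2\ell(\wo{I})]$), $D\RS{I}{p}{J}\cong\RS{I}{p}{J}[2(\ell(p_+)-\ell(p_-))]$ (the computation in Lemma~\ref{lem:Dnab}), and $D(\Gamma_p^{\ge}B)\cong\Gamma_p^{\le}(DB)$ (the filtration comparison in the proof of Proposition~\ref{prop:D}); and $D$ is an honest contravariant equivalence on $\nabflag{I}{J}$ because its objects are graded free over $R^I$, which is what justifies $\Hom(B,N)\cong\Hom(DN,DB)$. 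One small slip: you write $\Hom(D\RS{I}{p}{J},DB)\cong\Hom(\RS{I}{p}{J},DB)[2(\ell(p_+)-\ell(p_-))]$, but the shift should be $[-2(\ell(p_+)-\ell(p_-))]$ since $\Hom(X[n],Y)\cong\Hom(X,Y)[-n]$; with the correct sign the three shifts $[-2(\ell(p_+)-\ell(p_-))]$, $[-2\ell(p_-)]$ (from part (1) applied to $DB$), and $[2(\ell(p_+)-\ell(p_-))]$ (from undualising $\Gamma_p^{\ge}B$ against $D\RS{I}{p}{J}$) do collapse to the required $[-2\ell(p_-)]$, so the conclusion stands.
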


The proof depends on a lemma which we establish by considering various localisations of $B$. Given a subset $A  \subset W$ we extend the notion to sections supported in $\Gr{}{A}{}$ to modules $M \in \bimod{\Rloc{t}}{R}$ as follows. Writing $I_A$ for the ideal of functions vanishing on $\Gr{}{A}{}$, we define $\Gamma_{A} M$ to be the submodule of elements annihilated by $\langle I_A \rangle$, the ideal generated by $I_A$ in $\Rloc{t} \otimes R$.

\begin{lemma} \label{lem:comp}
For any pair of morphisms
  \begin{equation*}
    M \to B \to \RS{I}{p}{J}
  \end{equation*}
with $M \in \delflag{I}{J}$ such that $\Gamma_{\ge p} M = M$, the composition lands in $m_p \RS{I}{p}{J}$. \end{lemma}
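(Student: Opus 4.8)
The plan is to reduce the statement to a local computation over the ring $\Rloc{t}$, where $t$ is a reflection, and then to use the explicit description of the localisation of singular Soergel bimodules from Proposition \ref{prop:specialloc}. The key observation is that the element $m_p = \prod_{t\colon tp_- < p_-} h_t$ has one factor $h_t$ for each reflection $t$ with $tp_- < p_-$, and that these factors are precisely the functions controlling the ``extension'' part of the local decomposition of $B$. So the first step is to reformulate: given morphisms $M \xrightarrow{\beta} B \xrightarrow{\gamma} \RS{I}{p}{J}$ with $M \in \delflag{I}{J}$ and $\Gamma_{\ge p} M = M$, I want to show the composite $\gamma\beta$ factors through multiplication by $m_p$ on $\RS{I}{p}{J}$, i.e. that $\gamma\beta(M) \subset m_p\RS{I}{p}{J}$. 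Since $\RS{I}{p}{J} \subset R$ is torsion-free, it suffices to check this after localising at each codimension-one prime; equivalently, for each reflection $t$, it suffices to check that $(\gamma\beta)(M)$ lands in $h_t \Rloc{t}\otimes_R \RS{I}{p}{J}$ whenever $tp_- < p_-$, and there is nothing to check at $t$ with $tp_- > p_-$.

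The second step is the local analysis. Fix a reflection $t$ with $tp_- < p_-$ (equivalently $tp \ne p$, by Proposition \ref{prop:doublebruhat}, since $tp_- \notin p$). After applying $\Rloc{t} \otimes_{R^I} - \otimes_{R^J} R$, Proposition \ref{prop:specialloc} tells us $\Rloc{t}\otimes_{R^I} B \otimes_{R^J} R$ is a summand of a sum of modules $\Rloc{t}\otimes_R R_x$ and $\Rloc{t}\otimes_R R_{x,tx}$ with $x < tx$. Meanwhile $\Rloc{t}\otimes_R R(p) \cong \bigoplus_{x\in p}\Rloc{t}\otimes_R R_x$ by Lemma \ref{lem:pdecomp} (as $tp\ne p$), so $\RS{I}{p}{J}$ localises to a sum of copies of $\Rloc{t}\otimes_R R_x$, $x\in p$. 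The point is then to track where $p_-$ itself sits: in the decomposition of $\Rloc{t}\otimes B$, the component $R_{p_-}$ either appears alone or is glued with $R_{tp_-}$ into a module $R_{p_-, tp_-}$ (note $tp_- < p_-$, so in the notation $R_{x,tx}$ with $x < tx$ we have $x = tp_-$). When $\gamma$ is postcomposed, it kills $R_{tp_-}$ (since $tp_- \notin p$, and $\Hom(R_y, \RS{I}{p}{J})$ localises to $0$ for $y\notin p$ away from the glued pieces) — and any map out of $M$ which is supported on $\{\ge p\}$ must, after localising, avoid landing in the ``$R_{p_-}$-alone'' summands in the way that would contradict the delta-flag support condition; instead its image in the glued summand $R_{p_-,tp_-}$ is divisible by $h_t$, because the submodule of $\Rloc{t}\otimes_R R_{p_-,tp_-}$ supported on $\Gr{}{\ge p}{}$ (i.e. killing the $R_{tp_-}$-direction) is exactly $h_t\cdot(\Rloc{t}\otimes_R R_{p_-})$.

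The main obstacle, I expect, is bookkeeping the support conditions through the localisation and the choice of projection/inclusion splittings making $B$ a summand — in particular, verifying carefully that the hypothesis $\Gamma_{\ge p} M = M$ forces the image of $\beta$, localised at $t$, to lie in the $h_t$-divisible submodule of each glued summand $R_{p_-,tp_-}$, rather than just somewhere in $\Rloc{t}\otimes B$. One clean way to organise this: first reduce to the case $M = \del{I}{p}{J}$ (every object of $\delflag{I}{J}$ with $\Gamma_{\ge p}M = M$ is an iterated extension of shifts of standard modules $\RS{I}{q}{J}$ with $q \ge p$, and by additivity and the fact that $\Hom(\RS{I}{q}{J}, \RS{I}{p}{J}) = 0$ for $q\ne p$ by Corollary \ref{cor:standardhoms} the composite $\gamma\beta$ only sees the $q = p$ piece), then use that $\RS{I}{p}{J} = \del{I}{p}{J}[\ell(p_-)]$ is a cyclic algebra generated by $1$, so $\gamma\beta$ is determined by $(\gamma\beta)(1)$, and finally check $(\gamma\beta)(1)\in m_p\RS{I}{p}{J}$ prime-by-prime as above. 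Once the local statement ``the image is $h_t$-divisible'' is in hand at every relevant $t$, the global conclusion $\gamma\beta(M)\subset m_p\RS{I}{p}{J}$ follows since $R$ is a normal domain and $m_p$ is (up to units) the product of the $h_t$ over exactly those $t$ with $tp_- < p_-$, which by the lemma preceding the theorem are pairwise non-associate primes.
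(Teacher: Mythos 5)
Your global strategy coincides with the paper's: reduce to divisibility by each $h_t$ with $tp_-<p_-$ separately (the $h_t$ being pairwise non-associate linear forms), and prove each local statement after applying $\Rloc{t}\otimes_{R^I}-\otimes_{R^J}R$, using Lemma \ref{lem:pdecomp} and Proposition \ref{prop:specialloc}. However, there is a genuine gap at exactly the point you flag as the main obstacle: the possible \emph{pure} summands $\Rloc{t}\otimes_R R_{p_-}$ in the local decomposition of $B$. You dispose of them by asserting that a map out of $M$ must ``avoid landing in the $R_{p_-}$-alone summands'' because of the delta-flag support condition, but this is not so: $\Gr{}{p_-}{}$ lies inside the region $\Gr{I}{\ge p}{J}$ allowed by $\Gamma_{\ge p}M=M$, so nothing prevents the image of $M$ from having a nonzero component in a pure $R_{p_-}$ summand, and a composition passing through such a summand picks up no factor of $h_t$. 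The obstruction is a property of $B$, not of $M$. Indeed, the only structural input you extract from Proposition \ref{prop:specialloc} (``the component at $p_-$ appears alone or glued to $R_{tp_-}$'') is equally satisfied by $B=R_{p_-}$ itself (take $I=J=\emptyset$), and for the identity maps $R_{p_-}\to R_{p_-}\to R_{p_-}$ the conclusion is false whenever $p_-\neq e$; so no argument using only what you use can close this step.

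What the paper's proof actually relies on is the finer content of Proposition \ref{prop:specialloc}: the pure summands $\Rloc{t}\otimes_R R_x$ occurring for a singular Soergel bimodule all satisfy $x<tx$ (this is the point of the condition ``$x<tx$'' there; in its proof it comes down to the fact that if $tx>x$ and $tx\notin xW_K$ then $txw>xw$ for all $w\in W_K$). Since $tp_-<p_-$, no pure summand $\Rloc{t}\otimes_R R_{p_-}$ can occur, so the component of the localised composition landing in the $p_-$-part of $\Rloc{t}\otimes_R R(p)$ factors only through glued summands $\Rloc{t}\otimes_R R_{tp_-,p_-}$; there your observation that sections with vanishing $tp_-$-component have $p_-$-component in $(h_t)$ finishes the local step, and the rest of your outline (including the reduction modulo $\Gamma_{>p}M$, which is justified by support considerations via Lemma \ref{lem:standsupp} rather than by ``additivity'') goes through as in the paper. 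Without this input the key local step fails.
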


\begin{proof} As in Lemma \ref{lem:Rpinv} let us regard $\RS{I}{p}{J}$ as the subalgebra of $W_I \times W_J$-invariants in $R(p)$. Using Theorem \ref{thm:ind} we obtain, for all $t \in T$, a commutative diagram (where the vertical inclusions are inclusions of abelian groups):
\begin{equation*}
\xymatrix@C=0.4cm@R=0cm{
 m \in M \ar[r] & B \ar[r] & \RS{I}{p}{J} \\
\cap & \cap & \cap \\
R \otimes_{R^I}  M \otimes_{R^J} R \ar[r] & R \otimes_{R^I} B \otimes_{R^J} R \ar[r] & R(p) \ni (f_x) \\
\cap & \cap & \cap \\
 \Rloc{t} \otimes_{R^I}  M \otimes_{R^J} R\ar[r] & \Rloc{t} \otimes_{R^I} B \otimes_{R^J} R \ar[r] & \Rloc{t} \otimes_R R(p)}
\end{equation*}
Denote by $f = (f_x)$ the image of $m \in M$ in $R(p)$  as shown. By $W_I \times W_J$-invariance, it is enough to show that $f_{p_-}$ is divisible by $m_p$.

To this end, let $t \in T$ satisfy $tp_- < p_-$. Considering elements supported on $\Gr{}{p_-}{}$ and $\Gr{}{tp_-}{}$ and using Lemma \ref{lem:pdecomp} and Proposition \ref{prop:specialloc} we see that the bottom row admits a morphism to a composition of the form
\begin{equation*}
  \Rloc{t} \otimes_R R_{p_-} \to \bigoplus \Rloc{t} \otimes_R R_{tp_-, p_-} \to \Rloc{t} \otimes_R R_{p_-}.
\end{equation*}
The composition of any two such maps must land in $h_t \Rloc{t} \otimes_R R_{p_-}$. It follows that
\begin{equation*}
  f_{p_-} \in R \cap \bigcap_{t \in T \atop tp_- < p_-} h_t \Rloc{t} \otimes_R R = m_pR
\end{equation*}
and the lemma follows.
\end{proof}

\begin{proof}[Proof of Theorem \ref{thm:specialstalk}]
First note that if the theorem is true for a module $B$, then it is true to any direct summand of $B$. Thus we may assume without loss of generality that $B \in \preBB{I}{J}$.

We begin with 1). Let $\alpha : \RS{I}{p}{J} \to B$ be a morphism. As $\supp \RS{I}{p}{J} = \Gr{I}{p}{J}$ the image of $\alpha$ is  contained in $\Gamma_{\le p} B$ and, by composing with the quotient map we obtain a map $\RS{I}{p}{J} \to \Gamma^{\le}_p B$. This yields a morphism
\begin{equation*}
  \Phi: \Hom(\RS{I}{p}{J}, B) \to \Hom(\RS{I}{p}{J}, \Gamma_p^{\le} B).
\end{equation*}
As $B$ has a nabla flag, any element of $B$ has support consisting of a union of $\Gr{I}{q}{J}$ for $q \in \W{I}{J}$ by Lemma \ref{lem:standsupp}. It follows that $\Phi$ is injective.

Let us now fix an isomorphism
\begin{equation*}
  \Gamma_p^{\le} B \cong P \cdot \RS{I}{p}{J}.
\end{equation*}
By Lemma \ref{lem:comp} above, given any $\alpha \in \Hom(\RS{I}{p}{J}, B)$ the image of $\Phi(\alpha)$ is contained in $P \cdot m_p \RS{I}{p}{J} \cong \Gamma_p^{\le} B[-2\ell(p_-)]$. Thus we obtain an injection
\begin{equation} \label{eq:quotiso}
  \Hom(\RS{I}{p}{J}, B) \to \Hom(\RS{I}{p}{J}, \Gamma_p^{\le}B)[-2\ell(p_-)].
\end{equation}
We compare ranks in order to show that this is an isomorphism.

Let us write $g \in \N[v,v^{-1}]$ for the coefficient of $\mst{I}{p}{J}$ in $\nabchar(N)$ written in the standard basis. By Theorem \ref{thm:BShom}, we have, as left $R^I$-modules,
\begin{align*}
  \Hom(\RS{I}{p}{J}, B)[\ell(p_-) - \ell(\wo{J})]
& \cong \Hom(\del{I}{p}{J}, B)[-\ell(\wo{J})] \\
& \cong \overline{ \langle v^{\ell(p_-) - \ell(p_+)} \mst{I}{p}{J}, \nabchar(B) \rangle }  \cdot R^I\\
& \cong \overline{g} \frac{\pi(p)}{\pi(J)} \cdot R^I.
\end{align*}
One the other hand,
\begin{align*}
  \Hom(\RS{I}{p}{J}, \Gamma_p^{\le}B)&[-\ell(p_-) - \ell(\wo{J})]  \cong \\
& \cong \overline{g} \cdot \nab{I}{p}{J}[-\ell(p_-) - \ell(\wo{J}) ] 
& \text{(Cor. \ref{cor:standardhoms})}\\
& = \overline{g} \cdot \RS{I}{p}{J}[\ell(p_+)-\ell(p_-) - \ell(\wo{J}) ] \\
& = \overline{g} \cdot \RS{I}{p}{J}[\ell(\wo{I})- \ell(\wo{I,p,J}) ] 
& \text{(\ref{eq:poinc1})}\\
& = \overline{g} \frac{\pi(I)}{\pi(I,  p, J)} \cdot R^I
& \text{(Cor. \ref{cor:relinv})}\\
& = \overline{g} \frac{\pi(p)}{\pi(J)} \cdot R^I.
& \text{(\ref{eq:poinc3})}\\
\end{align*}
Thus (\ref{eq:quotiso}) is an isomorphism and 1) follows.

We now turn to 2) which, of course, is similar. Let $\alpha : B \to \RS{I}{p}{J}$ be a morphism. For support reasons, $\alpha$ annihilates $\Gamma_{> p} B$ and hence factorises to yield a map
$\Gamma_p^{\ge}B \to \RS{I}{p}{J}$. We obtain in this way an injection
\begin{align*}
  \Phi: \Hom(B, \RS{I}{p}{J}) \to \Hom(\Gamma_p^{\ge}B, \RS{I}{p}{J}).
\end{align*}
Let us fix an isomorphism
\begin{equation*}
  \Gamma_p^{\ge} B \cong P \cdot \RS{I}{p}{J}
\end{equation*}
for some $P \in \N[v,v^{-1}]$.  By the above lemma if $\alpha \in \Hom(B, \RS{I}{p}{J})$ then the image of $\Phi(\alpha)$ is contained in $P \cdot m_p \RS{I}{p}{J}$ and thus we obtain an injection
\begin{equation*}
\Hom(B, \RS{I}{p}{J}) \to \Hom(\Gamma_p^{\ge}B, \RS{I}{p}{J})[-2\ell(p_-)].
\end{equation*}
Again we compare ranks.
Choose $h \in \N[v,v^{-1}]$ such that $\Gamma_p^{\ge} B \cong h \cdot \del{I}{p}{J}$. By Theorem \ref{thm:BShom} we have isomorphisms of left $R^I$-modules:
\begin{align*}
\Hom(B, \RS{I}{p}{J})[\ell(p_+)  - \ell(\wo{J})] & \cong 
\Hom(B, \nab{I}{p}{J})[-\ell(\wo{J})] \\
& \cong \overline{h} \frac{\pi(p) }{ \pi(J)} \cdot R^I.
\end{align*}
On the other hand
\begin{align*}
  \Hom(\Gamma_p^{\ge}B, & \RS{I}{p}{J})[-2\ell(p_-) + \ell(p_+) -\ell(\wo{J})] \cong \\
& \cong \Hom( h \cdot \del{I}{p}{J}, \RS{I}{p}{J})[-2\ell(p_-) + \ell(p_+) -\ell(\wo{J})] \\
& \cong \overline{h} \cdot \RS{I}{p}{J}[\ell(p_+)- \ell(p_-)  - \ell(\wo{J})]
\qquad \text{(Cor. \ref{cor:standardhoms})}\\
& \cong \overline{h} \frac{\pi(p)}{\pi(J)} \cdot R^I
\end{align*}
which completes the proof of 2).
\end{proof}

\subsection{The general homomorphism formula and classification}
\label{subsec:hom}
We can now prove the natural generalisation of Theorem \ref{thm:BShom} to all Soergel bimodules. For the duration of this section fix $I, J \subset S$ finitary.

\begin{theorem} \label{thm:hom}
If $M \in \BB{I}{J}$, $N \in \nabflag{I}{J}$ or $M \in \delflag{I}{J}$, $N \in \BB{I}{J}$ then $\Hom(M,N)$ is graded free as an $R^I$-module and we have an isomorphism
  \begin{equation*}
    \Hom(M, N)[-\ell(\wo{J})] \cong \overline{ \langle \delchar(M), \nabchar(N) \rangle } \cdot R^I
  \end{equation*}
of graded $R^I$-modules.
\end{theorem}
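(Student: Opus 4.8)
The plan is to bootstrap from the Bott--Samelson case (Theorem \ref{thm:BShom}) using the ``special stalk'' isomorphisms of Theorem \ref{thm:specialstalk}. The two cases of the statement are handled in the same way, so I describe the case $M \in \BB{I}{J}$, $N \in \nabflag{I}{J}$, in which one filters the second argument; the case $M \in \delflag{I}{J}$, $N \in \BB{I}{J}$ is entirely parallel (filter the first argument, use left-exactness of $\Hom(-,N)$, and invoke Theorem \ref{thm:specialstalk}(1) in place of (2)), and may alternatively be deduced from the first case via the duality $D$ together with Propositions \ref{prop:transcommute} and \ref{prop:D}. The graded freeness is then automatic: since $M \in \BB{I}{J}$ is a direct summand of some $B \in \preBB{I}{J}$, the module $\Hom(M,N)$ is a graded direct summand of $\Hom(B,N)$, which is a finitely generated graded free $R^I$-module by Theorem \ref{thm:BShom}; a finitely generated graded projective over the connected graded ring $R^I$ is graded free.

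First I would settle the case $N = \nab{I}{p}{J}$ for a single double coset $p$, where $\nabchar(N) = \mst{I}{p}{J}$. As $M$ is a Soergel bimodule it has a delta flag, and by Theorem \ref{thm:specialstalk}(2) we have $\Hom(M, \RS{I}{p}{J}) \cong \Hom(\Gamma^{\ge}_p M, \RS{I}{p}{J})[-2\ell(p_-)]$. Writing $\Gamma^{\ge}_p M \cong h_p(M) \cdot \del{I}{p}{J}$ and using $\End(\RS{I}{p}{J}) = \RS{I}{p}{J}$ (Corollary \ref{cor:standardhoms}), this becomes $\overline{h_p(M)} \cdot \RS{I}{p}{J}[-\ell(p_-)]$; shifting by $\ell(p_+) - \ell(\wo{J})$ and using \eqref{eq:poinc1}, Lemma \ref{cor:relinv} and \eqref{eq:poinc3} (the identities already applied in the proof of Theorem \ref{thm:specialstalk}) identifies $\Hom(M, \nab{I}{p}{J})[-\ell(\wo{J})]$ with $\overline{h_p(M)}\,\frac{\Poinc(p)}{\Poinc(J)} \cdot R^I$. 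On the other hand, Lemma \ref{lem:biform2} together with the self-duality of $\Poinc(p)$ and $\Poinc(J)$ gives $\overline{\langle \delchar(M), \mst{I}{p}{J}\rangle} = \overline{h_p(M)}\,\frac{\Poinc(p)}{\Poinc(J)}$, so the formula holds in this case.

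Now fix $M \in \BB{I}{J}$ and an arbitrary $N \in \nabflag{I}{J}$. Choose an enumeration of $\W{I}{J}$ refining the Bruhat order, with associated downwardly closed sets $C(m)$; by Lemma \ref{lem:hinundher} the filtration $\Gamma_{C(m)} N$ has subquotients isomorphic to $g_{p_m}(N) \cdot \nab{I}{p_m}{J}$. Applying the left-exact functor $\Hom(M,-)$ to each sequence $\Gamma_{C(m-1)}N \hookrightarrow \Gamma_{C(m)}N \twoheadrightarrow g_{p_m}(N) \cdot \nab{I}{p_m}{J}$ and inducting on $m$ shows that the graded rank of $\Hom(M,N)$ is bounded, coefficientwise, by that of $\bigoplus_p g_p(N) \cdot \Hom(M, \nab{I}{p}{J})$, which by the previous paragraph equals $v^{\ell(\wo{J})}\,\overline{\langle \delchar(M), \nabchar(N)\rangle}$. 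Now pick $B \in \preBB{I}{J}$ with $B \cong M \oplus M'$ and $M' \in \BB{I}{J}$; then $\delchar(B) = \delchar(M) + \delchar(M')$ and $\Hom(B,N) = \Hom(M,N) \oplus \Hom(M',N)$, so adding the rank bound for $M$ to that for $M'$ and comparing with the \emph{equality} provided by Theorem \ref{thm:BShom} for $B$ forces both bounds to be equalities. Combined with the graded freeness of $\Hom(M,N)$ noted in the first paragraph, this yields $\Hom(M,N)[-\ell(\wo{J})] \cong \overline{\langle \delchar(M), \nabchar(N)\rangle}\cdot R^I$.

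The only delicate point is this last step: one cannot simply argue that $\Hom(M,-)$ is exact on objects with nabla flags, since there is no $\Ext^1$-vanishing to appeal to --- for instance $\Ext^1_{R^I\otimes R^I}(\RS{I}{}{I}, \RS{I}{}{I})$ is nonzero. The coefficientwise rank inequality, upgraded to an equality by the additivity of both sides of the formula over a Bott--Samelson presentation $B \cong M \oplus M'$, is what replaces exactness. Everything else is routine bookkeeping with grading shifts, the bar involution, and the self-duality of the Poincar\'e polynomials, controlled by \eqref{eq:poinc1}--\eqref{eq:poinc4}, Lemma \ref{lem:biform2}, Lemma \ref{cor:relinv} and Corollary \ref{cor:standardhoms}.
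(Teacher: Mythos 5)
Your proposal is correct and follows essentially the same route as the paper: the base case $N=\nab{I}{p}{J}$ (resp. $M=\del{I}{p}{J}$) via Theorem \ref{thm:specialstalk} and Corollary \ref{cor:standardhoms}, then induction over the nabla (resp. delta) flag, using Theorem \ref{thm:BShom} for a Bott--Samelson bimodule containing the Soergel bimodule as a summand to force the left-exact sequences to contribute exactly the predicted graded ranks. The only difference is organizational: the paper phrases the forcing step as exactness of $\Hom(-,\widetilde{N})$ (hence of $\Hom(-,N)$) on each flag sequence, whereas you record a coefficientwise rank inequality and upgrade it to equality by a single comparison with $B\cong M\oplus M'$ at the end --- the same mechanism in slightly different packaging.
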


\begin{proof} We handle first the case $M \in \delflag{I}{J}$ and $N \in \BB{I}{J}$. We will prove the theorem via induction on the length of a delta flag of $M$. The base case where $M \cong \del{I}{p}{J}$ for some $p \in \W{I}{J}$ follows by essentially the same calculations as those in the proof of Theorem \ref{thm:specialstalk}. Namely, if we write $g$ for the coefficient of $\mst{I}{p}{J}$ in $\nabchar(N)$, we have
  \begin{align*}
    \Hom(\del{I}{p}{J},N) & \cong \Gamma_p^{\le} N[-\ell(p_-)] \\
 & \cong \overline{g} \cdot \RS{I}{p}{J}[\ell(p_+) -\ell(p_-)] 
 \qquad \text{(Theorem \ref{thm:specialstalk})}\\
 & \cong  \overline{g} \frac{\Poinc(I)}{\Poinc(I,p,J)} \cdot R^I[\ell(\wo{J})] \\
 & \cong \overline{g} \frac{\Poinc(p)}{\Poinc(J)} \cdot R^I[\ell(\wo{J})] \\
 & \cong \overline{\langle \delchar(\del{I}{p}{J}), \nabchar(N) \rangle } \cdot R^I[\ell(\wo{J}].
  \end{align*}

For the general case we may choose $p \in \W{I}{J}$ minimal with $\Gamma^pM \ne 0$ and obtain an exact sequence
\begin{equation} \label{eq:mseq}
  \Gamma_{\ne p}M \hookrightarrow M \sur \Gamma^p M.
\end{equation}
By the minimality of $p$, both $\Gamma_{\ne p}M$ and $\Gamma^pM$ are in $\delflag{I}{J}$ and
\begin{equation*}
  \delchar{M}= \delchar(\Gamma_{\ne p}M) + \delchar(\Gamma^pM).
\end{equation*}
As $N \in \BB{I}{J}$ there exists some $\widetilde{N}\in \preBB{I}{J}$ in which $N$ occurs as a direct summand. The homomorphism formula for Bott-Samelson modules (\ref{thm:BShom}) tells us  that $\Hom(-, \widetilde{N})$ is exact when applied to (\ref{eq:mseq}). Hence the same is true for $\Hom(-, N)$ and we conclude by induction that we have isomorphisms of graded $R^I$-modules:
\begin{align*}
  \Hom(M,N) & \cong \Hom(\Gamma_{\ne p}M,N) \oplus \Hom(\Gamma^pM, N) \\
& \cong \langle \overline{\delchar(M), \nabchar(N)} \cdot R^I[\ell(\wo{J})].
\end{align*}

The case when $M \in \BB{I}{J}$ and $N \in \nabflag{I}{J}$ is handled similarly. If $N$ is isomorphic to $\nab{I}{p}{J}$ for some $p \in \W{I}{J}$, then similar calculations to those in Theorem \ref{thm:specialstalk} verify the theorem in this case. For general $N$ we choose $p$ minimal with $\Gamma_p N \ne 0$ and obtain an exact sequence
\begin{equation*}
  \Gamma_{p} N \hookrightarrow N \sur N/\Gamma_p N.
\end{equation*}
Applying $\Hom(M, -)$ this stays exact for the same reasons as above, and the isomophism in the theorem follows by induction.
\end{proof}

We now come to the classification.
 
\begin{theorem} \label{thm:classification}
For every $p \in \W{I}{J}$ there is, up to isomorphism, a unique indecomposable module $\Bi{I}{p}{J} \in \BB{I}{J}$ satisfying
\begin{enumerate}
\item $\supp \Bi{I}{p}{J} \subset \Gr{I}{\le p }{J}$;
\item $\Gamma^p (\Bi{I}{p}{J}) \cong \nab{I}{p}{J}$.
\end{enumerate}
The bimodule $\Bi{I}{p}{J}$ is self-dual and any indecomposable object in $\BB{I}{J}$ is isomorphic to $\Bi{I}{p}{J}[\nu]$ for some $p \in \W{I}{J}$ and $\nu\in \Z$
\end{theorem}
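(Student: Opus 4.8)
\emph{Existence.} Fix $p\in\W{I}{J}$. Proposition~\ref{prop:transseqprod}, read through Theorems~\ref{thm:translation} and~\ref{thm:translation2} (and using that $\mst{I}{}{I}=\h{\wo{I}}$ is the identity of the Schur algebroid), produces a Bott--Samelson bimodule $B\in\preBB{I}{J}$ — a composition of translation functors applied to $\nab{I}{}{I}$ — with $\nabchar(B)=\mst{I}{p}{J}+\sum_{q<p}\lambda_q\mst{I}{q}{J}$, $\lambda_q\in\N[v,v^{-1}]$. The coefficient of $\mst{I}{p}{J}$ being $1$ gives $\Gamma^pB\cong\nab{I}{p}{J}$ (where $\Gamma^pB=\Gamma_{\le p}B/\Gamma_{<p}B$ here, as $\supp B\subseteq\Gr{I}{\le p}{J}$ by Lemmas~\ref{lem:standsupp} and~\ref{lem:support}). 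Decompose $B$ via Krull--Schmidt; since $\Gamma^p$ is additive and $\nab{I}{p}{J}$ is indecomposable ($\Hom(\nab{I}{p}{J},\nab{I}{p}{J})^0=(\RS{I}{p}{J})^0=k$, Corollary~\ref{cor:standardhoms}), exactly one indecomposable summand, denoted $\Bi{I}{p}{J}$, has $\Gamma^p\ne0$, and then $\Gamma^p\Bi{I}{p}{J}\cong\nab{I}{p}{J}$ and $\supp\Bi{I}{p}{J}\subseteq\Gr{I}{\le p}{J}$. This settles existence, and along the way yields the triangular shape of $\ch(\Bi{I}{p}{J})=\nabchar(\Bi{I}{p}{J})$ asserted in Theorem~\ref{thm:intro2}.

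\emph{Uniqueness.} Let $B_1,B_2$ be indecomposable with properties (1) and (2). A preliminary remark: such a $B$ also has $\Gamma_{\ge p}B\cong\del{I}{p}{J}[\ell(p_+)-\ell(p_-)]$, seen by embedding $B$ in a Bott--Samelson $\widehat B$ (for which $\nabchar(\widehat B)=\overline{\delchar(\widehat B)}$, again by Theorems~\ref{thm:translation},~\ref{thm:translation2}), together with additivity of $\Gamma_{\ge p}$, the fact that supports of nonzero sections of objects of $\nabflag{I}{J}$ are unions of the irreducible sets $\Gr{I}{q}{J}$ (Lemma~\ref{lem:standsupp}), and indecomposability of $\del{I}{p}{J}$. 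Now any bimodule map preserves all the submodules $\Gamma_C(-)$ (supports only shrink under morphisms), so induces maps on $\Gamma^p$, and $\Hom(\nab{I}{p}{J},\nab{I}{p}{J})^0=k$. Using Theorems~\ref{thm:hom} and~\ref{thm:specialstalk} (with the remark above) and Lemma~\ref{lem:biform2}, comparing $\dim\Hom(B_i,B_j)^0$ with $\dim\Hom(B_i,\Gamma_{<p}B_j)^0$ shows that the restriction map $\Hom(B_i,B_j)^0\to\Hom(\Gamma^pB_i,\Gamma^pB_j)^0=k$ is surjective (for both orderings of $1,2$). Choose $\alpha\colon B_1\to B_2$ and $\beta\colon B_2\to B_1$ of degree $0$ that are nonzero — hence invertible — on $\Gamma^p$. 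Then $\beta\alpha\in\End(B_1)$ is invertible on $\Gamma^pB_1$; since $\End(B_1)^0$ is local (Krull--Schmidt) and its radical consists exactly of the endomorphisms vanishing on $\Gamma^pB_1$, $\beta\alpha$ is a unit, i.e. an isomorphism. So $B_1$ is a direct summand of the indecomposable $B_2$, whence $B_1\cong B_2$.

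\emph{Self-duality and exhaustion.} The duality $D$ commutes with translation functors (Proposition~\ref{prop:transcommute}) and fixes $\RS{I}{}{I}$ up to a shift, so it preserves $\preBB{I}{J}$ and hence $\BB{I}{J}$; being an anti-equivalence it preserves indecomposability; it preserves supports (the relevant modules are graded free of finite rank over $R^I$, so $D$ leaves the annihilator in $R^I\otimes R^J$ unchanged); and it interchanges $\nabflag{I}{J}$ with $\delflag{I}{J}$, with $D\nab{I}{p}{J}\cong\del{I}{p}{J}[\ell(p_+)-\ell(p_-)]$ (Lemma~\ref{lem:Dnab}, Proposition~\ref{prop:D}). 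Applying $D$ to $\Gamma_{\ge p}\Bi{I}{p}{J}\hookrightarrow\Bi{I}{p}{J}$ — which is $D$-acyclic, the cokernel carrying a delta flag and so being $R^I$-free — and using Lemma~\ref{lem:Dnab} with~\eqref{eq:poinc1}, one gets a surjection $D\Bi{I}{p}{J}\sur\nab{I}{p}{J}$ with kernel supported on $\Gr{I}{<p}{J}$, so $\Gamma^p(D\Bi{I}{p}{J})\cong\nab{I}{p}{J}$; thus $D\Bi{I}{p}{J}$ satisfies (1), (2) and uniqueness forces $D\Bi{I}{p}{J}\cong\Bi{I}{p}{J}$. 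Finally let $B\in\BB{I}{J}$ be an arbitrary indecomposable, pick $p$ maximal in its finite double-coset support with $\Gamma^pB\ne0$, so $\Gamma^pB\cong g_p\cdot\nab{I}{p}{J}$ with $0\ne g_p\in\N[v,v^{-1}]$; after a shift $[\nu]$ a copy of $\nab{I}{p}{J}$ splits off $\Gamma^p(B[\nu])$. Repeating the homomorphism argument of the uniqueness step with the pair $\Bi{I}{p}{J}$, $B[\nu]$ (using that, on the chosen summand, the coefficient of $\mst{I}{p}{J}$ in each nabla character is $1$) produces maps whose composite is an automorphism of $\Bi{I}{p}{J}$; so $\Bi{I}{p}{J}$ is a summand of $B[\nu]$, and $B$ indecomposable gives $B\cong\Bi{I}{p}{J}[-\nu]$.

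The main obstacle is the pair of homomorphism computations in the uniqueness step, and in particular the surjectivity of the restriction-to-$\Gamma^p$ maps — equivalently, that the $\Gamma_{<p}$-part of the target contributes no $\Ext^1$-obstruction. This is precisely the content engineered by Theorems~\ref{thm:hom} and~\ref{thm:specialstalk}, which in turn rest on the splitting and vanishing results of Section~\ref{subsec:vansplit}; with those in hand the rest is careful grading bookkeeping and the standard Fitting-type argument via local endomorphism rings.
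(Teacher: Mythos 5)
Your proof is correct and follows essentially the same strategy as the paper: build a Bott--Samelson bimodule via Proposition~\ref{prop:transseqprod} whose nabla character is $\mst{I}{p}{J}$ plus lower terms, extract the unique indecomposable summand touching $\Gr{I}{p}{J}$ by Krull--Schmidt, and use the homomorphism formula (Theorem~\ref{thm:hom}) to show that restriction to $\Gamma^p$ is surjective, which feeds the standard Fitting/local-endomorphism argument for uniqueness and exhaustion. The only genuine divergences are expository: you derive self-duality by verifying that $D\Bi{I}{p}{J}$ satisfies (1)–(2) and invoking uniqueness (so you need the preliminary remark identifying $\Gamma_{\ge p}\Bi{I}{p}{J}$ and the fact that $D^2\cong\id$ on graded-free modules), whereas the paper gets it for free from the self-duality of the Bott--Samelson bimodule $\widetilde B$ and the fact that $\Bi{I}{p}{J}$ is its only summand supported on $\Gr{I}{p}{J}$; and you phrase the key surjectivity as a $\dim$ comparison rather than as exactness of $\Hom(M,-)$ on $\Gamma_{<p}N\hookrightarrow N\twoheadrightarrow\Gamma^pN$, which are the same computation. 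Both routes are fine; the paper's is slightly leaner on the self-duality step.
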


In keeping with our notational convention, if $I = J = \emptyset$ we will write $B_w$ instead of $\Bi{I}{w}{J}$.

\begin{proof} Choose $p \in \W{I}{J}$. By Proposition
  \ref{prop:transseqprod} we can find a sequence
  $(J_i)_{0 \le i \le n}$ of finitary subsets of $S$ such that $I =
  J_0$, for
all $0 \le i < n$ either $J_i \subset J_{i+1}$ or $J_{i} \supset
J_{i+1}$ and
\begin{equation*} H := \mst{J_0}{}{J_1}*_{J_1} \mst{J_1}{}{J_2}*_{J_2} \cdots *_{J_{n-1}} \mst{J_{n-1}}{}{J_n} = \mst{I}{p}{J} + \sum_{q < p} \lambda_{q} \mst{I}{q}{J}.
\end{equation*}
Consider the module
  \begin{equation*}
    \widetilde{B}= \nab{I}{}{I} \cdot \out{J_0}{J_1} \cdot \out{J_1}{J_2} \cdot \dots \cdot \out{J_{n-1}}{J_{n}} \in \BB{I}{J}.
  \end{equation*}
By Theorem \ref{thm:translation} and Proposition \ref{prop:transseqprod} we have
$\nabchar{\widetilde{B}} = H$.
Hence $\widetilde{B}$ satisfies conditions 1) and 2) in the theorem. Let $\Bi{I}{p}{J}$ te the unique indecomposable summand of $\widetilde{B}$ with non-zero support on $\Gr{I}{p}{J}$. Clearly $\Bi{I}{p}{J}$ also satisfies conditions 1) and 2).

Note that $\widetilde{B}$ is self-dual (because $\nab{I}{}{I}$ is and the translation functors commute with duality by Proposition \ref{prop:transcommute}). As $\Bi{I}{p}{J}$ is the only direct summand of $\widetilde{B}$ with support containing $\Gr{I}{p}{J}$, it follows that $\Bi{I}{p}{J}$ is also self-dual.

Let $M$ and $N$ be objects in $\BB{I}{J}$ and assume that $p$ is maximal for both modules with $\Gamma^pM \ne 0$ and $\Gamma^pN \ne 0$. Using Theorem \ref{thm:hom} we see that $\Hom(M, -)$ is exact when applied to the sequence
\begin{equation*}
  \Gamma_{\ne p} N \hookrightarrow N \sur \Gamma^p N.
\end{equation*}
In other words we have a surjection
\begin{equation*}
  \Hom(M, N) \sur \Hom(M, \Gamma^pN) = \Hom(\Gamma^pM, \Gamma^pN).
\end{equation*}
By symmetry, we also have a surjection
\begin{equation*}
  \Hom(N,M) \sur \Hom(\Gamma^pN, \Gamma^pM).
\end{equation*}
These surjections tell us that we can lift homomorphisms between $\Gamma^pM$ and $\Gamma^pN$ to $M$ and $N$.

Now assume that $M$ and $N$ are indecomposable. After shifting $M$ and $N$ if necessary we may find $\alpha : \Gamma^pM \to \Gamma^pN$ and $\beta: \Gamma^pN \to \Gamma^pM$ of degree zero, such that $\beta \circ \alpha$ is the identity on a fixed direct summand $\nab{I}{p}{J}$ in $\Gamma^pM$ and zero elsewhere. By the above arguments we may find lifts $\tilde{\alpha} : M \to N$ and $\tilde{\beta} : N \to M$ of $\alpha$ and $\beta$ of degree zero. As $M$ is indecomposable and $\tilde{b} \circ \tilde{\alpha}$ is not nilpotent it must be an isomorphism. Thus $\Gamma^p M\cong \nab{I}{p}{J}$ and $M$ is isomorphic to a direct summand of $N$. However $N$ is indecomposable by assumption and thus $M$ and $N$ are isomorphic.

We conclude that, for any fixed $p \in \W{I}{J}$, there is at most one isomorphism class (up to shifts) of indecomposable bimodules $B \in \BB{I}{J}$ such that $p$ is maximal with $\Gamma^p B \ne 0$.  The theorem then follows as we already know that $\Bi{I}{p}{J}$ satisfies these conditions. \end{proof}

The classification allows us to prove that indecomposable Soergel bimodules stay indecomposable when translated out of the wall:

\begin{proposition} \label{prop:outofthewall}
Let $K \subset I$ and $L \subset J$ be finitary subsets of $S$ and
\begin{equation*}
\quo : \W{K}{L} \to \W{I}{J}
\end{equation*}
be the quotient map. Choose $p \in \W{I}{J}$ and let $q$ be the unique maximal element in $\quo^{-1}(p)$.
\begin{enumerate}
\item In $\bMod{R^K}{R^L}$ we have an isomorphism
\begin{equation*}
R^K \otimes_{R^I} \Bi{I}{p}{J} \otimes_{R^J} R^L \cong \Bi{K}{q}{L}.
\end{equation*}
\item In $\bMod{R^I}{R^J}$ we have an isomorphism
\begin{equation*}
_{R^I}(\Bi{K}{q}{L})_{R^J} \cong \frac{\tPoinc(I)\tPoinc(J)}{\tPoinc(K)\tPoinc(L)} \cdot \Bi{I}{p}{J}.
\end{equation*}
\end{enumerate}
\end{proposition}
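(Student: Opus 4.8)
\emph{Proof strategy.} I would prove part (1) first, and then part (2) will drop out cheaply. Write $B:=\Bi{I}{p}{J}$ and $B':=R^K\otimes_{R^I}B\otimes_{R^J}R^L$. The plan rests on four observations. First, $B'\in\BB{K}{L}$: by the construction in the proof of Theorem \ref{thm:classification}, $B$ is a direct summand of a Bott--Samelson bimodule $\widetilde{B}\in\preBB{I}{J}$ obtained by translating $\nab{I}{}{I}$ along a sequence of finitary subsets which realizes $p$ (Proposition \ref{prop:transseqprod}); since $R^K\otimes_{R^I}\nab{I}{}{I}\cong\nab{K}{}{K}\cdot\on{K}{I}$ is again a Bott--Samelson bimodule (in $\preBB{K}{I}$), applying $R^K\otimes_{R^I}-\otimes_{R^J}R^L$ to $\widetilde B$ gives a Bott--Samelson bimodule in $\preBB{K}{L}$, and $B'$, being a summand of it, lies in $\BB{K}{L}$. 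Second, $\Bi{K}{q}{L}$ is a summand of $B'$: since $\supp B=\Gr{I}{\le p}{J}$ and $\quo^{-1}(\{\le p\})=\{\le q\}$, Lemma \ref{lem:support} gives $\supp B'=\Gr{K}{\le q}{L}$, while a computation of $\nabchar(B')$ via Theorems \ref{thm:translation}--\ref{thm:translation2} and Proposition \ref{prop:multform} shows that the coefficient of $\mst{K}{q}{L}$ in $\nabchar(B')$ is $1$, i.e.\ $\Gamma^q B'\cong\nab{K}{q}{L}$; by Theorem \ref{thm:classification} this forces $B'\cong\Bi{K}{q}{L}\oplus B''$ with $\supp B''\subseteq\Gr{K}{<q}{L}$. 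Third, $B'$ is self-dual, since $DB\cong B$ (Theorem \ref{thm:classification}) and translation out of the wall commutes with $D$ by Proposition \ref{prop:transcommute} (and its left-hand analogue); as $D$ fixes $\Bi{K}{q}{L}$, it also fixes $B''$.

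The fourth observation is the restriction computation from which part (2) follows. Choosing a homogeneous $R^I$-basis $\{b_j\}$ of $R^K$, each $R^I b_j$ is a sub-$(R^I,R^I)$-bimodule isomorphic to a shift of $R^I$, so by Lemma \ref{cor:relinv} one has $_{R^I}R^K\cong\frac{\tPoinc(I)}{\tPoinc(K)}\cdot R^I$ in $\bMod{R^I}{R^I}$; tensoring with any $X\in\bMod{R^I}{R^J}$ over $R^I$ and doing the same on the right yields
\[
{}_{R^I}\bigl(R^K\otimes_{R^I}X\otimes_{R^J}R^L\bigr)_{R^J}\;\cong\;\frac{\tPoinc(I)\tPoinc(J)}{\tPoinc(K)\tPoinc(L)}\cdot X .
\]
Taking $X=B$ gives $_{R^I}B'{}_{R^J}\cong\frac{\tPoinc(I)\tPoinc(J)}{\tPoinc(K)\tPoinc(L)}\cdot\Bi{I}{p}{J}$; once part (1) is proved (so that $B'=\Bi{K}{q}{L}$), this is exactly part (2).

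\emph{The hard part} is showing $B''=0$, i.e.\ that $B'$ has no summands beyond $\Bi{K}{q}{L}$. I would reduce this to the identity $\nabchar(B')=\nabchar(\Bi{K}{q}{L})$: by the homomorphism formula (Theorem \ref{thm:hom}), $\End(B')$ and $\End(\Bi{K}{q}{L})$ are graded free $R^K$-modules of ranks $\overline{\langle\delchar(-),\nabchar(-)\rangle}$, and self-duality gives $\delchar=\nabchar$ for each; if the two nabla characters agree, then $\End(\Bi{K}{q}{L})$ and $\End(B')$ have equal graded rank, and since $\Bi{K}{q}{L}$ is a summand of $B'$ this forces $\Hom(B',B'')=0$, hence $\Hom(B'',B'')=0$ and $B''=0$. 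It then remains to verify $\nabchar(B')=\nabchar(\Bi{K}{q}{L})$, and this is where the real bookkeeping lies: one computes $\nabchar(B')=\mst{K}{}{I}*_I\nabchar(\Bi{I}{p}{J})*_J\mst{J}{}{L}$ from the translation theorems (using their left-hand analogues) and the known shape $\nabchar(\Bi{I}{p}{J})=\mst{I}{p}{J}+\sum_{r<p}g_{r,p}\mst{I}{r}{J}$, and compares term by term — via Proposition \ref{prop:multform} and the relation $\quo^{-1}(\{\le p\})=\{\le q\}$ — with the corresponding expression for $\nabchar(\Bi{K}{q}{L})$, which is available once the classification and the self-dual basis statement are in hand; the leading terms match with coefficient $1$ by the second observation above, and self-duality controls the remaining coefficients. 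With this in place, part (1) follows, and part (2) is the displayed restriction isomorphism. I expect the extraction of the exact normalisation $\tfrac{\tPoinc(I)\tPoinc(J)}{\tPoinc(K)\tPoinc(L)}$ (as opposed to a bar-twisted version) to require a careful tracking of the grading shifts in the translation functors, but this is routine once the structure is set up.
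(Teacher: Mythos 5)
Your first three observations are sound, and your fourth observation (the restriction computation $_{R^I}(R^K\otimes_{R^I}X\otimes_{R^J}R^L)_{R^J}\cong\frac{\tPoinc(I)\tPoinc(J)}{\tPoinc(K)\tPoinc(L)}\cdot X$) is exactly the identity the paper's proof hinges on. But your ``hard part'' has a genuine gap. You propose to show $B''=0$ by comparing $\nabchar(B')$ with $\nabchar(\Bi{K}{q}{L})$ ``term by term,'' with ``self-duality controlling the remaining coefficients.'' This cannot work as stated: after the leading term, both $\nabchar(\Bi{I}{p}{J})=\mst{I}{p}{J}+\sum_{r<p}g_{r,p}\mst{I}{r}{J}$ and $\nabchar(\Bi{K}{q}{L})=\mst{K}{q}{L}+\sum_{r<q}g'_{r,q}\mst{K}{r}{L}$ have \emph{unknown} coefficients --- pinning those down is precisely the content of Soergel's conjecture, which is not available as input here. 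Self-duality of $B''$ (and hence of $\nabchar(B'')$) is not enough either: there are plenty of nonzero self-dual elements of $\He{K}{L}$ supported on $\{<q\}$ (every $\mkl{K}{r}{L}$ with $r<q$, for instance), so self-duality does not force $\nabchar(B'')=0$.

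The paper proves $B''=0$ by an argument you already have almost all the pieces for but do not assemble. From the indecomposability of the subquotient $R(p)^{W_K\times W_L}$ (Theorem~\ref{thm:ind}) one gets $B'\cong\Bi{K}{q}{L}\oplus M$, with $\supp M\subset\Gr{K}{\quo^{-1}(\{<p\})}{L}$. Now restrict everything back to $\bMod{R^I}{R^J}$ and apply your own fourth observation: ${}_{R^I}(B')_{R^J}\cong P\cdot\Bi{I}{p}{J}$ with $P=\frac{\tPoinc(I)\tPoinc(J)}{\tPoinc(K)\tPoinc(L)}$. On the other hand, by computing $\Gamma_{\le p}/\Gamma_{<p}$ of ${}_{R^I}(\Bi{K}{q}{L})_{R^J}$ via Proposition~\ref{prop:transsupp}, one finds that ${}_{R^I}(\Bi{K}{q}{L})_{R^J}$ already contains $P\cdot\Bi{I}{p}{J}$ as a direct summand, i.e. ${}_{R^I}(\Bi{K}{q}{L})_{R^J}\cong P\cdot\Bi{I}{p}{J}\oplus N$. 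Plugging this into the restricted decomposition gives
\begin{equation*}
P\cdot\Bi{I}{p}{J}\;\cong\;P\cdot\Bi{I}{p}{J}\;\oplus\;{}_{R^I}M_{R^J}\;\oplus\;N,
\end{equation*}
and Krull--Schmidt cancellation forces $M=N=0$, proving parts (1) and (2) simultaneously. So the restriction isomorphism you treat as a consequence of part (1) is in fact the engine that proves it; you should use it before knowing $B'=\Bi{K}{q}{L}$, not after. (A smaller remark: your first observation that $B'\in\BB{K}{L}$ tacitly assumes left translation preserves Bott--Samelson bimodules, which is true but needs a word, e.g.\ via the explicit description of Bott--Samelson bimodules as iterated tensor products along a zig-zag chain of finitary subsets.)
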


\begin{proof} For the course of the proof let use define
\begin{equation*}
P = \frac{\tPoinc(I)\tPoinc(J)}{\tPoinc(K)\tPoinc(L)}.
\end{equation*}
The composition of inducing to $\bMod{R^K}{R^L}$ and restricting to $\bMod{R^I}{R^J}$ always produces a factor of $P$. To get started, note that $\Gamma^p(\Bi{I}{p}{J}) \cong \nab{I}{p}{J}$ and hence (using Proposition \ref{prop:transsupp})
\begin{align*}
\Gamma_{\quo^{-1}(\{ \le p\})} (R^K \otimes_{R^I} \Bi{I}{p}{J} \otimes_{R^J} R^L) /
\Gamma_{\quo^{-1}(\{< p\})} & (R^K \otimes_{R^I} \Bi{I}{p}{J} \otimes_{R^J} R^L) \cong \\
& \cong R^K \otimes_{R^I} \nab{I}{p}{J} \otimes_{R^J} R^L
\end{align*}
The latter is isomorphic to a shift of $R(p)^{W_K \times W_L}$ by Theorem \ref{thm:ind} and hence is indecomposable. By the classification, we may write
\begin{equation} \label{eq:indiso}
R^K \otimes_{R^I} \Bi{I}{p}{J} \otimes_{R^J} R^L \cong \Bi{K}{q}{L} \oplus M
\end{equation}
for some $M \in \BB{K}{L}$ whose support is contained in $\Gr{K}{\quo^{-1}(\{<q\})}{L}$. It follows that 
\begin{equation*}
\Gamma_{\quo^{-1}(\{\le p\})} (\Bi{K}{q}{L}) /
\Gamma_{\quo^{-1}(\{< p\})} (\Bi{K}{q}{L}) \cong R^K \otimes_{R^I} \nab{I}{p}{J} \otimes_{R^J} R^L
\end{equation*}
This tells us (again by Proposition \ref{prop:transsupp}) that
\begin{align*}
\Gamma_{\le p} ({}_{R^I}(\Bi{K}{q}{L})_{R^J}) /
\Gamma_{< p} ({}_{R^I}(\Bi{K}{q}{L})_{R^J}) & \cong {}_{R^I}(R^K \otimes_{R^I} \nab{I}{p}{J} \otimes_{R^J} R^L)_{R^J} \\
& \cong P \cdot \nab{I}{p}{J}
\end{align*}
Therefore we may write
\begin{equation*}
{}_{R^I}(\Bi{K}{q}{L})_{R^J} \cong P \cdot \Bi{I}{p}{J} \oplus N
\end{equation*}
for some $N \in \BB{I}{J}$. Restricting (\ref{eq:indiso}) to $\bMod{R^I}{R^J}$ yields
\begin{align*}
P \cdot \Bi{I}{p}{J} 
\cong {}_{R^I}(\Bi{K}{q}{L})_{R^J} \oplus {}_{R^I} M_{R^J} \cong 
P \cdot \Bi{I}{p}{J} 
\oplus {}_{R^I} M_{R^J} \oplus N
\end{align*}
whence $M = N = 0$. Both claims then follow.
\end{proof}

\subsection{Characters and Soergel's conjecture} \label{sec:characters}

In this section we turn our attention to the characters of Soergel bimodules. We will see in the following theorem that the nabla character of a singular Soergel bimodule is determined by its delta character (and vice versa). Therefore we simplify notation and define
\begin{equation*}
  \ch(B) = \delchar(B)
\end{equation*}
for all Soergel bimodules $B$.

\begin{theorem} \label{thm:categorification}
Let $I, J$ and $K$ be finitary subsets of $S$.
\begin{enumerate}
\item For all $B \in \BB{I}{J}$ we have $\nabchar(B) = \overline{\delchar(B)}$.
\item We have a commutative diagram
\begin{equation*}
\xymatrix@C=2cm{ \BB{I}{J} \times \BB{J}{K} \ar[r]^{- \otimes_{R^J} - } \ar[d]^{\ch \times \ch} & \BB{I}{K} \ar[d]^{\ch} \\
\He{I}{J} \times \He{J}{K} \ar[r]^{-*_J-} & \He{I}{K}}.
\end{equation*}
\item The set $\{ \ch(\Bi{I}{p}{J}) \; | \; p \in \W{I}{J} \}$ builds a self-dual basis for $\He{I}{J}$.
\end{enumerate}
\end{theorem}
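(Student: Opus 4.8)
These three statements follow by assembling the machinery already in place: the translation‑functor Theorems \ref{thm:translation} and \ref{thm:translation2}, the homomorphism formula Theorem \ref{thm:hom}, the classification Theorem \ref{thm:classification}, and the duality of Proposition \ref{prop:D}. After that it is organisation and bookkeeping, and the genuine difficulty is confined to one reduction step described at the end.

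\textbf{Part (1).} I would first prove $\nabchar(B)=\overline{\delchar(B)}$ for $B\in\preBB{I}{J}$ by induction along the inductive definition of the Bott--Samelson category. The base case $B=\RS{I}{}{I}$ is a direct computation: this module is supported on the single double coset $W_IW_I$, so both characters are monomials in $\mst{I}{}{I}$, and comparing the grading shifts in the definitions of $\nabflag{I}{J}$ and $\delflag{I}{J}$, together with the self‑duality of $\mst{I}{}{I}=\h{\wo I}$ (cf. \eqref{eq:longestpar}), gives the equality. For the inductive step one applies $-\cdot\out{J}{K}$ with $J\subset K$ or $J\supset K$: by Theorems \ref{thm:translation}(2) and \ref{thm:translation2}(2) one has $\nabchar(B\cdot\out{J}{K})=\nabchar(B)*_J\mst{J}{}{K}$ and $\delchar(B\cdot\out{J}{K})=\delchar(B)*_J\mst{J}{}{K}$; since $\mst{J}{}{K}$ is a Kazhdan--Lusztig basis element (one of $\h{\wo J}$, $\h{\wo K}$) it is self‑dual, and since the Poincar\'e polynomials occurring in $*_J$ are self‑dual by \eqref{eq:poinc4}, the bar involution commutes with $-*_J\mst{J}{}{K}$, whence $\overline{\delchar(B\cdot\out{J}{K})}=\overline{\delchar(B)}*_J\mst{J}{}{K}=\nabchar(B)*_J\mst{J}{}{K}=\nabchar(B\cdot\out{J}{K})$. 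For a general $B\in\BB{I}{J}$, both $\nabchar$ and $\overline{\delchar(\cdot)}$ are additive on direct sums and transform identically under grading shifts, so it suffices to treat the indecomposables $\Bi{I}{p}{J}$; recalling from the proof of Theorem \ref{thm:classification} that $\Bi{I}{p}{J}$ occurs as a direct summand of a Bott--Samelson bimodule $\widetilde B$ with complementary summand supported inside $\Gr{I}{<p}{J}$ --- hence a sum of shifts of $\Bi{I}{q}{J}$ with $q<p$ --- the identity for $\Bi{I}{p}{J}$ follows from the one for $\widetilde B$ by induction on the Bruhat order on $\W{I}{J}$.

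\textbf{Part (2).} Since $-\otimes_{R^J}-$, $-*_J-$ and $\ch=\delchar$ are all additive and compatible with grading shifts, it is enough, by Krull--Schmidt and Theorem \ref{thm:classification}, to prove $\ch(B\otimes_{R^J}B')=\ch(B)*_J\ch(B')$ when $B'=\Bi{J}{q}{K}$. I would induct on $q$. Realise $\Bi{J}{q}{K}$ as a summand of a Bott--Samelson bimodule $\widetilde B=\nab{J}{}{J}\cdot\out{J}{J_1}\cdot\dots\cdot\out{J_{n-1}}{K}$ as in the proof of Theorem \ref{thm:classification}, with $\widetilde B\cong\Bi{J}{q}{K}\oplus N$ and $N$ supported inside $\Gr{J}{<q}{K}$. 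Because tensoring on the left by $B$ commutes with the translation functors (which act only on the right side), $B\otimes_{R^J}\widetilde B$ is obtained from $B$ by the same chain of translation functors; iterating Theorem \ref{thm:translation2}(2), and matching the grading shifts against the value of $\ch$ on the generating object $\nab{J}{}{J}$, gives $\ch(B\otimes_{R^J}\widetilde B)=\ch(B)*_J\ch(\widetilde B)$. By the inductive hypothesis (and additivity) the formula also holds for $B\otimes_{R^J}N$; subtracting the two identities --- legitimate since all three operations are additive --- yields the formula for $B\otimes_{R^J}\Bi{J}{q}{K}$. Extending to arbitrary $B'\in\BB{J}{K}$ is then another appeal to additivity.

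\textbf{Part (3).} Fix $p\in\W{I}{J}$. Because $\supp\Bi{I}{p}{J}\subset\Gr{I}{\le p}{J}$, the element $\ch(\Bi{I}{p}{J})\in\He{I}{J}$ is a combination of $\mst{I}{q}{J}$ with $q\le p$; because $\Gamma^p(\Bi{I}{p}{J})\cong\nab{I}{p}{J}$ occurs with multiplicity one, the coefficient of $\mst{I}{p}{J}$ equals $1$ (using part (1) to translate between the $\nabla$‑ and $\Delta$‑normalisations of this coefficient); and all coefficients of $\ch$ are graded multiplicities of standard modules, hence lie in $\N[v,v^{-1}]$. Thus $\ch(\Bi{I}{p}{J})=\mst{I}{p}{J}+\sum_{q<p}g_{q,p}\mst{I}{q}{J}$ with $g_{q,p}\in\N[v,v^{-1}]$, so the transition matrix from $\{\mst{I}{p}{J}\}$ to $\{\ch(\Bi{I}{p}{J})\}$ is unitriangular with respect to the Bruhat order and therefore invertible over $\Z[v,v^{-1}]$; hence $\{\ch(\Bi{I}{p}{J})\}$ is a basis of $\He{I}{J}$. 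For self‑duality: $\Bi{I}{p}{J}$ is self‑dual by Theorem \ref{thm:classification}, so $D\Bi{I}{p}{J}\cong\Bi{I}{p}{J}$, and then Proposition \ref{prop:D} gives $\delchar(\Bi{I}{p}{J})=\delchar(D\Bi{I}{p}{J})=\nabchar(\Bi{I}{p}{J})$, which equals $\overline{\delchar(\Bi{I}{p}{J})}$ by part (1); so $\ch(\Bi{I}{p}{J})$ is self‑dual.

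\textbf{The main obstacle.} The only non‑formal step is the reduction to Bott--Samelson bimodules used in parts (1) and (2): additivity of characters alone does not let one descend an identity from a Bott--Samelson bimodule to one of its direct summands. The device that makes the induction go through is the fact, already extracted in the proof of Theorem \ref{thm:classification}, that each indecomposable $\Bi{}{p}{}$ sits inside a Bott--Samelson bimodule with \emph{all other summands supported strictly lower in the Bruhat order}, permitting an induction on that order. A secondary but genuinely fiddly point throughout part (2) is the careful tracking of grading shifts through iterated translation functors so that the powers of $v$ agree on the two sides.
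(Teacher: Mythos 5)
Your proposal follows essentially the same route as the paper's proof: establish the formulas for Bott--Samelson bimodules via Theorems \ref{thm:translation} and \ref{thm:translation2}, then descend to the indecomposables $\Bi{I}{p}{J}$ by induction on the Bruhat order using the decomposition $\widetilde{B}\cong\Bi{I}{p}{J}\oplus\widetilde{N}$ with $\widetilde{N}$ supported strictly lower, and deduce part (3) from the unitriangularity in Theorem \ref{thm:classification} together with the self-duality of $\Bi{I}{p}{J}$ and Proposition \ref{prop:D}. The extra details you supply for the Bott--Samelson base case of (1) (compatibility of the bar involution with $-*_J\mst{J}{}{K}$, via self-duality of $\mst{J}{}{K}$ and of $\Poinc(J)$) are correct and make explicit a step the paper leaves implicit.
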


\begin{proof} We begin with 1). As $\nabchar(\RS{I}{}{I}) = \overline{\delchar(\RS{I}{}{I})}$ we may use Theorems \ref{thm:translation} and \ref{thm:translation2} to conclude that the statement is true for all Bott-Samelson bimodules. We now use induction over the Bruhat order on $\W{I}{J}$ to show that $\nabchar(\Bi{I}{p}{J})= \overline{\delchar(\Bi{I}{p}{J})}$ for all $p  \in \W{I}{J}$, which implies the claim. If $p$ contains the identity, then $\Bi{I}{p}{J}$ is Bott-Samelson and so the claim is true. For general $p \in \W{I}{J}$ we may (as in the proof of Theorem \ref{thm:classification}) find a Bott-Samelson module $N$ such that $N \cong \Bi{I}{p}{J} \oplus \widetilde{N}$ and the support of $\widetilde{N}$ is contained in $\Gr{I}{<p}{J}$. We have
\begin{align*}
\nabchar(\Bi{I}{p}{J}) + \nabchar(\widetilde{N}) = \nabchar(N) = \overline{\delchar(N)} = \overline{\delchar(\Bi{I}{p}{J})} + \overline{\delchar(\widetilde{N})}.
\end{align*}
By induction $\nabchar(\widetilde{N}) = \overline{\delchar(\widetilde{N})}$ and the claim follows.

Statement 2) follows by a very similar argument. It is clear from Theorem \ref{thm:translation2} that the statement is true for Bott-Samelson bimodules.
Let us fix $M \in \BB{I}{J}$. It is enough to show that $\ch(M \otimes_{R^J} \Bi{J}{p}{K}) = \ch(M) *_J \ch(\Bi{J}{p}{K})$ for all $p \in \W{J}{K}$. Again we induct over the Bruhat order on $\W{J}{K}$. If $p$ is minimal then $\Bi{J}{p}{K}$ is Bott-Samelson and the claim follows by Theorem \ref{thm:translation2}. If $p \in \W{J}{K}$ is arbitrary then we may find, as above, a Bott-Samelson bimodule $N \in \preBB{J}{K}$ which decomposes as $N \cong \Bi{J}{p}{K} \oplus \widetilde{N}$ with the support of $\widetilde{N}$ contained in $\Gr{I}{<p}{J}$. We have\begin{align*}
\ch(M\otimes_{R^J}& \Bi{J}{p}{K}) + \ch(M \otimes_{R^J} \widetilde{N}) = 
 \ch(M \otimes_{R^J} N) = \\ = & \ch(M)*_J \ch(N)  = \ch(M)*_J \ch(\Bi{J}{p}{K}) + \ch(M) *_J \ch(\widetilde{N}).
\end{align*}
By induction $\ch(M \otimes_{R^J} \widetilde{N}) = \ch(M) *_J \ch(\widetilde{N})$ and the claim follows.

We now turn to 3). By Theorem \ref{thm:classification}, we have
\begin{equation*}
  \ch(\Bi{I}{p}{J}) = \mst{I}{p}{J} + \sum_{q < p} \lambda_q \mst{I}{q}{J}
\end{equation*}
for some $\lambda_q \in \N[v,v^{-1}]$. It follows that the set $\{ \ch(\Bi{I}{p}{J}) \}$ gives a basis for $\He{I}{J}$. The self-duality of $\ch(\Bi{I}{p}{J})$ follows from the self-duality of $\Bi{I}{p}{J}$ and Proposition \ref{prop:D}:
\begin{equation*}
  \ch(\Bi{I}{p}{J})  = \delchar(D\Bi{I}{p}{J}) = \nabchar(\Bi{I}{p}{J})  = \overline{\ch(\Bi{I}{p}{J})}. \qedhere
\end{equation*}
\end{proof}

Given the theorem it is desirable to understand this basis  $\{ \ch(\Bi{I}{p}{J}) \}$ for $p \in \W{I}{J}$ more explicitly. We will finish by recalling Soergel's conjecture on the characters of the indecomposable bimodules in $\BB{}{}$ (recall that we write $\BB{}{}$ instead of $\BB{\emptyset}{\emptyset}$).

In \cite{SoBimodules} Soergel considers the full subcategory of $\bMod{R}{R}$ consisting of all objects isomorphic to direct sums, summands and shifts of objects of the form
\begin{equation} \label{eq:iteratedtensor}
R \otimes_{R^s} R \otimes_{R^t} \dots \otimes_{R^u} R
\end{equation}
where $s,t, \dots, u \in S$ are simple reflections. A priori, this category may not contain all objects of $\BB{}{}$. However using the same arguments as in the proof of Theorem \ref{thm:classification} one can show that one obtains all indecomposable objects in $\BB{}{}$ as direct summands of bimodules of the form (\ref{eq:iteratedtensor}) for reduced expressions $st\dots u$. Thus Soergel's category is precisely $\BB{}{}$.

\excise{
Let us temporarily denote this category by $\BB{}{}{}_S$.

Clearly all objects in $\BB{}{}{}_S$ lie in $\BB{}{}{}$. On the other hand, if $w \in W$ and $st\dots u$ is a reduced expression for $w$ then, by Theorem \ref{translation2}
\begin{equation*}
\ch (R \otimes_{R^s} R \otimes_{R^t} \dots \otimes_{R^u} R)
= \h{s}\h{t}\dots \h{u}
\end{equation*}
and hence
\begin{equation*}
R \otimes_{R^s} R \otimes_{R^t} \dots \otimes_{R^u} R \cong B_w \oplus \bigoplus_{x < w} \lambda_x \cdot B_x
\end{equation*}
for some polynomials $\lambda_x \in \N[v,v^{-1}]$. It follows that $\BB{}{}_S$ contains all indecomposable objects in $\BB{}{}$, and hence the two categories are the same.}

The following is Vermutung 1.13 in \cite{SoBimodules}.

\begin{conjecture}(Soergel) For all $w \in W$ we have $\ch(B_w) = \h{w}$. \end{conjecture}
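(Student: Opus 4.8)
The plan is to reduce the conjecture, via the classification already in hand, to a \emph{positivity} statement about decomposition numbers, and then to prove that positivity by developing a ``Hodge theory'' of Soergel bimodules which plays the role that the decomposition theorem plays in the cases where geometry is available. (By Theorem \ref{thm:intro4} this would simultaneously settle the characters of all singular indecomposables, so it suffices to treat $\BB{}{}$.) First I would recall the geometric proof, which fixes the shape of the argument: when $(W,S)$ is a finite or affine Weyl group one may take $V$ to be a Cartan, so that $R$ is the equivariant cohomology of a point; Soergel's functor $\mathbb{V}$ then identifies $\BB{}{}$ with Borel-equivariant semisimple complexes on $G/B$, Bott--Samelson bimodules with Bott--Samelson resolutions, and the indecomposable $B_w$ with the intersection cohomology sheaf $\IC_w$. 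The decomposition theorem says that the pushforward of a Bott--Samelson resolution $\mathrm{BS}(\underline w)\to X_w\subset G/B$ decomposes into shifts of the $\IC_x$, Kazhdan--Lusztig's theorem identifies the multiplicities with Kazhdan--Lusztig polynomials, and combining this with the categorification statements of Section \ref{sec:characters} (Theorem \ref{thm:categorification}) yields $\ch(B_w)=\h{w}$. Equivalently, for $sw>w$ the Soergel bimodule $B_s\otimes_R B_w$ decomposes as $B_{sw}\oplus\bigoplus_{sx<x<w} B_x^{\oplus\mu(x,w)}$, with $\mu(x,w)$ the Kazhdan--Lusztig $\mu$-coefficient.

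For an arbitrary Coxeter system the strategy is to retain this bookkeeping but to supply, by hand, the two consequences of the decomposition theorem that it uses. The first is ``soft'': that the morphism spaces between Bott--Samelson bimodules are as large as they can possibly be, so that idempotents lift and decomposition multiplicities can be read off. This should follow from a diagrammatic presentation of $\preBB{}{}$ of Elias--Khovanov type together with Libedinsky's light-leaf basis, which exhibits $\Hom(\mathrm{BS}(\underline x),\mathrm{BS}(\underline y))$ as a graded free $R$-module with an explicit basis; alternatively it can be bootstrapped once characters are known from the homomorphism formula (Theorem \ref{thm:hom}). The second ingredient is the real content: \emph{positivity}, i.e.\ that the decomposition numbers equal the $\mu(x,w)$ and not something larger. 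I would establish this by an inductive Hodge theory, carried out on $\ell(w)$ and, within a fixed length, on the Bruhat order, proving simultaneously a hard Lefschetz theorem for multiplication by a generic degree-two element on (the coinvariant reduction of) $B_w$ and Hodge--Riemann bilinear relations for the pairing on $B_w$ coming from the intersection form of any Bott--Samelson resolution, restricted to the primitive subspaces. In the single degree that governs $\mu(x,w)$ the Hodge--Riemann relations force the relevant ``intersection form'' to be definite, hence of minimal possible rank, which pins the multiplicity of $B_x$ in $B_s\otimes_R B_w$ to $\mu(x,w)$ and gives the character formula. The inductive step propagates the relations from lower length and from smaller rank by a one-parameter deformation of the Lefschetz operator in the style of de Cataldo--Migliorini, applied to Rouquier-type complexes of Soergel bimodules.

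The main obstacle is precisely the Hodge--Riemann step. With no underlying variety there is nothing to quote: definiteness of the intersection forms must instead be deduced from hard Lefschetz in strictly lower cohomological degree together with a delicate limiting argument that preserves signatures (the crux being that a limit of nondegenerate forms of a fixed signature retains that signature on the appropriate primitive subspace, provided the limit stays nondegenerate there). Making the double induction self-consistent — choosing the Lefschetz operators, verifying that the deformation never degenerates, and handling the two-colour relations that encode the $m_{st}$-term braid relations — is where essentially all of the work lies. By contrast, the scaffolding needed is already in place and requires nothing further: the translation functors and their effect on characters (Theorem \ref{thm:translation}), the general homomorphism formula (Theorem \ref{thm:hom}), the classification of indecomposables (Theorem \ref{thm:classification}), and the Hecke-algebra combinatorics of Section \ref{sec:coxgroups}. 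In short, the plan is to import the geometric argument verbatim and replace its one geometric input, the decomposition theorem, by an algebraic Hodge theory proved by induction on length; the hard part is entirely the signs.
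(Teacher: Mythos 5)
The statement you were asked to prove is, in this paper, exactly that — a conjecture (Vermutung~1.13 of \cite{SoBimodules}). The paper supplies no proof, and at the time of writing none existed. There is therefore no ``paper's own proof'' against which to compare your attempt; the paper's role is only to set up the categorification and to record (Theorem~\ref{thm:intro4}) that a proof in the regular case $I=J=\emptyset$ would automatically yield the singular character formula $\ch(\Bi{I}{p}{J}) = \mkl{I}{p}{J}$, which you correctly invoke as the reduction.

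Your proposal itself is a research program, not a proof, and you are candid about this: the decisive hard Lefschetz and Hodge--Riemann statements for (coinvariant reductions of) $B_w$ are located, not established. What you have sketched is, step for step, the strategy that Elias and Williamson carried out successfully several years after this paper appeared: a ``soft'' step giving control of $\Hom$ spaces between Bott--Samelson bimodules via a light-leaves basis and a diagrammatic presentation of $\preBB{}{}$; a ``hard'' step proving hard Lefschetz and Hodge--Riemann bilinear relations by a double induction with a de Cataldo--Migliorini-style one-parameter deformation of the Lefschetz operator, using Rouquier complexes; and then reading off $\ch(B_w)=\h{w}$ from the resulting definiteness of intersection forms, exactly as the decomposition theorem does in the geometric cases. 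The scaffolding you cite from the present paper (Theorems~\ref{thm:translation}, \ref{thm:hom}, \ref{thm:classification}, \ref{thm:categorification}) is indeed all available and is used in that proof. But nothing in this paper, nor in your sketch, actually closes the induction: you do not specify the Lefschetz operators, you do not prove nondegeneracy under the deformation, and you defer the two-colour $m_{st}$-relations entirely. Until those are supplied, the conjecture remains a conjecture — which is precisely the status the paper assigns it.
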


If Soergel's conjecture is true then, by Proposition \ref{prop:outofthewall},
\begin{equation*}
\ch(R \otimes_{R^I} \Bi{I}{p}{J} \otimes_{R^J} R) = \ch(\Bi{}{p_+}{}) = \h{p_+}.
\end{equation*}
By Theorem \ref{thm:categorification}, $\ch(R \otimes_{R^I} \Bi{I}{p}{J} \otimes_{R^J} R)$ is equal to $\ch(\Bi{I}{p}{J})$ regarded as an element of $\He{}{}$. Hence
\begin{equation*}
\ch(\Bi{I}{p}{J}) = \mkl{I}{p}{J}.
\end{equation*}

\subsection{Acknowledgements}
This paper is a version of my PhD thesis at the University of Freiburg written under the supervision of Wolfgang Soergel. I would like to thank him warmly for his support and inspiration during my years in Freiburg. He suggested that a systematic study of singular Soergel bimodules would be worthwhile, and patiently explained many of the arguments in \cite{SoBimodules}. I am also greatly indebted to Matthew Dyer, who pointed out an error in an earlier version of this work. Peter Fiebig also offered numerous helpful suggestions, and it is at his suggestion that I used the modules $R(X)$ to control the extension of scalars of standard modules. I am indebted to Marco Mackaay for pointing out that what I had been calling the Hecke category was another reincarnation of the Schur algebra, which explains its renaming to Schur algebroid in this paper. Lastly I would like to thank Raph\"ael Rouquier, Olaf Schn\"urer, Catharina Stroppel and Ben Webster for many useful discussions.

\newpage

\section{Erratum for ``Singular Soergel bimodules''}

\maketitle

Theorem 1.2 in the paper \cite{SSBim} (Theorem 7.12(2) in the main body) is false as
stated. For example, consider $R^I$ as an $(R^I,
R^I)$-bimodule. This is a singular Soergel bimodule and
\[
R^I \otimes_{R^I} R^I \cong R^I.
\]
Thus, $R^I$ should be mapped to an idempotent in ${}^I \HC^I$ under
$\ch$. The only reasonable possibility is
\[
\ch(R^I) =
{}^IH^I.\] However, this is not the case:
\begin{gather*}
\ch_{\nabla}(R^I) = \ch( v^{-\ell(w_I)} \cdot R^I[\ell(w_I)]) =
\ch_{\nabla}( v^{-\ell(w_I)} \cdot {}^I\nabla ^I) = v^{\ell(w_I)}
\cdot {}^IH^I \\
\ch_{\Delta}(R^I) = \ch_{\Delta}({}^I\Delta^I) = v^{-\ell(w_I)}\cdot
{}^IH^I \\
\ch(R^I) = \ch_{\Delta}(R^I) = v^{-\ell(w_I)}\cdot
{}^IH^I 
\end{gather*}
(For the first line, we use the formulas at the beginning of \S3, and the definition of the
nabla module and character on page 4599. For the second line we use
the definition of the delta module on 4609 and delta character on
4610. The last line uses the definition of $\ch$ at the beginning of
\S 7.5.)

\subsection{The mistake}
The mistake is the second sentence of the second paragraph of the
proof of Theorem 7.12: ``It is clear from Theorem 6.14 
that the statement is true for Bott–Samelson bimodules.'' This is not
true, as we now explain.

Theorems 6.4
and 6.14 give us that $\ch(M
\cdot {}^J\nu^K) = \ch(M) *_J {}^JH^K$ for appropriate $M$. By
Definition 6.3 we have
\[
M
\cdot {}^J\nu^K = \begin{cases} M \otimes_{R^J} R^J[\ell(w_K) -
  \ell(w_J)] & \text{if $J \subset K$,}\\
  M \otimes_{R^J} R^K & \text{if $J \supset K$.} \end{cases}
  \]
However, $\ch( R^J[\ell(w_K)-\ell(w_J)])$ and $\ch(R^K)$ are equal to
${}^JH^K$ only up to a power of $v$. Thus, Theorem 7.12 is off by a
shift.

\subsection{The ``quick fix''} \label{sec:quick fix}
One can fix things in several ways, one way (which has minimal impact
on the rest of the paper) is as follows.
Let us write $\oldch$ for the
character map appearing in \cite[\S7.5]{SSBim}, and write $\newch$ for
\[
  \newch(M) = v^{\ell(w_I)} \oldch(M) \quad \text{for  $M \in {}^I \BC^J$.}
\]
This new definition corrects Theorem 7.12(2):

\begin{proposition} \label{prop:fix}
  For singular Soergel bimodules $B \in {}^I \BC^J$ and $B' \in  {}^J
  \BC^K$,
\[
  \newch(B \otimes_{R^J} B') = \newch(B) *_J
  \newch(B').
  \]
\end{proposition}

\begin{proof}
  Certainly Theorem 6.4 still holds with $\newch$ in place of
  $\oldch$:
  \[
    \newch(M \cdot {}^J \nu^K) = v^{\ell(w_I)} \oldch(M \cdot {}^J
    \nu^K) =v^{\ell(w_I)} \oldch(M) *_J {}^JH^K =  \newch(M) *_J {}^JH^K.
  \]
  However, $\newch$ also has the desired effect on bimodules corresponding to
  translation functors. When $J \supset K$ (and considering $(R^J,R^K)$-bimodules), we have
  \begin{gather*}
\newch( R^K) = v^{\ell(w_J)} \oldch({}^J\Delta^K) =
v^{\ell(w_J)-\ell(w_J)}\cdot {}^JH^K= {}^JH^K.
\end{gather*}
(We use that ${}^J\Delta^K = R^K$ (see bottom of page 4609) and the
definition of $\ch_\Delta$ on page 4610). When $J \subset K$,
  \begin{gather*}
\newch( R^J[\ell(w_K)-\ell(w_J)]) = v^{\ell(w_J)}  \oldch(
{}^J\Delta^K[\ell(w_K)-\ell(w_J)])  = \\ = v^{\ell(w_J)}v^{\ell(w_K)-\ell(w_J)}
 \oldch( {}^J\Delta^K) = v^{\ell(w_K)}
 \oldch({}^J\Delta^K)=v^{\ell(w_K)-\ell(w_K)} \cdot {}^JH^K = {}^JH^K 
\end{gather*}
Now the proof proceeds in the same was as \cite[Theorem 7.12]{SSBim}.
\end{proof}

Using Proposition \ref{prop:fix} to expand $\ch(B
\otimes_{R^J} B')$ and cancelling $v^{\ell(w_I)}$ from both sides leads to
the identity
\begin{equation} \label{eq:factor}
\oldch(B \otimes B') = v^{\ell(w_J)} \cdot \oldch(B) *_J \oldch(B')
  \end{equation}
Thus \cite[Prop. 7.12(2)]{SSBim} is wrong by a factor of
$v^{\ell(w_J)}$. Equation \eqref{eq:factor} also implies that we could
also fix $\oldch$ by defining
\begin{equation} \label{eq:newch'}
\ch'_{\textrm{new}}(M) := v^{\ell(w_J)} \cdot \oldch(M) \quad \text{for $M \in {}^I\BC^J$.}
  \end{equation}
  
\subsection{Indecomposables after the ``quick fix''} The change to
$\newch$ means that the characters of the (old 
normalizations of the) indecomposable Soergel bimodules are no longer
self-dual in the Hecke algebroid. One can fix this as follows: in the proof
of \cite[Theorem 7.10]{SSBim} we define ${}^IB_p^J$ to be the indecomposable
summand occurring inside translation functors applied to $R^I$, rather
than ${}^I \nabla^I$. (Thus the new indecomposable singular Soergel
bimodules agree with the old ones up to a shift by $\ell(w_I)$.)

\subsection{A more comprehensive fix} The above fixes the character
map, however a more comprehensive fix redefines the shifts on nabla and delta
modules.\footnote{Revisiting the paper after over a decade, it is not
  clear to the author where the apparently unmotivated shifts
  in the definition of the delta and nabla modules come from!} We
outline one possibility below.

In what follows we use wide tildes to define new objects. The unadorned
symbols refer to the objects as defined in \cite{SSBim}.

\emph{Step 1:} We define the duality as
\[
\widetilde{D} := \Hom_{R^I}(-, R^I) \quad \text{on $(R^I,R^J)$-bimodules.}
\]
(That is, we remove the shift present in the definition of $D$ on page
4610.)

\emph{Step 2:} We define
\[
{}^I \widetilde{\nabla}^J_p := {}^IR^J_p[\ell(p_+)-\ell(w_J)] =
{}^I\nabla^J_p[-\ell(w_J)]
\]
(This differs from the definition of ${}^I\nabla^J_p$ on page 4599 by
a shift by $-\ell(w_J)$.)

\emph{Step 3:} We define
\[
{}^I\widetilde{\Delta}^J_p := \widetilde{D}({}^I\widetilde{\nabla}^J_p).
  \]
(Thus we define the delta modules to be the duals (under
$\widetilde{D}$) of the nabla modules -- this is not the case in
\cite{SSBim}, see Lemma 6.17.) We claim that
\begin{equation} \label{eq:nablaform}
  \widetilde{D}({}^I\widetilde{\nabla}^J_p)={}^I\widetilde{\nabla}^J_p[-2\ell(p_-)]\end{equation}
Indeed, from the second displayed equation in the proof of Lemma 6.17
we deduce
\[
\widetilde{D}({}^IR^J_p)= {}^IR^J_p[2(\ell(p_+)-\ell(p_-)-\ell(w_J))]
\]
and hence
\[
\widetilde{D}({}^I\widetilde{\nabla}^J_p) = \widetilde{D}(
{}^IR^J_p[\ell(p_+)-\ell(w_J)]) = {}^IR^J_p[\ell(p_+)-2\ell(p_-)-\ell(w_J)] = {}^I\widetilde{\nabla}^J_p[-2\ell(p_-)].\]

\emph{Step 4:} We now define $\widetilde{\ch}_{\nabla}$ and
$\widetilde{\ch}_{\Delta}$ (almost) as before
\[
\widetilde{\ch}_{\nabla}(M) := \sum \overline{g_p(M)} {}^IH^J_p \quad
\text{and} \quad \widetilde{\ch}_{\Delta}(M) := \sum h_p(M) {}^IH^J_p 
\]
(compare pp. 4599 and 4610), however now $g_p(M)$ and $h_p(M)$ are
defined using $\widetilde{\nabla}$ and $\widetilde{\Delta}$
filtrations respectively. (Note that  the unmotivated shift in the
definition of $\ch_{\Delta}$ goes away in the definition of
$\widetilde{\ch}_{\Delta}$.)

Because the old and new normalizations of nabla modules are related by a
shift by $-\ell(w_J)$ it is immediate that we have
\begin{equation} \label{eq:relate}
\widetilde{\ch}_{\nabla}(M) = v^{-\ell(w_J)} \ch_\nabla(M).
  \end{equation}
Also, because nabla and delta modules are dual on the nose, the
analogue of \cite[Prop. 6.16]{SSBim} is immediate: we have
\begin{equation} \label{eq:dual}
\widetilde{\ch}_{\Delta}(\widetilde{D}M) = \widetilde{\ch}_{\nabla}(M).
\end{equation}

Thus we have
\begin{equation} \label{eq:relatedelta}
\widetilde{\ch}_\Delta(M) = v^{\ell(w_J)}\ch_{\Delta}(M)
\end{equation}
indeed
\begin{gather*}
\widetilde{\ch}_\Delta(M) \stackrel{\eqref{eq:dual}}{=} \widetilde{\ch}_{\nabla}(\widetilde{D}M) \stackrel{\eqref{eq:relate}}{=} 
v^{-\ell(w_J)} \ch_{\nabla}(DM[-2\ell(w_J)]) \stackrel{\text{(Prop. 6.16)}}{=} v^{\ell(w_J)}
\ch_{\Delta}(M).
  \end{gather*}

\emph{Step 5:} We set
\[
\widetilde{\ch}(M)= \widetilde{\ch}_\Delta(M)
\stackrel{\eqref{eq:relatedelta}}{=} v^{\ell(w_J)}\ch_{\Delta}(M)
\]
Thus the final result is $\ch'_{\textrm{new}}$ discussed at
the end of \S \ref{sec:quick fix}.

\subsection{Indecomposables after the ``more comprehensive fix''} As with
the ``quick fix'', it is not true that the images of the
indecomposable Soergel bimodules ${}^IB^J_p$ under $\widetilde{\ch}$
are self-dual in the Hecke algebroid. However, one may check that
there is a unique shift of each ${}^IB^J_p$ which makes it self-dual
under $\widetilde{D}$, and that the image under $\widetilde{\ch}$ of
this self-dual normalization is self-dual.

Indeed, the proof of \cite[Proposition 6.15]{SSBim} can be modified to
show that the tensor product of two self-dual singular Soergel
bimodules is self-dual (see also the proof of Theorem 7.10). Now, one
checks easily that the bimodules
\begin{gather*}
  {}^I\widetilde{\nabla}^J = {}^IR^J \quad   \text{if $I \subset J$}, \\
{}^I\widetilde{\nabla}^J = {}^IR^J[\ell(w_I)-\ell(w_J)] \quad
\text{if $I \supset J$}
\end{gather*}
are self-dual and have self-dual character. Thus the maximal summand of
any tensor product of
these bimodules for a reduced translation sequence provide the
required self-dual bimodule.

\subsection{Comments} Either of the above two fixes introduces a
left/right asymmetry which is 
perhaps unappealing.

For example, when $W$ is of rank $1$ with simple
reflection $s$ the normalizations for the ``quick fix'' result in the
following normalizations of some of the indecomposable Soergel bimodules
\[
{}^{ \{ s \}} B^{\emptyset}_s = R \quad \text{and} \quad {}^{\emptyset
} B^{\{ s \}}_s = R[1].
\]
The self-dual indecomposables for the ``more comprehensive fix'' are as follows
\[
{}^{ \{ s \}} B^{\emptyset}_s = R[1] \quad \text{and} \quad {}^{\emptyset
} B^{\{ s \}}_s = R.
\]
(Note the lack of left/right symmetry in both cases.)

It does not appear to possible to make things left/right symmetric
without making more drastic changes. One elegant possibility
(suggested by Brundan) is to introduce a shift in the tensor product\footnote{That is, $M
\otimes_J N := M \otimes_{R^J} N [\ell(w_J)]$. With this change, singular Soergel
bimodules would no longer be a full subcategory of bimodules.}. We
leave the consideration of these possibilities to the interested reader.

\subsection{Acknowledgements} I would like to thank Noriyuki Abe, Ben
Elias, Jon Brundan and Leonardo Patimo for pointing out the
error. Abe suggested (a variant of) the definition of $\newch$. The
``more comprehensive fix'' is due to Brundan. A similar fix is present
in forthcoming work of Bodish, Brundan and Elias \cite{BBE}.

\end{document}